\newtheorem{thm}{Theorem}[subsection]
\newtheorem{prop}[thm]{Proposition}     
\newtheorem{lem}[thm]{Lemma}
\newtheorem{cor}[thm]{Corollary}
\theoremstyle{definition}
\newtheorem{defn}[thm]{Definition}
\newtheorem{example}[thm]{Example} 
\newtheorem{rem}[thm]{Remark}
\newtheorem{notation}[thm]{Notation}
\newtheorem{setup}[thm]{Setup}
\newtheorem{const}[thm]{Construction}
\DeclareFontFamily{OT1}{rsfs}{}
\DeclareFontShape{OT1}{rsfs}{n}{it}{<-> rsfs10}{}
\DeclareMathAlphabet{\curly}{OT1}{rsfs}{n}{it}
\makeatletter \@addtoreset{equation}{subsection} \makeatother  
\makeatletter \@addtoreset{thm}{subsection} \makeatother  
\makeatletter \@addtoreset{equation}{subsection} \makeatother  
\makeatletter \@addtoreset{thm}{subsection} \makeatother  
\newcommand{\Esp}{{\bf Esp}} 
\newcommand{\DAb}{{\bf D(Ab)}} 
\newcommand{\Alg}{{\bf Alg}} 
\newcommand{\Ab}{{\bf Ab}} 
\newcommand{\Sets}{{\bf Sets}} 
\newcommand{\Mon}{{\bf Mon}} 
\newcommand{\DS}{{\bf DS}}   
\newcommand{\LRS}{{\bf LRS}}  
\newcommand{\RS}{{\bf RS}}  
\newcommand{\Sites}{{\bf Sites}} 
\newcommand{\Topoi}{{\bf Topoi}} 
\newcommand{\LogSch}{{\bf LogSch}} 
\newcommand{\Sh}{{\bf Sh}} 
\newcommand{\St}{{\bf St}} 
\newcommand{\Sch}{{\bf Sch}} 
\newcommand{\LMS}{{\bf LMS}} 
\newcommand{\Man}{{\bf Man}} 
\newcommand{\MS}{{\bf MS}}   
\newcommand{\Top}{{\bf Top}}   
\newcommand{\Fans}{{\bf Fans}} 
\newcommand{\LogDS}{{\bf LogDS}} 
\newcommand{\KMFans}{{\bf KMFans}} 
\newcommand{\GSFans}{{\bf GSFans}} 
\renewcommand{\AA}{\mathbb{A}} 
\newcommand{\BB}{\mathbb{B}} 
\newcommand{\CC}{\mathbb{C}} 
\newcommand{\DD}{\mathbb{D}} 
\newcommand{\EE}{\mathbb{E}} 
\newcommand{\FF}{\mathbb{F}} 
\newcommand{\GG}{\mathbb{G}} 
\newcommand{\NN}{\mathbb{N}} 
\newcommand{\PP}{\mathbb{P}} 
\newcommand{\QQ}{\mathbb{Q}} 
\newcommand{\RR}{\mathbb{R}} 
\newcommand{\UU}{\mathbb{U}} 
\newcommand{\ZZ}{\mathbb{Z}} 
\newcommand{\latticedatalimit}{\Lambda} 
\newcommand{\m}{\mathfrak{m}}
\newcommand{\C}{\mathcal{C}}  
\newcommand{\D}{\mathcal{D}}
\newcommand{\M}{\mathcal{M}} 
\newcommand{\N}{\mathcal{N}}
\renewcommand{\O}{\mathcal{O}} 
\renewcommand{\u}{\underline}
\newcommand{\ov}{\overline}
\newcommand{\into}{\hookrightarrow}
\newcommand{\be}{\begin{eqnarray*}}
\newcommand{\ee}{\end{eqnarray*}}
\newcommand{\bne}[1]{\begin{eqnarray} \label{#1}}
\newcommand{\ene}{\end{eqnarray}}
\newcommand{\xym}{\xymatrix}
\newcommand{\bp}{\begin{pmatrix}}
\newcommand{\ep}{\end{pmatrix}}
\newcommand{\slot}{ \hspace{0.05in} {\rm \_} \hspace{0.05in} } 
\newcommand{\dirlim}{\displaystyle \lim_{ \longrightarrow } \,} 
\newcommand{\Hom}{\operatorname{Hom}}   
\newcommand{\RHom}{\operatorname{RHom}}   
\newcommand{\Ext}{\operatorname{Ext}}
\renewcommand{\H}{\operatorname{H}}
\newcommand{\Aut}{\operatorname{Aut}}
\newcommand{\Supp}{\operatorname{Supp}}  
\newcommand{\Ker}{\operatorname{Ker}}
\newcommand{\Cok}{\operatorname{Cok}}
\newcommand{\Id}{\operatorname{Id}}
\newcommand{\Spec}{\operatorname{Spec}}
\newcommand{\Span}{\operatorname{Span}}
\newcommand{\Star}{\operatorname{Star}}
\def\arraystretch{1.2} 
\numberwithin{equation}{subsection}
\begin{document}

\author{W.~D.~Gillam}
\address{Department of Mathematics, Bogazici University, Istanbul, Turkey 34342}
\email{wdgillam@gmail.com}

\author{Sam Molcho}
\address{Department of Mathematics, University of Colorado Boulder, Boulder, Colorado 80309}
\email{Samouil.Molcho@colorado.edu}

\date{\today}
\title[A theory of stacky fans]{A theory of stacky fans}

\begin{abstract}  We study the category of \emph{KM fans}---a ``stacky" generalization of the category of fans considered in toric geometry---and its various realization functors to ``geometric" categories.  The ``purest" such realization takes the form of a functor from KM fans to the $2$-category of stacks over the category of fine fans, in the ``characteristic-zero-\'etale" topology.  In the algebraic setting, over a field of characteristic zero, we have a realization functor from KM fans to (log) Deligne-Mumford stacks.  We prove that this realization functor gives rise to an equivalence of categories between (lattice) KM fans and an appropriate category of toric DM stacks.  Finally, we have a differential realization functor to the category of (positive) log differentiable spaces.  Unlike the other realizations, the differential realization of a stacky fan is an ``actual" log differentiable space, not a stack.  Our main results are generalizations of ``classical" toric geometry, as well as a characterization of ``when a map of KM fans is a torsor".  The latter is used to explain the relationship between our theory and the ``stacky fans" of Geraschenko and Satriano.  \end{abstract}

\maketitle

\arraycolsep=2pt 
\def\arraystretch{1.2}

\setcounter{tocdepth}{2}
\tableofcontents

\newpage

\section*{Introduction}

The purpose of this paper is to introduce and study some ``new" combinatorial objects, called \emph{KM fans} (\S\ref{section:definitions}), generalizing the ``classical" \emph{fans} of toric geometry (see \S\ref{section:conesandfans}, \cite{F}, \cite{O}, \cite{CLS}).  A KM fan consists of a ``classical" fan $F$ (except we allow the ``lattice" $N$ to be an arbitrary finitely generated abelian group), together with a lattice $F_\sigma$ of finite index in $N \cap \Span \sigma$ for each cone $\sigma$ (we call these $F_\sigma$ ``lattice data"), satisfying an obvious compatibility condition.  Several similar notions of ``stacky fans" have been previously considered in the literature (see below), but we believe that our theory is both the most fully-developed and the easiest to work with.

Although one \emph{could} study our theory of KM fans from a purely combinatorial point of view, the fullest picture can be obtained only by understanding the (sometimes subtle) relationship between combinatorial properties of KM fans (or maps of KM fans) and geometric properties of their ``realizations."  In classical toric geometry the ``realization" of a fan typically means the associated complex variety, often together with the corresponding torus action.  We find it useful to consider various different ``realization" functors, related to the following \emph{categories of spaces} (see \S\ref{section:categoriesofspaces}): \begin{enumerate} \item the category $\Fans$ of fine fans \item the category $\LogSch$ of fine log schemes over field $k$ of characteristic zero \item the category $\LogDS$ of fine (positive) log differentiable spaces \end{enumerate} Corresponding to these three categories, we have three functors $F \mapsto F$, $F \mapsto X(F)$, and $F \mapsto Y(F)$ out of the category of KM fans.  These ``realization" functors are called the \emph{fan}, \emph{algebraic}, and \emph{differential realization}, respectively.  They take values in the ``category" \begin{enumerate} \item of stacks over $\Fans$ in the \emph{characteristic-zero-\'etale (CZE) topology}. \item of Deligne-Mumford (DM) stacks over $\LogSch$ in the \emph{strict \'etale} topology. \item $\LogDS$. \end{enumerate}  The algebraic realization $X(F)$ of a KM fan $F$ is Zariski-locally a global quotient of an affine toric variety (with its usual log structure) by a finite, diagonalizable group scheme acting through the torus of the toric variety.  We emphasize that the ``differential realization" of a KM fan is an actual log differentiable space, \emph{not a stack}.

The beauty and simplicity of our theory can perhaps be most easily appreciated by starting with the example constructions in \S\ref{section:examples}.  It is an interesting exercise to think about the geometric meanings of the realizations of these constructions, most of which are eventually explained later in the paper.

One recurrent theme is that KM fans are an extremely natural generalization of classical fans in which it should be possible to make sense of every result or construction from the classical theory of toric varieties.  Although we have not made an effort to work out generalizations of \emph{every} construction from, say, \cite{CLS}, we suspect that it \emph{would} be possible to do so!  To give the general idea, we have worked out analogues of the following classical results: \begin{enumerate} \item the geometric interpretation of the \emph{support} of a fan in terms of one-parameter ``subgroups" of the corresponding torus (\S\ref{section:supportandproperness}) \item the combinatorial characterization of \emph{proper} maps of toric varieties (\S\ref{section:supportandproperness}) \item \label{starfan} the construction of the \emph{star fan} associated to a cone in a fan, and the interpretation of its realization (\S\ref{section:starfansandstratification}) \item \label{stratification} the stratification of a toric variety by torus orbits (\S\ref{section:starfansandstratification}) \item the description of the fundamental group of a toric variety (\S\ref{section:fundamentalgroup}) \end{enumerate}  

At this point we should comment on the importance of allowing the ``lattice" $N$ for our KM fans to have torsion.  The KM fans where $N$ has torsion are precisely those whose algebraic realization has generically non-trivial isotropy.  Obviously we need KM fans of this type to make sure the strata in \eqref{stratification} are themselves realizations of KM fans.  Indeed, applying the star fan construction to a cone in a \emph{lattice} KM fan (one where $N$ actually \emph{is} a lattice) will typically yield a KM fan where $N$ has torsion.  The point is that ``non-lattice" KM fans will arise from many natural constructions with lattice KM fans (also including those in \cite{GM2}, for example), so a fully satisfactory theory must also take these into account.

We also establish results which have no ``classical analog," such as our characterization of maps of KM fans with representable scheme-theoretic realization (Theorem~\ref{thm:representability}).  Our Theorem~\ref{thm:quotients2}, roughly speaking, characterizes the maps of KM fans whose realizations are torsors.  This result is apparently new even when restricted to the case of classical fans, though here the KM fan generalization actually takes a great deal of work.  We use Theorem~\ref{thm:quotients2} to explain the relationship between our theory and the theory of ``stacky fans" introduced in \cite{GS1} (see below).

As in the classical theory of toric varieties, one is bound to ask questions like: \begin{enumerate} \item What is the essential image of the various realization functors? \item Are these realization functors ``faithful / full" in some sense (for example, when we restrict to some kind of ``torus-equivariant" morphisms)? \end{enumerate}

In \S\ref{section:toricstacks} we prove (Theorem~\ref{thm:main}) that the algebraic realization $X$ yields an equivalence of categories between the category of \emph{lattice} KM fans and the category of ``toric DM stacks" with ``torus equivariant maps" as the morphisms (Definition~\ref{defn:toricstack}).  The proof of this is completely parallel to the classical case and relies on a stacky version of Sumihiro's Theorem explained to us by A.~Geraschenko and M.~Satriano.

In the differential setting (where realization is certainly not faithful), these questions seem to be very subtle.  We are also rather unsure how to ``intrinsically" characterize the stacks obtained by realizing \emph{non-lattice} KM fans, even in the algebraic setting.

\subsection*{Previous work}  There have been many previous attempts in the literature to work out a theory of ``stacky fans" and ``toric stacks" in analogy with the classical theory of fans and toric varieties.  See \cite{GS1} and the long list of references in the first line of its abstract for some of these attempts.  We do not claim any incredible originality in our definition of \emph{KM fan}.  We arrived at this definition in the spring of 2013 at ETH Z\"{u}rich by carefully reading \cite{KM}.  Reading between the lines of \cite{KM}, one can see that Kottke and Melrose (for whom our KM fans are named) are implicitly aware of the differential realization of a KM fan---we came to the idea of a KM fan by noticing that one would have to work with \emph{stacks} to do the sort of ``gluing" which they could so easily perform in the differential setting (because extracting roots of positive smooth functions is such a benign operation in differential geometry, whereas one often needs to form \emph{root stacks} to carry out analogous constructions algebraically).  

When we were first thinking about \cite{GM2}, we also noticed that the ``lattice data" for a KM fan were lurking in the ``Chow quotient" construction of \cite{KSZ} and we knew this extra data should be exploited to form a ``refined Chow quotient" with an interesting modular interpretation.  The possibility of considering both the algebraic and differential realizations of a KM fan has been a major consideration for us from the outset, as we explain in our paper \cite{GM2}.  Indeed, the present paper was largely written in order to be used in \cite{GM2} and \cite{AM}.

After the appearance of \cite{AM}, where the basic notions of this paper first appeared, A.~Geraschenko kindly informed us that I.~Tyomkin also considered something like a ``KM fan" in \cite[Definition~4.1]{Tyo}.  Indeed, Tyomkin's ``toric stacky data" is the same thing as what we call a \emph{lattice KM fan}---he is of course also aware of the algebraic realization of such a thing, which he discusses in \cite[\S4.1]{Tyo}, but says little else about the notion.  We do not deny that a KM fan could justifiably be called a \emph{Tyomkin fan}---we're just too set in our ways to change terminology at this point.

Of the various papers on ``toric stacks," the only one we can claim any great familiarity with is \cite{GS1}, where Geraschenko and Satriano introduce a kind of ``stacky fan," which we call a \emph{GS fan} (Definition~\ref{defn:GSfan}) to avoid confusion.  A GS fan is a fan $F$ in the usual (``classical") sense, together with combinatorial data corresponding to a subgroup $S$ of the torus $T$ for the corresponding toric variety $X=X(F)$.  The (algebraic) realization of such a GS fan is the stack-theoretic quotient $[X / S]$.  We show (Theorem~\ref{thm:folding}) that every GS fan whose (algebraic) realization is a separated DM stack gives rise by ``folding" to a lattice KM fan with the same realization---thus every such GS fan ``is" a lattice KM fan.  

It turns out, however, that not every lattice KM fan arises by ``folding" a GS fan.  This is because the separated DM stack $X=X(F)$ obtained by realizing a lattice KM fan $F$ need not admit a torsor $X' \to X$ whose total space $X'$ is a toric variety (with the torsorial group acting through the torus of $X'$), even though every such $X$ does admit such a torsor Zariski locally.  We prove this by introducing an ``unfolding" construction (\S\ref{section:unfolding}) which yields a torsor over (the realization of) any KM fan which is appropriately ``universal" (Theorem~\ref{thm:unfolding}).  Among other things, we show that if there were an $X' \to X$ as before, then the (rigidified) unfolding of the KM fan $F$ would have to be a classical fan, which, in general, it is not.  Ultimately we obtain a fairly simple combinatorial characterization (Theorem~\ref{thm:KMandGSfans}) of the lattice KM fans that can be obtained by folding a GS fan.

The unfolding construction is really rather simple.  One can think of a KM fan as an abstract set of cones $F$ and lattice data $\{ F_\sigma \}$ (or, better, as an abstract set of monoids $\{ P_\sigma := F_\sigma \cap \sigma \}$) together with various (``face") inclusions between these cones  (or monoids) \emph{together with an embedding of this data into an ambient finitely generated abelian group} $N$.  To unfold a KM fan, we just replace the italicized data with the ``universal" such $N$---i.e.\ the direct limit of the lattice data $\{ F_\sigma \}$.  An interesting feature of this construction is that it can yield a non-classical KM fan (even a non-lattice KM fan) when applied to a classical fan.

It should be mentioned that in an early version of \cite{GS2}, Geraschenko and Satriano purported to prove a general result characterizing certain (not necessarily Deligne-Mumford) ``toric stacks" which would imply, in particular, that every ``toric DM stack" in our sense (Definition~\ref{defn:toricstack}) is the realization of a GS fan.  This is not true; we expect that this mistake will be corrected in a revision of \cite{GS2}.  The ``local analysis" of toric stacks performed in that version of \cite{GS2} is entirely correct---we make use of it to prove our ``stacky Sumihiro Theorem" (Lemma~\ref{lem:Sumihiro})---but they make a small mistake in passing from their local description of toric stacks to a global one.

\subsection*{Acknowledgements} We would like to thank A.~Geraschenko and M.~Satriano for explaining to us how their results from \cite{GS2} yield a proof of``Sumihiro's theorem" for toric DM stacks (Lemma~\ref{lem:Sumihiro} here). W.D.G.\ was supported by a Marie Curie/T\"{U}BITAK Co-Funded Brain Circulation Scheme fellowship.

\subsection*{Conventions and notation} Throughout this paper, \emph{monoid} means \emph{commutative monoid with zero} and \emph{FGA group} means \emph{finitely generated abelian group}.  For an abelian group $N$, we set $N^\lor := \Hom(N,\ZZ)$, $\EE(N) := \Ext^1(N,\ZZ)$.

\newpage

\section{Categories of spaces} \label{section:categoriesofspaces} The purpose of this section is to give a brief review of the categories $\Fans$, $\LogSch$, and $\LogDS$ mentioned in the introduction.  We also review the \emph{CZE topology} on $\Fans$ and the \emph{strict-\'etale topology} on $\LogSch$.  It is necessary to say something about these topologies because we will want to consider sheaves on (and stacks over) them---these notions depend on a choice of topology.  The resulting $2$-categories of stacks will be the targets of various realization functors from the category of KM fans.  Although this section is very brief, we have tried to provide ample references for further reading.  Readers familiar with the classical theory of toric varieties and who are interested primarily in the basic theory of KM fans and their algebraic realizations can probably skip to \S\ref{section:KMfans}.

\subsection{Fans and the CZE topology}  \label{section:fans}  In this section we review the category of (abstract) fans, which is to the category of monoids what the category of schemes is to the category of rings.  A thorough study of this category can be found in \cite{G2}.

\begin{defn} \label{defn:fine} A monoid is called \emph{fine} iff it is isomorphic to a finitely generated submonoid of a group. \end{defn}

\begin{defn} \label{defn:face} A subset $I$ (possibly empty!) of a monoid $P$ is called an \emph{ideal} iff $P+I = I$.  A submonoid $F$ of a monoid $P$ is called a \emph{face} (notation: $F \leq P$) iff $P \setminus F$ is an ideal of $P$.  An ideal of $P$ whose complement is a face is called a \emph{prime ideal}, so that taking complements defines a bijection between the set of prime ideals of $P$ and the set of faces of $P$. \end{defn}

The set of faces of a monoid $P$ is denoted $\Spec P$.  Every monoid $P$ has a smallest face $P^*$ (the units of $P$) and a largest face $P$.  The monoid $\ov{P} := P / P^*$ is called the \emph{sharpening} of $P$.  The set $\Spec P$ is finite whenever $P$ is finitely generated because, in general, if $S \subseteq P$ generates $P$ then $S \cap F$ generates $F$ for any face $F \leq P$ (in particular, any face of a finitely generated monoid is also finitely generated), hence $F \mapsto S \cap F$ is an injection from $\Spec P$ to the powerset of $S$.  

We topologize $\Spec P$ and equip it with a sheaf of monoids $\M_P$ in a manner completely analogous to the construction of the locally ringed space $\Spec A$ associated to a (commutative) ring $A$.  The stalk $\M_{P,F}$ of $\M_P$ at a face $F \in \Spec P$ is the localization $F^{-1}P$ of $P$ at $F$.  As with rings, the $\Spec$ construction yields a fully faithful embedding from $\Mon^{\rm op}$ to the category of locally monoidal spaces $\LMS$, whose objects are pairs $X=(X,\M_X)$ consisting of a topological space $X$ and a sheaf $\M_X$ of monoids on $X$, and whose morphisms $(X,\M_X) \to (Y,\M_Y)$ are pairs consisting of a map of topological spaces $f : X \to Y$ and a map $f^\sharp : f^{-1} \M_Y \to \M_X$ of sheaves of monoids on $X$ which is \emph{local} in the sense that each stalk $f^\sharp_x : \M_{Y,f(x)} \to \M_{X,x}$ satisfies $(f^\sharp_x)^{-1}(\M_{X,x}^*) = \M_{Y,f(x)}^*$.  (The category $\MS$ of \emph{monoidal spaces} has the same objects as $\LMS$, but the locality condition is dropped in defining a morphism in $\MS$, so that $\LMS \into \MS$ is not full.)  As in the analogous ring-theoretic situation, the locally monoidal space $\Spec P$ represents the functor \be \LMS^{\rm op} & \to & \Sets \\ X & \mapsto & \Hom_{\Mon}(P,\M_X(X)). \ee

\begin{defn} \label{defn:abstractfan} An \emph{affine fan} (resp.\ a \emph{fine affine fan}) is a locally monoidal space isomorphic in $\LMS$ to $\Spec P$ for some monoid (resp.\ fine monoid) $P$.  A \emph{fan} (resp.\ \emph{fine fan}) is an object $(X,\M_X)$ of $\LMS$ with an open cover $\{ U_i \}$ such that each $(U_i,\O_X|U_i)$ is an affine fan (resp.\ fine affine fan).   We define a morphism of fans so that fans form a full subcategory of $\LMS$.  We write $\Fans$ for the category of \emph{fine} fans. \end{defn}

The category $\LMS$ is, in many ways, much simpler than the category $\LRS$ of locally ringed spaces.  For example, unlike the inclusion functor $\LRS \into \RS$ and the underlying space functor $\LRS \to \Top$, the inclusion functor $\LMS \into \MS$ and the underlying space functor $\LMS \to \Top$ preserve inverse limits \cite[Theorem~5.4.1]{GM1}.  In other words, the inverse limit $X$ of a functor $i \mapsto X_i$ to $\LMS$ is constructed in the ``obvious" way:  On the level of spaces, $X$ is the inverse limit of the $X_i$ (denote the projections $\pi_i : X \to X_i$) and $\M_X$ is the direct limit of $i \mapsto \pi_i^{-1} \M_{X_i}$ in the category of sheaves of monoids on $X$.  (It is true, but not obvious, that the structure maps $\pi_i^{-1} \M_{X_i} \to \M_X$ are local.)  Like the inclusion functor $\Sch \into \LRS$, the inclusion functor from the category of (not necessarily fine) fans into $\LMS$ preserves finite inverse limits (all of which exist in these categories).  This isn't true of fine fans (though the category of fine fans does have finite inverse limits) because, for example, the direct limit of a finite diagram of fine monoids (taken in the category of all monoids) is not necessarily fine, though it is always finitely generated.

We now recall some basic facts about the CZE topology on the category $\Fans$.  This topology is closely related with the \'etale topology on the category of schemes over a field of characteristic zero.  For further details, see \cite[\S4.12]{G2}.

\begin{defn} \label{defn:CZE} A map of fine monoids $Q \to P$ is called \emph{CZE (characteristic-zero-\'etale)} iff $Q^* \to P^*$ is injective with finite cokernel and $$ \xym{ Q^* \ar[r] \ar[d] & P^* \ar[d] \\ Q \ar[r] & P } $$ is a pushout diagram of monoids (equivalently: of fine monoids).  (See \cite[4.12.1]{G2} for several equivalent characterizations of CZE maps.)  A map of fine fans $f : X \to Y$ is called \emph{CZE} iff $f_x : \mathcal{M}_{Y,f(x)} \to \mathcal{M}_{X,x}$ is a CZE map of monoids for every $x \in X$.  A \emph{CZE cover} is a surjective CZE map of fine fans.  \end{defn}

\begin{lem} \label{lem:CZEmaps} If $h : Q \to P$ is a CZE map of fine monoids then $\Spec h$ is a CZE map of fine monoids and a homeomorphism on topological spaces, hence also a CZE cover. \end{lem}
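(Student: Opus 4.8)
The plan is to reduce everything to two features of a CZE map $h : Q \to P$ of fine monoids: that the induced map of sharpenings $\ov h : \ov Q \to \ov P$ is an isomorphism, and that $h^\gp : Q^\gp \to P^\gp$ is injective with finite cokernel. I would first extract these from Definition~\ref{defn:CZE}. Since $Q^* \into P^*$ is injective, a direct inspection of the pushout shows that $P^* \oplus_{Q^*} Q$ has unit group $P^*$ and sharpening $\ov Q$; the natural map $P^* \oplus_{Q^*} Q \to P$ is an isomorphism (that is what the pushout condition asserts), and being the identity on $P^*$ it induces an isomorphism $\ov Q \to \ov P$, namely $\ov h$. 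Applying group completion to the pushout square gives $P^\gp = P^* \oplus_{Q^*} Q^\gp$, whence $h^\gp$ is injective with cokernel $P^*/Q^*$, finite by hypothesis. (Conversely, these two conditions characterize CZE maps; I would cite \cite[4.12.1]{G2} for this and for the other elementary facts used below.) After this I would identify $Q^\gp$ with its image in $P^\gp$, so that $Q \subseteq P$ and $G := h^{-1}(F) = F \cap Q$ for any face $F \leq P$.

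Next I would show that $\Spec h : \Spec P \to \Spec Q$ (a morphism of fine fans, since $P$ and $Q$ are fine) is a homeomorphism. Its underlying map is continuous because $\Spec h$ is a morphism of $\LMS$. It is a bijection because the faces of a monoid $M$ correspond, compatibly with the Zariski topology, to the faces of $\ov M$ (every face contains $M^*$), and under this correspondence $\Spec h$ becomes $\Spec \ov h$ with $\ov h$ an isomorphism. For openness I would check it on a basic open $D(f) = \{ F \in \Spec P : f \in F \}$ with $f \in P$: using surjectivity of $\ov h$, write $f = u + h(g)$ with $u \in P^*$ and $g \in Q$; then $D(f) = D(h(g)) = (\Spec h)^{-1}(D(g))$, and since $\Spec h$ is bijective, $(\Spec h)(D(f)) = D(g)$ is open. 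An open continuous bijection is a homeomorphism.

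Then I would verify that $\Spec h$ is CZE by examining its stalk maps. For a face $F \leq P$ and $G := h^{-1}(F)$, the map $\M_{Q,G} \to \M_{P,F}$ induced by $\Spec h$ is the localized homomorphism $h_G : G^{-1}Q \to F^{-1}P$. Its group completion is again $h^\gp$, hence injective with finite cokernel; and its sharpening is $\ov h$ localized at the face $\ov G$ (whose image under $\ov h$ is $\ov F$, since $\ov h$ is an isomorphism), hence again an isomorphism. By the characterization recalled in the first step, $h_G$ is CZE. (If one prefers to avoid that characterization, one checks the defining pushout square for $h_G$ directly: $(G^{-1}Q)^* = G^\gp$ and $(F^{-1}P)^* = F^\gp = P^* + G^\gp$, and modulo these unit groups both $G^{-1}Q$ and $F^{-1}P$ are the submonoid of $Q^\gp/G^\gp = P^\gp/F^\gp$ generated by the image of $Q$.) Thus $\Spec h$ is a CZE map of fine fans; being a homeomorphism it is surjective, so it is a CZE cover.

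The main obstacle here is not conceptual but is the localization bookkeeping in the third step: one must know that the stalk maps of $\Spec h$ are the localized maps $h_G$, and that group completion and sharpening commute with localization, so that $(h_G)^\gp = h^\gp$ and $\ov{h_G} = \ov G^{-1}\ov h$. These facts are routine and are all recorded in \cite{G2}; once they are in hand the argument is purely formal.
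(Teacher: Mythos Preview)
Your argument is correct. The paper itself does not give a proof here but simply cites \cite[4.12.5]{G2}, so your write-up is essentially a reconstruction of what that reference contains: extract from the pushout square that $\ov h$ is an isomorphism and $h^{\gp}$ is injective with finite cokernel, use the first fact to identify $\Spec P$ with $\Spec Q$ as topological spaces, and then check the CZE condition stalkwise by observing that localization preserves both features. One small notational point: when you write ``$\ov{h_G} = \ov G^{-1}\ov h$'' you really mean that $\ov{h_G}$ is the sharpening of the localization of $\ov h$ at $\ov G$; the intermediate map $\ov G^{-1}\ov h$ is not itself sharp. This does not affect the conclusion, since an isomorphism stays an isomorphism under both operations.
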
 

\begin{proof} See \cite[4.12.5]{G2}. \end{proof}

If $h :Q \to P$ is a map of fine monoids for which $\Spec h$ is a CZE map of fans, then $h$ may not be a CZE map of monoids (because $\Spec h$ may not be surjective), but $h$ will factor through the localization $Q \to G^{-1}Q$ of $Q$ at the face $G := h^{-1}(P^*)$ via a CZE map of fine monoids $G^{-1}Q \to P$ (see \cite[4.12.6]{G2}).

\begin{defn} \label{defn:topology} In this paper, a \emph{topology} on a category $\C$ is a class $\tau$ of $\C$-morphisms, called \emph{covers}, closed under composition and base change (in particular these base changes are assumed to exist) and containing all isomorphisms.\footnote{To avoid set-theoretic difficulties in various constructions (sheafification, in particular), one should also assume that for any $Y \in \C$ there is a \emph{set} $S_Y$ of covers with codomain $Y$ (``covers of $Y$") cofinal amongst all covers of $Y$, meaning: For any other cover $f : X \to Y$ of $Y$, there is some cover in $S_Y$ factoring through $f$.  This should be clear in every example we consider.}  A \emph{site} is a category equipped with a topology.  \end{defn}

A \emph{topology} in our sense is basically a ``pretopology" in the sense of \cite[IV.4.2.5]{SGA3} (it is even a rather special case of a ``pretopology"); one can attach to it a topology, in the sense of \cite[IV.4.2.1]{SGA3}, in the manner described in \cite[IV.4.2.4]{SGA3}.  Although our definition of a ``topology" is slightly restrictive, it is general enough to include every example of interest to us.

\begin{prop} CZE covers are closed under composition and base change, hence they are the covers for a topology on the category $\Fans$ of fine fans, called the \emph{CZE topology}. \end{prop}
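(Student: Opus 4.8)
The plan is to verify directly that CZE covers satisfy the three requirements imposed on the ``covers'' of a topology in Definition~\ref{defn:topology}: that they contain all isomorphisms and are closed under composition and under base change.

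The first two are easy. An isomorphism of fine fans is surjective and restricts to an isomorphism of fine monoids on each stalk, and an isomorphism of fine monoids is vacuously CZE, so every isomorphism is a CZE cover. For composition, let $f\colon X\to Y$ and $g\colon Y\to Z$ be CZE covers; then $g\circ f$ is surjective, and its stalk at $x\in X$ is the composite $\M_{Z,(g\circ f)(x)}\to\M_{Y,f(x)}\to\M_{X,x}$ of two CZE maps of fine monoids. So it suffices to note that CZE maps of fine monoids are closed under composition: if $R\to Q$ and $Q\to P$ are CZE, then $R^*\to P^*$ is a composite of injections of abelian groups with finite cokernel, hence again such; and pasting the two pushout squares of Definition~\ref{defn:CZE} along their common edge $Q^*\to Q$ shows that the outer rectangle, which is exactly the square attached to $R\to P$, is a pushout. (Compare \cite[\S4.12]{G2}.)

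Base change is the substantive point. Let $f\colon X\to Y$ be a CZE cover and $g\colon Y'\to Y$ an arbitrary morphism of fine fans. The heart of the matter will be the claim that the fibre product $X\times_Y Y'$, which exists in $\Fans$, coincides with the fibre product formed in the category of all (not necessarily fine) fans, hence --- by the limit-preservation properties of the inclusion of all fans into $\LMS$ and of the underlying-space functor on $\LMS$, recalled in \S\ref{section:fans} --- with the one formed in $\LMS$, whose underlying space is the topological fibre product. Granting this, the projection $p\colon X\times_Y Y'\to Y'$ is a CZE cover: it is surjective because on underlying sets $|X\times_Y Y'|=|X|\times_{|Y|}|Y'|$ and $f$ is surjective; and it is CZE, which I would check on stalks. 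At a point of $X\times_Y Y'$ lying over $x\in X$ and $y'\in Y'$ with common image $y\in Y$, the structure sheaf has stalk $\M_{X,x}\amalg_{\M_{Y,y}}\M_{Y',y'}$, and pasting the CZE square for $f_x$ with the defining pushout square of this stalk identifies $\M_{Y',y'}\to\M_{X,x}\amalg_{\M_{Y,y}}\M_{Y',y'}$ with the pushout of $\M_{Y',y'}\leftarrow\M_{Y,y}^*\to\M_{X,x}^*$; since $\M_{Y,y}^*\to\M_{X,x}^*$ is an injection of finitely generated abelian groups with finite cokernel and $\M_{Y,y}^*\to\M_{Y',y'}$ is unit-valued, factoring through $\M_{Y',y'}^*$ exhibits this as a CZE map of fine monoids.

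What then remains is to justify that $X\times_Y Y'$ is computed ``naively.'' Covering $Y$ by affine opens $\Spec R$, then $f^{-1}(\Spec R)$ by affine opens $\Spec Q$ and $g^{-1}(\Spec R)$ by affine opens $\Spec R'$, the fibre product is covered by the affine fans $\Spec(Q\amalg_R R')$; since fibre products of fans may be computed locally and a fine fan that is the fibre product in all fans is a fortiori the fibre product in the full subcategory $\Fans$, it is enough to prove: if $R\to Q$ is a map of fine monoids whose spectrum is a CZE map of fans and $R\to R'$ is any map of fine monoids, then the pushout $Q\amalg_R R'$ formed in all monoids is again fine. Here I would use \cite[4.12.6]{G2} to factor $R\to Q$ through a localization $R\to G^{-1}R$ with $G^{-1}R\to Q$ a genuine CZE map; localizations are stable under pushout and preserve fineness, so after pasting pushout squares this reduces to the fact that the pushout $P\amalg_{P^*}S$ of a fine monoid $P$ along an injective homomorphism $P^*\hookrightarrow S$ of finitely generated abelian groups with finite cokernel is fine --- a standard fact about fine monoids (see \cite[\S4.12]{G2}). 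That last fineness statement is the only non-formal ingredient; the rest is diagram chasing with pushout squares and the generalities on $\LMS$ recalled in \S\ref{section:fans}, so I anticipate no serious obstacle. The one subtlety worth flagging is precisely that fibre products in $\Fans$ are \emph{not} computed naively in general, and it is exactly CZE-ness --- via the fineness of the relevant monoid pushouts --- that makes the base change here well behaved.
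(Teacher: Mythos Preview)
Your argument is correct.  The paper's own ``proof'' is simply a citation of \cite[4.12.7]{G2}, so there is nothing to compare at the level of strategy; what you have written is essentially what one finds upon unpacking that reference.  The organization you chose---isolating the one nontrivial point, namely that the monoid pushouts arising in the base-change step are already fine, so that the fibre product in $\Fans$ agrees with the naive one in $\LMS$---is exactly the right emphasis.

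Two small points you pass over but which do hold: first, to conclude that the stalk map $\M_{Y',y'}\to S\amalg_{\M_{Y',y'}^*}\M_{Y',y'}$ is CZE in the sense of Definition~\ref{defn:CZE}, one needs that the unit group of this pushout is precisely $S$; this follows because $\M_{Y',y'}^*\hookrightarrow S$ is injective, so the action of $\M_{Y',y'}^*$ on $S\times\M_{Y',y'}$ is free and one checks directly that only the image of $S$ is invertible in the quotient.  Second, the observation that $\M_{Y,y}^*\to\M_{Y',y'}$ is unit-valued needs no locality hypothesis on $g$: any monoid homomorphism out of a group lands in units.  Neither point is a gap; both are the sort of verification you rightly left implicit.
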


\begin{proof} See \cite[4.12.7]{G2}. \end{proof}

\begin{rem} \label{rem:Ux} Since the topological space underlying any fine fan $X$ is locally finite, any $x \in X$ has a smallest open neighborhood $U_x$.  Evidently \be U_x & = & \{ y \in X : x \in \ov{ \{ y \} } \}. \ee  In fact, by working in an affine open subspace of $X$ containing $x$, one sees that $U_x = \Spec \M_{X,x}$.  This identification is natural in $(X,x)$ in the following sense: For a morphism of fine fans $f : X \to Y$ and a point $x \in X$, one has a commutative diagram of fine fans \bne{Uxdiagram} & \xym{ U_x \ar[r] \ar[d]_{f|U_x} & X \ar[d]^-f \\ U_{f(x)} \ar[r] & Y } \ene where the horizontal arrows are the inclusions and the left vertical arrow is, under the aforementioned natural isomorphisms, identified with $\Spec$ of the map $f^\sharp_x : \M_{Y,f(x)} \to \M_{X,x}$. \end{rem} 

The upshot of the above remark is that conditions on the stalks of a map of fine fans translate directly into (``Zariski") local conditions on the map itself.  For example, the following result follows immediately from Remark~\ref{rem:Ux} (cf.\ \cite[4.12.8]{G2}):

\begin{prop} \label{prop:CZEimpliesZariski}  Suppose $f : X \to Y$ is a CZE map of fine fans.  Then: \begin{enumerate} \item Zariski locally on $f$, $f$ is $\Spec$ of a CZE map of monoids.  \item The map of topological spaces underlying $f$ is a local homeomorphism. \end{enumerate} In particular, any CZE cover of fine fans is a Zariski cover on the underlying spaces. \end{prop}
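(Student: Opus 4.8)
The plan is to reduce both parts to Remark~\ref{rem:Ux} and Lemma~\ref{lem:CZEmaps}; the point is that on a fine fan, whose underlying space is locally finite, every point has a smallest open neighborhood, and Remark~\ref{rem:Ux} identifies that neighborhood with $\Spec$ of the structure-sheaf stalk at the point and identifies the relevant restriction of $f$ with $\Spec$ of the corresponding stalk map. So the argument is essentially a matter of unwinding definitions locally.

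First I would fix a point $x \in X$ and consider the smallest open neighborhoods $U_x \subseteq X$ and $U_{f(x)} \subseteq Y$ of $x$ and $f(x)$, which by Remark~\ref{rem:Ux} are canonically $\Spec \M_{X,x}$ and $\Spec \M_{Y,f(x)}$. The naturality square \eqref{Uxdiagram} then tells me that $f(U_x) \subseteq U_{f(x)}$ and that $f|U_x : U_x \to U_{f(x)}$ is identified with $\Spec f^\sharp_x$, where $f^\sharp_x : \M_{Y,f(x)} \to \M_{X,x}$ is the stalk map. These stalks are fine monoids (localizations of fine monoids are fine), and $f^\sharp_x$ is CZE by the very definition of a CZE map of fine fans. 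Since the $U_x$, for $x$ ranging over $X$, form an open cover of $X$, this already gives (1): Zariski-locally on the source, $f$ is $\Spec$ of a CZE map of fine monoids.

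Next I would invoke Lemma~\ref{lem:CZEmaps}, which says that $\Spec$ of a CZE map of fine monoids is a homeomorphism on underlying spaces. Applying this to $f^\sharp_x$ shows that each $f|U_x : U_x \to U_{f(x)}$ is a homeomorphism; composing with the open inclusion $U_{f(x)} \into Y$ exhibits $f|U_x$ as an open topological embedding, so $f$ is a local homeomorphism, which is (2). For the last assertion I would use surjectivity of a CZE cover: every $y \in Y$ equals $f(x)$ for some $x \in X$, so $y$ lies in the open set $f(U_x) = U_{f(x)}$; hence the open sets $f(U_x)$ cover $Y$ and $f$ is a Zariski cover on underlying spaces.

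I do not expect a serious obstacle here --- this is why the statement ``follows immediately'' from the remark. The one thing I would be careful not to under-use is the \emph{full} strength of Lemma~\ref{lem:CZEmaps}: it is essential that $\Spec$ of a CZE monoid map be an honest \emph{homeomorphism}, not merely a CZE (or just surjective) map of fans, since otherwise the last step would only yield that $f$ is \emph{CZE}-locally a homeomorphism, which is strictly weaker than the Zariski-local conclusion asserted. A minor bookkeeping point is the inclusion $f(U_x) \subseteq U_{f(x)}$ --- that the smallest open neighborhood of $x$ maps into the smallest open neighborhood of $f(x)$ --- but this is precisely the commutativity of \eqref{Uxdiagram}.
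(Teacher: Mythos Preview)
Your argument is correct and is precisely the intended one: the paper states that the result follows immediately from Remark~\ref{rem:Ux}, and your proof is exactly the unwinding of that remark together with Lemma~\ref{lem:CZEmaps}. There is nothing to add.
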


Any abelian group $A$ has a natural abelian group object structure in $\Mon^{\rm op}$ (the opposite of the category of monoids).  An action of $A$ on a monoid $P$ in $\Mon^{\rm op}$ is the same thing as a group homomorphism $a : P^{\rm gp} \to A$.  (See \cite[\S2.8]{G2}.)  Since $\Spec : \Mon^{\rm op} \to \Fans$ preserves finite inverse limits, it takes group objects to group objects (and actions to actions).

\begin{prop} \label{prop:CZEtorsors} Suppose \bne{FGAGSeq} & 0 \to A \to B \to C \to 0, \ene is an exact sequence of FGA groups.  Then there exists an injective map of FGA groups $A \into A'$ with finite cokernel such that the exact sequence \bne{FGAGSeq2} & 0 \to A' \to B' \to C \to 0 \ene obtained by pushing out \eqref{FGAGSeq} along $A \to A'$ splits.  It follows that the sequence \bne{GGFGAGSeq} & 0 \to \GG(C) \to \GG(B) \to \GG(A) \to 0 \ene obtained by applying $\GG( \slot ) = \Spec( \slot )$ to \eqref{FGAGSeq} is a short exact sequence of (representable) sheaves of abelian groups on the category of fine fans in the CZE topology which can be split after pulling back along a CZE cover $\GG(A') \to \GG(A)$.  Equivalently, the induced action of $\GG(C)$ on $\GG(B)$ makes $\GG(B) \to \GG(A)$ a $\GG(C)$ torsor, locally trivial in the CZE topology.  \end{prop}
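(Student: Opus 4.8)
The plan is to prove the first statement purely at the level of FGA groups---that one may push out a short exact sequence of FGA groups along an injection $A \into A'$ with finite cokernel so as to make it split---and then translate everything into $\Fans$ using the fact (established earlier in the excerpt) that $\GG = \Spec$ converts finite inverse limits into finite inverse limits, hence carries short exact sequences of FGA groups to short exact sequences of group objects in $\Fans$, and carries the injection $A \into A'$ with finite cokernel to a CZE cover $\GG(A') \to \GG(A)$.

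First I would reduce the splitting claim to a statement about $\Ext^1$. Pushing out $\eqref{FGAGSeq}$ along $A \to A'$ corresponds to applying $\Ext^1(C, \slot)$ to $A \to A'$ and tracking the class of the extension; the pushed-out sequence $\eqref{FGAGSeq2}$ splits precisely when the class of $\eqref{FGAGSeq}$ in $\Ext^1(C,A)$ maps to $0$ in $\Ext^1(C,A')$. So it suffices to find $A \into A'$ with finite cokernel killing a given class $\xi \in \Ext^1(C,A)$. Now $\Ext^1(C,A)$ is a finitely generated abelian group---indeed a \emph{finite} group, since $C$ being FGA makes $\Ext^1(C, \slot)$ vanish on $\QQ$-vector spaces and sends finitely generated groups to finite groups (write $C = \ZZ^r \oplus (\text{torsion})$; then $\Ext^1(\ZZ^r, A) = 0$ and $\Ext^1$ of a finite cyclic group against an FGA group is finite). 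Hence $\xi$ has finite order, say $n$. Taking $A' := \frac1n A$, i.e.\ the pushout of $A$ along multiplication by $n$ viewed as $A \xrightarrow{\cdot n} A$ composed appropriately---more precisely, let $A'$ be $A$ with the embedding $A \into A'$ being ``division by $n$'' so that the induced map $\Ext^1(C,A) \to \Ext^1(C,A')$ is multiplication by $n$ on the relevant class---sends $\xi$ to $n\xi = 0$. Concretely: the map $A \xrightarrow{n} A$ has cokernel $A/nA$, which is finite, and the induced endomorphism of $\Ext^1(C,A)$ is multiplication by $n$; choosing $n$ to be a common multiple of the orders of the (finitely many) generators of the finite group $\Ext^1(C,A)$ (or simply the exponent of that group) makes this the zero map, so in particular kills $\xi$. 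Set $A' = A$ with $A \into A'$ equal to this multiplication-by-$n$ map; then $\eqref{FGAGSeq2}$ splits.

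Once the splitting of $\eqref{FGAGSeq2}$ is in hand, the rest is formal. Applying $\GG(\slot) = \Spec(\slot) : \Mon^{\op} \to \Fans$, which preserves finite inverse limits, turns $\eqref{FGAGSeq}$ into a diagram of group objects $\GG(C) \to \GG(B) \to \GG(A)$, and the pushout square defining $\eqref{FGAGSeq2}$ into a pullback square of group objects; since $\GG$ preserves kernels, this diagram is ``left exact'' in the sheaf sense. Surjectivity of $\GG(B) \to \GG(A)$ as sheaves in the CZE topology is exactly the assertion that it becomes split---hence surjective---after the base change along $\GG(A') \to \GG(A)$, and $\GG(A') \to \GG(A)$ is a CZE cover by Proposition~\ref{prop:CZEimpliesZariski} together with the fact that $\Spec$ of a map of fine monoids whose cokernel on groups (here $A/nA$) is finite is a CZE map: the relevant pushout square $A \to A'$, $0 \to A/nA$ on the level of \emph{group} objects is, after $\Spec$, the square witnessing the CZE condition in Definition~\ref{defn:CZE}. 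Pulling $\eqref{GGFGAGSeq}$ back along this cover gives a split exact sequence of sheaves of groups, which is the definition of $\GG(B) \to \GG(A)$ being a $\GG(C)$-torsor locally trivial in the CZE topology; the equivalence of ``short exact sequence of sheaves of abelian groups, split after a CZE pullback'' with ``torsor, CZE-locally trivial'' is the standard dictionary between extensions of sheaves of groups and torsors.

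The main obstacle is the bookkeeping in the first paragraph: one must be careful that ``pushout along $A \to A'$'' really does correspond to the covariant functoriality of $\Ext^1(C,\slot)$ applied to the extension class, and that the cokernel of $A \into A'$ is genuinely finite---both are standard, but the identification of $\Ext^1(C,A)$ as a finite group (using that $C$ is finitely generated) is the one place where the hypothesis on $C$ is essentially used, and it is worth stating cleanly. Everything downstream is a mechanical transport of structure along the limit-preserving functor $\Spec$, modulo invoking the equivalent characterizations of CZE maps from \cite[4.12.1]{G2} and Definition~\ref{defn:CZE}.
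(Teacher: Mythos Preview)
Your overall strategy is sound: reduce to killing the extension class $\xi \in \Ext^1(C,A)$ (which you correctly identify as finite), then transport everything to $\Fans$ via the limit-preserving functor $\Spec$. However, there is a genuine gap in your choice of $A'$. You take $A' = A$ with $A \into A'$ given by multiplication by $n$, where $n$ is the exponent of $\Ext^1(C,A)$. This map is injective only when $A$ has no $n$-torsion, and since $n$ is forced on you by $\xi$ there is no way to arrange this in general. For a concrete failure, take $A = C = \ZZ/2\ZZ$ and $B = \ZZ/4\ZZ$: then $\xi$ has order $2$, and multiplication by $2$ on $A$ is the zero map, not an injection.

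The fix is short. Write $C = C_f \oplus C_t$ with $C_f$ free and $C_t$ finite; since $C_f$ is projective, the surjection $B \to C_f$ splits, so $B = B_t \oplus C_f$ with $B_t$ the preimage of $C_t$ in $B$. Then $A \subseteq B_t$ and $B_t/A \cong C_t$ is finite, so take $A' := B_t$. The pushed-out extension splits because $\Ext^1(C, B_t) = \Ext^1(C_t, B_t)$ (as $C_f$ is free), and the image of $\xi$ there is the image of $\delta(\id_{C_t})$ under $\Ext^1(C_t, A) \to \Ext^1(C_t, B_t)$ in the long exact sequence for $0 \to A \to B_t \to C_t \to 0$, which is zero by exactness. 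Equivalently: your multiplication-by-$n$ argument works verbatim on the free summand of $A$, and the torsion summand is handled by this direct construction. With this correction the remainder of your argument (left exactness from limit preservation, CZE-local surjectivity from the split pullback, the torsor interpretation) goes through as written. The paper itself gives no proof, referring to \cite[4.12.10]{G2}.
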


\begin{proof} See \cite[4.12.10]{G2}. \end{proof}

\subsection{Log schemes}  \label{section:logschemes} We work throughout over a fixed base field $k$, of characteristic zero.  Let $\Sch$ denote the category of $k$-schemes, which we will refer to throughout as ``schemes".  In most of this paper, we will not be terribly concerned with log structures, but it is sometimes helpful to 

\begin{defn} \label{defn:logstructure} A \emph{prelog structure} (resp.\ \emph{log structure}) on $X \in \Sch$ is a map $\alpha : \M_X \to \O_X = (\O_X,\cdot)$ of sheaves of monoids on the \'etale site of $X$ (resp.\ such that $\alpha|\alpha^{-1}(\O_X^*) : \alpha^{-1}(\O_X^*) \to \O_X^*$ is an isomorphism).  A map of log structures or prelog structures is a homomorphism of sheaves of monoids over $\O_X$. \end{defn}

Note that, in the above definition---and throughout this section---$\O_X$ denotes the \emph{\'etale} structure sheaf of $X$, defined by taking an \'etale map $U \to X$ to $\O_U(U)$.

\begin{example} \label{example:triviallogstructure} Take $\M_X := \O_X^*$, $\alpha : \M_X \to \O_X$ the inclusion.  This defines a log structure on $X$ called the \emph{trivial log structure}; it is the initial object in the category of log structures on $X$. \end{example}

The inclusion of log structures into prelog structures has a left adjoint given by taking $\alpha : \M_X \to \O_X$ to the natural map $\alpha^a : \M_X \oplus_{\alpha^{-1} \O_X^*} \O_X^* \to \O_X$ defined via the universal property of the pushout.  The log structure $\alpha^a$ is called the \emph{log structure associated to the prelog structure} $\alpha$.  If there is no chance of confusion (writing $\alpha_X$ instead of $\alpha$ will often alleviate this confusion) one writes $\M_X$ for a log or prelog structure and $\M_X^a$ for the associated log structure.

\begin{example} \label{example:trivialassociatedlogstructure} If $\alpha : \M_X \to \O_X$ is a prelog structure where $\alpha$ factors through $\O_X^* \into \O_X$, then $\alpha^a$ is the trivial log structure (Example~\ref{example:triviallogstructure}).  The easiest way to see this is to just directly check that every map of prelog structures from $\M_X$ to a log structure $\N_X$ factors uniquely through $\M_X \to \O_X^*$.  \end{example}

\begin{defn} \label{defn:pullbacklogstructure} If $f : X \to Y$ is a map of schemes and $\alpha_Y : \M_Y \to \O_Y$ is a prelog structure on $Y$, then the \emph{pullback log structure} $f^* \M_Y$ is the log structure on $X$ associated to the prelog structure given by the composition $$ \xym{ f^{-1} \M_Y \ar[r]^-{ f^{-1} \alpha_Y } & f^{-1} \O_Y \ar[r]^-{f^\sharp} & \O_X. } $$  \end{defn}

Formation of the pullback log structure $f^* \M_Y$ commutes with forming the associated log structure $\M_Y^a$.

\begin{defn} \label{defn:logscheme} A \emph{log scheme} is a scheme $X$ equipped with a log structure $\M_X$.  A map of log schemes $f : (X,\M_X) \to (Y,\M_Y)$ is a map of schemes, also abusively denoted $f : X \to Y$, together with a map $f^\dagger : f^* \M_Y \to \M_X$ of log structures on $X$.  Such a map is called \emph{strict} iff $f^\dagger$ is an isomorphism. \end{defn}

\begin{defn} \label{defn:strictetale} A map of log schemes is called \emph{strict \'etale} (resp.\ a \emph{strict \'etale cover}) iff it is strict and the underlying map of schemes is \'etale (resp.\ and surjective).  \end{defn}

\begin{defn} \label{defn:finelogstructure} Given a monoid $P$ and a monoid homomorphism $P \to \O_X(X)$, we can consider the corresponding map from the associated constant sheaf ``$P$" on the \'etale site of $X$ to the (\'etale) structure sheaf $\O_X$ of $X$.  This defines a prelog structure on $X$ whose associated log structure we denote $(P \to \O_X(X))^a$, or just $P^a$ if there is no chance of confusion.  A log structure on $X$ \'etale locally isomorphic to one of the form $P^a$, with $P$ fine, is called a \emph{fine log structure} on $X$.  A \emph{fine log scheme} is a scheme equipped with a fine log structure.  The category of fine log schemes is denoted $\LogSch$.  Strict \'etale covers define a topology (Definition~\ref{defn:topology}) on $\LogSch$ called the \emph{strict \'etale topology}. \end{defn}

\begin{example} \label{example:XP} Suppose $P$ is a fine monoid, $k[P]$ is the associated monoid algebra over our base field $k$, and $X(P) := \Spec k[P]$ is the associated affine scheme.  For any $U \in \Sch$ we have natural bijections \be \Hom_{\Sch}(U,X(P)) & = & \Hom_{\Alg(k)}(k[P],\O_U(U)) \\ & = & \Hom_{\Mon}(P,\O_U(U)), \ee so $X(P)$ represents the presheaf $U \mapsto \Hom_{\Mon}(P,\O_U(U))$ on $\Sch$.  There is an obvious monoid homomorphism $\alpha_P : P \to k[P] = \O_X(X)$ taking $p \in P$ to its image $[p]$ in the monoid algebra.  We regard $X(P)$ as a fine log scheme by equipping it with the (manifestly fine) log structure $\M_{X(P)} := P^a$ associated to $\alpha_P$.  A common alternative notation for $X(P)$ is $\Spec(P \to k[P])$.  Combining the modular interpretation of $X(P) \in \Sch$ above with the universal property of $P^a$, one sees that $X(P) \in \LogSch$ represents the presheaf \be \LogSch^{\rm op} & \to & \Sets \\ U & \mapsto & \Hom_{\Mon}(P,\M_X(X)). \ee  The construction of $X(P)$ is contravariantly functorial in $P$; we view $X( \slot )$ as a functor \bne{X} X( \slot ) : \Mon^{\rm op} & \to & \LogSch \ene from fine monoids to fine log schemes.  \end{example}

\begin{example} \label{example:XG} Suppose $G$ is a FGA group.  Then the log structure on $X(G)$ is trivial (see Example~\ref{example:trivialassociatedlogstructure}). \end{example}

The category of fine log schemes has finite inverse limits, though they do not generally commute with the forgetful functor $\LogSch \to \Sch$.  The category of fine monoids $\Mon$ has all finite direct limits (though they aren't generally the same is those calculated in the category of all monoids).  The functor \eqref{X} preserves finite inverse limits. 

\subsection{Log differentiable spaces}  \label{section:logdifferentiablespaces}  To carry out our ``differential realization" constructions, we will need some convenient category (with good formal properties) where we can ``do our differential geometry."  We will work in the category $\DS$ of \emph{differentiable spaces} discussed in \cite{G} (see the references there for history and further reading).  For our purposes, we will need only (some of) the following formal properties of $\DS$:  \begin{enumerate} \item \label{fullsubcategory} The category $\DS$ is a full subcategory of the category $\LRS/\RR$ of locally ringed spaces over $\RR$.  \item \label{Rpoint} Every point $x$ of a differentiable space $(X,\O_X)$ is an $\RR$-point, meaning that the composition of the $\LRS / \RR$ structure map $\RR \to \O_{X,x}$ and the projection $\O_{X,x} \to k(x)$ from the local ring $\O_{X,x}$ to its residue field $k(x)$ is an isomorphism.  \item The topological space underlying any differentiable space is locally homeomorphic to a closed subspace of $\RR^n$ (for varying $n$).  \item \label{local} ``Being in $\DS$ is a local property of objects of $\LRS/\RR$."  That is, for $(X,\O_X) \in \LRS / \RR$ and any open cover $\{ U_i \}$ of $X$, we have $(X,\O_X) \in \DS$ iff $(U_i,\O_X|U_i) \in \DS$ for all $i$.  \item \label{containsMan} The category $\DS$ contains the category $\Man$ of smooth manifolds as a full subcategory; the inclusion functor $\Man \into \DS$ preserves finite products.  \item \label{inverselimitsinDS} The category $\DS$ has all finite inverse limits.  Such limits commute with the ``underlying topological space" functor $\DS \to \Top$. \item \label{modularinterpretationofR} The real line $\RR$, with its usual smooth manifold structure is an object of $\DS$ (by \eqref{containsMan}) representing the ``global sections" functor \be \DS^{\rm op} & \to & \Sets \\ X & \mapsto & \O_X(X). \ee \item \label{modularinterpretationofRplus} For $X \in \DS$, $f \in \O_X(X)$, $x \in X$, let $f(x) \in \RR$ be the image of $f$ in the residue field $k(x)$ under the identification $k(x)=\RR$ of \eqref{Rpoint}.  The presheaf \be \DS^{\rm op} & \to & \Sets \\ X & \mapsto & \{ f \in \O_X(X) : f(x) \in \RR_{\geq 0} \}. \ee is representable by a differentiable space, denoted $\RR_{\geq 0}$, and the $\DS$-morphism $\RR_{\geq 0} \to \RR$ deduced from the ``modular interpretation" of $\RR$ in \eqref{modularinterpretationofR} is a closed embedding in $\LRS / \RR$ where the underlying closed embedding of topological spaces is the obvious embedding $\RR_{\geq 0} \into \RR$ suggested by the notation.   \end{enumerate}

The ``modular interpretation" of $\RR_{\geq 0}$ in \eqref{modularinterpretationofRplus} endows $\RR_{\geq 0}$ with the structure of a monoid object in $\DS$.  For any fine monoid $P$, consider the presheaf $Y(P)$ on $\DS$ defined by \be Y(P)(U) & := & \Hom_{\Mon}(P,\RR_{\geq 0}(U)). \ee  There is an obvious natural isomorphism $Y(\NN) = \RR_{\geq 0}$.  Combining this observation with property \eqref{inverselimitsinDS} of $\DS$ and the fact that every fine monoid is finitely presented, we see that $Y(P)$ is representable (by a differentiable space we also abusively denote by $Y(P)$) for every $P \in \Mon$.  This defines a functor \be Y( \slot ) : \Mon^{\rm op} & \to & \DS. \ee  (Compare the analogous construction of $X(P)$ in \S\ref{section:logschemes}.)

\begin{defn} \label{defn:DSlogstructure}  Let $X$ be a differentiable space.  We define a \emph{prelog structure} (resp.\ \emph{log structure}, \emph{fine log structure}) on $X$ exactly as we defined a prelog structure (resp.\ log structure, fine log structure) on a scheme $X$ in Definitions~\ref{defn:logstructure} and \ref{defn:finelogstructure}, except we replace the sheaf of monoids $(\O_X,\cdot)$ used in that case with the sheaf of monoids $\O_{X,\geq 0}$ on our differentiable space $X$ defined by $U \mapsto \RR_{\geq 0}(U)$ (and we work with the Zariski topology on $X$---there is no meaningful notion of the ``\'etale topology" on $\DS$).  We then define a \emph{log differentiable space}, \emph{morphism of log differentiable spaces}, and a \emph{fine log differentiable space} in analogy with Definitions~\ref{defn:logscheme}.  The category of fine log differentiable spaces is denoted $\LogDS$. \end{defn}

\begin{rem} What we call a ``log differentiable space" in the above definition is called a ``positive log differentiable space" in \cite{GM1}. \end{rem}

\begin{example} \label{example:YP}  Let $P$ be a fine monoid.  From the ``modular description" of $Y(P)$ above, we obtain a tautological monoid homomorphism $\alpha_P : P \to \O_{Y(P),\geq 0}(Y(P))$.  We view $Y(P)$ as a fine log differentiable space by endowing it with the fine log structure $\M_{Y(P)} := P^a$ associated to $\alpha_P$.  Using the modular interpretation of the underlying differentiable space $Y(P)$ and the universal property of the associated log structure $P^a$, we see that $Y(P)$ represents the presheaf \be \LogDS^{\rm op} & \to & \Sets \\ U & \mapsto & \Hom_{\Mon}(P,\M_U(U)). \ee  This construction yields a finite-inverse-limit-preserving functor \be Y( \slot ) : \Mon^{\rm op} & \to & \LogDS . \ee   \end{example}

\begin{example} \label{example:YG} Suppose $G$ is a FGA group.  Then we see that the log structure on $Y(G)$ is trivial by using Example~\ref{example:trivialassociatedlogstructure} exactly as in Example~\ref{example:XG}.  Furthermore, if $G$ is finite, then $Y(G)$ is the terminal object of $\LogDS$ (the point $\Spec \RR$ with trivial log structure).  This is a special case of \cite[5.9.3]{GM1}, but let us give an alternative direct argument.  The key claim is that for any differentiable space $X$, the group $\O_{X,>0}(X)$ of positive functions on $X$ is torsion free.  To prove the claim, suppose $f \in \O_{X,>0}(X)$ satisfies $f^m = 1$ for some $m \in \ZZ_{>0}$.  We want to show that $f=1$.  Since any differentiable space $X$ is, essentially by definition, ``Taylor reduced", it suffices to check that $f_x \in \O_{X,x}$ is equal to $1$ in the $\m_x$-adic completion of the local ring $\O_{X,x}$ for each $x \in X$.  Since $f^m=1$, $f(x) \in \RR_{>0}$, and $1$ is the only root of unity in $\RR_{>0}$ we know $f(x)=1$, so $f_x=1+\epsilon$ for some $\epsilon \in \m_x$.  Now $f_x^m=1$ implies that $m \epsilon=0$ modulo $\m_x^2$, so $\epsilon \in \m_x^2$ since $m \in \RR^* \subseteq \O_{X,x}^*$.  Repeatedly applying this same argument shows that $\epsilon \in \m_x^n$ for every $n \in \ZZ_{>0}$, so $f=1$ in the $\m_x$-adic completion of $\O_{X,x}$, as desired.

Now we use the ``modular interpretation" of $Y(G)$ above to see that, for any $U \in \LogDS$, the set \be \Hom_{\LogDS}(U,Y(G)) & = & \Hom_{\Mon}(G,\M_U(U)) \\ & = & \Hom_{\Ab}(G,\M_U^*(U)) \\ & = & \Hom_{\Ab}(G,\O_{X,>0}(X)) \ee is punctual because the group $\O_{X,>0}(X)$ is torsion-free and the finite group $G$ is a torsion group.  \end{example}

\subsection{Realization functors} \label{section:realizationofmonoids}  The (contravariant) association of various geometric objects (topological spaces, schemes, \dots) to monoids lies at the heart of toric geometry.  In \cite[\S4]{GM1}, we formulated an axiomatic setup for constructing functors \be \AA : \Mon^{\rm op} & \to & \Esp \\ \AA : \Fans & \to & \Esp \ee out of the category of monoids $\Mon$ (or fine monoids, finitely generated monoids, etc.), or its ``generalization," the category $\Fans$ of fans (\S\ref{section:fans}).  These functors take values in various categories ``$\Esp$" of ``geometric objects" (``spaces").  In \cite[Definition~4.0.7]{GM1} we defined a \emph{category of spaces} to be a pair $(\Esp,\AA^1)$, where $\Esp$ is a category equipped with an ``underlying space functor" $\Esp \to \Top$, often written $X \mapsto \u{X}$, and a monoid object $\AA^1$ satisfying some reasonable properties that will almost certainly hold in every example the reader can imagine.  In particular, the categories $\Fans$, $\LogSch$, and $\LogDS$ of \S\ref{section:fans}, \S\ref{section:logschemes}, and \S\ref{section:logdifferentiablespaces}, equipped with their usual ``underlying space" functors and the ``obvious" monoid objects $\Spec \NN$, $\Spec( \NN \to k[\NN])$, and $Y(\NN)$ are categories of spaces.  

For any ``category of spaces" $(\Esp,\AA^1)$, the general nonsense of \cite[\S4]{GM1} yields a finite-inverse-limit-preserving functor \bne{monoidrealization} \AA : \Mon^{\rm op} & \to & \Esp \ene taking $\NN$ to $\AA^1$.  It follows from the fact that \eqref{monoidrealization} preserves finite inverse limits (and the fact that any fine monoid is finitely presented) that, for any finitely generated monoid $P$, the space $\AA(P)$ represents the presheaf \be \Esp^{\rm op} & \to & \Sets \\ U & \mapsto & \Hom_{\Mon}(P,\AA^1(U)). \ee  One thus sees that there is an \emph{essentially unique} such functor.  By axiomatizing the idea of ``gluing together spaces along open subspaces," we show that \eqref{monoidrealization} extends to a finite-inverse-limit-preserving functor \bne{AA} \AA : \Fans & \to & \Esp, \ene also essentially unique in an appropriate sense (as a ``morphism of categories of spaces").  See \cite[\S4]{GM1} for details.

Since \eqref{monoidrealization} preserves finite inverse limits, it takes group objects to group objects.  Every abelian group $A$ can be regarded as an abelian group object in $\Mon^{\rm op}$ (functorially in $A$), thus we obtain a functor \bne{GG} \GG( \slot ) : \Ab^{\rm op} & \to & \Ab(\Esp) \ene from FGA groups to abelian group objects in spaces.  For example, when $\Esp = \Sch$ is the category of (log) $k$-schemes, $\GG(A) = \Spec k[A]$ is the usual diagonalizable group scheme attached to $A$ (with the trivial log structure when $\Esp=\LogSch$).  The group spaces $\GG(A)$ will play an important role in this paper since we often consider stacks obtained as quotients by actions of such groups.

One of our axioms for the underlying space functor $\Esp \to \Top$ is that ``open subspaces are representable."  Here is what this means:  Given $X \in \Esp$ and an open subset $\u{U} \subseteq \u{X}$ of its underlying topological space, we can consider the subpresheaf $U$ of $X$ defined by letting $U(T)$ be the set of $\Esp$-morphisms $f : T \to Y$ such that the underlying map of topological space $\u{f} : \u{T} \to \u{Y}$ factors through $\u{U}$.  We assume that this $U$ is representable and that the map of topological spaces underlying the $\Esp$ morphism $U \to X$ ``is" the inclusion $\u{U} \into \u{X}$, as the notation would suggest.  We can an $\Esp$-morphism $V \to Y$ isomorphic to a map $U \to X$ constructed above an \emph{open embedding}.  One checks readily that open embeddings are stable under base change (we assume $\Esp$ has all finite inverse limits).  Using the notion of ``open embedding" one can then define a ``Zariski cover," and a ``Zariski topology," in the usual way.

In order to form ``realizations" of KM fans, we will generally need to use not only our category $\Esp$ of ``spaces" but also the ``categories" of sheaves and stacks over $\Esp$.  To make sense of this, we need to assume that $\Esp$ is equipped with a topology $\tau$, which is \emph{admissible} in the following sense:

\begin{defn} \label{defn:admissible} Let $(\Esp,\AA^1)$ be a category of spaces.  A topology $\tau$ on $\Esp$ is called \emph{admissible} iff it is\begin{enumerate} \item \label{admissible1} \emph{subcanonical}, \item \label{admissible2} \emph{compatible with the CZE topology}, and \item \label{admissible3} \emph{compatible with the structure of} $\Esp$ \emph{as a category of spaces}. \end{enumerate} \end{defn}

These conditions mean, respectively, that \begin{enumerate} \item every $\tau$-cover is a universal effective descent morphism (canonical cover), \item the realization \eqref{AA} takes CZE covers to $\tau$-covers, and \item every ``Zariski cover" is a $\tau$-cover. \end{enumerate}  

\begin{prop} \label{prop:tautopology}  The CZE topology on the category $\Fans$ of fine fans, the strict \'etale topology on the category $\LogSch$ of fine log schemes, and the Zariski topology on the category $\LogDS$ of fine log differentiable spaces are admissible.  \end{prop}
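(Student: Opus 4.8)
The plan is to verify, for each of the three topologies, the three requirements of Definition~\ref{defn:admissible}: subcanonicality, compatibility with the CZE topology, and compatibility with the structure of the ambient category of spaces. The first and third are formal consequences of standard descent theory together with Proposition~\ref{prop:CZEimpliesZariski}, so most of the work lies in the second requirement, and it is there that the hypothesis $\operatorname{char} k = 0$ (in the $\LogSch$ case) and the benign behavior of positive roots in differential geometry (in the $\LogDS$ case) enter in an essential way.

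I would dispose of the easy requirements first. ``Compatibility with the space structure'' is immediate: for $\LogDS$ there is nothing to check; for $\Fans$, an open embedding of fine fans has identity stalk maps, and an identity map of fine monoids is CZE (take $Q = P$ in Definition~\ref{defn:CZE}), so every Zariski cover is a surjective CZE map, hence a CZE cover; for $\LogSch$, an open immersion of schemes is étale and a Zariski open embedding of fine log schemes is manifestly strict, so every Zariski cover is a strict étale cover. ``Compatibility with the CZE topology'' is likewise trivial for $\Fans$, where the relevant realization $\AA\colon\Fans\to\Fans$ is the identity functor.

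The substance is ``compatibility with the CZE topology'' for $\LogSch$ and $\LogDS$. Given a CZE cover $f$ of fine fans, Proposition~\ref{prop:CZEimpliesZariski} --- together with the fact that the realizations $X(\slot),Y(\slot)$ preserve open embeddings and take Zariski covers to Zariski covers (they are morphisms of categories of spaces, \cite{GM1}) --- lets me reduce to $f=\Spec h$ for a CZE map of fine monoids $h\colon Q\to P$, and I must show $X(h)$ is a strict étale cover and $Y(h)$ a Zariski cover. Since the defining square of a CZE map is a pushout, $P = Q\oplus_{Q^*}P^*$, and as $X(\slot),Y(\slot)$ preserve finite inverse limits, $X(h)$ is the base change of $X(P^*)\to X(Q^*)$ along $X(Q)\to X(Q^*)$, and likewise for $Y$. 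Now $X(P^*),X(Q^*)$ carry trivial log structures (Example~\ref{example:XG}), so $X(P^*)\to X(Q^*)$ is automatically strict; on underlying schemes it is $\Spec k[P^*]\to\Spec k[Q^*]$, which is finite and free (a transversal of $C:=P^*/Q^*$ is a basis) and, $C$ having order invertible in $k$, also unramified --- equivalently, a torsor under the finite étale group scheme $\GG(C)=\Spec k[C]$, in analogy with Proposition~\ref{prop:CZEtorsors}. So $X(P^*)\to X(Q^*)$, hence its base change $X(h)$, is a strict finite étale cover. For $Y$ the key fact is that the group $\O_{U,>0}(U)$ of positive functions on any differentiable space $U$ is torsion-free (the argument in Example~\ref{example:YG}) and uniquely divisible (raising to the $n$-th power is an automorphism of the open subspace $\RR_{>0}\subset\RR_{\geq 0}$); as $C$ is finite, the restriction $\Hom(P^*,\O_{U,>0}(U))\to\Hom(Q^*,\O_{U,>0}(U))$ is then bijective for every $U$, so $Y(P^*)\to Y(Q^*)$, hence its base change $Y(h)$, is an isomorphism of log differentiable spaces, in particular a Zariski cover. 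Passing back to a general CZE cover $f$: cover the source by affines $\Spec P$ on which $f$ restricts to $\Spec h$ with image an affine $\Spec Q\subseteq Y$; the $X(\Spec P)$ cover $X(X)$, each maps onto $X(Q)$ by the surjectivity just established, and the $X(Q)$ cover $X(Y)$ since $f$ is surjective, so $X(f)$ is a strict étale cover --- and the same reasoning yields the $Y$ statement.

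It remains to check subcanonicality. For $\LogDS$ with the Zariski topology this is the statement that morphisms and objects of $\LRS/\RR$ glue along open covers, which is part of the notion of a category of spaces (\cite{GM1}). For $\LogSch$ with the strict étale topology it follows from faithfully flat (here, étale) descent for schemes \cite{SGA3}, together with the remark that along a \emph{strict} cover a fine log structure descends --- its underlying sheaf of monoids and structure map to $\O$ descending by étale descent of sheaves. For $\Fans$ with the CZE topology, a CZE cover is a surjective local homeomorphism on underlying spaces (Proposition~\ref{prop:CZEimpliesZariski}), hence an open cover; morphisms of locally monoidal spaces are local in nature and so glue along such covers (the descent datum being handled by the equalizer), and fine fans themselves glue because ``being a fine fan'' is a local condition on $\LMS$ --- all of this is worked out in \cite{G2}. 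The only real friction I anticipate is the bookkeeping in the second requirement: checking that ``strict étale cover'' and ``Zariski cover'' can be tested Zariski-locally on the target, and that surjectivity of a CZE cover is inherited by its realization (which, as above, comes down to faithful flatness of $k[Q^*]\to k[P^*]$, respectively to $Y(h)$ being an isomorphism); the descent statements behind subcanonicality are routine given the cited references.
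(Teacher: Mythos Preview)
Your proposal is correct and follows essentially the same route as the paper: both reduce the CZE-compatibility check (via Proposition~\ref{prop:CZEimpliesZariski} and the pushout description of CZE monoid maps) to the map on unit groups, then observe that the log structures there are trivial and that the underlying map of spaces is \'etale (resp.\ an isomorphism). The only notable difference is that where the paper cites \cite[4.12.9]{G2} and \cite[5.9.3]{GM1} for the \'etaleness of $X(P^*)\to X(Q^*)$ and the isomorphy of $Y(P^*)\to Y(Q^*)$, you supply direct arguments (finite free plus $|C|$ invertible in $k$; torsion-freeness and unique divisibility of $\O_{U,>0}(U)$), which is a perfectly good and slightly more self-contained way to close those steps.
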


\begin{proof}  The CZE topology is subcanonical by \cite[4.12.4]{G2}.  The fact that the strict \'etale topology is subcanonical follows from the fact that the \'etale topology on $\Sch$ is subcanonical (since even the fppf topology is subcanonical by \cite[VIII.5.1]{SGA1}) and the fact that log structures are defined in terms of sheaves in the \'etale topology, hence one certainly has \'etale descent for such sheaves.  In checking through the details, one uses the fact that if $f : X \to Y$ is strict \'etale, then $f^{-1}\M_Y \to \M_X$ is an isomorphism of \'etale sheaves of monoids on $X$ over the \'etale structure sheaf $\O_X$.  This is because $f^{-1} \O_Y \to \O_X$ is an isomorphism of \'etale sheaves (these are the \emph{\'etale} structure sheaves and $f$ is \'etale) so $f^* \M_Y = f^{-1} \M_Y$ and $f^* \M_Y \to \M_X$ is an isomorphism since $f$ is strict (Definition~\ref{defn:logscheme}).   The Zariski topology (=``strict Zariski topology") is subcanonical by the same reasoning since the Zariksi topology is subcanonical on the category $\DS$ of differentiable spaces.  (It is even subcanonical on the category $\LRS / \RR$ of locally ringed spaces over $\RR$, which contains $\DS$ as a full subcategory.)

Algebraic realization takes CZE covers to \'etale covers by \cite[4.12.9]{G2}.  That reference doesn't mention log structures, so we also need to check that the algebraic realization of a CZE map of fine fans is strict.  This is a local question, so by Proposition~\ref{prop:CZEimpliesZariski} we reduce to showing that the algebraic realization of a CZE map of fine monoids is strict.  Since algebraic realization preserves inverse limits, strict maps are stable under base change, and a CZE map of fine monoids is, by definition, a pushout of an injection $A \to B$ of FGA groups with finite cokernel, we just need to show that the algebraic realization of $A \to B$ is strict.  But this is automatic because the log structures on the algebraic realizations of the FGA groups $A$ and $B$ are trivial (Example~\ref{example:XG}).

To see that differential realization takes CZE covers to Zariski covers of log differentiable spaces, we reduce as in the previous paragraph to showing that the differential realization of an injection $A \to B$ of FGA groups with finite cokernel is an isomorphism of log differentiable spaces.  As in the previous paragraph, the log structures on the realizations of $A$ and $B$ are trivial (Example~\ref{example:YG}), so we just need to check that $Y(B) \to Y(A)$ is a diffeomorphism, which is \cite[5.9.3]{GM1}.

It is clear that a Zariski cover is a cover for any of the topologies in question.  \end{proof}

The following proposition will be used frequently throughout the paper without further comment:

\begin{prop} \label{prop:tautorsors} Suppose \bne{SequenceA} & 0 \to A \to B \to C \to 0, \ene is an exact sequence of FGA groups and $(\Esp,\AA^1)$ is a category of spaces with an admissible topology $\tau$.  Then there exists an injective map of FGA groups $A \into A'$ with finite cokernel such that the exact sequence \bne{SequenceB} & 0 \to A' \to B' \to C \to 0 \ene obtained by pushing out \eqref{SequenceA} along $A \to A'$ splits.  It follows that the sequence \bne{SequenceC} & 0 \to \GG(C) \to \GG(B) \to \GG(A) \to 0 \ene obtained by applying \eqref{GG} to \eqref{SequenceA} is a short exact sequence of (representable) sheaves of abelian groups on $\Esp$ which can be split after pulling back along a $\tau$-cover $\GG(A') \to \GG(A)$.  Equivalently, the induced action of $\GG(C)$ on $\GG(B)$ makes $\GG(B) \to \GG(A)$ a $\GG(C)$ torsor, locally trivial in the $\tau$ topology.  \end{prop}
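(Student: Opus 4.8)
The plan is to reduce the statement to its purely group-theoretic first assertion --- which is identical to the first assertion of Proposition~\ref{prop:CZEtorsors}, hence supplied by \cite[4.12.10]{G2} --- together with the formal properties of the realization \eqref{AA} and the admissibility of $\tau$. Accordingly, I would begin by using \cite[4.12.10]{G2} to fix an injection $A \into A'$ of FGA groups with finite cokernel for which the pushout \eqref{SequenceB} of \eqref{SequenceA} along $A \to A'$ splits; thus $B' = B \oplus_A A' \cong A' \oplus C$. The key feature of this $A'$ is that $A \to A'$ is a \emph{CZE map of fine monoids}: an FGA group is a fine monoid equal to its own group of units, so the square of Definition~\ref{defn:CZE}, instantiated at $A \to A'$, has $A \to A'$ as both rows and identities as its columns --- trivially a pushout --- while $A \to A'$ is injective with finite cokernel by construction. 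By Lemma~\ref{lem:CZEmaps}, $\Spec(A \to A')$ is then a CZE cover of fine fans; and since for an FGA group $X$ the space $\GG(X)$ is the realization $\AA(\Spec X)$ of the affine fan $\Spec X$, applying \eqref{AA} --- which sends CZE covers to $\tau$-covers, $\tau$ being admissible (Definition~\ref{defn:admissible}) --- shows that the structure map $p : \GG(A') \to \GG(A)$ is a $\tau$-cover.

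It then remains to transport the algebraic pushout and the splitting through $\GG(\slot)$, and for this I would use only representability. For $U \in \Esp$ put $M := \AA^1(U)$; a monoid homomorphism out of a group lands in the units, so $\GG(X)(U) = \Hom_{\Mon}(X,M) = \Hom_{\Ab}(X,M^*)$ for every FGA group $X$, and $\Hom_{\Ab}(\slot,M^*)$ carries finite colimits of FGA groups to finite limits of sets. Applied to $B' = B \oplus_A A'$ this gives a natural identification $\GG(B') = \GG(B) \times_{\GG(A)} \GG(A')$ --- the fibre product formed with $p$ and with $\GG$ of the inclusion $A \into B$, the projection $\GG(B') \to \GG(A')$ being $\GG$ of the inclusion $A' \into B'$. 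Applied to the right-exact sequence $A \to B \to C \to 0$, the same principle identifies $\GG(C)$ with $\Ker(\GG(B) \to \GG(A))$, so $0 \to \GG(C) \to \GG(B) \to \GG(A)$ is exact. Using the splitting $B' \cong A' \oplus C$, compatible with $A' \into B'$ and $B' \twoheadrightarrow C$, one gets $\GG(B') \cong \GG(A') \times \GG(C)$, under which $\GG(B') \to \GG(A')$ becomes the first projection --- the trivial $\GG(C)$-torsor. Thus the base change of $\GG(B) \to \GG(A)$ along the $\tau$-cover $p$ is exactly this trivial torsor over $\GG(A')$; in particular $\GG(B) \to \GG(A)$ is an epimorphism of sheaves, $0 \to \GG(C) \to \GG(B) \to \GG(A) \to 0$ is a short exact sequence of sheaves of abelian groups on $\Esp$, and any such sequence exhibits $\GG(B) \to \GG(A)$ as a $\GG(C)$-torsor --- here one trivialized by $p$, hence $\tau$-locally trivial.

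I expect the only ingredient with genuine content to be the first assertion, imported from \cite{G2}; the rest is bookkeeping. The point needing the most care is checking that $\GG(\slot)$ converts the algebraic pushout square into the fibre-product square \emph{with the correct structure maps}, so that its base change along $p$ is literally the trivial torsor $\GG(A') \times \GG(C) \to \GG(A')$, and that the action of $\GG(C)$ on $\GG(B)$ from \eqref{SequenceC} --- translation by the subsheaf $\GG(C) = \Ker(\GG(B) \to \GG(A)) \into \GG(B)$ via the group law on $\GG(B)$ --- is the one that becomes translation on the $\GG(C)$-factor after base change along $p$; both are routine unwindings of the relevant universal properties. An alternative route I would keep in reserve is to push the CZE-locally-trivial torsor of Proposition~\ref{prop:CZEtorsors} forward along $\AA : \Fans \to \Esp$, using that this functor preserves finite inverse limits and sends CZE covers to $\tau$-covers.
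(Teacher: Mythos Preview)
Your proof is correct and follows essentially the same route as the paper, which simply says the result ``follows immediately from Proposition~\ref{prop:CZEtorsors} and the definition of an admissible topology''; you have unpacked precisely the details underlying that one-liner (and even named the paper's compressed argument as your ``alternative route'' at the end). The only difference is that you verify directly, via representability, that $\GG(\slot)$ converts the pushout and splitting into the required fibre product and trivialization, whereas the paper leaves this implicit in its appeal to Proposition~\ref{prop:CZEtorsors}.
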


\begin{proof} This follows immediately from Proposition~\ref{prop:CZEtorsors} and the definition of an admissible topology (Definition~\ref{defn:admissible}). \end{proof}

Assumption \eqref{admissible2} in Definition~\ref{defn:admissible} ensures that for an admissible topology $\tau$, the functor \eqref{AA} defines a morphism of sites\footnote{Our convention concerning the ``variance" of this map follows [SGA3].  The various conventions are set up so that the functors \be \Top & \to & \Sites \\ X & \mapsto & \{ {\rm open \; subsets \; of \;} X \} \ee and \be \Sites & \to & \Topoi \\ X & \mapsto & \Sh(X) \ee are covariant.} $\Esp \to \Fans$ and hence a morphism of topoi \bne{generaltopoimap} \AA = (\AA^{-1},\AA_*) : \Sh( \Esp ) & \to & \Sh( \Fans ). \ene  For now, we will be concerned mainly with the ``inverse image" part $\AA^{-1}$ of \eqref{generaltopoimap}, which we will simply denote \bne{topoimap} \AA : \Sh( \Fans ) & \to & \Sh ( \Esp ). \ene  By ``categorifying" the construction of this inverse image map, one similarly obtains a map of $2$-categories \bne{stackmap} \AA : \St( \Esp ) & \to & \St( \Fans ) \ene where $\St( \C )$ denotes the $2$-category of stacks over a site $\C$.  The various realization functors \eqref{AA}, \eqref{topoimap}, \eqref{stackmap} fit into a $2$-commutative diagram \bne{realizationfunctors} & \xym{ \Fans  \ar[r]^-{\AA} \ar[d] & \Esp \ar[d] \\ \Sh( \Fans ) \ar[r]^-{\AA} \ar[d] & \Sh( \Esp ) \ar[d] \\ \St( \Fans ) \ar[r]^-{\AA} & \St( \Esp ) } \ene where the top vertical arrows are the (fully faithful, inverse-limit-preserving) Yoneda embeddings and the bottom vertical arrows are similar embeddings obtained by viewing a presheaf on a category $\C$ as a category fibered in \emph{sets} (hence also in groupoids) over $\C$.  The abuse of notation in denoting all three horizontal arrows by ``$\AA$" is hence relatively harmless.

\section{KM Fans} \label{section:KMfans}

\subsection{Cones and fans} \label{section:conesandfans} Here we recall, for the convenience of the reader, some basic notions of toric geometry.

\begin{defn} \label{defn:lattice} A \emph{lattice} is a group isomorphic to $\ZZ^n$ for some $n \in \NN$ (equivalently, a torsion-free FGA group). \end{defn}

\begin{notation} \label{notation:MN} Throughout the paper $N$, $N'$, $N''$ will be FGA groups and $L$, $L'$, $L''$ will be lattices.  We set $N_{\RR} := N \otimes \RR$.  For any FGA group $N$, the vector spaces $N^\lor_{\RR}$ and $N_{\RR}$ are canonically dual; we denote the canonical pairing between them by $\langle \slot , \slot  \rangle : N^\lor_\RR \times N_\RR  \to  \RR$. \end{notation}

\begin{defn} \label{defn:cone} A \emph{cone} in $N_{\RR}$ \emph{(with respect to $N$)} is a non-empty subset $\sigma \subseteq N_{\RR}$ such that \bne{generators} \sigma & = & \{ \lambda_1 n_1 + \cdots + \lambda_k n_k : \lambda_1, \dots, \lambda_k \in \RR_{\geq 0} \} \ene for some $n_1, \dots, n_k \in N$.\footnote{A cone in this sense is a \emph{rational convex polyhedral cone} in the sense of Fulton \cite[Pages 9, 12]{F}.}  Here, and elsewhere, we suppress notion for the map $N \to N_{\RR}$.  If $\sigma$ is a cone and $n_1,\dots,n_k \in N$ satisfy \eqref{generators}, we say that $n_1,\dots,n_k$ are \emph{generators} for $\sigma$.  Evidently, a cone is a monoid under addition.  A cone $\sigma$ is called \emph{sharp} iff $\sigma \cap - \sigma = \{ 0 \}$ (equivalently, $\sigma$ contains no linear subspace of $N_{\RR}$ of positive dimension).\footnote{Our \emph{sharp cones} are Fulton's \emph{rational strictly convex polyhedral cones} \cite[Page 14]{F}.}   We refer to \be \dim \sigma & := & \dim \Span \sigma. \ee as the \emph{dimension} of the cone $\sigma$  

If $\sigma,\tau$ are cones in $N_{\RR}$ with $\tau \subseteq \sigma$, then we say that $\tau$ is a \emph{subcone} of $\sigma$.  We say that a subcone $\tau \subseteq \sigma$ is a \emph{face} of $\sigma$ (notation: $\tau \leq \sigma$) iff $\tau$ is a face of the monoid $\sigma$ in the sense of Definition~\ref{defn:face}.\footnote{This notion of ``face of a cone" is the same as Fulton's \cite[Page 9]{F}.  Every ``face" of the monoid $\sigma$, in the sense of Definition~\ref{defn:face}, is a subcone of $\sigma$.} 

For a subset $\sigma \subseteq N_{\RR}$, we call \be \sigma^\lor & := & \{ m \in M_{\RR} : \langle m, n \rangle \geq 0 {\rm \; for \; all \;} n \in \sigma \} \\ \sigma^\perp & := & \{ m \in M_{\RR} : \langle m, n \rangle = 0 {\rm \; for \; all \;} n \in \Sigma \} \ee the \emph{dual} and \emph{orthogonal complement} of $\sigma$, respectively.  \end{defn} 

\begin{rem} \label{rem:cones} Suppose $N \to N'$ is a map of FGA groups such that the induced map $N \otimes \QQ \to N' \otimes \QQ$ is an isomorphism (equivalently $N \otimes \RR \to N' \otimes \RR$ is an isomorphism).  Then a subset $\sigma \subseteq N_{\RR}=N'_{\RR}$ is a cone with respect to $N$ iff it is a cone with respect to $N'$.  In particular, it is harmless to assume that $N$ is torsion-free when discussing cones---on the other hand, there is no harm in allowing $N$ to have torsion; this general setup will be convenient later. \end{rem}

\begin{lem} \label{lem:keyfact} For any cone $\sigma \subseteq N_{\RR}$ (with respect to $N$), the dual $\sigma^\lor \subseteq N^\lor_{\RR}$ is a cone (with respect to $N^\lor$) and $\sigma = \sigma^{\lor \lor}$. \end{lem}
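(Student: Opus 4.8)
The statement is the classical biduality theorem for rational polyhedral cones, and I would prove it in three steps. First, a reduction: by Remark~\ref{rem:cones}, replacing $N$ by its quotient modulo torsion changes neither $N_\RR$, nor $N^\lor=\Hom(N,\ZZ)$, nor the notion of a cone, so we may assume $N$ is a lattice. Put $M:=N^\lor$, which is then also a lattice; the canonical pairing identifies $M_\RR$ with the dual vector space of $N_\RR$ and conversely, so $M^\lor=N$ canonically. Fix generators $n_1,\dots,n_k\in N$ of $\sigma$. The inclusion $\sigma\subseteq\sigma^{\lor\lor}$ is formal: for $n\in\sigma$ and any $m\in\sigma^\lor$ we have $\langle m,n\rangle\geq0$, which is exactly the condition defining membership in $\sigma^{\lor\lor}$. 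The two remaining tasks---that $\sigma^\lor$ is a cone with respect to $M$, and that $\sigma^{\lor\lor}\subseteq\sigma$---are logically independent, and I would carry them out in turn.

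For the first, note that
\[ \sigma^\lor=\{\,m\in M_\RR:\langle m,n_i\rangle\geq0\text{ for }i=1,\dots,k\,\} \]
is a subset of $M_\RR$ cut out by finitely many homogeneous linear inequalities whose ``coefficient vectors'' $n_i$ lie in the lattice $M^\lor$. I would deduce that it is finitely generated over $M$ from the following standard half of the Minkowski--Weyl theorem: for a lattice $M$ and finitely many $u_1,\dots,u_k\in M^\lor$, the set $\{\,m\in M_\RR:\langle u_j,m\rangle\geq0\text{ for all }j\,\}$ is generated as a cone by finitely many elements of $M$. This is proved by induction on $k$ via Fourier--Motzkin elimination: for $k=0$ the set is all of $M_\RR$, generated by $\pm$ a basis of $M$; and in the inductive step, if $\tau$ is generated by $\ell_1,\dots,\ell_p\in M$, then $\tau\cap\{\,m:\langle u_{k+1},m\rangle\geq0\,\}$ is generated by those $\ell_a$ with $\langle u_{k+1},\ell_a\rangle\geq0$ together with the finitely many vectors $\langle u_{k+1},\ell_a\rangle\,\ell_b-\langle u_{k+1},\ell_b\rangle\,\ell_a$ coming from pairs with $\langle u_{k+1},\ell_a\rangle>0>\langle u_{k+1},\ell_b\rangle$ (these lie in $M$ because $\langle u_{k+1},\ell_a\rangle$ and $\langle u_{k+1},\ell_b\rangle$ are integers). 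Applying this with $M=N^\lor$ and $u_j=n_j$ shows $\sigma^\lor$ is a cone with respect to $N^\lor$. (Alternatively one may simply cite \cite{F}, \cite{O}, or \cite{CLS} for this classical fact.)

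For the inclusion $\sigma^{\lor\lor}\subseteq\sigma$, I would first observe that $\sigma$ is closed in $N_\RR$: by Carath\'eodory's theorem for cones, every point of $\sigma$ is a nonnegative combination of a linearly independent subset of $\{n_1,\dots,n_k\}$, so $\sigma$ is the (finite) union of the cones spanned by such subsets, each of which is closed (being the image of a closed positive orthant under an injective linear map). Now let $n_0\in N_\RR\setminus\sigma$. Since $\sigma$ is a nonempty closed convex set not containing $n_0$, the separating hyperplane theorem produces $m_0\in M_\RR=\Hom(N_\RR,\RR)$ and $c\in\RR$ with $\langle m_0,n\rangle\geq c>\langle m_0,n_0\rangle$ for all $n\in\sigma$. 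Since $\sigma$ is a cone, the bound $\langle m_0,n\rangle\geq c$ for all $n\in\sigma$ forces $\langle m_0,n\rangle\geq0$ for all $n\in\sigma$ (if some $n\in\sigma$ had $\langle m_0,n\rangle<0$, then $\langle m_0,\lambda n\rangle\to-\infty$ as $\lambda\to\infty$, contradicting the bound); thus $m_0\in\sigma^\lor$, while $\langle m_0,n_0\rangle<c\leq0$ (the latter since $0\in\sigma$), so $n_0\notin\sigma^{\lor\lor}$. Hence $\sigma^{\lor\lor}\subseteq\sigma$, completing the proof.

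The only genuinely nontrivial ingredient here is the Minkowski--Weyl direction used to finitely generate $\sigma^\lor$; the separating-hyperplane step is routine convex analysis and the inclusion $\sigma\subseteq\sigma^{\lor\lor}$ is purely formal. So I expect the main obstacle---though still entirely elementary---to be the bookkeeping in the Fourier--Motzkin induction, i.e.\ checking that the explicitly listed vectors really do generate the intersection with the new half-space.
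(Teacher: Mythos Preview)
Your proof is correct and follows the standard convex-geometry route (Fourier--Motzkin for finite generation of $\sigma^\lor$, Carath\'eodory plus the separating hyperplane theorem for biduality). The paper does not actually prove this lemma: it simply cites \cite[Page~12]{F} and the references therein, which is precisely the classical argument you have spelled out. So your approach is not different from the paper's---it is the content behind the paper's citation, written out in full.
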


\begin{proof}  See \cite[Page 12]{F} and the references listed in \cite[Page 132, (5)]{F}. \end{proof}

\begin{defn} \label{defn:Ssigma} For a cone $\sigma \subseteq N_{\RR}$ we write $S_{\sigma}(N) := \sigma^\lor \cap N^\lor$ for the \emph{monoid of integral points in the dual cone of} $\sigma$.  By Lemma~\ref{lem:keyfact}, the monoid $S_{\sigma}(N)$ is finitely generated.  If there is no chance of confusion (there often will be) we will write $S_{\sigma}$ instead of $S_\sigma(N)$.  \end{defn}

\begin{rem} \label{rem:Ssigma} It is a basic fact of toric geometry that, when $\sigma$ is sharp, $S_\sigma$ generates $N^\lor$ as a group---i.e.\ $S_{\sigma}^{\rm gp} = N^\lor$.  It is clear from the definitions that the group of units in the monoid $S_\sigma$ is $S_\sigma^* = \sigma^\perp \cap N^\lor$. \end{rem}

\begin{notation} \label{notation:abusive}  For subsets $A \subseteq N$, $\sigma \subseteq N_{\RR}$, we often write $\sigma \cap A$ as abusive notation for the set of $a \in A$ mapping into $\sigma$ under the natural map $A \to N_{\RR}$.  (Usually $A \subseteq N$ will be a subgroup and $\sigma$ will be a cone in $N_{\RR}$.) \end{notation}

\begin{defn} \label{defn:Nsigma}  For a cone $\sigma \subseteq N_{\RR}$, we set $N_{\sigma} := \Span \sigma \cap N$ (using Notation~\ref{notation:abusive}). \end{defn}

\begin{rem} \label{rem:Nsigma} Because of the way $N_{\sigma}$ is defined, the inclusion $N_{\sigma} \subseteq N$ is \emph{saturated}, meaning the quotient $N/N_{\sigma}$ is free (i.e.\ is a lattice). \end{rem}

\begin{defn} \label{defn:fan} A \emph{fan} $F$ \emph{in} $N$ is a finite, non-empty set of sharp cones in $N_{\RR}$ such that: \begin{enumerate}[label=(\roman*), ref=\roman*] \item $\tau \in F$ for any $\sigma \in F$ and any $\tau \leq \sigma$ and \item $\sigma \cap \tau \leq \sigma$ for any $\sigma, \tau \in F$. \end{enumerate}  \end{defn}  

\begin{defn} \label{defn:nondegenerate} A fan $F$ in $N$ is \emph{atoroidal} (or \emph{has no torus factors}) iff the natural map of $\RR$ vector spaces \bne{nondegeneratevectorspacemap} \bigoplus_{\sigma \in F} \Span \sigma & \to & N_{\RR} \ene is surjective, or, equivalently, the natural map of groups \bne{nondegeneratemap} \bigoplus_{\sigma \in F} N_\sigma & \to & N \ene has finite cokernel; otherwise we say that $F$ \emph{has torus factors}.  A fan $F$ is called \emph{nondegenerate} iff $\dim \sigma = \dim N_{\RR}$ for every maximal cone $\sigma \in F$.  \end{defn}

\subsection{Lattice data and KM fans} \label{section:definitions}  Finally we come to the main objects of study.  Throughout this section, $N$ denotes an arbitrary FGA group.

\begin{defn} \label{defn:latticedatum} Let $\sigma$ a cone in $N_{\RR}$ with respect to $N$ (Definition~\ref{defn:cone}).  A \emph{lattice datum} for $\sigma$ (with respect to $N$) is a lattice $F_{\sigma} \subseteq N_{\sigma}$ (Definition~\ref{defn:Nsigma}) such that $F(\sigma) := N_{\sigma} / F_{\sigma}$ is finite.  A \emph{lifting} of a such a lattice datum is a lattice $L \subseteq N$ such that $N/L$ is finite and $F_\sigma = \Span \sigma \cap L$. \end{defn}

\begin{lem} \label{lem:liftingsexist} For $N$, $\sigma$ as in the above definition, every lattice datum $F_\sigma$ for $\sigma$ has at least one lifting $L$.  Furthermore, one can choose $L$ so that the map $N/F_\sigma \to N/L$ is an isomorphism on torsion subgroups. \end{lem}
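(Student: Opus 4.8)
The plan is to exploit the fact, recorded in Remark~\ref{rem:Nsigma}, that $N_\sigma := \Span \sigma \cap N$ is \emph{saturated} in $N$, so that $N/N_\sigma$ is a lattice and the surjection $N \to N/N_\sigma$ admits a section $s : N/N_\sigma \to N$. Fixing such an $s$ gives an internal direct sum decomposition $N = N_\sigma \oplus s(N/N_\sigma)$, and I would simply propose as the lifting
$$ L := F_\sigma \oplus s(N/N_\sigma) \subseteq N. $$

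Next I would verify the three defining properties of a lifting (Definition~\ref{defn:latticedatum}) for this $L$. First, $L$ is a lattice, being a direct sum of the lattice $F_\sigma$ (given) and $s(N/N_\sigma) \cong N/N_\sigma$, which is a lattice by saturatedness. Second, the decompositions of $N$ and $L$ give $N/L \cong N_\sigma/F_\sigma = F(\sigma)$, which is finite since $F_\sigma$ is a lattice datum. Third, $\Span \sigma \cap L = (\Span \sigma \cap N) \cap L = N_\sigma \cap L$, and $N_\sigma \cap L = F_\sigma$ because $N_\sigma$ meets $s(N/N_\sigma)$ trivially in the direct sum. Thus $L$ is a lifting of $F_\sigma$, proving the first assertion.

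For the ``furthermore'' clause I would note that the short exact sequence $0 \to N_\sigma/F_\sigma \to N/F_\sigma \to N/N_\sigma \to 0$ has finite left term and torsion-free (indeed free) right term, so the torsion subgroup of $N/F_\sigma$ is precisely $N_\sigma/F_\sigma$. Under the further quotient map $N/F_\sigma \to N/L$, the subgroup $N_\sigma/F_\sigma$ is carried onto $N/L$: via the direct-sum decompositions both are identified with $N_\sigma/F_\sigma$, compatibly, so the induced map is the identity, in particular an isomorphism. Since $N/L$ is finite it equals its own torsion subgroup, so $N/F_\sigma \to N/L$ restricts to an isomorphism on torsion subgroups, as required.

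I do not anticipate a real obstacle here; this is an elementary manipulation of finitely generated abelian groups. The only point deserving attention is that the ``furthermore'' genuinely requires a \emph{choice}: an arbitrary lifting $L$ only satisfies $[N:L] = [N : N_\sigma + L]\cdot[N_\sigma : F_\sigma]$, so the torsion map need not be surjective unless $N = N_\sigma + L$. The direct-sum construction above forces $L \supseteq s(N/N_\sigma)$, hence $N_\sigma + L = N$, which is exactly what makes the torsion statement come out; so the bulk of the ``work'' is simply in choosing $L$ correctly rather than in any subsequent argument.
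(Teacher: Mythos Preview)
Your proof is correct and follows essentially the same approach as the paper: choose a splitting $N \cong N_\sigma \oplus N/N_\sigma$ (possible since $N/N_\sigma$ is free) and set $L := F_\sigma \oplus N/N_\sigma$, then observe that $N/F_\sigma \to N/L$ is the projection $N_\sigma/F_\sigma \oplus N/N_\sigma \to N_\sigma/F_\sigma$, which is an isomorphism on torsion subgroups because $N/N_\sigma$ is torsion-free. Your write-up is more explicit in verifying the lifting axioms and in isolating exactly why this particular choice of $L$ is needed for the torsion statement, but the underlying construction is identical.
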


\begin{proof} Since $N / N_{\sigma}$ is free (Remark~\ref{rem:Nsigma}) we can always \emph{choose} a splitting $N \cong N_{\sigma} \oplus N / N_{\sigma}$.  Then $L := F_{\sigma} \oplus N / N_{\sigma}$ is clearly a lifting of $F_\sigma$.  For this choice of $L$, the map $N / F_\sigma \to N/L$ is just the obvious projection $N_\sigma / F_\sigma \oplus N / N_\sigma \to N_\sigma / F_\sigma$, which is an isomorphism on torsion subgroups since $N / N_\sigma$ is torsion free. \end{proof}

\begin{defn} \label{defn:KMfan} A \emph{KM fan} $(N,F,\{ F_{\sigma} \})$ in $N$ is a fan $F$ in $N_{\RR}$ (Definition~\ref{defn:fan}), together with a lattice datum $F_\sigma \subseteq N_\sigma$ for each cone $\sigma \in \Sigma$.  These lattice data $F_\sigma$ are required to satisfy the \emph{compatibility condition} $F_\tau = \Span \tau \cap F_{\sigma}$ for all $\sigma \in F$, $\tau \leq \sigma$.  We usually abusively write ``$F$" for the triple $(N,F,\{ F_\sigma \})$ and say that ``$F$ is a KM-fan in $N$ with \emph{lattice data} $\{ F_{\sigma} \}$."  When $N$ is torsion-free (i.e.\ a lattice), such a triple is called a \emph{lattice KM fan}.  We sometimes refer to the group $N$ as the ``lattice for the KM fan $F$" even though it need not be a lattice in general. \end{defn}

\begin{rem} \label{rem:liftings} If $F = (N,F,\{ F_{\sigma} \})$ is a KM fan, $\sigma \in F$, $\tau \leq \sigma$, and $L \subseteq N$ is a lifting of $F_\sigma$, then the compatibility condition $F_\tau = F_\sigma \cap \Span \tau$ implies that $L$ is also a lifting of $F_\tau$. \end{rem}

\begin{defn} \label{defn:KMfanmorphism} A \emph{morphism} of KM-fans \be f : (N,F, \{ F_\sigma \} ) & \to & (N',F', \{ F'_\tau \}) \ee is a group homomorphism $f : N \to N'$ such that for each $\sigma \in F$ there is some $\sigma' \in F'$ such that (i) $f_{\RR}(\sigma) \subseteq \sigma'$ and (ii) $f(F_{\sigma}) \subseteq F'_{\sigma'}$.  With this notion of morphisms, KM fans form a category denoted $\KMFans$. \end{defn}

\begin{rem} \label{rem:KMfan}  We again emphasize that we do \emph{not} require $N$ to be torsion-free (Notation~\ref{notation:MN}) as in the usual theory of toric varieties.  The lattice data $F_{\sigma}$ \emph{are} torsion-free (Definition~\ref{defn:latticedatum}).  Even if we started off by considering only lattice KM fans, various basic constructions (for example, the star fan construction of \S\ref{section:starfansandstratification}) would immediately force us to consider the more general setup.  In the definition of \emph{morphism}, the coherence condition for $F'$ ensures that (ii) holds for \emph{some} $\sigma' \in F'$ satisfying (i) iff (ii) holds for \emph{all} $\sigma' \in F'$ satisfying (i). \end{rem}

\subsection{Drawing KM fans} \label{section:drawingKMfans}  One could alternatively (but equivalently) define a KM fan as follows:

\begin{defn} \label{defn:KMfan2} A \emph{KM fan} $(N,\{ P_\sigma \})$ is a FGA group $N$ together with a finite set $\{ P_\sigma \}$ of submonoids of $N$ satisfying the following properties: \begin{enumerate} \item Each $P_\sigma$ is a finitely generated, sharp, saturated monoid.  Here ``saturated" means that $P$ is ``intrinsically saturated" in the sense that \be P & = & \{ p \in P^{\rm gp} : \exists n \in \ZZ_{>0} {\rm \; such \; that \;} np \in P \} . \ee  It does \emph{not} mean that $P$ is ``saturated in $N$." \item We have $F \in \{ P_\sigma \}$ whenever $F$ is a face of some $P_\sigma \in \{ P_\sigma \}$. \item $P_\sigma \cap P_\tau$ is a face of both $P_\sigma$ and $P_\tau$ whenever $P_\sigma,P_\tau \in \{ P_\sigma \}$. \end{enumerate} \end{defn}

To go from a KM fan $(N,F,\{ F_\sigma \})$ in the sense of Definition~\ref{defn:KMfan} to a KM fan $(N,\{ P_\sigma : \sigma \in F \})$ in the sense of Definition~\ref{defn:KMfan2}, we set $P_\sigma := \sigma \cap F_\sigma$ (Notation~\ref{notation:abusive}) for $\sigma \in F$.  To go from a KM fan $(N,\{ P_\sigma \})$ in the sense of Definition~\ref{defn:KMfan2} to a KM fan $(N,F,\{ F_\sigma \})$ in the sense of Definition~\ref{defn:KMfan} we first get the fan $F$ by setting \be F & := & \{ C(P_\sigma) : P_\sigma \in \{ P_\sigma \} \} \ee where $C(P) \subseteq N_{\RR}$ denotes the cone associated to a finitely generated submonoid $P \subseteq N$ (the cone with generators given by any set of generators for $P$).  We then take $F_\sigma := P_\sigma^{\rm gp}$ as the lattice datum for the cone $C(P_\sigma) \in F$.  We leave it as an exercise for the reader to check that these two constructions are inverse.

Usually we work with Definition~\ref{defn:KMfan} because we think it will be more palatable to those familiar with ``classical" toric geometry.  The setup of Definition~\ref{defn:KMfan2} suggests a practical way to \emph{draw} a KM fan $(N,F,\{ F_\sigma \}) = (N, \{ P_\sigma \})$:  First we ``draw" the group $N$ and the cones in the fan $F$ in ``the usual way."  This can be a little tricky if $N$ has torsion.  A good convention is to pick an isomorphism \be N & \cong & \ZZ^r \oplus \ZZ / n_1 \ZZ \oplus \cdots \oplus \ZZ / n_k \ZZ \ee and then draw (or at least ``think of") each cyclic summand of $N$ as a different ``dimension."  One might as well then just draw the cones of $F$ in the ``free" dimensions (i.e.\ in $\RR^r = \ZZ^r \otimes \RR$) as usual, keeping in mind that in some sense these latter cones should be thought of as extending out ``infinitely" in each of the ``torsion dimensions."  (It is good to keep this in mind because the monoid $\sigma \cap N$ of lattice points in a cone always contains the torsion subgroup $N_{\rm tor}$.)  In this picture of $N$ and $N_{\RR}$, one should plot an open circle for each ``lattice point" (element of $N$).  Next, one fills in (darkens/shades) each open circle corresponding to lattice point belonging to some $P_\sigma$ (what we will call the ``fine support" of the KM fan).  The resulting picture is not particularly different from the way one draws a classical fan---we just draw a fan ``as usual," then carefully indicate which integral points of each cone are actually in $P_\sigma$.  A good policy is to always draw enough lattice points so that the filled in lattice points ``inside" each cone $\sigma$ (again, one has to keep in mind that $\sigma$ is viewed as extended infinitely into all torsion dimensions in defining ``inside" here) generate the monoid $P_\sigma$.

The setup of Definition~\ref{defn:KMfan2} also makes it easy to make the following

\begin{defn} \label{defn:smooth} A KM fan $(N,\{ P_\sigma \})$ is called \emph{smooth} iff each $P_{\sigma}$ is isomorphic to $\NN^n$ for some $n = n(\sigma)$. \end{defn}

\subsection{Examples} \label{section:examples} Here are some examples of KM fans and KM fan morphisms to keep in mind:

\begin{example} \label{example:classicalfan} {\bf (Classical fans)} Every ``classical" fan $F = (N,F)$ can be regarded as a KM fan $F = (N,F,\{ N_\sigma \})$ by taking the groups $N_{\sigma}$ (Definition~\ref{defn:Nsigma}) as the lattice data---note that $N$ is always understood to be a \emph{lattice}, hence the subgroups $N_\sigma \subseteq N$ are also lattices.  It is clear from this construction that a morphism of classical fans also induces a morphism between the associated KM fans and that, in fact, this construction is a fully faithful functor from classical fans to KM fans.  The essential image of this functor consists of those lattice KM fans $F = (N,F,\{ N_\sigma \})$ for which $F_\sigma = N_\sigma$ (equivalently, $F_\sigma$ is saturated in $N$) for every $\sigma \in F$.  By slight abuse of terminology, we often refer to such a KM fan as a ``classical fan." \end{example}

\begin{example} \label{example:GmA1} As a particular instance of Example~\ref{example:classicalfan}, we can view the classical fan in $\ZZ$ whose only cone is $\{ 0 \} \subseteq \ZZ_{\RR} = \RR$ as a KM fan, which we shall denote $\GG_m$.  (There should be no confusion with the \emph{scheme} $\GG_m = \Spec \ZZ[\ZZ]$ because we will always make it clear whether we are referring to the KM fan $\GG_m$ or the scheme $\GG_m$.)  Making standard abuses of notation, one might denote this KM fan $\GG_m = (\ZZ,0,0)$, though it would be more precise to write something like $\GG_m = (\ZZ,\{ \{ 0 \} \}, \{ \{ 0 \} \})$.  Similarly, we can view the classical fan in $\ZZ$ whose cones are $\{ 0 \}$ and $\RR_{\geq 0}$ as a KM fan, denoted $\AA^1$.  There is an evident map of KM fans $\GG_m \to \AA^1$.  Whenever we make reference to a map of KM fans $\GG_m \to \AA^1$, we of course mean to refer to this map unless we say otherwise. \end{example}

\begin{example} \label{example:grouptofan} {\bf (Zero fan)} For any FGA group $N$, we can consider the KM fan whose only cone is the zero cone; this is necessarily equipped with the only possible lattice datum---the zero lattice.  We shall call this KM fan the \emph{zero fan} associated to $N$ and denote it $(N,0,0)$, though more pedantic notation might have a couple curly brackets around the zeros.  This construction gives a functor $N \mapsto (N,0,0)$ from FGA groups to KM fans which is left adjoint to the functor $(N,F,\{ N_\sigma \}) \mapsto N$. \end{example} 

\begin{example} \label{example:coarsefan}  {\bf (Coarse fan)} Given any KM fan $F = (N,F,\{ F_\sigma \})$ we can construct a classical fan $\ov{F}$ in the lattice $\ov{N} := N / N_{\rm tor}$, called the \emph{underlying fan} (or \emph{coarse fan}) of $F$.  To do this, just note that the projection $N \to \ov{N}$ has finite kernel, hence induces an isomorphism $N_{\RR} = \ov{N}_{\RR}$ of vector spaces.  Since $F$ is, by definition, a fan in this vector space (with respect to $N$), so we can also view $F$ as a fan with respect to $\ov{N}$ (we denote the latter fan $\ov{F}$ for clarity) as in Remark~\ref{rem:cones}.  If we view $\ov{F}$ as a KM fan as in Example~\ref{example:classicalfan}, then the quotient projection $\pi : N \to \ov{N}$ defines a morphism of KM fans $\pi : F \to \ov{F}$.  To see this, first note that for each $\sigma \in F$, the map $\pi_{\RR}$ takes $\sigma$ isomorphically onto the corresponding cone $\ov{\sigma} \in \ov{F}$.  Next note that $F_\sigma \subseteq N_\sigma$ (by definition of a lattice datum) and $\pi$ takes $N_{\sigma}$ surjectively onto $\ov{N}_{\ov{\sigma}}$, so $\pi(F_\sigma) \subseteq \ov{N}_{\ov{\sigma}}$.

The construction $F \mapsto \ov{F}$ is functorial in the KM fan $F$.  The reader can easily check that this functor is left adjoint to the functor from classical fans to KM fans discussed in Example~\ref{example:classicalfan}. \end{example}

\begin{example} \label{example:rigidification} {\bf (Rigidification)} There is a slightly more refined version of the construction of Example~\ref{example:coarsefan} which is often useful.  Suppose $F = (N,F,\{ F_\sigma \})$ is a KM fan.  Let $\ov{N} := N / N_{\rm tor}$, $q : N \to \ov{N}$ the quotient map.  As in Example~\ref{example:coarsefan}, we can regard $F$ as a fan $\ov{F}$ in $\ov{N}_{\RR} = N_{\RR}$.  If $F_\sigma$ is the lattice datum for a cone $\sigma \in F$, then $q|F_\sigma : F_\sigma \to q(F_\sigma)$ is an isomorphism because $F_\sigma$ is a lattice, hence its intersection with $\Ker q = N_{\rm tor}$ is $\{ 0 \}$.  Then $F^{\rm rig} := (\ov{N},\ov{F}, \{ q(F_\sigma) \} )$ is a KM fan called the \emph{rigidification} of $F$.  Evidently $q$ defines a map of KM fans $q : F \to F^{\rm rig}$ which is readily seen to be initial among all KM fan maps from $F$ to a lattice KM fan.  The construction of $F^{\rm rig}$ is functorial in $F$ and defines a functor which is left adjoint to the inclusion of lattice KM fans into KM fans.  Note that $\ov{F} = \ov{F^{\rm rig}}$. \end{example}

\begin{example} \label{example:rootstacks} {\bf (Roots)} Suppose $F =(N,\{ P_\sigma \})=(N,F,\{ F_\sigma \})$ is a smooth KM fan (Definition~\ref{defn:smooth}).  Let $\rho_1,\dots,\rho_n \in F$ be the rays of $F$ and let $e_i \in P_{\rho_i} \cong \NN$ be the generators of the corresponding monoids.  Fix $a=(a_1,\dots,a_n) \in \ZZ_{>0}^n$.  Since $F$ is smooth, for each $\sigma \in F$, we have \be P_{\sigma} & = & \NN \langle e_{j_1}, \dots, e_{j_k} \rangle,\ee where $\rho_{j_1},\dots,\rho_{j_k}$ are the rays of $F$ contained in $\sigma$.  By replacing each $P_{\sigma}$ with \be aP_{\sigma} & := & \NN \langle a_{j_1}e_{j_1}, \dots, a_{j_k}e_{j_k} \rangle, \ee we obtain a new KM fan denoted $aF$.  The identity map $N \to N$ defines a map of KM fans $aF \to F$.  \end{example}

\begin{example} \label{example:dilation} {\bf (Dilation)} There is an interesting variant of the construction of Example~\ref{example:rootstacks} that makes sense for an \emph{arbitrary} KM fan $F=(N,F,\{ F_\sigma \})$.  Fix $a \in \ZZ_{>0}$.  Then $aF := (N,F, \{ aF_\sigma \})$ is a KM fan called the $a$-\emph{dilation} of $F$. \end{example}

The dilation construction of Example~\ref{example:dilation} can actually be built out of the two more general constructions discussed in the next two examples.

\begin{example} \label{example:inflation} {\bf (Inflation)} Suppose $F=(N,F,\{ F_\sigma \})$ is a KM fan and $N \subseteq N'$ is a finite index inclusion of groups.  As in Remark~\ref{rem:cones}, we can regard $F$ as a fan in $N'$, thus $F':=(N',F, \{ F_\sigma \})$ is also a KM fan, and the inclusion $N \into N'$ induces a morphism of KM fans $F \to F'$ called the \emph{inflation} of $F$ with respect to $N \subseteq N'$. \end{example}

\begin{example} \label{example:contraction} {\bf (Contraction)} Suppose $F=(N,F,\{ F_\sigma \})$ is a KM fan and $N' \subseteq N$ is a finite index inclusion.  Then $F' := (N',F,\{ F_\sigma \cap N' \})$ is also a KM fan and the inclusion $N' \into N$ defines a morphism $F' \to F$ called the \emph{contraction} of $F$ with respect to $N' \subseteq N$. \end{example}

\begin{defn} \label{defn:simplicial} Consider a KM fan $F=(N,F,\{ F_\sigma \})$ and a cone $\sigma \in F$.  The cone $\sigma$ is called \emph{simplicial} iff the dimension of $\Span \sigma$ is equal to the number of rays of $\sigma$.  The KM fan $F$ is called \emph{simplicial} iff each of its cones is simplicial in the previous sense.  Note that these are notions of cones and fans only and have no dependence on the lattice data for $F$. \end{defn} 

\begin{example} \label{example:canonicalresolution} {\bf (Canonical resolution)}  Suppose $F=(N,F,\{ F_\sigma \})$ is a simplicial KM fan.  Consider a cone $\sigma \in F$ and let $\rho_1, \dots, \rho_n$ denote the rays of $\sigma$.  Let $e_i \in F_{\sigma} \cap \rho_i = F_{\rho_i} \cap \rho_i$ be the primitive integral point along $\rho_i$ (the generator of the monoid $F_{\rho_i} \cap \rho_i \cong \NN$).  Since $F$ is simplicial, the $e_i$ freely generate a sublattice $G_\sigma \subseteq F_\sigma$ of finite index.  Furthermore, the monoid $G_\sigma \cap \sigma$ is freely generated by the $e_i$ and we have \be G_\sigma \cap \Span \tau & = & G_\tau \ee for $\tau \leq \sigma \in F$.  Thus $F' := (N,F,\{ G_\sigma \})$ is a smooth fan and the identity map $N \to N$ defines a morphism of KM fans $F' \to F$, called the \emph{canonical resolution} of $F$. \end{example}

The fun really begins when we consider KM fans where $N$ has torsion.

\begin{example} \label{example:P22} Let $N := \ZZ \oplus \ZZ/2 \ZZ$.  Let $F$ be the fan in $N_{\RR}$ whose cones are $\{ 0 \}$, $\sigma_+ := \RR_{\geq 0}$, and $\sigma_- := \RR_{\leq 0}$ in $N_{\RR} = \RR$.  Set $F_0 := \{ 0 \}$, $F_+ := \ZZ \langle (1,1) \rangle$, $F_- := \ZZ \langle (-1,0) \rangle$.  Notice that $F_0$, $F_+$, $F_-$ are lattice data for the cones $\{ 0 \}$, $\sigma_+$, $\sigma_-$, respectively, satisfying the compatibility condition $$F_{\pm} \cap \Span \{ 0 \} = \{ 0 \} = F_0,$$ so $F := (N,F,\{ F_0, F_+, F_- \})$ is a KM fan.  Regard the classical fan $\tilde{F}$ in $\tilde{N} = \ZZ^2$ whose cones are $\{ 0 \}$, $\{ 0 \} \times \RR_{\geq 0}$, and $\RR_{\geq 0} \times \{ 0 \}$ as a KM fan as in Example~\ref{example:classicalfan}.  Define $f : \tilde{N} \to N$ by the matrix \be f & := & \bp 1 & -1 \\ 1 & 0 \ep . \ee  Then $f$ defines a map of KM fans $\tilde{F} \to F$.  Without having any idea how we will define the realization of a map of KM fans, the reader may wish to speculate about the realization of $F$ and the map $f$.  See Example~\ref{example:P22revisited} for the answer.  \end{example}

\subsection{The fundamental lemmas} \label{section:fundamentallemma}  Here we will establish the key technical results that will enable us to define the gluings necessary to construct the various realizations of a KM fan in \S\ref{section:KMfanrealization}.

\begin{lem} \label{lem:torsor} Suppose $f : L \to L'$ is a map of lattices with finite cokernel, $\sigma$ (resp.\ $\sigma'$) is a cone in $L_{\RR}$ (resp.\ $L'_{\RR}$) and $f$ induces bijections \be f_{\RR} : \sigma & \to & \sigma' \\ f : \Span \sigma \cap L & \to & \Span \sigma' \cap L'.\ee  Let $B := \Cok (f^\lor)$, so that $S_\sigma(L) \subseteq L^\lor$ maps to $B$ via the composition $S_{\sigma}(L) \into L^\lor \to B$, hence we have an action of $\GG(B)$ on $\AA(S_\sigma(L))$ for any realization functor $\AA$ (\S\ref{section:realizationofmonoids}).  Then the map $\AA(S_\sigma(L)) \to \AA(S_{\sigma'}(L'))$ induced by $f$ is a $\GG(B)$ torsor under this action.  If $f$ is surjective, this torsor is (non-canonically) trivial. \end{lem}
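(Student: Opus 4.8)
The plan is to split off the ``monoid part'' common to $S_\sigma(L)$ and $S_{\sigma'}(L')$, and thereby reduce to the purely group-theoretic statement of Proposition~\ref{prop:tautorsors}. First I would put $f$ in block-diagonal form. Since $f|_{N_\sigma}$ is an isomorphism onto $N_{\sigma'}$, I identify both $N_\sigma$ and $N_{\sigma'}$ with a single lattice $N_0$, and $\sigma$ (hence also $\sigma'$) with a cone $\sigma_0$ in $(N_0)_\RR$. The inclusion $N_{\sigma'} \subseteq L'$ is saturated (Remark~\ref{rem:Nsigma}), so I choose a complement $D$ to $N_0$ in $L'$ with projection $p : L' \to N_0$; setting $C := \Ker(p \circ f) \subseteq L$ and using that $p \circ f$ restricts on $N_\sigma$ to the isomorphism $f|_{N_\sigma} : N_0 \xrightarrow{\sim} N_0$, one gets $L = N_0 \oplus C$ with $f(C) \subseteq \Ker p = D$. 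Hence $f = \mathrm{id}_{N_0} \oplus g$ for a homomorphism of lattices $g : C \to D$, and $\Cok g \cong \Cok f$ is finite; since $g_\RR$ is therefore surjective, $g^\lor : D^\lor \to C^\lor$ is injective, and its cokernel is canonically $B = \Cok(f^\lor)$, because $f^\lor = \mathrm{id}_{N_0^\lor} \oplus g^\lor$.

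Next I would read off the two monoids and the map between them. Because $\sigma_0$ lies in the summand $(N_0)_\RR$ of $L_\RR$, its dual cone inside $L^\lor_\RR = (N_0^\lor)_\RR \oplus C^\lor_\RR$ is $\sigma_0^\lor \oplus C^\lor_\RR$ (the dual taken in $(N_0^\lor)_\RR$), so $S_\sigma(L) = S_{\sigma_0}(N_0) \oplus C^\lor$ and, likewise, $S_{\sigma'}(L') = S_{\sigma_0}(N_0) \oplus D^\lor$. Under these identifications $f^\lor$ carries $S_{\sigma'}(L')$ into $S_\sigma(L)$ by $\mathrm{id} \oplus g^\lor$, so the map $\AA(S_\sigma(L)) \to \AA(S_{\sigma'}(L'))$ becomes $\mathrm{id}_{\AA(S_{\sigma_0}(N_0))} \times \AA(g^\lor)$, with $\AA(g^\lor) : \GG(C^\lor) \to \GG(D^\lor)$. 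Moreover $S_\sigma(L)^{\rm gp} = S_{\sigma_0}(N_0)^{\rm gp} \oplus C^\lor$, and the homomorphism $S_\sigma(L)^{\rm gp} \hookrightarrow L^\lor \to B$ defining the $\GG(B)$-action vanishes on the first summand and is the quotient map $C^\lor \to B$ on the second; hence the given $\GG(B)$-action on $\AA(S_\sigma(L))$ is trivial on the factor $\AA(S_{\sigma_0}(N_0))$ and is the tautological action on $\GG(C^\lor)$ attached to the exact sequence $0 \to D^\lor \xrightarrow{g^\lor} C^\lor \to B \to 0$.

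I would then apply Proposition~\ref{prop:tautorsors} to this sequence: it says precisely that $\AA(g^\lor) : \GG(C^\lor) \to \GG(D^\lor)$ is a $\GG(B)$-torsor, locally trivial in $\tau$. Since torsors are stable under base change, taking the product with $\AA(S_{\sigma_0}(N_0))$ shows that $\AA(S_\sigma(L)) \to \AA(S_{\sigma'}(L'))$ is a $\GG(B)$-torsor. Finally, if $f$ is surjective then so is $g$, so $0 \to \Ker g \to C \to D \to 0$ splits ($D$ being a lattice) and dualizes to a \emph{split} exact sequence $0 \to D^\lor \to C^\lor \to B \to 0$ with $B \cong (\Ker g)^\lor$; a choice of splitting identifies $\AA(g^\lor)$ with the projection $\GG(D^\lor) \times \GG(B) \to \GG(D^\lor)$, exhibiting the torsor as (non-canonically) trivial.

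The step I expect to demand the most care is the first one: arranging the block-diagonal form $f = \mathrm{id}_{N_0} \oplus g$ while keeping $\Cok g$ finite, and then tracking the canonical identification $B = \Cok(f^\lor) = \Cok(g^\lor)$ and the $\GG(B)$-action through the various direct-sum decompositions. Everything downstream of that is a formal consequence of Proposition~\ref{prop:tautorsors} together with the stability of torsors under base change.
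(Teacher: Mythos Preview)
Your proof is correct and actually streamlines the paper's argument. The paper proceeds in three stages: it first treats the case where $f$ is surjective (by choosing a splitting $L \cong L' \oplus K$, which immediately gives $S_\sigma(L) \cong S_{\sigma'}(L') \oplus K^\lor$ and a trivial torsor), then the case where $f$ is a finite-index inclusion (by choosing \emph{compatible} splittings of $L$ and $L'$ off the common sublattice $F = \Span\sigma \cap L = \Span\sigma' \cap L'$, leading to a pushout square to which Proposition~\ref{prop:tautorsors} applies), and finally combines the two by factoring a general $f$ as a surjection followed by a finite-index inclusion and assembling the resulting torsors via a short exact sequence of the relevant cokernels. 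Your single block-diagonalization $f = \mathrm{id}_{N_0} \oplus g$ accomplishes in one stroke what the paper's compatible-splitting step does in the injective case, while simultaneously absorbing the surjective part into $g$; this avoids the factorization and the subsequent bookkeeping with the exact sequence \eqref{SESBB}. The trade-off is that the paper's staged approach makes the surjective case (and hence the ``trivial torsor'' claim) immediate and self-contained, whereas you recover it at the end by noting that surjectivity of $f$ forces surjectivity of $g$ and hence splitting of the dual sequence.
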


\begin{proof} First suppose $f$ is surjective.  Set $K := \Ker f$.  We can \emph{choose} a splitting $L \cong L' \oplus K$ identifying $f$ with the projection $L' \oplus K \to L'$.  Here we have $B = K^\lor$.  The hypotheses ensure that, under this splitting, we have $\sigma \cong \sigma' \times \{ 0 \}$ in $L_{\RR} = L'_{\RR} \times K_{\RR}$.  It is then clear from the definitions that $S_\sigma(L) \cong S_{\sigma'}(L') \oplus K^\lor$ (isomorphism of monoids over $K^\lor$) in $L^\lor \cong (L')^\lor \oplus K^\lor$, hence $\AA(S_\sigma(L)) \to \AA(S_{\sigma'}(L'))$ is a trivial $\GG(B) = \GG(K^\lor)$ torsor.

Next suppose $f$ is a finite index inclusion $L \into L'$.  Here we will suppress notation for $f$ and view $L$ as a subgroup of $L'$.  By hypothesis we have $$ \Span \sigma \cap L = \Span \sigma' \cap L' =: F. $$  We can view $\sigma$ (or, equivalently, $\sigma'$) as a cone in $F_{\RR}$; let us call this cone $\ov{\sigma}$ for clarity.  The definition of $F$ ensures that the inclusions $F \into L$, $F \into L'$ are saturated (cf.\ Remark~\ref{rem:Nsigma}), so the quotients $Q := L/F$, $Q' := L'/F$ are free.  We have a map of short exact sequences of lattices as below.  \bne{SESmapAA} & \xym{ 0 \ar[r] & F \ar@{=}[d] \ar[r] & L \ar[r] \ar[d] & Q \ar[d] \ar[r] & 0 \\ 0 \ar[r] & F \ar[r] & L' \ar[r] & Q' \ar[r] & 0 } \ene  Since $L \into L'$ is injective, $Q \into Q'$ is injective and $L'/L = Q'/Q =: A$ by the Snake Lemma.  Since $Q'$ is free, we can \emph{choose} a section $s : Q' \to L'$ of the surjection $L' \to Q'$.  By diagram chasing, we see that $s|Q :Q \to L'$ takes values in $L \subseteq L'$ and provides a section of $L \to Q$.  Thus the sequences in \eqref{SESmapAA} can be ``compatibly split" so that \eqref{SESmapAA} is isomorphic to the diagram of lattices \bne{SESmapAAA} & \xym{ 0 \ar[r] & F \ar@{=}[d] \ar[r] & F \oplus Q \ar[r] \ar[d] & Q \ar[d] \ar[r] & 0 \\ 0 \ar[r] & F \ar[r] & F \oplus Q' \ar[r] & Q' \ar[r] & 0 } \ene where all the maps are the obvious ones.  Dualizing \eqref{SESmapAAA}, we obtain a map of short exact sequences of lattices \bne{SESmapAAdual} & \xym{ 0 \ar[r] & Q^\lor \ar[r] & F^\lor \oplus Q^\lor \ar[r] & F^\lor \ar[r] & 0 \\ 0 \ar[r] & (Q')^\lor \ar[u] \ar[r] & F^\lor \oplus (Q')^\lor \ar[u] \ar[r] & F^\lor \ar@{=}[u] \ar[r] & 0 } \ene where all the arrows are the obvious ones, all the vertical arrows are injective, and we have $$ Q^\lor / (Q')^\lor = L^\lor / (L')^\lor =: B = \EE(A). $$  The hypotheses and definitions ensure that, with respect to our compatible splittings $L \cong F \oplus Q$, $L' \cong F \oplus Q'$, we have $S_\sigma(L) = S_{\ov{\sigma}}(F) \oplus Q^\lor$ and $S_{\sigma'}(L') = S_{\ov{\sigma}}(F) \oplus (Q')^\lor$ inside $L^\lor \cong F^\lor \oplus Q^\lor$ and $(L')^\lor \cong F^\lor \oplus (Q')^\lor$.  Now we have a diagram of monoids \bne{push} & \xym{ 0 \ar[r] & (Q')^\lor \ar[r] \ar[d] & Q^\lor \ar[r] \ar[d] & B \ar[r] & 0 \\ & S_{\sigma'}(L') \ar[r] & S_\sigma(L) \ar@{.>}[ru] } \ene where the square is a pushout and the top row is an exact sequence of groups with $B$ finite.  

Any realization functor $\AA$ preserves finite inverse limits and makes $\AA(Q^\lor) \to \AA((Q')^\lor)$ a $\GG(B)$ torsor (Proposition~\ref{prop:tautorsors}).  Since $\GG(B)$ torsors are stable under base change, we conclude that $\AA(S_\sigma(L)) \to \AA(S_{\sigma'}(L'))$ is a $\GG(B)$ torsor, as desired.  (The action of $\GG(B)$ on $\AA(S_\sigma(L))$ defined via base change is the same as the action defined in the statement of the theorem---this is reflected by the fact that the map $S_\sigma(L) \to B$ defined in the statement of the lemma completes \eqref{push} as indicated to a commutative diagram.  It is also worth noting that, although we used our \emph{choice} of splitting $s$ as an expedient means to prove that the square in \eqref{push} is a pushout, the diagram \eqref{push} can be constructed without making such a choice of splitting---the groups $Q^\lor$ and $(Q')^\lor$ are canonically interpreted as the groups of units in the monoids $S_\sigma(L)$ and $S_{\sigma'}(L')$, respectively. (See Remark~\ref{rem:Ssigma}.)

Finally, for the general case, we factor $f : L \to L'$ as a surjection $L \to L''$ followed by a finite index inclusion $L'' \into L'$.  Let $\sigma'' \subseteq L''_{\RR}$ be the cone corresponding to $\sigma' \subseteq L'_{\RR}$ under the isomorphism $L''_{\RR} = L'_{\RR}$ induced by $L'' \into L'$.  The hypotheses on $f$ ensure that the maps $L \to L''$ (using the cones $\sigma$, $\sigma''$) and $L'' \into L'$ (using the cones $\sigma''$, $\sigma'$) also satisfy the same hypotheses.  We have inclusions $(L')^\lor \into (L'')^\lor \into L^\lor$ and hence a short exact sequence \bne{SESBB} & 0 \to & (L'')^\lor / (L')^\lor \to L^\lor / (L')^\lor \to L^\lor/(L'')^\lor \to 0. \ene  The realization of \eqref{SESBB} is a short exact sequence of group objects \bne{SESBB2} & 0 \to & \GG(L^\lor / (L'')^\lor) \to \GG(L^\lor / (L')^\lor) \to \GG((L'')^\lor/(L')^\lor) \to 0 \ene by Proposition~\ref{prop:tautorsors}.  The map $\AA(S_\sigma(L)) \to \AA(S_{\sigma'}(L'))$ factors as \bne{factorization} & \AA(S_\sigma(L)) \to \AA(S_{\sigma''}(L'')) \to \AA(S_{\sigma'}(L')).\ene  The results proved above show that the natural action of $\GG(L^\lor / (L'')^\lor)$ on $\AA(S_\sigma(L))$ makes the left map in \eqref{factorization} a trivial $\GG(L^\lor / (L'')^\lor)$ torsor and the natural action of $\GG((L'')^\lor/(L')^\lor)$ on $\AA(S_{\sigma''}(L''))$ makes the right map in \eqref{factorization} a $\GG((L'')^\lor/(L')^\lor)$ torsor.  Combining these results with the exact sequence \eqref{SESBB2}, one sees that the natural action of $\GG(L^\lor / (L')^\lor)$ on $\AA(S_\sigma(L))$ makes $\AA(S_\sigma(L)) \to \AA(S_{\sigma'}(L'))$ a $\GG(L^\lor / (L')^\lor)$ torsor.  \end{proof}

\begin{defn} \label{defn:AAsigma}  Let $\sigma$ be a cone in $N_{\RR}$, $F_\sigma$ a lattice datum for $\sigma$, $L \subseteq N$ a lifting of $F_\sigma$ (Definition~\ref{defn:latticedatum}).  Since the inclusion $L \subseteq N$ has finite index, we have $L_{\RR} = N_{\RR}$ and we can regard $\sigma$ as a cone in $L_{\RR}$ as in Remark~\ref{rem:cones}.  We can then consider the submonoid $S_\sigma(L) \subseteq L^\lor$ as in Definition~\ref{defn:Ssigma}.  Dualizing the short exact sequence $$0 \to L \to N \to N/L \to 0,$$ we obtain an exact sequence \bne{fundamentalSES} & 0 \to N^\lor \to L^\lor \to \EE(N/L) \to \EE(N) \to 0.\ene  The composition of the inclusion $S_\sigma(L) \into L^\lor$ and the map $L^\lor \to \EE(N/L)$ can be viewed as an action of the group object $\EE(N/L)$ on $S_\sigma(L)$ in $\Mon^{\rm op}$.  All realization functors $\AA$ (\S\ref{section:realizationofmonoids}) preserve finite inverse limits, so we obtain an induced action of $\GG(\EE(N/L))$ on $\AA(S_\sigma(L))$.  Let \be \AA(\sigma,N,F_\sigma,L) & := & [ \AA(S_\sigma(L)) / \GG(\EE(N/L)) ]  \ee be the stack-theoretic quotient.  (If $N$ and/or $F_\sigma$ are/is clear from context we drop them from the notation in $\AA(\sigma,N,F_\sigma,L)$, writing, for example, $\AA(\sigma,L)$ in lieu of $\AA(\sigma,N,F,L)$.)   We will prove in Lemma~\ref{lem:fundamental} that $\AA(\sigma,N,F_\sigma,L)$ is ``independent" of the choice of lifting $L$.   Since any lattice datum admits at least one lifting (Lemma~\ref{lem:liftingsexist}), we can unambiguously write $\AA(\sigma,N,F_\sigma)$ instead of $\AA(\sigma,N,F_\sigma,L)$.  (We similarly shorten this to $\AA(\sigma,N)$ or even just $\AA(\sigma)$ if there is no risk of confusion.) \end{defn}

\begin{rem} \label{rem:torusaction1} When $N$ is a lattice we have $\EE(N)=0$, so applying $\GG( \slot )$ to \eqref{fundamentalSES} yields an exact sequence \bne{GfundamentalSES} & 0 \to \GG(\EE(N/L)) \to \GG(L^\lor) \to \GG(N^\lor) \to 0 \ene of sheaves of abelian groups on $\Esp$.  Therefore Lemma~\ref{lem:quotients} gives a natural action of the ``torus" $\GG(N^\lor)$ on $\AA(\sigma,L)$.  This naturality ensures, in particular, that when $\tau \leq \sigma$, the map $\AA(\tau,L) \to \AA(\sigma,L)$ is $\GG(N^\lor)$ equivariant. \end{rem}

\begin{lem} \label{lem:fundamental}  Suppose $\sigma \subseteq N_{\RR}$ is a cone with lattice datum $F_\sigma$ and $L_1,L_2 \subseteq N$ are two liftings of $F_{\sigma}$ (Definition~\ref{defn:latticedatum}).  Then there is a canonical isomorphism of stacks \be \AA(\sigma,L_1) & = & \AA(\sigma,L_2).\ee  When $N$ is a lattice this is an isomorphism of stacks with $\GG(N^\lor)$ action as in Remark~\ref{rem:torusaction1}. \end{lem}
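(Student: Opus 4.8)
The plan is to reduce to the case where one lifting refines the other, and then compare the two quotient stacks directly using the fundamental torsor result Lemma~\ref{lem:torsor}.

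For the reduction, given two liftings $L_1,L_2\subseteq N$ of $F_\sigma$, I would first observe that $L_3:=L_1\cap L_2$ is again a lifting: $N/L_3$ is finite because it embeds in $N/L_1\times N/L_2$, and $\Span\sigma\cap L_3=(\Span\sigma\cap L_1)\cap(\Span\sigma\cap L_2)=F_\sigma$. Since $L_3\subseteq L_1$ and $L_3\subseteq L_2$ are finite-index inclusions of liftings, it then suffices to produce, for every finite-index inclusion $L'\subseteq L$ of liftings of $F_\sigma$, a canonical isomorphism $\AA(\sigma,L')\cong\AA(\sigma,L)$; the desired isomorphism $\AA(\sigma,L_1)\cong\AA(\sigma,L_2)$ is obtained by composing through $\AA(\sigma,L_3)$. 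To justify the claim (from Definition~\ref{defn:AAsigma}) that the notation $\AA(\sigma,N,F_\sigma)$ is unambiguous, I would further remark that, using $L_1\cap L_2\cap L_3$ as a common refinement of any three liftings and the evident functoriality of the construction below with respect to further refinement, these isomorphisms satisfy the cocycle condition.

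For the comparison with $L'\subseteq L$, write $X_L:=\AA(S_\sigma(L))$ (and similarly $X_{L'}$), so that $\AA(\sigma,L)=[X_L/\GG(\EE(N/L))]$. The inclusion $L'\subseteq L$ induces $L^\lor\hookrightarrow(L')^\lor$, hence a monoid inclusion $S_\sigma(L)\hookrightarrow S_\sigma(L')$ and a map $\pi\colon X_{L'}\to X_L$. Since $\Span\sigma\cap L'=F_\sigma=\Span\sigma\cap L$, Lemma~\ref{lem:torsor} applies and shows $\pi$ is a $\GG(B)$-torsor, where $B:=(L')^\lor/L^\lor$ and $\GG(B)$ acts on $X_{L'}$ through $S_\sigma(L')\hookrightarrow(L')^\lor\twoheadrightarrow B$. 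Next I would dualize (over $\ZZ$, where free groups have no $\Ext^1$, finite groups no $\Hom$, and $\Ext^2$ vanishes) the sequences $0\to L'\to L\to L/L'\to 0$ and $0\to L/L'\to N/L'\to N/L\to 0$, together with the morphism of sequences $(0\to L'\to N\to N/L'\to0)\to(0\to L\to N\to N/L\to0)$; this produces a canonical identification $B\cong\EE(L/L')$ and a short exact sequence $0\to\EE(N/L)\to\EE(N/L')\to\EE(L/L')\to0$, compatible with the connecting maps $a_L\colon L^\lor\to\EE(N/L)$ and $a_{L'}\colon(L')^\lor\to\EE(N/L')$ of \eqref{fundamentalSES}. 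Applying $\GG(\slot)$ and Proposition~\ref{prop:tautorsors} then gives a short exact sequence of group objects
\[ 0 \to \GG(B) \to \GG(\EE(N/L')) \to \GG(\EE(N/L)) \to 0 \]
exhibiting $\GG(B)$ as a normal subgroup of $\GG(\EE(N/L'))$ with quotient $\GG(\EE(N/L))$.

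To assemble the isomorphism I would then verify two compatibilities. First, the restriction to $\GG(B)\subseteq\GG(\EE(N/L'))$ of the $\GG(\EE(N/L'))$-action on $X_{L'}$ (given by $a_{L'}$) agrees with the $\GG(B)$-action from Lemma~\ref{lem:torsor}; this reduces to checking that the composite $(L')^\lor\xrightarrow{a_{L'}}\EE(N/L')\twoheadrightarrow\EE(L/L')\cong B$ is the canonical surjection onto $(L')^\lor/L^\lor$, which follows by computing the kernel of the former to be $L^\lor$ using naturality of the connecting maps and exactness of \eqref{fundamentalSES}. Granting this, $\pi$ identifies $X_L$ with $X_{L'}/\GG(B)$. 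Second, the residual action of $\GG(\EE(N/L'))/\GG(B)=\GG(\EE(N/L))$ on $X_{L'}/\GG(B)=X_L$ is the standard action defining $\AA(\sigma,L)$, which follows from the commuting square relating $a_L$ and $a_{L'}$. Combining these,
\[ \AA(\sigma,L') = [X_{L'}/\GG(\EE(N/L'))] = \big[[X_{L'}/\GG(B)]\,/\,\GG(\EE(N/L))\big] = [X_L/\GG(\EE(N/L))] = \AA(\sigma,L), \]
a canonical isomorphism. When $N$ is a lattice, $N^\lor$ sits compatibly inside $L^\lor$ and $(L')^\lor$ (via the sequences $0\to N^\lor\to L^\lor\to\EE(N/L)\to 0$, and likewise for $L'$), so all maps above are equivariant for the induced $\GG(N^\lor)$-actions of Remark~\ref{rem:torusaction1}, giving an isomorphism of stacks with $\GG(N^\lor)$-action. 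I expect the main obstacle to be precisely the bookkeeping in this last step: pinning down the canonical identification $B\cong\EE(L/L')$ and checking that, under it, the $\GG(B)$-torsor action, the $\GG(\EE(N/L'))$-action on $X_{L'}$, and the residual $\GG(\EE(N/L))$-action on $X_L$ are mutually compatible; everything else is a formal consequence of Lemmas~\ref{lem:torsor} and \ref{lem:liftingsexist}, Proposition~\ref{prop:tautorsors}, and routine diagram chases with the dualized exact sequences.
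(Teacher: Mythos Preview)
Your proposal is correct and follows essentially the same route as the paper: reduce to a nested pair of liftings via $L_1\cap L_2$, identify the map $\AA(S_\sigma(L'))\to\AA(S_\sigma(L))$ as a $\GG(\EE(L/L'))$-torsor using Lemma~\ref{lem:torsor}, and then quotient in two stages using the exact sequence $0\to\EE(N/L)\to\EE(N/L')\to\EE(L/L')\to 0$ together with Lemma~\ref{lem:quotients}. The paper is slightly terser about the compatibility checks you flag (it absorbs them into the invocation of Lemma~\ref{lem:quotients}), and it does not explicitly verify the cocycle condition for three liftings, but otherwise the arguments coincide.
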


\begin{proof} One sees easily that $L_{12} := L_1 \cap L_2$ is also a lifting of $F_{\sigma}$.  By considering the inclusions $L_{12} \subseteq L_i$ ($i=1,2$) we reduce to treating the case where $L_1 \subseteq L_2$.  In this case, we have a map of short exact sequences as below. \bne{diagramA2} & \xym{ 0 \ar[r] & L_1 \ar[r] \ar[d] & N \ar@{=}[d] \ar[r] & N/L_1 \ar[r] \ar[d] & 0 \\ 0 \ar[r] & L_2 \ar[r] & N \ar[r] & N / L_2 \ar[r] & 0 } \ene  The map $N/L_1 \to N/L_2$ is surjective and we have $$ L_2/L_1 = \Ker(N/L_1 \to N/L_2) =: A$$ by the Snake Lemma.  Dualizing \eqref{diagramA2} we obtain a commutative diagram \bne{diagramB2} & \xym{ 0 \ar[r] & N^\lor \ar@{=}[d] \ar[r] & L_1^\lor \ar[r] & \EE(N/L_1) \ar[r] & \EE(N) \ar@{=}[d] \ar[r] & 0 \\ 0 \ar[r] & N^\lor \ar[r] & L_2^\lor \ar[r] \ar[u] & \EE(N/L_2) \ar[r] \ar[u] & \EE(N)  \ar[r] & 0 } \ene where the two middle vertical arrows are injective with the same cokernel, $\EE(A)$.  

The realization of the exact sequence \bne{lemfund1} & 0 \to \EE(N/L_2) \to \EE(N/L_1) \to \EE(A) \to 0 \ene yields a short exact sequence of sheaves of abelian groups \bne{GGlemfund1} & 0 \to \GG(\EE(A)) \to \GG(\EE(N/L_1)) \to \GG(\EE(N/L_2)) \to 0. \ene  The natural map $\AA(S_\sigma(L_1)) \to \AA(S_\sigma(L_2))$ is a $\GG(\EE(A))$ torsor by Lemma~\ref{lem:torsor}, so that $[\AA(S_\sigma(L_1)) / \GG(\EE(A))] = \AA(S_\sigma(L_2))$.  We compute \bne{lemfundcomputation} \AA(\sigma,L_1) & = & [ \AA(S_\sigma(L_1)) / \GG(\EE(N/L_1)) ] \\ \nonumber & = & [[ \AA(S_\sigma(L_1)) / \GG(\EE(A)) ] / \GG(\EE(N/L_2)) ] \\ \nonumber & = & [ \AA(S_\sigma(L_2)) / \GG(\EE(N/L_2)) ] \\ \nonumber & = & \AA(\sigma,L_2) \ene using Lemma~\ref{lem:quotients} (applied to the exact sequence \eqref{GGlemfund1} and the object $X = \AA(S_\sigma(L_1))$) for the second ``equality" (natural isomorphism). 

Suppose furthermore that $N$ is a lattice.  Then \eqref{lemfund1} is part of an exact diagram \bne{bigexactdiagram} & \xym{ & 0 \ar[d] & 0 \ar[d] \\ 0 \ar[r] & N^\lor \ar@{=}[r] \ar[d] & N^\lor \ar[r] \ar[d] & 0 \ar[d] \\ 0 \ar[r] & L_2^\lor \ar[r] \ar[d] & L_1^\lor \ar[r] \ar[d] & \EE(L_2/L_1) \ar@{=}[d] \ar[r] & 0 \\ 0 \ar[r] & \EE(N/L_2) \ar[r] \ar[d] & \EE(N/L_1) \ar[r] \ar[d] & \EE(A) \ar[r] \ar[d] & 0 \\ & 0 & 0 & 0 } \ene whose realization yields an exact diagram of sheaves of abelian groups.  By using this diagram and the naturality in Lemma~\ref{lem:quotients} we see that the isomorphisms in \eqref{lemfundcomputation} are $\GG(N^\lor)$ equivariant. \end{proof}

The above lemma will be needed to define the realization of a KM fan in the next section.  To define the realization of a \emph{morphism} of KM fans (Definition~\ref{defn:KMfanmorphism}), we shall also need Lemma~\ref{lem:liftings} below.

\begin{lem} \label{lem:representability} Let $F=(N,F,\{F_\sigma \})$ be a KM fan, $\sigma \in F$ a cone of $F$.  Suppose that $A$ is a subgroup of $N$ containing $F_\sigma$ with $A/F_\sigma$ torsion free.  (This hypothesis holds when $A$ is a lifting of $F_\sigma$, or if $A = F_\tau$ for some $\tau \in F$ containing $\sigma$.)  Then the surjective map of abelian groups $N/F_\sigma \to N/A$ is injective on torsion subgroups (equivalently, its kernel is torsion-free). \end{lem}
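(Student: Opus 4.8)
The plan is to see that the displayed conclusion is, after one elementary identification, just a restatement of the standing hypothesis, and then to verify separately the parenthetical claim that the hypothesis is met in the two cases of interest. First I would note that the canonical surjection $\pi \colon N/F_\sigma \to N/A$ (surjective because $F_\sigma \subseteq A$) has kernel exactly $A/F_\sigma$, by the third isomorphism theorem. Then I would invoke the elementary fact that a surjection of abelian groups $\pi \colon G \to H$ with kernel $K \leq G$ is injective on torsion subgroups if and only if $K$ is torsion free: the notion of torsion element is intrinsic, so $K \cap G_{\rm tor} = K_{\rm tor}$, and $K \cap G_{\rm tor}$ is precisely the kernel of $\pi|_{G_{\rm tor}}$. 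Applying this with $G = N/F_\sigma$ and $K = A/F_\sigma$ turns the assertion ``$\pi$ is injective on torsion subgroups'' into the assertion ``$A/F_\sigma$ is torsion free,'' which is exactly the hypothesis. So there is nothing left to prove in the conclusion itself.

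It remains to check the parenthetical. I would treat both listed cases uniformly by observing that in each of them $A$ is a lattice $B$ for which $F_\sigma = \Span\sigma \cap B$. When $A = L$ is a lifting of $F_\sigma$, this is immediate from the definition of a lifting (Definition~\ref{defn:latticedatum}), which also tells us $L$ is a lattice. When $A = F_\tau$ for some $\tau \in F$ containing $\sigma$, note first that two cones of a single fan with $\sigma \subseteq \tau$ automatically satisfy $\sigma \leq \tau$ (by fan axiom (ii) applied to the pair $(\tau,\sigma)$), so the KM-fan compatibility condition (Definition~\ref{defn:KMfan}) gives $F_\sigma = \Span\sigma \cap F_\tau$, and $F_\tau$ is a lattice since it is a lattice datum. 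In either case $F_\sigma \subseteq A$, and since $\Span\sigma \cap B$ is the intersection of $B$ with a linear subspace of $N_{\RR}$, it is a saturated subgroup of $B$ (cf.\ Remark~\ref{rem:Nsigma}); hence $A/F_\sigma = B/(\Span\sigma \cap B)$ is torsion free, as needed.

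I do not expect any serious obstacle: the lemma is essentially bookkeeping needed later for defining realizations of morphisms of KM fans. The only points requiring a little care are getting the direction of the KM-fan compatibility condition right (the span that cuts $F_\tau$ down to $F_\sigma$ is that of the smaller cone $\sigma$) and the harmless observation that ``$\tau$ containing $\sigma$'' may be read as ``$\sigma$ is a face of $\tau$.''
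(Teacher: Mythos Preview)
Your proof is correct and follows essentially the same approach as the paper: identify the kernel of $N/F_\sigma \to N/A$ as $A/F_\sigma$ and observe that this is torsion free by hypothesis. The paper invokes the Snake Lemma where you use the third isomorphism theorem, but this is the same computation; you additionally spell out the parenthetical verification, which the paper leaves implicit.
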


\begin{proof} Apply the Snake Lemma to $$ \xym{ 0 \ar[r] & F_\sigma \ar[r] \ar[d] & N \ar@{=}[d] \ar[r] & N/F_\sigma \ar[d] \ar[r] & 0 \\ 0 \ar[r] & A \ar[r] & N \ar[r] & N/A \ar[r] & 0 }$$ to see that the kernel of $N/F_\sigma \to N/A$ is $A/F_\sigma$, which is torsion free by assumption. \end{proof}

\begin{lem} \label{lem:liftings} Let $f : (N,F,\{ F_\sigma \}) \to (N',F',\{ F'_\tau \})$ be a morphism of KM fans, $\sigma \in F$, $\tau \in F'$ a cone containing $f_{\RR}(\sigma)$, $L' \subseteq N'$ a lifting of $F'_{\tau}$ (Definition~\ref{defn:latticedatum}).  \begin{enumerate} \item \label{liftings1} There exists a lifting $L \subseteq N$ of $F_\sigma$ so that $f(L) \subseteq L'$.  \item \label{liftings2} If $f_{\rm tor} : N_{\rm tor} \to N'_{\rm tor}$ is injective (equivalently $\Ker f$ is torsion-free, which holds, in particular, whenever $N$ is a lattice) and $f|F_\sigma : F_\sigma \to F'_\tau$ is bijective, then the map $(N/F_\sigma)_{\rm tor} \to (N'/F'_\tau)_{\rm tor}$ induced by $f$ is injective. \item \label{liftings3} If $(N/F_\sigma)_{\rm tor} \to (N'/F'_\tau)_{\rm tor}$ is injective, then $L := f^{-1}(L')$ is a lifting of $F_\sigma$.  For this choice of $L$, the map of finite groups $N/L \to N'/L'$ induced by $f$ is injective. \item \label{liftings4} If there is a lifting $L$ of $F_\sigma$ such that $f(L) \subseteq L'$ and $N/L \to N'/L'$ is injective, then $(N/F_\sigma)_{\rm tor} \to (N'/F'_\tau)_{\rm tor}$ is injective. \end{enumerate} \end{lem}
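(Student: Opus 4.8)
All four assertions follow from the Snake Lemma together with careful bookkeeping of torsion, and I would prove them essentially independently, in the order stated. Two standing observations will be used throughout. First, since $N_\sigma = \Span\sigma \cap N$, we have $\Span\sigma \cap L = N_\sigma \cap L$ for every subgroup $L \subseteq N$; and since $f_\RR(\sigma) \subseteq \tau$ we get $f(N_\sigma) \subseteq N'_\tau$ and, by the coherence condition (Remark~\ref{rem:KMfan}), $f(F_\sigma) \subseteq F'_\tau$. Second, as $F_\sigma$ is a lattice we have $F_\sigma \cap N_{\rm tor} = 0$, so the projection induces an embedding $N_{\rm tor} \into (N/F_\sigma)_{\rm tor}$ (and likewise $N'_{\rm tor} \into (N'/F'_\tau)_{\rm tor}$); and for any subgroup $B' \subseteq N'$ the map $f$ induces an injection $N/f^{-1}(B') \into N'/B'$, so $f^{-1}(B')$ has finite index in $N$ whenever $B'$ does.

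\textbf{Parts (1) and (2).} For (1): take any lifting $L_0$ of $F_\sigma$ (Lemma~\ref{lem:liftingsexist}) and put $L := L_0 \cap f^{-1}(L')$. Then $L$ is torsion-free (a subgroup of the lattice $L_0$) and of finite index in $N$; one has $\Span\sigma \cap L = F_\sigma \cap f^{-1}(L') = F_\sigma$, because $f(F_\sigma) \subseteq F'_\tau \subseteq L'$; and $f(L) \subseteq L'$ by construction, so $L$ is the desired lifting. For (2): suppose $\bar x \in (N/F_\sigma)_{\rm tor}$ maps to $0$, i.e.\ $nx \in F_\sigma$ for some $n > 0$ and $f(x) \in F'_\tau$; from $nx \in \Span\sigma$ we get $x \in N_\sigma$. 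Using surjectivity of $f|F_\sigma$, pick $y \in F_\sigma$ with $f(y) = f(x)$. Then $x - y \in \Ker f$ and $n(x - y) \in F_\sigma$, so $n(x-y) \in F_\sigma \cap \Ker f = 0$ (injectivity of $f|F_\sigma$); hence $x - y \in N_{\rm tor} \cap \Ker f = \Ker f_{\rm tor} = 0$, i.e.\ $x = y \in F_\sigma$.

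\textbf{Parts (3) and (4).} For (3): set $L := f^{-1}(L')$, so $N/L \into N'/L'$; in particular $N/L$ is finite. Writing the composite $N_{\rm tor} \into (N/F_\sigma)_{\rm tor} \to (N'/F'_\tau)_{\rm tor}$ also as $N_{\rm tor} \to N'_{\rm tor} \into (N'/F'_\tau)_{\rm tor}$, the torsion-injectivity hypothesis forces $f_{\rm tor}$ to be injective; since $L'$ is a lattice, $L_{\rm tor} = \Ker f_{\rm tor} = 0$, so $L$ is a lattice. Finally, $\Span\sigma \cap L = \{ x \in N_\sigma : f(x) \in F'_\tau \}$ (using $f(N_\sigma) \subseteq N'_\tau$ and $N'_\tau \cap L' = F'_\tau$), and this set equals $F_\sigma$ by applying the torsion-injectivity hypothesis to the image of such an $x$ in $N_\sigma/F_\sigma \subseteq (N/F_\sigma)_{\rm tor}$; the asserted injectivity of $N/L \to N'/L'$ was noted at the outset. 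For (4): given such an $L$, if $\bar x \in (N/F_\sigma)_{\rm tor}$ satisfies $f(x) \in F'_\tau$, then $x \in N_\sigma$ and the class of $x$ in $N/L$ maps to $0$ in $N'/L'$ (since $f(x) \in F'_\tau \subseteq L'$), so $x \in L$ by injectivity of $N/L \into N'/L'$; hence $x \in N_\sigma \cap L = F_\sigma$.

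\textbf{Main obstacle.} There is no deep obstacle: the entire argument is a sequence of diagram chases. The point demanding the most care is the first half of (3) --- one must see that injectivity merely on the torsion subgroup of $N/F_\sigma$ already forces $f_{\rm tor}$ itself to be injective (otherwise $f^{-1}(L')$ could fail to be torsion-free), and the canonical embeddings $N_{\rm tor} \into (N/F_\sigma)_{\rm tor}$ are exactly what supplies this.
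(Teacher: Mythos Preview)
Your proof is correct and follows essentially the same route as the paper's: parts (1) and (2) are virtually identical, and (3) and (4) differ only in packaging. In (3) you first extract that the hypothesis forces $f_{\rm tor}$ itself to be injective (via the embeddings $N_{\rm tor}\hookrightarrow (N/F_\sigma)_{\rm tor}$) and then conclude $L$ is torsion-free, whereas the paper argues directly that a torsion element of $L$ would give a nonzero torsion element of $\Ker\bigl((N/F_\sigma)\to(N'/F'_\tau)\bigr)$; in (4) you give a direct element chase using $N_\sigma\cap L=F_\sigma$, while the paper invokes Lemma~\ref{lem:representability} to factor the map through $N/L\to N'/L'$ in a commutative square---same idea, different bookkeeping.
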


\begin{proof} \eqref{liftings1}:  By Lemma~\ref{lem:liftingsexist} there is \emph{some} lifting $L''$ of $F_\sigma$.  The map $N/f^{-1}(L') \to N'/L'$ is clearly injective and $L'$ has finite index in $N'$, so $f^{-1}(L')$ has finite index in $N$.  By definition of a morphism of KM fans (cf.\ Remark~\ref{rem:KMfan}), $f(F_\sigma) \subseteq F'_\tau \subseteq L'$, so we have $F_\sigma \subseteq \Span \sigma \cap f^{-1}(L')$.  Now if we set $L := L'' \cap f^{-1}(L')$, then since $\Span \sigma \cap L'' = F_\sigma$ we have $\Span \sigma \cap L = F_\sigma$.  Certainly $L$ is a lattice since it is contained in the lattice $L''$.  Furthermore, $L$ has finite index in $N$ since $f^{-1}(L')$ and $L''$ have finite index in $N$, so $L$ is as desired.

\eqref{liftings2}:  Suppose $n \in N$ is an element of $N$ with $f(n) \in F'_\tau$ and $mn \in F_\sigma$ for some positive integer $m$.  To see that $(N/F_\sigma)_{\rm tor} \to (N'/F'_\tau)_{\rm tor}$ is injective, we need to show that any such $n$ is in $F_\sigma$.  Since $f|F_\sigma : F_\sigma \to F'_\tau$ is surjective, there is some $\ov{n} \in N$ with $f(\ov{n})= f(n)$.  Since $f|F_\sigma : F_\sigma \to F'_\tau$ is injective and $mn, m \ov{n} \in F_\sigma$ have the same image under $f$, we have $mn=m\ov{n}$, hence $n-\ov{n} \in N_{\rm tor}$.  Since we also have $f(n-\ov{n})=0$ and $f_{\rm tor} : N_{\rm tor} \to N'_{\rm tor}$ is injective, we conclude that $n=\ov{n} \in F_\sigma$, as desired.

\eqref{liftings3}:  Assume $(N/F_\sigma)_{\rm tor} \to (N'/F'_\tau)_{\rm tor}$ is injective.  We first show that $L = f^{-1}(L') \subseteq N$ is torsion-free (hence a lattice).  Suppose $n \in L$ is a torsion element.  Then $f(n)$ is a torsion element of the lattice $L'$, hence $f(n)=0$.  If $n \notin F_\sigma$, then $n$ would be a non-trivial torsion element in the kernel of $N/F_\sigma \to N'/F'_\tau$, contradicting our injectivity assumption.  So we conclude that $n \in F_\sigma$, hence $n=0$ because $F_\sigma$ is a lattice.

Much as in the proof of \eqref{liftings1} above, we see easily that $L$ has finite index in $N$ and that $F_\sigma \subseteq L \cap \Span \sigma$, so it remains only to establish the opposite containment.  Suppose $n \in L \cap \Span \sigma$.  Then $f(n) \in L \cap \Span \tau = F'_\tau$.  Furthermore, $n \in N \cap \Span \sigma = N_\sigma$ and $F_\sigma$ has finite index in $N_\sigma$, so $mn \in F_\sigma$ for some $m \in \ZZ_{>0}$.  Putting this all together, we conclude that the image of $n$ in $N/F_\sigma$ is in the kernel of $(N/F_\sigma)_{\rm tor} \to (N'/F'_\tau)_{\rm tor}$, so our injectivity assumption implies that $n \in F_\sigma$, as desired.

\eqref{liftings4}:  For such an $L$, we have a commutative diagram $$ \xym{ (N/F_\sigma)_{\rm tor} \ar[r] \ar[d] & N/L=(N/L)_{\rm tor} \ar[d] \\ (N'/F'_\tau)_{\rm tor} \ar[r] & N'/L'=(N'/L')_{\rm tor} } $$ where the horizontal arrows are injective by Lemma~\ref{lem:representability} and the right vertical arrow is injective by assumption, hence the left vertical arrow must also be injective, as desired. \end{proof}

\subsection{Realizations of KM fans} \label{section:KMfanrealization}  Let  $F = (N,F,\{ F_\sigma \})$ be a KM fan, $\tau \leq \sigma$ cones of $F$.  Pick a lifting (Definition~\ref{defn:latticedatum}) $L$ of $F_\sigma$.  Then $L$ is also a lifting of $F_\tau$ (Remark~\ref{rem:liftings}).  As in the usual theory of toric varieties, the inclusion of monoids $S_\sigma(L) \into S_\tau(L)$ is the localization of $S_\sigma(L)$ at the face $S_\sigma(L) \cap \tau^{\perp}$, so its realization $\AA(S_\tau(L)) \into \AA(S_\sigma(L))$ is an open embedding.  It is clear from the definition of the relevant actions (Definition~\ref{defn:AAsigma}) that this open embedding is $\GG(\EE(N/L))$ equivariant, hence it gives rise to an open embedding \bne{openembeddings} \AA(\tau,L) & \into & \AA(\sigma,L)  \ene of the corresponding quotient stacks.  It is clear from the proof of Lemma~\ref{lem:fundamental} that the open embedding \eqref{openembeddings} is also ``independent of the choice of $L$", so it may be viewed as an open embedding \bne{openembedding2} \AA(\tau) & \into & \AA(\sigma).  \ene  Furthermore, it is clear---as in the usual theory of toric varieties---that the open embeddings \eqref{openembedding2} are functorial in the inclusion of cones $\tau \leq \sigma$.

\begin{defn} \label{defn:realization}  The \emph{realization} of the KM fan $F$ is defined to be stack $\AA(F)$ obtained by ``gluing the stacks $\AA(\sigma)$ along the open embeddings \eqref{openembedding2}."  More precisely, $\AA(F)$ is ``the" $2$-direct limit of the stacks $\AA(\sigma)$ and the open embeddings \eqref{openembedding2}, taken over the poset of cones $\sigma$ of $F$ ordered by inclusion. \end{defn}

\begin{example} \label{example:classicalrealization} Suppose $F = (N,F)$ is a ``classical" fan, regarded as a KM fan as in Example~\ref{example:classicalfan}.  Then the realization $\AA(F)$ of $F$ defined above is the usual ``classical" realization.  This is because, for such a KM fan, one may take $L=N$ as the lifting of any lattice datum for any cone, making it clear that $$\AA(\sigma) = \AA(\sigma,L=N) = \AA( S_\sigma(N) )$$ is the usual realization. \end{example}

As in the usual theory of toric varieties, the structure maps $\AA(\sigma) \to \AA(F)$ to the direct limit are open embeddings satisfying \bne{intersectionformula} \AA(\sigma) \cap \AA(\tau) & = & \AA(\sigma \cap \tau).\ene  In particular, as $\sigma$ runs over the \emph{maximal} cones of $F$, the open substacks $\AA(\sigma)$ cover $\AA(F)$.  

\begin{rem} \label{rem:torusaction2} When $F$ is a lattice KM fan, we have an action of the torus $\GG(N^\lor)$ on each $\AA(\sigma)$ making the inclusions $\AA(\tau) \into \AA(\sigma)$ equivariant (see Remark~\ref{rem:torusaction1} and the final part of Lemma~\ref{lem:fundamental}), hence we obtain an action of the torus $T := \GG(N^\lor)$ on $\AA(F)$ so that each open substacks $\AA(\sigma)$ is $T$ invariant.  \end{rem}

Some features of this realization construction are summarized below.

\begin{prop} \label{prop:realization}  Given a KM fan $F$ and a choice of lifting of each lattice datum, we obtain a cover of the realization $\AA(F)$ by open substacks of the form $\AA(\sigma) = [ \AA(Q_\sigma) / \GG(E_\sigma) ] $ (one for each cone $\sigma$ in the fan $F$) where each $Q_\sigma$ is a toric monoid, each $E_\sigma$ is a finite (abelian) group, and $\GG(E_\sigma)$ acts on $\AA(Q_\sigma)$ via the realization of a group homomorphism $a_\sigma : Q_\sigma^{\rm gp} \to E_\sigma$.  In particular, $\AA(F)$ has a $\tau$-cover by a finite disjoint union of realizations of toric monoids.  It follows that the algebraic realization $X(F)$ of a KM fan is a Deligne-Mumford stack (we shall see in Corollary~\ref{cor:separated} that it is separated) with an \'etale cover by a finite disjoint union of affine toric varieties, hence it enjoys any property which is \'etale local in nature and which is enjoyed by any toric variety (e.g.\ it is normal, Cohen-Macaulay, etc.).  

Suppose furthermore that $F$ is a lattice KM fan.  Then each $a_\sigma$ is the cokernel map in an exact sequence \bne{realizationSES} 0 \to N^\lor \to Q_\sigma^{\rm gp} \to E_\sigma \to 0, \ene so $\GG(E_\sigma)$ acts as a subgroup of the torus $\GG(Q_\sigma^{\rm gp})$.  In this case the open cover is $T := \GG(N^\lor)$ equivariant (for the action of Remark~\ref{rem:torusaction2}), with $T$ acting on $[ \AA(Q_\sigma) / \GG(E_\sigma) ]$ via the natural action of Lemma~\ref{lem:quotients} coming from the realization $\GG$ of \eqref{realizationSES}.  When $\sigma=0$ is the zero cone, we can take $N$ itself as a lifting of the lattice datum $F_0=0$, thus we see that $\AA(0)=T$ (with the $T$ action given by left multiplication) is a $T$-invariant open substack of $\AA(F)$ (cf.\ the ``furthermore" in Lemma~\ref{lem:quotients}).  The open substack $T \subseteq \AA(F)$ is contained in each open substack $[ \AA(Q_\sigma) / \GG(E_\sigma) ]$ as $T= [ \GG(Q_\sigma^{\rm gp}) / \GG(E_\sigma) ]$.  In particular, it follows that $T = \Spec k[N^\lor]$ is dense in the algebraic realization $X(F)$ since the torus $\GG(Q_\sigma^{\rm gp}) = \Spec k[Q_\sigma^{\rm gp}]$ is dense in the affine toric variety $X(Q_\sigma) = \Spec k[Q_\sigma]$. \end{prop}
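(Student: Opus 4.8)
The plan is to peel back Definition~\ref{defn:realization} and Definition~\ref{defn:AAsigma} and then simply cite the lemmas already in place; this proposition is a consolidation of earlier results rather than a new argument. Having fixed a lifting $L_\sigma \subseteq N$ of each lattice datum $F_\sigma$, Definition~\ref{defn:AAsigma} presents $\AA(\sigma) = [\AA(S_\sigma(L_\sigma))/\GG(\EE(N/L_\sigma))]$, so I would set $Q_\sigma := S_\sigma(L_\sigma)$ and $E_\sigma := \EE(N/L_\sigma)$. That $Q_\sigma$ is a toric monoid with $Q_\sigma^{\rm gp} = L_\sigma^\lor$ follows from Lemma~\ref{lem:keyfact} (which makes $\sigma^\lor$ a cone with respect to $L_\sigma^\lor$, so $Q_\sigma = \sigma^\lor \cap L_\sigma^\lor$ is finitely generated and saturated in the lattice $L_\sigma^\lor$) together with the sharpness of $\sigma$ via Remark~\ref{rem:Ssigma}. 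That $E_\sigma$ is finite is immediate: $L_\sigma$ has finite index in $N$, so $N/L_\sigma$ is a finite abelian group and $\EE(N/L_\sigma)=\Ext^1(N/L_\sigma,\ZZ)$ is finite. The action of $\GG(E_\sigma)$ on $\AA(Q_\sigma)$ is, by the construction in Definition~\ref{defn:AAsigma}, the one attached to the homomorphism $a_\sigma : Q_\sigma^{\rm gp} = L_\sigma^\lor \to \EE(N/L_\sigma) = E_\sigma$ occurring in the fundamental sequence \eqref{fundamentalSES}, using the dictionary (recalled just before Proposition~\ref{prop:CZEtorsors}) between actions of an abelian group in $\Mon^{\rm op}$ and homomorphisms out of the associated group. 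The open-cover assertion then merely records Definition~\ref{defn:realization} and the observation (made just above the proposition) that the $\AA(\sigma)$, for $\sigma$ maximal, cover $\AA(F)$; since $F$ is finite, this is a finite set of charts.

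For the ``$\tau$-cover'' and Deligne--Mumford assertions I would note that $\AA(Q_\sigma)\to\AA(\sigma)$ is a $\GG(E_\sigma)$-torsor, and that in the algebraic setting $\GG(E_\sigma)=\Spec k[E_\sigma]$ is a finite \'etale group scheme because $\operatorname{char} k = 0$ (the group algebra of a finite abelian group over a characteristic-zero field is a product of fields); hence this torsor is a finite \'etale, in particular strict \'etale, surjection. Composing the finite disjoint union of these torsors (over maximal $\sigma$) with the Zariski cover $\coprod_{\sigma\ \mathrm{max}}\AA(\sigma)\to\AA(F)$—a $\tau$-cover by admissibility—gives the desired $\tau$-cover of $\AA(F)$ by a finite disjoint union of realizations of toric monoids. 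For $\Esp = \LogSch$ this is a representable \'etale surjective atlas $\coprod X(Q_\sigma)\to X(F)$, with each $X(Q_\sigma)=\Spec k[Q_\sigma]$ an affine toric variety, so $X(F)$ is a DM stack; the closing sentence of this part is just the remark that normality, Cohen--Macaulayness, and the like hold for affine toric varieties and are \'etale-local properties.

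For the lattice KM fan case I would observe that $N$ free forces $\EE(N)=0$, so \eqref{fundamentalSES} degenerates to the short exact sequence $0\to N^\lor\to Q_\sigma^{\rm gp}\to E_\sigma\to 0$, which is exactly \eqref{realizationSES} and exhibits $a_\sigma$ as the cokernel map; applying Proposition~\ref{prop:tautorsors} to it realizes $\GG(E_\sigma)$ as a subgroup of the torus $\GG(Q_\sigma^{\rm gp})$ with quotient $T:=\GG(N^\lor)$. The $T$-equivariance of the open cover is Remark~\ref{rem:torusaction2} (assembled from Remark~\ref{rem:torusaction1} and the last paragraph of Lemma~\ref{lem:fundamental}), and the description of the $T$-action on $[\AA(Q_\sigma)/\GG(E_\sigma)]$ via Lemma~\ref{lem:quotients} is precisely how that action was produced in Remark~\ref{rem:torusaction1}. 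For $\sigma=0$ one checks directly that any finite-index $L$—in particular $L=N$—is a lifting of $F_0=0$ (the condition $\Span\{0\}\cap L = \{0\} = F_0$ is automatic), that $S_0(N)=N^\lor$ and $\EE(N/N)=0$, so $\AA(0)=[\AA(N^\lor)/\GG(0)] = \GG(N^\lor)=T$ with $T$ acting by left multiplication (the ``furthermore'' in Lemma~\ref{lem:quotients}). Since $L_\sigma$ is simultaneously a lifting of $F_0$, the open embedding $\AA(0)\into\AA(\sigma)$ of \S\ref{section:KMfanrealization} is obtained by localizing $Q_\sigma=S_\sigma(L_\sigma)$ at the face $Q_\sigma\cap 0^\perp = Q_\sigma$, i.e.\ to $Q_\sigma^{\rm gp}$, and dividing by $\GG(E_\sigma)$; so $T=[\GG(Q_\sigma^{\rm gp})/\GG(E_\sigma)]$, which the above short exact sequence identifies with $\GG(N^\lor)$. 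Finally, for density: in the algebraic case $\Spec k[Q_\sigma^{\rm gp}]$ is dense in $X(Q_\sigma)=\Spec k[Q_\sigma]$ because $k[Q_\sigma]\into k[Q_\sigma^{\rm gp}]$ is an injective localization; the torsor $X(Q_\sigma)\to X(\sigma)$ is \'etale surjective, hence open, so it carries this dense open onto the dense open $T\subseteq X(\sigma)$; as the $X(\sigma)$ over maximal $\sigma$ form an open cover of $X(F)$ and $T$ is dense in (and contained in) each, $T=\Spec k[N^\lor]$ is dense in $X(F)$.

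I do not expect a single deep obstacle here; the part demanding the most care is the bookkeeping that the presentations $\AA(\sigma)=[\AA(Q_\sigma)/\GG(E_\sigma)]$, the various $T$-actions, and the identification $\AA(0)=T$ are all compatible with the gluing of Definition~\ref{defn:realization}—and this is handled by invoking the independence-of-lifting statement (Lemma~\ref{lem:fundamental}) and the equivariance in Remarks~\ref{rem:torusaction1}--\ref{rem:torusaction2}, not by any fresh computation. The only genuinely non-formal inputs are the classical fact that the torus is dense in an affine toric variety and the elementary but essential observation that a finite diagonalizable group scheme over a characteristic-zero field is \'etale, which is what upgrades the quotient presentations to a Deligne--Mumford atlas.
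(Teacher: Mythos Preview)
Your proposal is correct and matches the paper's approach: the paper presents this proposition explicitly as a summary (``Some features of this realization construction are summarized below'') with no separate proof, and your argument simply unpacks Definitions~\ref{defn:AAsigma} and \ref{defn:realization} and cites the supporting lemmas (Lemma~\ref{lem:fundamental}, Remarks~\ref{rem:torusaction1}--\ref{rem:torusaction2}, Lemma~\ref{lem:quotients}) exactly as intended. The only content beyond bookkeeping---that $\GG(E_\sigma)$ is finite \'etale in characteristic zero, and that the torus is dense in an affine toric variety---you identify correctly.
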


The realization $\AA(f) : \AA(F) \to \AA(F')$ of a \emph{morphism} of KM fans $f : (N,F,\{ F_\sigma \}) \to (N',F',\{ F'_\tau \})$ is defined as follows:  Let $\sigma$, $\tau$, $L'$ and $L$ be as in Lemma~\ref{lem:liftings}.  Dualizing the map of short exact sequences \bne{SESmapf}  & \xym{ 0 \ar[r] & L \ar[d]_f \ar[r] & N \ar[d]_f \ar[r] & N/L \ar[d] \ar[r] & 0 \\ 0 \ar[r] & L' \ar[r] & N' \ar[r] & N'/L' \ar[r] & 0, } \ene we obtain a map of exact sequences \bne{LESmapf} & \xym{ 0 \ar[r] & N^\lor \ar[r] & L^\lor \ar[r] & \EE(N/L) \ar[r] & \EE(N) \ar[r] & 0 \\ 0 \ar[r] & (N')^\lor \ar[u]_{f^\lor} \ar[r] & (L')^\lor \ar[u]_{f^\lor}  \ar[r] & \EE(N'/L') \ar[r] \ar[u] & \EE(N') \ar[r] \ar[u] & 0. } \ene  Since $f_{\RR}(\sigma) \subseteq f_{\RR}(\tau)$, the map $f^\lor : (L')^\lor \to L^\lor$ induces a map of monoids \bne{monoidmapf} f^\lor : S_\tau(L') & \to & S_\sigma(L). \ene  From the commutativity of the middle square in \eqref{LESmapf}, we see that the realization \bne{monoidmapfrealization} \AA(f^\lor) : \AA(S_\sigma(L)) & \to & \AA(S_\tau(L')) \ene of \eqref{monoidmapf} is equivariant with respect to the map of group objects \be \GG( \EE(N/L)) & \to & \GG(\EE(N'/L')), \ee hence it yields a map \bne{localf} \AA(\sigma,F,L) & \to & \AA(\tau,F',L') \ene between the corresponding quotient stacks.  Now, by following this story through the proof of Lemma~\ref{lem:fundamental}, one checks that the maps \eqref{localf} do not depend on the choice of lifting $L'$ or the choice of $L$ as in Lemma~\ref{lem:liftings}, so we obtain unambiguous maps \bne{localf2} \AA(\sigma,F) & \to & \AA(\tau,F'). \ene  One also sees readily that the ``local maps" \eqref{localf2} are compatible with the open embeddings \eqref{openembedding2} associated to inclusions of cones in $F$ and $F'$, hence we obtain an induced map $\AA(f) : \AA(F) \to \AA(F')$ on the direct limits, called the \emph{realization of} $f$.  When $N$ and $N'$ are lattices, we have $\EE(N)=\EE(N')=0$, and one can see from the commutativity of \eqref{LESmapf} and the naturality in Lemma~\ref{lem:quotients} that the realization of $f$ is equivariant with respect to the map $\GG(N^\lor) \to \GG((N')^\lor)$ induced by $f : N \to N'$.

Let us emphasize the following basic feature of this realization construction:  In the notation above, we have a commutative (but not generally cartesian) diagram of stacks over spaces \bne{localfsquare} & \xym{ \AA(S_\sigma(L)) \ar[r] \ar[d] & \AA(\sigma,F) \ar[d] \ar@{^(->}[r] & \AA(F) \ar[d]^{\AA(f)} \\ \AA(S_\tau(L')) \ar[r] & \AA(\tau,F') \ar@{^(->}[r] & \AA(F') } \ene where: \begin{enumerate} \item The leftmost vertical arrow is a map of ``affine toric varieties"---it is the map of spaces obtained via realization of an evident map of \emph{classical} fans (each with a unique maximal cone) lying over the map of lattices $L \to L'$. \item The left horizontal arrows are $\tau$-covers (``\'etale covers"), since each is a $\tau$-locally trivial torsor under some group object $\GG(A)$ for a finite abelian group $A$, and each $\GG(A)$ is a $\tau$-cover of the terminal object by our assumptions about $\tau$. \item The right horizontal arrows are open embeddings. \end{enumerate}  Questions about $\AA(f)$ which are ``\'etale local (on $\AA(f)$)" in nature can generally be reduced to questions about maps of ``toric varieties" (realizations of maps of \emph{classical} fans) by the consideration of the squares \eqref{localfsquare}.  We shall see many examples of this later.

\begin{rem} \label{rem:preimages} One can check, as in the usual theory of toric varieties, that the preimage $\AA(f)^{-1}(\AA(\tau))$ of the open substack $\AA(\tau) \subseteq \AA(F')$ is the open substack $\AA(F)_\tau$ of $\AA(F)$ given by the union of the ``basic" open substacks $\AA(\sigma)$ over all cones $\sigma$ of $F$ for which $f_{\RR}(\sigma) \subseteq \tau$.  The open embedding $\AA(F)_\tau \into \AA(F)$ is nothing but the realization of the inclusion of the KM subfan $F^\tau$ of $F$ whose cones are the cones $\sigma$ of $F$ with $f_\RR(\sigma) \subseteq \tau$ (the lattice data for $F^\tau$ are inherited from $F$). \end{rem}

\begin{example} \label{example:atomicopen}  Since every fan $F$ contains the zero cone, the realization of every KM fan $F = (N,F,\{ F_\sigma \})$ contains a distinguished open subspace $\AA(0) \subseteq \AA(F)$, which we shall call the \emph{atomic open}.  As is clear from the construction of realizations, the atomic open is the realization $\AA(N,0,0)$ of the KM fan $(N,0,0)$ attached to the group $N$ as in Example~\ref{example:grouptofan} and the inclusion $\AA(0) \subseteq \AA(F)$ is the realization of the unit $(N,0,0) \to (N,F, \{ F_\sigma \})$ of the adjunction described in that example.  As remarked above, the atomic open is functorial under maps of KM fans.  

Let us describe the realization $\AA(N,0,0)$ explicitly.  When $N$ is a lattice, we can take $N$ itself as a lifting of the lattice datum $F_0=0$ for the zero cone; thus we see that the atomic open $\AA(N,0,0)$ is the torus $\GG(N^\lor)$.  For a general $N$, we can \emph{choose} a section $s : \ov{N} \to N$ of the projection $N \to \ov{N} = N / N_{\rm tor}$ (i.e.\ a splitting $N \cong \ov{N} \oplus N_{\rm tor}$) and then use $L := s(\ov{N}) \subseteq N$  as a lifting of $F_0=0$.  Choosing $L$ in this manner ensures that the natural maps $N^\lor \to L^\lor$ and $\EE(N/L) \to \EE(N)$ are isomophisms and the natural map $L^\lor \to \EE(N/L)=\EE(N)=\EE(N_{\rm tor})$ is the zero map.  In terms of the construction of the realization $\AA(N,0,0)$ given above, this means that the action of $\GG(\EE(N/L))$ on $$\AA(S_0(L)) = \GG(L^\lor) = \GG(N^\lor) = \GG(\ov{N}^\lor)$$ is the trivial action, so that \be \AA(N,0,0) & \cong & [ \AA(S_0(L)) / \GG(\EE(N/L)) ] \\ & = & \GG(\ov{N}^\lor) \times B \GG(\EE(N_{\rm tor})) \ee is a trivial $\GG(\EE(N_{\rm tor}))$ gerbe over the torus $\GG(\ov{N}^\lor) = \AA(\ov{N},0,0)$.  The structure map $$\AA(N,0,0) \to \AA(\ov{N},0,0) = \GG(\ov{N}^\lor)$$ realizing $\AA(N,0,0)$ as such a trivial gerbe is natural in $N$ (it is just the realization of the map of KM fans $(N,0,0) \to (\ov{N},0,0)$ obtained from the natural map of groups $N \to \ov{N}$), but the \emph{trivialization} of this gerbe is \emph{not} canonical---it depends on the choice of splitting made above. \end{example}

\begin{defn} \label{defn:KMstack} A stack isomorphic to the realization of a KM fan (resp.\ lattice KM fan) will be called a \emph{KM stack} (resp.\ a \emph{toric KM stack}). \end{defn}

\section{Basic Results and Constructions} \label{section:basicresults}  In this section we will establish some fundamental results about KM fans (Definition~\ref{defn:KMfan}) and their realizations (\S\ref{section:KMfanrealization}).  All proofs which do not substantially differ from their ``classical" analogues will be left to the reader.

\subsection{Equidimensionality and reduced fibers}  \label{section:equidimensional} Here is a typical example of a statement about realizations of KM fans which is easily reduced to the ``classical case" by the method discussed at the end of \S\ref{section:KMfanrealization}:

\begin{prop} \label{prop:equidimensional} Let $f : (N,F,\{ F_\sigma \}) \to (N',F', \{ F'_{\sigma} \} )$ be a map of KM fans such that $\Cok( f : N \to N')$ is finite.  Then the following are equivalent: \begin{enumerate} \item For every cone $\sigma \in F$, the cone $\sigma' := f_{\RR}(\sigma)$ is in $F'$. \item All geometric fibers of the algebraic realization $X(f)$ of $f$ are of the same dimension. \end{enumerate}  Assume these equivalent conditions hold.  Then the following are equivalent: \begin{enumerate} \item For every cone $\sigma \in F$, the map $f : F_\sigma \to F'_{\sigma'}$ is surjective. \item All geometric fibers of the algebraic realization $X(f)$ of $f$ are reduced. \end{enumerate} \end{prop}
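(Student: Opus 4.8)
The plan is to reduce, exactly as in the proof of the first dichotomy and as indicated at the end of \S\ref{section:KMfanrealization}, to a statement about maps of ordinary toric varieties, and then to compute the relevant fibers explicitly as quotients of monoid algebras.

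\textbf{Step 1: reduction to the classical case.} Reducedness of all geometric fibers is a property of a morphism that descends along strict \'etale covers of the target (an \'etale morphism induces separable residue field extensions, which do not affect reducedness of a geometric fiber) and is checked locally on the source. Since the open substacks $X(\sigma)\subseteq X(F)$ over the maximal cones $\sigma\in F$ cover $X(F)$, and $X(f)$ carries $X(\sigma)$ into $X(\sigma')$ for $\sigma':=f_\RR(\sigma)\in F'$ (Remark~\ref{rem:preimages}, using that we are assuming the equivalent conditions of the first part), the squares \eqref{localfsquare} reduce the problem to: \emph{for which maps $\psi\colon\Lambda\to\Lambda'$ of lattices with finite cokernel, cones $\rho\subseteq\Lambda_\RR$, and $\rho':=\psi_\RR(\rho)$, does the induced map of affine toric varieties $X(S_\rho(\Lambda))\to X(S_{\rho'}(\Lambda'))$ have reduced geometric fibers?} One takes here $\Lambda=L$, $\Lambda'=L'$ for liftings $L$ of $F_\sigma$, $L'$ of $F'_{\sigma'}$ with $f(L)\subseteq L'$ (Lemma~\ref{lem:liftings}). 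The conditions we are assuming guarantee that $f_\RR$ carries every face $\rho\leq\sigma$ onto a \emph{face} $f_\RR(\rho)\leq\sigma'$ of $F'$ (a cone of $F'$ contained in $\sigma'$ is automatically a face of $\sigma'$), that $L$ lifts $F_\rho$ and $L'$ lifts $F'_{f_\RR(\rho)}$ (Remark~\ref{rem:liftings}), and hence that $\Span\rho\cap L=F_\rho$ and $\Span(f_\RR\rho)\cap L'=F'_{f_\RR(\rho)}$. Thus condition (1) becomes: for every face $\rho\leq\sigma$ (equivalently, for every cone of $F$), the map $\psi\colon\Span\rho\cap\Lambda\to\Span(\psi_\RR\rho)\cap\Lambda'$ is surjective.

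\textbf{Step 2: the classical fiber computation.} By torus equivariance every geometric fiber of $X(S_\rho(\Lambda))\to X(S_{\rho'}(\Lambda'))$ is isomorphic to the fiber over the distinguished point $x_{\theta'}$ of some face $\theta'\leq\rho'$, and I would cover the relevant part of the preimage of the chart $X(S_{\theta'}(\Lambda'))$ by the charts $X(S_\theta(\Lambda))$ over the faces $\theta\leq\rho$ with $\psi_\RR(\theta)=\theta'$ (the other charts do not meet that fiber, since $\psi_\RR$ sends faces to faces). For such a pair, the geometric fiber of $X(S_\theta(\Lambda))\to X(S_{\theta'}(\Lambda'))$ over $x_{\theta'}$ is $\Spec$ of $k[S_\theta(\Lambda)]$ modulo the ideal generated by the characters $\chi^{\psi^\lor(m')}$ with $m'\in S_{\theta'}(\Lambda')\setminus S_{\theta'}(\Lambda')^*$, together with relations $\chi^{\psi^\lor(m')}=1$ for $m'\in S_{\theta'}(\Lambda')^*=(\theta')^\perp\cap(\Lambda')^\lor$ (Remark~\ref{rem:Ssigma}); the latter relations only cut out a subtorus and are irrelevant to reducedness, so the fiber is reduced iff the monomial monoid ideal $J\subseteq S_\theta(\Lambda)$ generated by $\psi^\lor(S_{\theta'}(\Lambda')\setminus S_{\theta'}(\Lambda')^*)$ is radical, i.e.\ $n\in J$ whenever $pn\in J$ for some integer $p\geq 1$. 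Using $\psi^\lor_\RR((\theta')^\lor)=\theta^\lor\cap(\ker\psi_\RR)^\perp$ one identifies $\psi^\lor(S_{\theta'}(\Lambda'))$ with $S_\theta(\Lambda)\cap(\ker\psi_\RR)^\perp\cap\psi^\lor((\Lambda')^\lor)$; the minimal primes over $J$ are the face primes of $S_\theta(\Lambda)$ attached to the minimal subfaces $\nu\leq\theta$ with $\psi_\RR(\nu)=\theta'$, so $\sqrt J$ is the intersection of those primes, and the crux is the combinatorial identity: $J=\sqrt J$ iff for every face $\nu\leq\theta$ with $\psi_\RR(\nu)=\theta'$ the map $\psi\colon\Span\nu\cap\Lambda\to\Span\theta'\cap\Lambda'$ is surjective. (Surjectivity of $\psi$ on these spans is precisely dual to the condition that a lattice point of the appropriate cone in $(\Lambda')^\lor$ is \emph{hit} by $\psi^\lor$, rather than merely some multiple of it being hit, which is exactly what makes the $\nu$-graded piece of $J$ agree with that of $\sqrt J$.) Feeding Step~1 back in---where $\psi=f$, $\nu$ ranges over all cones of $F$, and $\psi_\RR(\nu)=f_\RR(\nu)$ is the cone of $F'$ that it lands on---these conditions over all $(\theta,\theta')$ amount exactly to condition (1), so $(1)\Leftrightarrow(2)$.

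\textbf{The main obstacle.} The reduction of Step~1 and all the bookkeeping with liftings and faces are routine with the machinery in place (\eqref{localfsquare}, Remark~\ref{rem:preimages}, Lemma~\ref{lem:liftings}, Remark~\ref{rem:liftings}); the substantive work is the monoid-algebra computation of Step~2, and inside it the combinatorial characterization of when $J$ is radical---correctly pinning down $\sqrt J$ and matching its graded pieces against those of $J$ via the surjectivity condition. A point that needs to be handled rather than ignored is the case where $\rho$ (or $\theta$) is not full-dimensional in $\Lambda_\RR$: there $S_\theta(\Lambda)$ has a nontrivial unit group $S_\theta(\Lambda)^*=\theta^\perp\cap\Lambda^\lor$, which $J$ does not meet, so the fiber is a torus times its ``essential'' part and the analysis goes through unchanged once this is noted.
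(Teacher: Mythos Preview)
Your Step~1 reduction is exactly the paper's proof: observe that reducedness (and equidimensionality) of geometric fibers is \'etale-local on source and target, then use the squares \eqref{localfsquare} and Remark~\ref{rem:preimages} to pass to the map of affine toric varieties $\AA(S_\sigma(L))\to\AA(S_\tau(L'))$ associated to a map of classical fans. The paper stops there and simply cites the Abramovich--Karu result \cite[4.1, 5.2]{AK}, \cite[5.2]{Kar} for the classical statement.

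Where you diverge is in Step~2: rather than cite Abramovich--Karu, you sketch a direct proof of the classical case by computing the fiber over each distinguished point $x_{\theta'}$ as a quotient of a monoid algebra by a monomial ideal $J$, identifying the minimal primes over $J$ with the minimal faces $\nu\leq\theta$ satisfying $\psi_{\RR}(\nu)=\theta'$, and asserting that $J=\sqrt J$ is equivalent to surjectivity of $\psi$ on the relevant span-lattices. This outline is correct and is essentially how the Abramovich--Karu argument proceeds; your identification of the minimal primes over $J$ (via the correspondence between faces of $S_\theta(\Lambda)$ and faces $\nu\leq\theta$, together with the observation that $G_\nu\cap J=\emptyset$ iff $\psi_{\RR}(\nu)=\theta'$) is right. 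What you leave as ``the crux'' --- the equivalence of $J=\sqrt J$ with the surjectivity condition on each minimal $\nu$ --- is precisely the substance of the cited result and would take a page or so to write out carefully (one has to match, for each minimal $\nu$, the $\nu^\perp$-graded pieces of $J$ and $\sqrt J$ using the dual characterization of surjectivity). Your approach buys a self-contained argument; the paper's buys brevity by outsourcing the toric-variety computation to the literature.
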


\begin{proof} The ``equidimensional" and ``reduced fibers" statements are all \'etale local on $X(f)$, so one can easily reduce to the case of classical KM fans by considering the diagrams \eqref{localfsquare} (running over all $\sigma, \tau$).  (Exercise!)  For classical fans, these are results of Abramovich and Karu \cite[4.1, 5.2]{AK} \cite[5.2]{Kar}. \end{proof}

\subsection{Coarse moduli space} \label{section:coarsemodulispace} Here we will show, among other things, that for a KM fan $F$, the algebraic realization $X(\pi)$ of the map $\pi : F \to \ov{F}$ discussed in Example~\ref{example:coarsefan} is the ``coarse moduli space" of the stack $X(F)$.  \emph{In this section we shall consider the algebraic realization \emph{without} log structure, to avoid confusion about the meaning of ``coarse moduli space."}  For clarity, we begin by recalling the basic ``Keel Mori Theorem" (see \cite{KeM} and \cite{Con}) concerning the existence and properties of coarse moduli spaces.  Although Keel and Mori (and especially Conrad) work with fairly general algebraic stacks (over fairly general bases), we restrict ourselves here to the case of Deligne-Mumford stacks (over a fixed base field, which will always be of characteristic zero in our applications).

\begin{thm}[Keel-Mori Theorem] \label{thm:cms} Let $\mathcal{X}$ be a Deligne-Mumford stack.  Then there exists an algebraic space $\ov{\mathcal{X}}$ and a morphism $\pi : \mathcal{X} \to \ov{\mathcal{X}}$ of stacks, called the ``coarse moduli space" of $\mathcal{X}$ with the following properties: \begin{enumerate} \item The map $\pi$ is initial among maps of stacks from $\mathcal{X}$ to an algebraic space.  (This characterizes $\pi$ up to unique isomorphism.) \item For any algebraically closed field $K$ (containing our base field), $\pi$ induces a bijection from the set of isomorphism classes in the category $\mathcal{X}(K)$ to the set $\ov{\mathcal{X}}(K)$ of $K$ points of $\ov{\mathcal{X}}$. \item $\ov{\mathcal{X}}$ is separated iff $\mathcal{X}$ is separated. \item The map $\pi$ is proper, quasi-finite, and a universal homeomorphism. \item For any flat (and locally finite type) map $f : \ov{\mathcal{X}}' \to \ov{\mathcal{X}}$, the base change $\pi' : \mathcal{X}' \to X'$ of $\pi$ along $f$ also satisfies the above properties. \end{enumerate} \end{thm}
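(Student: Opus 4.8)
The statement quoted is the classical Keel--Mori theorem, and for the generality we actually need (Deligne--Mumford stacks over a field of characteristic zero) the plan is to reduce everything to the local situation of a finite group quotient of an affine scheme and then glue. First I would invoke the local structure of Deligne--Mumford stacks: about any geometric point of $\mathcal{X}$ with automorphism group $G$ there is an affine scheme $\Spec A$ carrying an action of the finite group $G$ together with a representable \'etale morphism $[\Spec A/G]\to\mathcal{X}$ whose image contains that point; hence $\mathcal{X}$ admits an \'etale cover $\{[\Spec A_i/G_i]\to\mathcal{X}\}$ with each $A_i$ of finite type and each $G_i$ finite. (In the toric application to $X(F)$ this input is handed to us for free by Proposition~\ref{prop:realization}, which even produces a \emph{finite} \'etale cover by affine toric varieties.)

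Next I would analyze the local model $\mathcal{U}:=[\Spec A/G]$. Set $\ov{\mathcal{U}}:=\Spec(A^G)$ and let $\pi\colon\mathcal{U}\to\ov{\mathcal{U}}$ be the obvious map. The non-formal inputs are the standard facts of invariant theory: $A^G\to A$ is integral, and module-finite since $A$ is of finite type over the base field; $\Spec A\to\Spec A^G$ is surjective, closed, a topological quotient by $G$, and its formation commutes with flat base change on $\Spec A^G$. Granting these, (1) $\pi$ is initial among maps from $\mathcal{U}$ to an algebraic space, since such a map is a $G$-invariant map $\Spec A\to Y$ which, working Zariski-locally on $Y$, factors uniquely through $\Spec A^G$ by the universal property of invariants; (2) on $K$-points, the $G$-orbits on $(\Spec A)(K)$ biject with $(\Spec A^G)(K)$ by lying-over together with transitivity of $G$ on fibers; (3) $\pi$ is proper, quasi-finite and a universal homeomorphism, because $\Spec A\to\ov{\mathcal U}$ is finite surjective and $\Spec A\to\mathcal U$ is finite \'etale surjective, so properness descends along $\Spec A\to\mathcal U$, the fibers are finite, and $\pi$ is a homeomorphism on underlying spaces (both $\Spec A\to\mathcal U$ and $\Spec A\to\Spec A^G$ are topological quotients), with universality of the homeomorphism following from the flat-base-change compatibility of $A^G$; (4) locally the separatedness equivalence is trivial, since $\ov{\mathcal U}$ is affine and $[\Spec A/G]$ with $G$ finite and $\Spec A$ separated is separated, while the general equivalence follows by combining the properness of $\pi$ with the valuative criterion applied to $\Delta_{\mathcal X}$ and $\Delta_{\ov{\mathcal X}}$ (cf.\ \cite{Con}).

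Then I would glue. By initiality, formation of the coarse space of a finite group quotient commutes with \'etale base change, so along the fiber products $[\Spec A_i/G_i]\times_{\mathcal X}[\Spec A_j/G_j]$ the affine schemes $\ov{\mathcal{U}}_i=\Spec(A_i^{G_i})$ acquire canonical descent data; this is an \'etale equivalence relation whose quotient is an algebraic space $\ov{\mathcal{X}}$ equipped with $\pi\colon\mathcal{X}\to\ov{\mathcal{X}}$ restricting to the $\pi_i$. Properties (1)--(4) are \'etale-local on the target, so they transfer to $\pi$, and property (5) (stability under flat base change) reduces, via the local description, to the flat-base-change compatibility of $A^G$ already used.

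The main obstacle is the pair of foundational facts feeding the local analysis: the \emph{local structure theorem} presenting a Deligne--Mumford stack \'etale-locally as $[\Spec A/G]$ with $G$ finite (genuinely substantial in the generality of \cite{KeM}, but automatic in our situation by Proposition~\ref{prop:realization}), and the assertion that \emph{formation of the ring of invariants commutes with flat base change} --- it is this compatibility that simultaneously powers the gluing of step three, the universality of the homeomorphism in step two, and property (5); everything else is formal. Since the statement as quoted is exactly the theorem of Keel and Mori, in the paper we simply cite \cite{KeM} and \cite{Con}, the sketch above indicating the route relevant to the Deligne--Mumford case.
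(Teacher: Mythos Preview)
Your proposal is appropriate: the paper does not prove this theorem at all but simply states it as the classical Keel--Mori result with references to \cite{KeM} and \cite{Con}, exactly as you acknowledge in your final paragraph. Your sketch of the local-structure-plus-gluing argument is a reasonable summary of the Deligne--Mumford case, but it goes beyond what the paper itself provides, which is nothing more than the citation.
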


It should be noted that, strictly speaking, we never really need to appeal to this theorem anywhere in \S\ref{section:coarsemodulispace} since, for the stacks we will consider, we will directly construct $\pi$ and we \emph{could} (but don't) also directly verify all of its properties.  We shall reduce everything to the following simple setup:

\begin{setup} \label{setup:finitequotient} Let $P$ be a fine monoid, $E$ a \emph{finite} abelian group, $a : P^{\rm gp} \to E$ a group homomorphism, $Q$ the submonoid of $P$ consisting of those $p \in P$ for which $a(p)=0$.  In other words, $Q$ is defined by the equalizer diagram \bne{eqdiagram} & Q \to P \rightrightarrows P \oplus E \ene where the parallel arrows are given by the ``projection" $p \mapsto (p,0)$ and the ``action" $p \mapsto (p,a(p))$. \end{setup}

In the above setup, the monoid $Q$ is finitely generated because it is a general principle that the inverse limit of a finite diagram of finitely generated monoids is finitely generated.  Hence $Q$ is fine since, being a submonoid of the integral monoid $P$, it is integral.  For any $p \in P$, we have $|E|p \in Q$, so $Q$ is \emph{dense} in $P$ in the sense of \cite[\S1.7]{GM1}.  By Gordan's Lemma \cite[Theorem~1.7.2(2)]{GM1}, the inclusion $Q \into P$ is finite in the sense that $P$ is finitely generated as a $Q$-module.

Let $\AA$ be a realization functor from finitely generated monoids to spaces, satisfying the properties discussed in \S\ref{section:realizationofmonoids} (in particular, we assume that ``spaces" is equipped with a topology $\tau$ satisfying some reasonable properties).  Since the two compositions in \eqref{eqdiagram} agree, the same is true of the two compositions in its realization \bne{eqdiagramrealization} &  \AA(P) \times \GG(E) \rightrightarrows \AA(P) \to \AA(Q), \ene so we have a map of stacks \bne{finitequotientmap} [ \AA(P) / \GG(E) ] & \to & \AA(Q) . \ene  The results of this section reflect properties of this map of stacks for various ``realizations."  We record these properties as a series of remarks:

\begin{rem} \label{rem:algebraicCMS} Let us consider, for example, the case where $\AA=X$ is the algebraic realization (without log structure, over a field $k$ of characteristic zero, as usual).  In general, the monoid algebra functor \bne{monoidalg} k[ \slot ] : \Mon & \to & \Alg(k) \ene does not preserve equalizers.  However, it is clear from direct inspection that the diagram \bne{monoidalgeqdiagram} & k[Q] \to k[P] \rightrightarrows k[P \oplus E] \ene obtained by applying \eqref{monoidalg} to the equalizer diagram \eqref{eqdiagram} in Setup~\ref{setup:finitequotient} \emph{is} an equalizer diagram of $k$-algebras (equivalently: of sets)---all that matters here is that the parallel arrows in \eqref{eqdiagram} are both monic; the assumptions that $E$ is finite and that $k$ has characteristic zero are irrelevant.  It then follows from \cite[V.4.1]{SGA3} (see, in particular, the discussion of the affine case there) that $\Spec$ of the equalizer diagram \eqref{monoidalgeqdiagram} is a coequalizer diagram in schemes, locally ringed spaces, and ringed spaces (hence also on the level of topological spaces).  (Also in fppf sheaves on schemes, in fact.  Here we do need the finiteness of $E$ to ensure that the $k$-group scheme $\Spec k[E]$ is finite over $\Spec k$.)  In other words, $X(Q) = \Spec k[Q]$ is the scheme-theoretic (and the algebraic space theoretic, ringed space theoretic, etc.) quotient of $\Spec k[P]$ by the action of $\GG(E) = \Spec k[E]$, so \eqref{finitequotientmap} is the coarse moduli space of the global quotient stack $[X(P)/\GG(E)]$. \end{rem}

\begin{rem} \label{rem:fanCMS} It is also shown in \cite{G2}[\S2.8] that $\Spec$ of \eqref{eqdiagram} is an equalizer diagram in fans, locally monoidal spaces, and monoidal spaces (in particular, $\Spec P \to \Spec Q$ is a homeomorphism on topological spaces).  It follows that, when $\AA$ is the fan realization, the map \eqref{finitequotientmap} is the ``coarse moduli space" of $[ \AA(P) / \GG(E) ]$ in the sense that it is initial among maps from $[ \AA(P) / \GG(E) ]$ to a fan. \end{rem}

\begin{rem} \label{rem:diffrealization} When $\AA=Y$ is the \emph{differential} realization, the map \eqref{finitequotientmap} certainly does \emph{not} have any interpretation as a ``coarse moduli space".  Indeed, the differential realization $Y(E)$ of any finite abelian group $E$ is the trivial group object, so \eqref{finitequotientmap} is just the map of differentiable spaces $Y(P) \to Y(Q)$ induced by the inclusion $Q \into P$.  This is hardly ever an isomorphism.   For a simple example, take $P=\NN$, $a : \ZZ \to \ZZ / 2 \ZZ$ the projection.  Then $Q = 2 \NN$ and $Y(P) \to Y(Q)$ is the map $\RR_{\geq 0} \to \RR_{\geq 0}$ given by $x \mapsto x^2$, which is not a diffeomorphism.  On the other hand, in Setup~\ref{setup:finitequotient}, the realization $Y(P) \to Y(Q)$ of the inclusion $Q \into P$ is always a homeomorphism on the level of topological spaces.  This follows from the properties satisfied by $Q \into P$ mentioned in Setup~\ref{setup:finitequotient} using \cite[Lemma~5.8.1(5)]{GM1}. \end{rem}

\begin{prop} \label{prop:coarsemodulispace}  Let $F= (N,F,\{ F_\sigma \})$ be a KM fan, $\pi : F \to \ov{F}$ the map of KM fans defined in Example~\ref{example:coarsefan}.  Then, locally on $\AA(\ov{F})$, the map $\AA(\pi)$ is given by maps of the form \eqref{finitequotientmap} in Setup~\ref{setup:finitequotient}.  In particular: \begin{enumerate} \item If $\AA$ is the algebraic or fan realization, then $\AA(\pi)$ is the coarse moduli space of $\AA(F)$.  \item The algebraic realization $X(\pi)$ of $\pi$ is a proper map of Deligne-Mumford stacks.  \item The differential realization $Y(\pi)$ of $\pi$ is a homeomorphism on the level of topological spaces. \end{enumerate} \end{prop}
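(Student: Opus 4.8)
The plan is to reduce, via the open cover $\{ \AA(\ov\sigma) : \sigma \in F \}$ of $\AA(\ov F)$ (here $\ov\sigma$ denotes the cone of $\ov F$ corresponding to $\sigma$ under $N_\RR = \ov N_\RR$, as in Example~\ref{example:coarsefan}), to a purely local computation. First I would compute the preimage: by Remark~\ref{rem:preimages}, $\AA(\pi)^{-1}(\AA(\ov\sigma))$ is the union of the basic opens $\AA(\sigma')$ over cones $\sigma'$ of $F$ with $\pi_\RR(\sigma') \subseteq \ov\sigma$; since $\pi_\RR$ carries each $\sigma'$ isomorphically onto $\ov{\sigma'}$, these $\sigma'$ are precisely the faces of $\sigma$, so $\AA(\pi)^{-1}(\AA(\ov\sigma)) = \AA(\sigma)$. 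Fixing a lifting $L$ of $F_\sigma$, we have $\AA(\sigma) = [ \AA(S_\sigma(L)) / \GG(\EE(N/L)) ]$ (Definition~\ref{defn:AAsigma}), while $\AA(\ov\sigma) = \AA(S_{\ov\sigma}(\ov N))$, since $\ov F$ is a classical fan and $L' = \ov N$ (for which $\EE(\ov N) = 0$) serves as a lifting there (Example~\ref{example:classicalrealization}). Tracing the definition of the realization of a KM-fan morphism with these choices of $L, L'$ (note $\pi(L) \subseteq \ov N$ automatically), the restriction of $\AA(\pi)$ over $\AA(\ov\sigma)$ is the map $[ \AA(S_\sigma(L)) / \GG(\EE(N/L)) ] \to \AA(S_{\ov\sigma}(\ov N))$ induced by the monoid map $\pi^\lor \colon S_{\ov\sigma}(\ov N) \to S_\sigma(L)$.

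To identify this with a map of the form \eqref{finitequotientmap} I would set $P := S_\sigma(L)$, $E := \EE(N/L)$ (finite, since $N/L$ is), and let $a \colon P^{\gp} = L^\lor \to \EE(N/L)$ be the connecting map in \eqref{fundamentalSES}; by Definition~\ref{defn:AAsigma} this $a$ is exactly the homomorphism defining the $\GG(\EE(N/L))$-action on $\AA(S_\sigma(L))$. The one substantive point is that the equalizer $Q = \{ p \in P : a(p) = 0 \}$ equals $S_{\ov\sigma}(\ov N)$, compatibly with $\pi^\lor$. By exactness of \eqref{fundamentalSES} the kernel of $a$ is the image of $N^\lor \into L^\lor$, so $Q = S_\sigma(L) \cap N^\lor = \sigma^\lor \cap N^\lor$; and since $\Hom(N_{\rm tor}, \ZZ) = 0$ gives $N^\lor = \ov N^\lor$ canonically, with the pairings $N^\lor_\RR \times N_\RR$ and $\ov N^\lor_\RR \times \ov N_\RR$ identified, we get $\sigma^\lor = \ov\sigma^\lor$ and hence $Q = \ov\sigma^\lor \cap \ov N^\lor = S_{\ov\sigma}(\ov N)$, with $Q \into P$ being $\pi^\lor$. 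Thus over each $\AA(\ov\sigma)$ the map $\AA(\pi)$ is literally the map \eqref{finitequotientmap} of Setup~\ref{setup:finitequotient}, which is the main assertion.

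For the three consequences I would use that the $\AA(\ov\sigma)$ cover $\AA(\ov F)$, hence the $\AA(\sigma) = \AA(\pi)^{-1}(\AA(\ov\sigma))$ cover $\AA(F)$. (1) By Remarks~\ref{rem:algebraicCMS}--\ref{rem:fanCMS}, over $\AA(\ov\sigma)$ the map \eqref{finitequotientmap} is a coarse-moduli-space map; since formation of the coarse space commutes with the open immersions $\AA(\ov\sigma) \into \AA(\ov F)$ (by Theorem~\ref{thm:cms}(5) in the algebraic case, open immersions being flat, and similarly but more easily in the fan case), the canonical map from the coarse space of $\AA(F)$ to $\AA(\ov F)$ is an isomorphism over each member of the cover, hence globally. (2) Properness of $X(\pi)$ is local on the target, and over $X(\ov\sigma)$ the map \eqref{finitequotientmap} is proper --- either by Theorem~\ref{thm:cms}(4), or because $X(S_\sigma(L)) \to X(Q)$ is finite (Gordan's Lemma) while $X(S_\sigma(L)) \to [ X(S_\sigma(L)) / \GG(E) ]$ is a finite surjective fppf cover, so properness descends; both $X(F)$ and $X(\ov F)$ are Deligne--Mumford by Proposition~\ref{prop:realization} (indeed $X(\ov F)$ is a scheme). (3) For the differential realization $\GG(E) = Y(E)$ is the trivial group (Example~\ref{example:YG}), so over $Y(\ov\sigma)$ the map \eqref{finitequotientmap} is simply $Y(S_\sigma(L)) \to Y(S_{\ov\sigma}(\ov N))$ induced by $Q \into P$, which is a homeomorphism on underlying spaces by Remark~\ref{rem:diffrealization}; since a continuous map which restricts to homeomorphisms over the members of an open cover of the target (with the correct preimages) is itself a homeomorphism, $Y(\pi)$ is a homeomorphism. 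The only genuinely delicate part is the bookkeeping in the second paragraph --- matching the $\GG$-action and the monoid map produced by the definition of a KM-fan morphism's realization with the data of Setup~\ref{setup:finitequotient} --- but all of it is dictated by the computation $\ker a = N^\lor$ coming from \eqref{fundamentalSES}.
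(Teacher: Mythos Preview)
Your proof is correct and follows essentially the same route as the paper's: reduce to the basic opens $\AA(\ov\sigma)$, compute preimages, choose the liftings $L$ and $L'=\ov N$, and identify $S_{\ov\sigma}(\ov N)$ with the kernel submonoid $Q\subseteq S_\sigma(L)$ via the exact sequence \eqref{fundamentalSES} (the paper rewrites this as the diagram \eqref{LESmappi}, but the content is identical). Your treatment of the three consequences is slightly more explicit than the paper's---in particular your appeal to Theorem~\ref{thm:cms}(5) to glue the local coarse-moduli-space identifications---but the argument is the same.
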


\begin{proof}  Note that $\pi_{\RR}$ is a isomorphism from the set of cones in $F$ (ordered by inclusion) to the set of cones in $\ov{F}$ (ordered by inclusion).  Let $\sigma$ be a cone of $F$, $\ov{\sigma}$ the corresponding cone of $\ov{F}$.  The aforementioned isomorphy ensures that the open substack $\AA(\sigma,F) \subseteq \AA(F)$ is the preimage of the open subspace $\AA(\ov{\sigma},\ov{F}) \subseteq \AA(\ov{F})$ under $\AA(\pi)$ (cf.\ Remark~\ref{rem:preimages}).  The statements we want to prove are local on $\AA(\ov{F})$, so we reduce to proving them for \bne{CMS} \AA(\pi) | \AA(\sigma,F) : \AA(\sigma,F) & \to & \AA(\ov{\sigma},\ov{F}). \ene  Since $\ov{F}$ ``is" a classical fan, we may take $\ov{L} = \ov{N}$ as a lifting of the lattice datum $\ov{F}_{\ov{\sigma}} = \ov{N} \cap \Span \ov{\sigma}$ of $\ov{\sigma}$.  Pick any lifting $L \subseteq N$ of the lattice datum $F_\sigma$ (Lemma~\ref{lem:liftingsexist}).  Obviously $\pi(L) \subseteq \ov{L} = \ov{N}$, so we can calculate \eqref{CMS} as \bne{CMS2} \AA(\sigma,F,L) & \to & \AA(\ov{\sigma},\ov{F},\ov{L}=\ov{N}). \ene 

To do this, we run through the construction of \eqref{CMS2} in \S\ref{section:KMfanrealization}.  In our situation, the diagram \eqref{SESmapf} in that section takes the form\bne{SESmappi}  & \xym{ 0 \ar[r] & L \ar[d]_\pi \ar[r] & N \ar[d]_\pi \ar[r] & N/L \ar[d] \ar[r] & 0 \\ 0 \ar[r] & \ov{L} \ar@{=}[r] & \ov{N} \ar[r] & 0 \ar[r] & 0 } \ene and its ``dual" \eqref{LESmapf} takes the form \bne{LESmappi} & \xym{ 0 \ar[r] & N^\lor \ar[r] & L^\lor \ar[r] & \EE(N/L) \ar[r] & \EE(N) \ar[r] & 0 \\ 0 \ar[r] & \ov{N}^\lor \ar[u]_{\pi^\lor}^{\cong} \ar@{=}[r] & \ov{L}^\lor \ar[u]_{\pi^\lor}  \ar[r] & 0 \ar[r] \ar[u] & 0 \ar[r] \ar[u] & 0. } \ene  The left vertical arrow is an isomorphism because $\pi : N \to \ov{N}$ is just the quotient of $N$ by its torsion subgroup.  This implies that \bne{SESpi} & 0 \to \ov{L}^\lor \to L^\lor \to \EE(N/L) \ene is exact.  Since $\pi_{\RR}$ takes $\sigma$ bijectively onto $\ov{\sigma}$ we see from exactness of \eqref{SESpi} that $S_{\ov{\sigma}}(\ov{L})$ is nothing but the submonoid of $S_{\sigma}(L)$ consisting of those elements mapping to zero under $S_{\sigma}(L)^{\rm gp} = L^\lor \to \EE(N/L)$.  On the other hand, the map \eqref{CMS2} is given by \bne{CMS3} [ \AA(S_{\sigma}(L)) / \GG(\EE(N/L)) ] & \to & \AA(S_{\ov{\sigma}}(\ov{L})). \ene  This proves the first assertion.

The statements about the algebraic and fan realizations follow from the remarks preceding the theorem and general properties of coarse moduli spaces.  In the case of the differential realization, \eqref{CMS3} is just the map \bne{CMS4} Y(S_{\sigma}(L)) & \to & Y(S_{\ov{\sigma}}(\ov{L}))  \ene because $Y(\EE(N/L))$ is the trivial group object since $\EE(N/L)$ is finite.  The map \eqref{CMS4} is a homeomorphism on topological spaces by Remark~\ref{rem:diffrealization}.  This proves the final assertion.
\end{proof}

\begin{cor} \label{cor:separated} For any KM fan $F$, the algebraic realization $X(F)$ of $F$ is separated and the topological space underlying the differential realization $Y(F)$ is Hausdorff. \end{cor}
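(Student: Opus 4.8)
The plan is to reduce both claims to classical toric geometry via the coarse fan $\ov F$ and the map $\pi : F \to \ov F$ of Example~\ref{example:coarsefan}, in exactly the spirit of Proposition~\ref{prop:coarsemodulispace}. The ingredients I would use are already available: $X(F)$ is a Deligne--Mumford stack (Proposition~\ref{prop:realization}) with $X(\pi) : X(F) \to X(\ov F)$ its coarse moduli space, $Y(\pi) : Y(F) \to Y(\ov F)$ is a homeomorphism on underlying topological spaces, and, since $\ov F$ ``is'' a classical fan, $X(\ov F)$ and $Y(\ov F)$ are the usual toric realizations; all of this comes from Proposition~\ref{prop:coarsemodulispace}. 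Note there is no circularity: the Deligne--Mumford property of $X(F)$ does not rely on separatedness. Log structures affect neither separatedness nor Hausdorffness, so I would discard them throughout, exactly as in Proposition~\ref{prop:coarsemodulispace}.

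For the algebraic statement the argument I would give is immediate. The classical toric variety $X(\ov F)$ attached to the genuine fan $\ov F$ is a separated scheme --- the fan axiom that $\sigma\cap\tau$ be a face of both $\sigma$ and $\tau$ is precisely what makes the toric gluing separated (see \cite{CLS}). By the Keel--Mori Theorem (Theorem~\ref{thm:cms}(3)), a Deligne--Mumford stack is separated if and only if its coarse moduli space is; since the coarse moduli space $X(\ov F)$ of $X(F)$ is separated, so is $X(F)$.

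For the differential statement, the homeomorphism $Y(\pi)$ reduces us to proving that the space underlying $Y(\ov F)$ is Hausdorff, where $\ov F$ is now a classical fan in $\ov N$. Here I would use that, taking $\ov N$ as a lifting of every lattice datum, $Y(\ov F)$ is glued from the affine differentiable spaces $Y(\sigma):=Y(S_\sigma(\ov N))$, $\sigma\in\ov F$, along the open embeddings $Y(\tau)\into Y(\sigma)$ for $\tau\leq\sigma$, with $Y(\sigma)\cap Y(\tau)=Y(\sigma\cap\tau)$ by \eqref{intersectionformula}. Each $Y(\sigma)$ is Hausdorff, since a finite generating set of $S_\sigma(\ov N)$ realizes it as a closed subspace of a Euclidean space. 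By the standard criterion --- a space glued from Hausdorff open subspaces $U_i$ is Hausdorff iff every $U_i\cap U_j\to U_i\times U_j$ is closed --- it then suffices to show the diagonal $Y(\sigma\cap\tau)\to Y(\sigma)\times Y(\tau)=Y(S_\sigma(\ov N)\oplus S_\tau(\ov N))$ is a closed embedding of topological spaces. This map is the differential realization of the monoid homomorphism $S_\sigma(\ov N)\oplus S_\tau(\ov N)\to S_{\sigma\cap\tau}(\ov N)$, $(m,n)\mapsto m+n$, with image $S_\sigma(\ov N)+S_\tau(\ov N)$; by the classical separation lemma this sum is all of $S_{\sigma\cap\tau}(\ov N)$ inside $\ov N^\lor$ because $\sigma\cap\tau$ is a common face. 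So the diagonal is the differential realization of a surjection of fine monoids.

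The one point needing care --- and the main (modest) obstacle --- is the remaining claim that the differential realization carries a surjection of fine monoids to a closed embedding of (log) differentiable spaces (this is also what underlies the Hausdorffness of each chart $Y(\sigma)$ above). I would establish it directly, by exhibiting the target's underlying space as the subspace of the source's cut out by the finitely many equations $\varphi(p)=\varphi(q)$ over pairs $p,q$ with equal image --- in analogy with the closed embedding $\RR_{\geq0}\into\RR$ recorded among the properties of $\DS$ --- or simply by citing the relevant statement of \cite{GM1}. Everything else is a direct appeal to the Keel--Mori Theorem together with the classical separatedness of toric varieties.
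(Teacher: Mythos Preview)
Your proposal is correct. The algebraic half is essentially the paper's argument: both reduce to separatedness of the classical toric variety $X(\ov F)$ via the coarse moduli space map; the paper phrases the transfer as ``$X(\pi)$ is proper, hence separated, so $X(F)$ is separated,'' while you invoke Theorem~\ref{thm:cms}(3) directly, which amounts to the same thing.

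The differential half is where the two diverge. After the common reduction via the homeomorphism $Y(\pi)$, the paper does not argue Hausdorffness of $Y(\ov F)$ intrinsically. Instead it observes that $Y(\ov F)$ sits as a closed subspace of the analytic space $X(\ov F)^{\rm an}$, which is Hausdorff because $X(\ov F)$ is separated---thereby bootstrapping from the algebraic statement just proved. Your route is a direct, self-contained gluing argument using the classical separation lemma $S_\sigma + S_\tau = S_{\sigma\cap\tau}$ and the fact that $Y$ of a surjection of fine monoids is a topological closed embedding. Your argument is longer and requires the lemma you flag (which is indeed straightforward: choosing generators of $P$ embeds both $Y(P)$ and $Y(Q)$ as closed subspaces of the same $\RR_{\geq 0}^n$, with $Y(Q)$ cut out by the additional relations). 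The paper's approach is slicker but relies on the analytification comparison $Y(\ov F)\hookrightarrow X(\ov F)^{\rm an}$ being a closed embedding, which it takes as known (and uses again later, in Proposition~\ref{prop:properness}); your approach avoids that dependence entirely and stays within the differentiable category.
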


\begin{proof}  Recall from Example~\ref{example:classicalrealization} that $X(\ov{F})$ is the ``usual" realization of the classical fan $\ov{F}$, so it is a toric variety, hence, in particular, separated, so $X(F)$ is separated since the coarse moduli space map $X(\pi)$ is proper, hence separated.  Similarly, Proposition~\ref{prop:coarsemodulispace} says that $Y(\pi)$ is a homeomorphism, so for the second statement we can reduce to the case where $F$ is a classical fan, in which case $Y(F)$ is Hausdorff because it is a closed subspace of the analytic topological space $X(F)^{\rm an}$ underlying $X(F)$, which is Hausdorff because $X(F)$ is separated.  \end{proof}

\begin{example} \label{example:P22coarsening} Let $F$ be the KM fan described in Example~\ref{example:P22} and let $\pi : F \to \ov{F}$ be the coarse fan map described in Example~\ref{example:coarsefan}, so that $X(F) \to X(\ov{F})$ is the coarse moduli space of $X(F)$ by Proposition~\ref{prop:coarsemodulispace}.  The classical fan $\ov{F}$ is the fan for $\PP^1$, so $X(\ov{F}) = \PP^1$.  For either of the maximal cones $\sigma_{\pm}$ of $F$, the lattice datum $F_{\pm}$ is of finite index in $N = \ZZ \oplus \ZZ / 2 \ZZ$---indeed, the quotient $N / F_{\pm}$ is isomorphic to $\ZZ / 2 \ZZ$ and the projection $N \to N / F_{\pm}$ induces an isomorphism $\EE(N/F_{\sigma}) \to \EE(N)$ (both are isomorphic to $\ZZ / 2 \ZZ$).  Furthermore, $F_{\pm}$ maps isomorphically via $\pi$ onto the corresponding lattice datum $\ZZ$ for $\sigma_{\pm}$ in the classical fan $\ov{F}$.  By construction of the realization, we see that $\AA(\sigma_{\pm},F) \to \AA(\sigma_{\pm},\ov{F})$ is isomorphic to the obvious projection \be \AA^1 \times B \GG( \ZZ / 2 \ZZ) & \to & \AA^1 . \ee  In particular, since the group object $\GG( \ZZ / 2 \ZZ)$ is trivial in the differential realization, we see that the differential realization $Y(\pi)$ of $\pi$ is an isomorphism of log differentiable spaces.  \end{example}

\subsection{Rigidification and root stacks} \label{section:rigidificationandrootstacks}  We will now give geometric interpretations of some of the examples from \S\ref{section:examples}.  The details here will be left to the interested reader.

\begin{example} \label{example:rigidificationrealization}  Let $F = (N,F,\{ F_\sigma \})$ be a KM fan.  For any lattice $L$ of finite index in $N$, the natural map $N_{\rm tor} \to N/L$ is an \emph{injection} of finite abelian groups because $L \cap N_{\rm tor} = 0$ since $L$ is torsion-free, hence we have a surjection $\EE(N/L) \to \EE(N_{\rm tor})$ of finite abelian groups, and hence a closed embedding of finite abelian group schemes from $G := \GG(\EE(N_{\rm tor}))$ into $\GG(\EE(N/L))$.  From the construction of the algebraic realization $X(F)$, we see that $G$ is a (normal) subgroup of the inertia group scheme of $X(F)$ because, locally, $X(F)$ is a global quotient by an abelian group scheme containing $G$ as a subgroup acting trivially.  Let $F^{\rm rig} := (\ov{N},\ov{F}, \{ q(F_\sigma) \} )$ be the rigidification of $F$ as in Example~\ref{example:rigidification}.  The algebraic realization of the map of KM fans $F \to F^{\rm rig}$ can be interpreted as the rigidification of $X(F)$ with respect to $G$. \end{example}

\begin{example} \label{example:rootstacksrealization}  Let $F$, $\rho_1,\dots,\rho_n \in F$, $a=(a_1,\dots,a_n) \in \ZZ_{>0}^n$, and $aF$ be as in Example~\ref{example:rootstacks}.  The algebraic realization $X(aF) \to X(F)$ of the map of (smooth) KM fans $aF \to F$ is the fibered product, over $X(F)$, of the roots stacks of $X(F)$ along the (Cartier) divisors $D_i \subseteq X(F)$ corresponding to the rays $\rho_i$: \be X(aF) & = &  \sqrt[a_1]{(X(F),D_1)} \times_{X(F)} \cdots \times_{X(F)} \sqrt[a_n]{(X(F),D_n)}. \ee  We are not aware of any simple geometric interpretation of the geometric realization of the dilation construction of Example~\ref{example:dilation}. \end{example}

\subsection{Star fans and stratification} \label{section:starfansandstratification}  In this section and the next, we explain the analogue of the ``star fan" construction familiar from the classical theory of toric varieties.  This is used to describe (certain) ``torus invariant subvarieties of a toric variety."

Let $F = (N,F,\{ F_\sigma \})$ be a KM fan.  Fix a cone $\tau \in F$ for the remainder of the section.  We define a KM fan $\Star(\tau,F)$ (or just $\Star(\tau)$ if there is no chance of confusion) called the \emph{star of} $\tau$ \emph{in} $F$ as follows:  The ``lattice" of $\Star(\tau)$ is $N/F_\tau$, the cones of $\Star(\tau)$ are the subsets $\ov{\sigma} := \sigma / \Span \tau$ of $(N/F_\tau)_\RR = N_{\RR} / \Span \tau$ given by the images of all cones $\sigma$ of $F$ containing $\tau$; the lattice datum for such a cone $\sigma / \Span \tau$ is the image $F_\sigma / F_\tau$---note that this is torsion free because the compatibility condition $F_\tau = F_\sigma \cap \Span \tau$ ensures that $F_\tau$ is saturated in $F_\sigma$.

Notice that the ``lattice" $N/F_\tau$ for $\Star(\tau)$ may not be torsion-free even when $N$ is torsion-free because the inclusion $F_\tau \subseteq N$ may not be saturated (equivalently, the finite index inclusion $F_\tau \into N_\tau$ may not be an equality).  This is one of many points in the theory of KM fans where one must allow $N$ to have torsion in order to obtain a satisfactory theory.

Suppose $\sigma$ is a cone of $F$ containing our fixed cone $\tau$ and $L$ is a lifting of $F_\sigma$.  The compatibility condition on the lattice data $F_\tau$ and $F_\sigma$ and the fact that $L$ is a lifting of $F_\sigma$ ensure that \be L \cap \Span \tau & = & F_\tau. \ee  In particular $L/F_\tau$ is a lattice contained in $N/F_\tau$, which is readily seen to be a lifting of the lattice datum $F_\sigma / F_\tau$ for the cone $\ov{\sigma}$ in the KM fan $\Star(\tau)$.  Furthermore, we have a map of exact sequences \bne{JK} & \xym{ 0 \ar[r] & L \ar[d] \ar[r] & N \ar[d] \ar[r] & N/L \ar@{=}[d] \ar[r] & 0 \\ 0 \ar[r] & L/F_\tau \ar[r] & N/F_\tau \ar[r] & N/L \ar[r] & 0.} \ene  

It is immediate from the definitions that the inclusion $(L/F_\tau)^\lor \into L^\lor$ takes the submonoid $S_{\ov{\sigma}}(L/F_\tau)$ of $(L/F_\tau)^\lor$ isomorphically onto the face $\tau^\perp \cap S_{\sigma}(L)$ of $S_\sigma(L)$ corresponding to $\tau \leq \sigma$.  We thus view $S_{\ov{\sigma}}(L/F_\tau)$ as a face of $S_\sigma(L)$.  Like the inclusion $F \into P$ of any face $F$ of any monoid $P$, the aforementioned face inclusion has a retract \bne{faceretract} S_\sigma(L)_* & \to & S_{\ov{\sigma}}(L/F_\tau)_* \ene once we pass to the associated \emph{pointed monoids} by adjoining an element $\infty$ with the obvious addition law $p+\infty = \infty$ for all $p$.  (The retract $P_* \to F_*$ of $F_* \to P_*$ takes $p$ to $p$ when $p \in F$ and to $\infty$ when $p \notin F$.)

The map of pointed monoids \eqref{faceretract} can be ``realized" in various ``categories of spaces" such as the category of ``pointed fans" (also called ``monoid schemes" by Weibel and others), the category of schemes, and the category of differentiable spaces, etc.\ (We always assume that the pointed realization $\AA_*(P_*)$ of the ``pointing" of a monoid $P$ coincides with the given realization $\AA(P)$.)  For example, the (pointed) realization of \eqref{faceretract} in schemes (over our base field $k$, let's say) is given by $\Spec$ of the map of $k$-algebras \bne{kfaceretract} k[S_\sigma(L)] & \to & k[S_{\ov{\sigma}}(L/F_\tau)] \ene defined by taking $[p] \in k[S_\sigma(L)]$ to $[p] \in k[S_{\ov{\sigma}}(L/F_\tau)]$ whenever $p \in S_{\ov{\sigma}}(L/F_\tau) \leq S_\sigma(L)$ and by taking $[p]$ to $0$ whenever $p \in S_\sigma(L) \setminus S_{\ov{\sigma}}(L/F_\tau)$.  Note that the map of $k$-algebras \eqref{kfaceretract} does in fact retract the map of $k$-algebras associated to the face inclusion $S_{\ov{\sigma}}(L/F_\tau) \into S_\sigma(L)$.  (This is because the latter map of $k$-algebras is the pointed realization of the pointing $S_{\ov{\sigma}}(L/F_\tau)_* \into S_\sigma(L)_*$ of the aforementioned face inclusion.)

\begin{rem} \label{rem:pointedrealization} There is no reasonable way to realize the face retract \eqref{faceretract} in the category of \emph{log} schemes (or \emph{log} differentiable spaces).  One could make sense of this realization in an appropriate category of \emph{pointed} log schemes (or pointed log differentiable spaces), but we shall not pursue this here. \end{rem}

Let us record this sort of ``pointed realization" of \eqref{faceretract} as a map of ``spaces" \bne{Afaceretract} \AA(S_{\ov{\sigma}}(L/F_\tau)) & \to & \AA( S_\sigma(L) ), \ene keeping in mind that it can only be defined for \emph{certain} realizations $\AA$.  Let us now concentrate for a moment on the case where $\AA$ is either the algebraic or the differential realization (\emph{without log structures}).  In this case, \eqref{Afaceretract} is a closed embedding since it is the (pointed) realization of the surjection of (pointed) monoids \eqref{faceretract}.  Denote by $\tilde{\sigma} = (L, [ \sigma ], \{ F_\tau \cap L \})$ the classical fan whose lattice is $L$, whose set of cones $[ \sigma ]$ is the set of faces of $\sigma$, and whose realization is $\AA(S_\sigma(L)) = \AA(\tilde{\sigma})$.  Then the classical fan whose lattice is $L/F_\tau$ and whose realization is $\AA(S_{\ov{\sigma}}(L/F_\tau))$ is nothing but $\Star(\tau,\tilde{\sigma})$ and \eqref{Afaceretract} is nothing but the closed embedding \bne{Afaceretract2} \AA(\Star(\tau,\tilde{\sigma})) & \to & \AA(\tilde{\sigma}) \ene well-known in the ``classical" theory of (affine) toric varieties \cite[\S3.2]{CLS}.  In particular, the ``torus" (that is, the ``atomic open") $$ \AA(0,\Star(\tau,\tilde{\sigma})) = \AA(L/F_\tau,0,0) = \GG((L/F_\tau)^\lor) $$ (cf.\ Example~\ref{example:atomicopen}) is a locally closed subspace of $\AA(\tilde{\sigma})$.  It is well-known from the classical theory that if we fix $\sigma$ and let $\tau$ run over all subcones of $\sigma$, then the locally closed subspaces $\AA(L/F_\tau,0,0)$ form a stratification \bne{classicalstratification} \AA(\tilde{\sigma}) & = & \coprod_{\tau \leq \sigma} \AA(L/F_\tau,0,0) \ene of $\AA(\tilde{\sigma})$, corresponding to the fact that the submonoids $S_{\ov{\sigma}}(L/F_\tau)$ are precisely the faces of $S_\sigma(L)$.

In any situation where we can form the pointed realization, the commutativity of \eqref{JK} ensures that \eqref{Afaceretract} (like the realization of the corresponding face \emph{inclusion} it is retracting) is $\GG(\EE(N/L))$ equivariant, so it induces a map \bne{starmaplocal} [ \AA(S_{\ov{\sigma}}(L/F_\tau)) / \GG(\EE(N/L)) ] & \to & [ \AA( S_\sigma(L) ) / \GG(\EE(N/L)) ]. \ene  Using \eqref{JK}, we can interpret \eqref{starmaplocal} as a map \bne{starmaplocal2} \AA( \ov{\sigma}, \Star(\tau)) & \to & \AA(\sigma,F). \ene  In the algebraic or differential setting, the map \eqref{starmaplocal2} is a closed embedding since it is the stack quotient of the $\GG(\EE(N/L))$ equivariant closed embedding \eqref{Afaceretract}.  Similarly, if we fix $\sigma$ and let $\tau$ run over the subcones of $\sigma$, then the atomic opens $$\AA(0,\Star(\tau,[\sigma])) = \AA(N/F_\tau,0,0)$$ (here we write $[\sigma]$ for the KM fan whose cones are the subcones of $\sigma$ and whose lattice data are inherited from $F$) form a stratification \bne{localstratification} \AA(\sigma,F) & = & \coprod_{\tau \leq \sigma} \AA(N/F_\tau,0,0) \ene of $\AA(\sigma,F)$ which is nothing but the stack-theoretic quotient of \eqref{classicalstratification} by $\GG(\EE(N/L))$.

By running over all cones $\sigma$ of $F$ containing $\tau$ and noting that the maps \eqref{starmaplocal2} are compatible with inclusions (of cones of $F$ containing $\tau$), we obtain a map of stacks \bne{starmap} \AA(\Star(\tau,F)) & \to & \AA(F). \ene  Since the question is local on $\AA(F)$, we see that (in the differential or algebraic situation), \eqref{starmap} is a closed embedding satisfying \bne{starmapintersection} \AA(\Star(\tau,F)) \cap \AA(\sigma,F) & = & \AA(\Star(\tau,[\sigma])). \ene  Combining the discussion thus far with the formula \eqref{starmapintersection} and the formula \eqref{intersectionformula} for the intersections of the $\AA(\sigma,F)$, we obtain:

\begin{prop} \label{prop:stratification} Let $F$ be a KM fan and let $\AA$ denote either the algebraic or differential realization (\emph{without log structures}).  The atomic opens in the various $\AA(\Star \tau)$ determine a stratification \bne{KMfanstratification} \AA(F) & = & \coprod_{\tau \in F} \AA(N/F_\tau,0,0) \ene of $\AA(F)$ by locally closed substacks.  The stratum $\AA(N/F_\tau,0,0) = \AA(0,\Star(\tau))$ is a (non-canonically) trivial gerbe over the ``torus" $\GG((N/F_\tau)^\lor)$ banded by $\GG(\EE(N/F_\tau))$ (Example~\ref{example:atomicopen}).  The closed substack $\AA(\Star(\tau))$ of $\AA(F)$ is the smallest closed substack of $\AA(F)$ containing the stratum $\AA(0,\Star \tau)$ corresponding to $\tau$. \end{prop}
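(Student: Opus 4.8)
\section*{Proof proposal for Proposition~\ref{prop:stratification}}

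The plan is to assemble the global stratification \eqref{KMfanstratification} by gluing the local stratifications \eqref{localstratification} of the affine charts $\AA(\sigma,F)$ that were established in the discussion preceding the statement, then to read off the gerbe description from Example~\ref{example:atomicopen} and the closure assertion from a density computation on those charts. Throughout, $\AA$ denotes the algebraic or differential realization (without log structures), so that the maps \eqref{starmap} are closed embeddings and each $\AA(\sigma,F)$ already carries the stratification \eqref{localstratification}.

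First I would record that each putative stratum $\AA(N/F_\tau,0,0) = \AA(0,\Star\tau)$ is a locally closed substack of $\AA(F)$: it is the atomic open of $\AA(\Star\tau)$, since the zero cone $\ov\tau = 0$ is a face of every cone of the KM fan $\Star\tau$, hence an open substack of the closed substack $\AA(\Star\tau)\into\AA(F)$. The key compatibility is the intersection formula \eqref{starmapintersection}: $\AA(\Star\tau)\cap\AA(\sigma,F) = \AA(\Star(\tau,[\sigma]))$, which is empty unless $\tau\le\sigma$ and, when $\tau\le\sigma$, is an open substack of $\AA(\sigma,F)$ whose atomic open $\AA(0,\Star(\tau,[\sigma]))$ equals $\AA(0,\Star\tau)$. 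Hence $\AA(0,\Star\tau)\subseteq\AA(\sigma,F)$ whenever $\tau\le\sigma$ and $\AA(0,\Star\tau)\cap\AA(\sigma,F) = \emptyset$ otherwise; in particular, for each maximal cone $\sigma$ the stratum $\AA(0,\Star\tau)$ with $\tau\le\sigma$ is exactly the stratum indexed by $\tau$ in \eqref{localstratification}. Covering and disjointness then follow formally: the $\AA(\sigma,F)$ with $\sigma$ maximal cover $\AA(F)$ and each is exhausted by its strata via \eqref{localstratification}, so $\{\AA(N/F_\tau,0,0)\}_{\tau\in F}$ covers $\AA(F)$; and if a point lay in $\AA(0,\Star\tau)\cap\AA(0,\Star{\tau'})$ with $\tau\ne\tau'$, it would lie in some maximal chart $\AA(\sigma,F)$ with $\tau\le\sigma$, whence (by the above) also $\tau'\le\sigma$, contradicting the disjointness of the strata in \eqref{localstratification} (the faces $\tau^\perp\cap S_\sigma(L)$ of $S_\sigma(L)$ attached to distinct subcones of $\sigma$ are distinct). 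This proves \eqref{KMfanstratification}.

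The description of each stratum is then immediate: $\AA(N/F_\tau,0,0)$ is the realization of the zero fan attached to $N/F_\tau$, so Example~\ref{example:atomicopen}, applied with $N$ replaced by $N/F_\tau$ and using $(N/F_\tau)^\lor = \ov{(N/F_\tau)}{}^\lor$ and $\EE(N/F_\tau) = \EE((N/F_\tau)_{\rm tor})$, exhibits it as a non-canonically trivial $\GG(\EE(N/F_\tau))$-gerbe over the torus $\GG((N/F_\tau)^\lor)$. For the last assertion, $\AA(\Star\tau)$ is a closed substack of $\AA(F)$ containing $\AA(0,\Star\tau)$, so it suffices to show $\AA(0,\Star\tau)$ is dense in $\AA(\Star\tau)$; then any closed substack of $\AA(F)$ containing the stratum contains its closure $\AA(\Star\tau)$. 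Density may be checked on the charts $\AA(\ov\sigma,\Star\tau) = [\AA(S_{\ov\sigma}(L/F_\tau))/\GG(\EE(N/L))]$, where the atomic open is the quotient by $\GG(\EE(N/L))$ of the big torus orbit $\AA(S_{\ov\sigma}(L/F_\tau)^{\rm gp})\into\AA(S_{\ov\sigma}(L/F_\tau))$, a dense open substack exactly as in classical toric geometry (cf.\ the last part of Proposition~\ref{prop:realization}, and the analogous fact for the differential realization); density of an open survives passage to the quotient stack because a closed substack of the quotient containing the image of a dense open has full preimage.

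The one point requiring genuine care — and the likely main obstacle — is the bookkeeping in the second paragraph: verifying that the globally defined stratum $\AA(N/F_\tau,0,0)$ restricts on each chart to the locally defined one in \eqref{localstratification} (so that the $\coprod$'s glue consistently over all cones of $F$), and making the density-plus-descent argument uniform across both the algebraic and the differential realizations. Everything else is formal once \eqref{intersectionformula}, \eqref{localstratification} and \eqref{starmapintersection} are in hand.
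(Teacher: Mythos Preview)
Your proposal is correct and follows essentially the same approach as the paper: the paper's ``proof'' consists of the sentence ``Combining the discussion thus far with the formula \eqref{starmapintersection} and the formula \eqref{intersectionformula} for the intersections of the $\AA(\sigma,F)$, we obtain:'' followed immediately by the statement, so the argument is exactly the gluing of the local stratifications \eqref{localstratification} over the cover $\{\AA(\sigma,F)\}$ via those two intersection formulas. You have simply spelled out the bookkeeping (locally closedness, compatibility of global and local strata, covering, disjointness, the gerbe description via Example~\ref{example:atomicopen}, and density of the atomic open for the closure assertion) that the paper leaves implicit.
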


\begin{rem} \label{rem:stratification1} As mentioned in Remark~\ref{rem:pointedrealization}, one cannot make sense of the stratification of Proposition~\ref{prop:stratification} for the \emph{log} algebraic or \emph{log} differential realizations.  Although it is possible, in some weak sense, to make sense of these ``stratifications" using \emph{pointed} log structures, one should be aware that the log structure on $\AA(N/F_\tau,0,0)$ coming from the fact that it is the log realization of a KM fan is trivial, whereas the restriction of the log structure on $\AA(F)$ to such a stratum is generally non-trivial.  This boils down to the fact that the ``pointed log realization" of a face retract \eqref{faceretract} is generally \emph{not} a ``strict closed embedding" even though it is a closed embedding on underlying spaces. \end{rem}

\begin{rem} \label{rem:stratification2} As in the classical case of toric varieties, one can see (by reducing to that classical case, for example) that the stratification of Proposition~\ref{prop:stratification} is functorial with respect to maps of KM fans $f : F \to F'$.  The (algebraic or differentiable) realization $\AA(f)$ of $f$ takes the stratum $\AA(N/F_\sigma,0,0)$ of $\AA(F)$ corresponding to a cone $\sigma \in F$ into the stratum $\AA(N'/F'_\tau,0,0)$ of $\AA(F')$ (via the map $\AA(N/F_\sigma,0,0) \to \AA(N'/F'_\tau,0,0)$ induced by the map $N/F_\sigma \to N'/F'_\tau$ induced by $f$) corresponding to the smallest cone $\tau \in F'$ containing $f_{\RR}(\sigma)$. \end{rem}

\subsection{Support and properness} \label{section:supportandproperness}  Here we define the analog of the ``support" of a classical fan and explain how it is related to properness of the realization of a map of KM fans.

\begin{defn} \label{defn:support} Let $F = (N,F,\{ F_\sigma \})$ be a KM fan.  The \emph{fine support} (or \emph{stacky support}, or just \emph{support} if there is no chance of confusion) of $F$ is the subset $\Supp F \subseteq N$ defined by $$ \Supp F  :=  \bigcup_{\sigma \in F} F_\sigma \cap \sigma = \bigcup_{\sigma \in F} P_{\sigma} $$ (cf.\ Definition~\ref{defn:KMfan2}).  The \emph{coarse support} of $F$, denoted $\Supp \ov{F}$, is, as the notation suggests, the subset of $\ov{N} = N/N_{\rm tor}$ given by the support of the classical fan $\ov{F}$ underlying $F$ (Example~\ref{example:classicalfan})---i.e.\ \be \Supp \ov{F} & = & \bigcup_{\sigma \in F} \ov{N}_{\sigma} . \ee   \end{defn}

Recall the KM fans $\GG_m$ and $\AA^1$ from Example~\ref{example:GmA1}.  If $F$ is a KM fan, any $n \in N$ determines a map of KM fans $n : \GG_m \to F$ defined by mapping $1 \in \ZZ$ to $n \in N$.

\begin{prop} \label{prop:support} For a KM fan $F = (N,F,\{ F_\sigma \})$ and an $n \in N$, the following are equivalent: \begin{enumerate} \item \label{support1} $n \in \Supp F$ \item \label{support2} The map of KM fans $n : \GG_m \to F$ extends to a map of KM fans $\tilde{n} : \AA^1 \to F$. \item \label{support3} The map of log stacks $X(n) : \GG_m=X(\GG_m) \to X(F)$ obtained by algebraically realizing the map of KM fans $n$ extends to a map of log stacks $\AA^1 \to X(F)$. \item \label{support4} The map of stacks $X(n) : \GG_m \to X(F)$ underlying the map of log stacks in \eqref{support3} extends to a map of stacks $\AA^1 \to X(F)$. \item \label{support5} There is an algebraically closed field $K$ containing the base field $k$ such that the map of stacks in \eqref{support4} (base changed to $K$) extends to a map of stacks $\tilde{n}_{K} : \AA^1_K \to X(F)_K$. \end{enumerate}  These equivalent conditions imply the condition \begin{enumerate} \setcounter{enumi}{5} \item \label{support6} The map of topological spaces $n : \RR_{>0} \to Y(F)$ underlying the differential realization of the map of KM fans in \eqref{support2} extends to a continuous map $\tilde{n} : \RR_{\geq 0} \to Y(F)$. \end{enumerate} and are equivalent to this condition when the fan $F$ is a classical fan. \end{prop}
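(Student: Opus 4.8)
The plan is to establish $(1)\Leftrightarrow(2)$ and the chain $(2)\Rightarrow(3)\Rightarrow(4)\Rightarrow(5)$ essentially formally, then the real work $(5)\Rightarrow(1)$, and to treat $(6)$ last. For $(1)\Leftrightarrow(2)$: since $\AA^1$ is a classical fan (Example~\ref{example:GmA1}), the lattice datum of its ray $\RR_{\geq 0}$ is all of $\ZZ$, so unwinding Definition~\ref{defn:KMfanmorphism} shows that a KM-fan map $\tilde n\colon\AA^1\to F$ restricting to $n$ is exactly the assertion that $n\in\sigma$ and $n\in F_\sigma$ for some $\sigma\in F$, i.e.\ $n\in F_\sigma\cap\sigma=P_\sigma$ for some $\sigma$, i.e.\ $n\in\Supp F$. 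For $(2)\Rightarrow(3)$ I would apply the algebraic realization functor, using $X(\AA^1)=\AA^1$ and $X(\GG_m)=\GG_m$ (Examples~\ref{example:classicalrealization} and~\ref{example:XG}); then $(3)\Rightarrow(4)$ forgets the log structure and $(4)\Rightarrow(5)$ is base change along $\Spec K\to\Spec k$. For $(1)\Rightarrow(6)$ I would apply the differential realization $Y$ to the extension $\tilde n\colon\AA^1\to F$ produced by $(2)$: the space underlying $Y(\AA^1)$ is $\RR_{\geq 0}$, and restricting $Y(\tilde n)$ to the open $Y(\GG_m)$ (underlying space $\RR_{>0}$) recovers the map of $(6)$, so the underlying continuous map of $Y(\tilde n)$ is the desired extension.

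For $(6)\Rightarrow(1)$ when $F$ is classical I would argue directly: $Y(F)$ is covered by the charts $Y(\sigma)=Y(S_\sigma(N))$, whose points are the monoid homomorphisms $S_\sigma(N)\to\RR_{\geq 0}$, and the map $n\colon\RR_{>0}\to Y(F)$ sends $t$ to $(u\mapsto t^{\langle u,n\rangle})$. If it extends to $\RR_{\geq 0}$, then by continuity some neighborhood of $0$ lands in a single chart $Y(\sigma)$, so $t^{\langle u,n\rangle}$ must have a limit as $t\to 0^{+}$ for every $u\in S_\sigma(N)$, i.e.\ $\langle u,n\rangle\geq 0$ for all such $u$, i.e.\ $n\in S_\sigma(N)^\lor=\sigma$ (Lemma~\ref{lem:keyfact}); uniqueness of the extension is guaranteed by Hausdorffness of $Y(F)$ (Corollary~\ref{cor:separated}). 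Hence $n\in\bigcup_\sigma\sigma$, so $n\in\bigcup_\sigma(\sigma\cap N)=\Supp F$ because $n\in N$.

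The heart of the matter is $(5)\Rightarrow(1)$. Given an extension $\tilde n_K\colon\AA^1_K\to X(F)_K$ of $X(n)_K$, composing with the coarse-moduli map $\pi\colon X(F)\to X(\ov F)$, proper by Proposition~\ref{prop:coarsemodulispace}, shows that the one-parameter subgroup $\lambda_{\ov n}\colon\GG_{m,K}\to X(\ov F)_K$ of the toric variety $X(\ov F)$ extends to $\AA^1_K$; by the classical theory of toric varieties \cite[\S3.2]{CLS}, $\ov n\in|\ov F|$ and $\lim_{t\to 0}\lambda_{\ov n}(t)$ lies in the orbit $O_{\ov\sigma}$, where $\sigma\in F$ is the unique cone having $n$ in its relative interior. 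Since $O_{\ov\sigma}$ and the big torus both lie in the affine chart $X(\ov\sigma,\ov F)$, the map $\pi_K\circ\tilde n_K$ lands there, so $\tilde n_K$ factors through $X(\sigma,F)_K=\pi^{-1}(X(\ov\sigma,\ov F))$. Fix a lifting $L$ of $F_\sigma$ (Lemma~\ref{lem:liftings}), so $X(\sigma,F)=[X(S_\sigma(L))/\GG(\EE(N/L))]$, where $X(S_\sigma(L))$ is the affine toric variety $X(\tilde\sigma)$ of the classical fan $\tilde\sigma$ with lattice $L$ and big cone $\sigma$. Pulling back the \'etale cover $X(S_\sigma(L))\to X(\sigma,F)$ along $\tilde n_K$ yields a torsor under the finite diagonalizable group $\GG(\EE(N/L))_K$ over $\AA^1_K$; as $\AA^1_K$ is simply connected (char $0$, $K=\ov K$) this torsor is trivial, so $\tilde n_K$ lifts to $\hat n_K\colon\AA^1_K\to X(\tilde\sigma)_K$. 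Because $X(n)_K$ factors through the atomic open $X(0,F)=[\GG(L^\lor)/\GG(\EE(N/L))]$, whose preimage in $X(\tilde\sigma)$ is the torus $\GG(L^\lor)$, the restriction $\hat n_K|_{\GG_m}$ is a $\GG_{m,K}$-point of $\GG(L^\lor)_K$, hence of the form $z\mapsto\mu(z)g$ with cocharacter $\mu\in L$ and $g\in\GG(L^\lor)(K)$; classical toric theory applied to $\hat n_K$ gives $\mu\in\sigma$, so $\mu\in L\cap\sigma=F_\sigma\cap\sigma$, while comparing with the coarse map $\GG(L^\lor)\to\GG(\ov N^\lor)$ (which on cocharacters is $L\hookrightarrow N\twoheadrightarrow\ov N$) shows $\mu$ maps to $\ov n$ in $\ov N$. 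Set $m:=\mu\in P_\sigma\subseteq\Supp F$; then by $(1)\Leftrightarrow(2)$ (already proven) $X(m)_K$ extends over $\AA^1_K$, and, $m$ having the same minimal cone $\sigma$, this extension again factors through $X(\sigma,F)_K$ and lifts to some $\hat m_K\colon\AA^1_K\to X(\tilde\sigma)_K$.

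To finish, note that the cocharacter of $\hat m_K|_{\GG_m}$ also lies in $F_\sigma$ and maps to $\ov m=\ov n$ in $\ov N$; as $F_\sigma$ is torsion-free, $F_\sigma\to\ov N$ is injective, so this cocharacter equals $m$. Hence $\hat n_K|_{\GG_m}$ and $\hat m_K|_{\GG_m}$ are $\GG_{m,K}$-points of the torus $\GG(L^\lor)_K$ with the same cocharacter, so $\hat n_K|_{\GG_m}=c_0\cdot\hat m_K|_{\GG_m}$ for a constant $c_0\in\GG(L^\lor)(K)$. Applying the quotient homomorphism $\GG(L^\lor)\to X(0,F)$ of group stacks and using that $v\mapsto X(v)_K$ is a group homomorphism $N=\Hom_{\KMFans}(\GG_m,(N,0,0))\to\Hom(\GG_{m,K},X(0,F)_K)$ (since $X(\,\cdot\,)$ preserves group objects), one gets that $X(t)_K\colon\GG_{m,K}\to X(F)_K$ is a constant map, where $t:=n-m\in N_{\rm tor}$ (as $\ov n=\ov m$). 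Because $\ov t=0$, this $X(t)_K$ factors through $BG\subseteq X(0,F)$ with $G:=\GG(\EE(N_{\rm tor}))$, i.e.\ it is a $G$-torsor over $\GG_{m,K}$; the assignment $t'\mapsto[X(t')_K]$ defines an injective (indeed bijective, by a short Kummer-sequence computation together with Example~\ref{example:atomicopen}) map $N_{\rm tor}\to H^1(\GG_{m,K},G_K)$, so constancy of $X(t)_K$ forces $t=0$, whence $n=m\in F_\sigma$ and $n\in P_\sigma\subseteq\Supp F$. The hard part is exactly this last passage: the torus-level analysis pins down $n$ only modulo $N_{\rm tor}$, and one must recover the torsion part of $n$ from extendability of the \emph{scheme-theoretic} map $X(n)_K$ --- equivalently, from the gerbe $X(F)\to X(F^{\rm rig})$ banded by $\GG(\EE(N_{\rm tor}))$ --- exploiting that $\AA^1_K$ has trivial Picard group and fundamental group (so $H^1$ and $H^2$ with finite diagonalizable coefficients vanish) whereas $\GG_{m,K}$ does not.
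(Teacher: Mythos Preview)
Your proof is correct, but it diverges from the paper's in two places worth noting.

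For $(6)\Rightarrow(1)$ in the classical case, the paper argues by contradiction: assuming $n\notin\Supp F$, it enlarges $F$ to a fan $G$ by adjoining the ray through $n$, then uses the stratification of Proposition~\ref{prop:stratification} together with Hausdorffness of $Y(G)$ to show that the continuous extension $\tilde n$ would have to hit a point of $Y(G)\setminus Y(F)$. Your direct chart-and-limit argument is more elementary and avoids the stratification machinery entirely.

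For $(5)\Rightarrow(1)$, the paper proceeds differently. Rather than lifting $\tilde n_K$ to the cover and extracting an auxiliary cocharacter $m\in P_\sigma$, the paper \emph{identifies the $\GG(\EE(N/L))$-torsor associated to $X(n)$ explicitly} as $\GG(A)\to\GG_m$, where $A$ sits in a pushout/pullback of the sequence $0\to N^\lor\to L^\lor\to\EE(N/L)\to\EE(N)\to 0$ along $n\colon N^\lor\to\ZZ$. Extendability over $\AA^1_K$ forces this torsor to be trivial, hence the relevant extension of $\ZZ$ by $\EE(N/L)$ (or $\EE(Q)$ in the non-lattice case, $Q=\ZZ/a\ZZ$ with $a\ZZ=n^{-1}(L)$) splits, and an $\Ext^1$ injectivity argument (identified via Lemma~\ref{lem:isotropygroups} with the injection $Q\hookrightarrow N/L$) forces $a=1$, i.e.\ $n\in L$. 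Your route---produce $m\in P_\sigma$ with $\ov m=\ov n$, then kill the torsion discrepancy $t=n-m$ via injectivity of $N_{\rm tor}\to H^1(\GG_{m,K},\GG(\EE(N_{\rm tor})))$---is equivalent in spirit (both boil down to the same Kummer/$\Ext^1$ computation), but introduces more moving parts: the auxiliary $m$, the group-stack manipulation showing $X(t)_K$ is constant, and the injectivity claim, which you assert but do not verify. The paper's argument is more self-contained on this last point; yours has the virtue of treating the lattice and non-lattice cases uniformly, whereas the paper separates them.
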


\begin{proof} The equivalence of \eqref{support1} and \eqref{support2} is immediate from the definitions of the KM (in fact, classical) fans $\GG_m$ and $\AA^1$ (Example~\ref{example:GmA1}), the definition of a map of KM fans (Definition~\ref{defn:KMfan}), and the definition of the support (Definition~\ref{defn:support}).  Obviously \eqref{support2} implies \eqref{support3} by applying the algebraic realization functor $X$.  Obviously \eqref{support3} implies \eqref{support4} by forgetting log structures.  Obviously \eqref{support4} implies \eqref{support5} by extending scalars.

To prove that \eqref{support5} implies \eqref{support1}, first note that \eqref{support5} is the same as \eqref{support4}, except we work over $K$ instead of $k$, so it is enough to prove that \eqref{support4} implies \eqref{support1} in the case where the base field $k$ is algebraically closed.  To do this, we first treat the case where $N$ is a lattice (i.e.\ $F$ is a lattice KM fan).  Here \eqref{support4} implies that $n \in N \cap \sigma$ for some cone $\sigma \in F$ because if $X(n)$ has such an extension, then so does the corresponding map to the coarse moduli space of $X(F)$, which is $X(\ov{F})$ by Proposition~\ref{prop:coarsemodulispace}, and we know from the classical theory of toric varieties that the proposition holds for the classical fan $\ov{F}$.  It still remains to show that $n \in F_{\sigma}$.  Pick a lifting $L \subseteq N$ of $F_\sigma$, so $L \cap \Span \sigma = F_\sigma$ and it suffices to show that $n \in L$ (for then $n$ will be in $P_{\sigma} = L \cap \sigma$).  We know the lift $\tilde{n} : \AA^1 \to X(F)$ factors through the open substack $X(\sigma,F) \subseteq X(F)$ because this is true on the level of coarse moduli spaces and $X(\sigma,F)$ is the preimage of $X(\sigma,\ov{F}) \subseteq X(\ov{F})$ under $X(F \to \ov{F})$.  We can therefore assume that $X(\sigma,F) = X(F)$.  By construction (\S\ref{section:KMfanrealization}), the realization $X(\sigma,F) = X(F)$ of such a KM fan $F$ is the quotient $[W / S]$, where $W$ is the affine toric variety corresponding to the cone $\sigma \subseteq L_{\RR} = N_{\RR}$ with respect to the lattice $L$ and $S = \GG(\EE(N/L))$ acts on $W$ through the action of the torus $T' := \GG(L^\lor)$ of $W$ and the embedding $S \into T'$ coming from applying $\GG( \slot )$ to the exact sequence on the top row of the diagram below. \bne{NEW} & \xym{ 0 \ar[r] & N^\lor \ar[d]_n \ar[r] & \ar@{.>}[ld] L^\lor  \ar[d]^f \ar[r] & \EE(N/L) \ar@{=}[d] \ar[r] & 0 \\ 0 \ar[r] & \ZZ \ar[r] & A \ar[r] & \EE(N/L) \ar[r] & 0. } \ene (Note $\EE(N)=0$ since $N$ is a lattice.)  The bottom row of \eqref{NEW} is defined to be the pushout of the top row along $n : N^\lor \to \ZZ$.  View an object of $X(F) = [W/S]$ over a scheme $U$ as an $S$ torsor $P \to U$ together with an $S$ equivariant map $P \to W$.  Then the map $n : \GG_m \to X(F)$ corresponds to the $S$ torsor $\GG(A) \to \GG(\ZZ) = \GG_m$ coming from the bottom row of \eqref{NEW} and the $S$-equivariant map $\GG(A) \to W = \AA( L^\lor \cap \sigma^\lor)$ coming from the restriction of the middle vertical arrow in \eqref{NEW}.  By \eqref{support4}, the map $n : \GG_m \to X(F)$ extends to a map $\tilde{n} : \AA^1 \to X(F)=[W/S]$.  In particular, this means that the $S$ torsor $\GG(A) \to \GG_m$ must extend to an $S$ torsor $P \to \AA^1$.  But every $S$-torsor over $\AA^1$ is trivial ($\H^1_{et}(\AA^1,S)=0$), so this means $\GG(A) \to \GG_m$ must be trivial.  (Here we use the fact that the base field $k$ is algebraically closed and of characteristic zero, otherwise there may be nontrivial $S$ torsors over $\AA^1$ coming from nontrivial $S$ torsors over $k$ itself.)  But this means the extension given by the bottom row of \eqref{NEW} must split because $A \mapsto \GG(A)$ defines an isomorphism of groups \be \Ext^1( \EE(N/L) , \ZZ) & \to & \H^1_{et}(\GG_m, S). \ee  (Again we use that $k$ is algebraically closed and of characteristic zero.)  Such a splitting is the same thing as a completion as indicated by the dotted arrow, which is the same thing as saying that $n \in L$, as desired. 

The general case is quite similar to the ``lattice case" treated above, but we have chosen to treat it separately for clarity.  As in the lattice case, we can see that $n \in N \cap \sigma$ for some cone $\sigma \in F$ and we can assume $\sigma$ is the unique maximal cone of $F$.  Let $L$ be a lifting of $F_\sigma$, as before.  We know that $an \in F_{\sigma}$ for some $a \in \ZZ_{>0}$ since $F_\sigma \subseteq N_\sigma$ has finite index (by definition of a KM fan).  This means that the preimage of $L \subseteq N$ under $n : \ZZ \to N$ will be of the form $a \ZZ$ for some $a \in \ZZ_{>0}$.  We want to show that $a=1$, so that $n \in L$.  It is equivalent to show that the finite group $Q := \ZZ / a \ZZ$ is zero, and this in turn is equivalent to showing that $\EE(Q)=0$.  We have a map of short exact sequences \bne{NEW2} & \xym{ 0 \ar[r] & a \ZZ \ar[r] \ar[d] & \ZZ \ar[d]^-n \ar[r] & Q \ar[d] \ar[r] & 0 \\ 0 \ar[r] & L \ar[r] & N \ar[r] & N/L \ar[r] & 0 } \ene where $Q \to N/L$ is injective by the definition of $a \ZZ \subseteq \ZZ$.  From \eqref{NEW2}, we obtain a diagram with exact rows \bne{NEW2dual} & \xym{ 0 \ar[r] & N^\lor \ar[r] \ar[d] & L^\lor \ar[r] \ar[d] & \EE(N/L) \ar[r] \ar@{=}[d] & \EE(N) \ar[r] & 0  \\ 0 \ar[r] & \ZZ \ar[r] \ar[d] & A \ar[r] \ar[d] & \EE(N/L) \ar[r] \ar[d] & 0 \\ 0 \ar[r] & \ZZ \ar[r] & (a \ZZ)^\lor \ar[r] & \EE(Q) \ar[r] & 0.} \ene  The map from the top to the bottom row in \eqref{NEW2dual} comes from dualizing \eqref{NEW2}; the middle row in \eqref{NEW2dual} comes from pulling back the bottom row along $\EE(N/L) \to \EE(Q)$.  As in the lattice case, we have $X(F) = [W/S]$ (same definitions of $S$ and $W$ as above, except now $S \to T'$ may not be an embedding so $S$ may not act effectively on $W$) and the map $n : \GG_m \to X(F) = [W/S]$ corresponds to the $S$ torsor $\GG(A) \to \GG_m$ coming from the middle row of \eqref{NEW2dual} and the $S$ equivariant map $\GG(A) \to W = \AA(L^\lor \cap \sigma^\lor)$ coming from the upper middle vertical arrow in \eqref{NEW2dual}.  For the same reason as in the lattice case, \eqref{support4} ensures that the middle row of \eqref{NEW2dual} splits.  If we denote by $x \in \Ext^1(\EE(Q),\ZZ)$ the extension class of the bottom row of \eqref{NEW2dual}, then this means that $x$ is in the kernel of \bne{emap} \Ext^1( \EE(Q), \ZZ) & \to & \Ext^1( \EE(N/L), \ZZ) . \ene  By basic algebra (cf.\ Lemma~\ref{lem:isotropygroups} in \S\ref{section:isotropygroups}), the map \eqref{emap} is identified with the map $Q \to N/L$, which is injective, so $x=0$, so the bottom row of \eqref{NEW2dual} splits.  But the middle group in that bottom row is isomorphic to $\ZZ$, so the only way that sequence can split is if $\EE(Q)=0$.  

Obviously \eqref{support2} implies \eqref{support6} by taking the differential realization of the map of KM fans $\tilde{n} : \AA^1 \to F$ in \eqref{support2} and looking at the underlying map of topological spaces.  To see that \eqref{support6} implies \eqref{support1} when $F$ is classical, suppose, toward a contradiction, that \eqref{support6} holds but \eqref{support1} does not.  Then obviously $n \neq 0$.  Let $G$ be the (classical) fan obtained from $F$ by adding the ray $\rho$ through $n$ to the set of cones $F$.  (Notice that it would be unclear what this would mean if we were working with KM fans because if $F$ is a KM fan it might be that $\rho$ is already a cone of $F$ even though $n \notin \Supp F$.)  The composition of the continuous map $\tilde{n} : \RR_{\geq 0} \to Y(F)$ assumed to exist in \eqref{support6} and the inclusion $Y(F) \subseteq Y(G)$ (the realization of $F \to G$) must agree with the map of topological spaces $\RR_{\geq 0} \to Y(G)$ underlying the differential realization of the map of KM fans $``n" : \AA^1 \to G$ because $\RR_{>0}$ is dense in $\RR_{\geq 0}$ and $Y(G)$ is Hausdorff (because it is a closed subspace of the analytic topological space underlying $X(G)$).  This is a contradiction because the latter map takes $0 \in \RR_{\geq 0}$ to a point of $Y(G)$ not in $Y(F)$ in light of the stratifications of $Y(F)$, $Y(\AA^1) = \RR_{\geq 0}$, and $Y(G)$ constructed in Proposition~\ref{prop:stratification} and their naturality (Remark~\ref{rem:stratification2}).  Indeed, the strata of $Y(F)$ are the same as those of $Y(G)$, except $Y(G)$ has one extra stratum corresponding to the cone $\rho$, and the latter map takes $0$ into that stratum because $\rho$ is the smallest cone of $G$ (indeed the only cone of $G$) containing the image of the maximal cone of the classical fan $\AA^1$ under the map of classical fans $``n" : \AA^1 \to G$.    \end{proof}

\begin{rem} \label{rem:support} If $F$ is a (non-classical) KM fan, condition \eqref{support6} in Proposition~\ref{prop:support} does not generally imply the other conditions in that proposition.  Indeed, even if $n : \RR_{>0} \to Y(F)$ can be extended to a map of log differentiable spaces $\tilde{n} : \RR_{\geq 0} \to Y(F)$, this does not necessarily mean $n \in \Supp F$.  This is because it is possible to find a map of KM fans $f :F \to F'$ for which $Y(f)$ is an isomorphism of log differentiable spaces but where there is some $n \in N \setminus \Supp F$ with $f(n) \in \Supp F'$.  This occurs in Example~\ref{example:P22coarsening}.  In fact, we will see in Example~\ref{example:inflationrevisited} that this is in fact a rather common phenomenon. \end{rem}

\begin{prop} \label{prop:properness} For a map $f : F \to F'$ of KM fans, the following are equivalent: \begin{enumerate} \item \label{proper1} The algebraic realization $X(f)$ is a proper map of DM stacks. \item \label{proper1prime} The algebraic realization of the corresponding map $\ov{f} : \ov{F} \to \ov{F}'$ of coarse fans (cf.\ Example~\ref{example:coarsefan}) is a proper map of toric varieties. \item \label{proper2} The differential realization $Y(\ov{f})$ of the corresponding map $\ov{f} : \ov{F} \to \ov{F}'$ of coarse fans is a proper map of topological spaces. \item \label{proper2prime} The differential realization $Y(f)$ is a proper map of topological spaces. \item \label{proper3} The preimage of the coarse support of $F'$ under $f$ is equal to the coarse support of $F$. \end{enumerate} \end{prop}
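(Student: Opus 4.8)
The plan is to prove this as a ``ring of five implications'' reducing everything to the classical theory of toric varieties, using the coarse moduli space machinery (Proposition~\ref{prop:coarsemodulispace}) and the homeomorphism properties of the various realizations. First I would observe that conditions \eqref{proper1prime} and \eqref{proper3} are equivalent by the classical combinatorial characterization of proper morphisms of toric varieties: for a map of classical fans $\ov{f} : \ov{F} \to \ov{F}'$, the induced morphism of toric varieties is proper if and only if $\ov{f}_{\RR}^{-1}(\Supp \ov{F}') = \Supp \ov{F}$ (this is \cite[\S3.4]{CLS}; note that here $\Supp \ov{F}$ and $\Supp \ov{F}'$ are the coarse supports in $\ov{N}$ and $\ov{N}'$, and $\ov{f}$ is the map induced by $f$ on $\ov{N} = N/N_{\mathrm{tor}}$). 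So it suffices to relate the remaining conditions to properties of $\ov{f}$.

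The equivalence of \eqref{proper1prime} and \eqref{proper2} is classical and well known: a morphism of (finite-type, separated) $\CC$-varieties — or, after the obvious extension of scalars, of toric varieties over any base field of characteristic zero — is proper if and only if the induced map of analytic topological spaces is proper (universally closed with compact fibers), and the differential realization $Y(\ov{F})$ of a classical fan is precisely the closed subspace of $X(\ov{F})^{\mathrm{an}}$ cut out by the ``positive part'' (see Corollary~\ref{cor:separated} and the surrounding discussion); on this closed subspace properness of $Y(\ov{f})$ is equivalent to properness of the analytification, hence to properness of $X(\ov{f})$. Next I would handle \eqref{proper1} $\Leftrightarrow$ \eqref{proper1prime}: by Proposition~\ref{prop:coarsemodulispace} we have a commutative square with vertical arrows the coarse moduli space maps $X(\pi) : X(F) \to X(\ov{F})$ and $X(\pi') : X(F') \to X(\ov{F}')$, both of which are proper (Proposition~\ref{prop:coarsemodulispace}(2)). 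Since $X(\pi)$ is proper and surjective, $X(f)$ is proper if and only if $X(\pi') \circ X(f) = X(\ov{f}) \circ X(\pi)$ is proper (a proper map composed with, or precomposed with a surjective proper map, is proper iff the other factor is — more precisely, $X(f)$ proper $\Rightarrow$ $X(\ov f)\circ X(\pi)$ proper; conversely if $X(\ov f)\circ X(\pi)$ is proper then $X(f)$ is proper since $X(\pi')$ is separated, using that $X(\pi)$ is surjective proper for the ``only if'' of descending properness). And $X(\ov{f}) \circ X(\pi)$ is proper iff $X(\ov{f})$ is proper, again because $X(\pi)$ is surjective and proper. Finally, \eqref{proper2} $\Leftrightarrow$ \eqref{proper2prime} is proved the same way on the topological level: by Proposition~\ref{prop:coarsemodulispace}(3), $Y(\pi)$ and $Y(\pi')$ are homeomorphisms on underlying topological spaces, so $Y(f)$ and $Y(\ov{f})$ have homeomorphic (as maps) underlying continuous maps, hence one is proper iff the other is.

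I expect the main obstacle to be the careful bookkeeping in the implication \eqref{proper1} $\Leftrightarrow$ \eqref{proper1prime}: properness does not descend or ascend freely along arbitrary morphisms, so one has to be precise about which of the standard ``two-out-of-three'' and ``faithfully flat / proper surjective descent'' facts for properness are being invoked, and to check that all the stacks in sight are Deligne--Mumford and separated (which is exactly Proposition~\ref{prop:realization} and Corollary~\ref{cor:separated}) so that the relevant statements about proper morphisms of DM stacks apply. The cleanest route is probably to first establish \eqref{proper1prime} $\Leftrightarrow$ \eqref{proper3} (pure classical toric geometry), then \eqref{proper1prime} $\Leftrightarrow$ \eqref{proper2} (classical, analytification), then bootstrap to \eqref{proper1} and \eqref{proper2prime} via the coarse moduli space / differential-realization-homeomorphism squares, so that every step is either a cited classical result or a formal consequence of the coarse moduli space property already proved. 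All proofs that ``do not substantially differ from their classical analogues'' — in particular the classical input for \eqref{proper1prime} $\Leftrightarrow$ \eqref{proper3} — may, per the conventions of this section, simply be cited to \cite{CLS} or \cite{F}.
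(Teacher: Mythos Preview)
Your overall architecture matches the paper's: reduce \eqref{proper1} and \eqref{proper2prime} to the coarse-fan versions \eqref{proper1prime} and \eqref{proper2} via the coarse moduli square, and cite the classical equivalence \eqref{proper1prime}$\Leftrightarrow$\eqref{proper3}. For \eqref{proper1}$\Leftrightarrow$\eqref{proper1prime} you use two-out-of-three/cancellation for properness along the proper surjective coarse moduli maps, whereas the paper uses the valuative criterion together with the fact that the coarse moduli map is a bijection on $\ov{k}$-points; both arguments are fine. Your \eqref{proper2}$\Leftrightarrow$\eqref{proper2prime} via the homeomorphism statement in Proposition~\ref{prop:coarsemodulispace} is exactly what the paper does.

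The one genuine gap is your treatment of \eqref{proper1prime}$\Leftrightarrow$\eqref{proper2}. The direction \eqref{proper1prime}$\Rightarrow$\eqref{proper2} is fine (GAGA plus the closed embedding $Y(\ov{F})\hookrightarrow X(\ov{F})^{\rm an}$, just as in the paper). But your converse ``on this closed subspace properness of $Y(\ov{f})$ is equivalent to properness of the analytification'' is not justified: properness of a map after restricting to a closed subspace does not imply properness of the ambient map. The paper does \emph{not} attempt \eqref{proper2}$\Rightarrow$\eqref{proper1prime} directly; instead it proves \eqref{proper2}$\Rightarrow$\eqref{proper3} by contraposition using the last part of Proposition~\ref{prop:support}: if \eqref{proper3} fails one produces a diagram
\[
\xymatrix{ \RR_{>0} \ar[r] \ar[d] & Y(\ov{F}) \ar[d]^{Y(\ov{f})} \\ \RR_{\geq 0} \ar[r] & Y(\ov{F}') }
\]
admitting no lift, contradicting properness of $Y(\ov{f})$. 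That step is where Proposition~\ref{prop:support} earns its keep, and it is what your argument is missing. (One could alternatively repair your route by invoking the proper ``absolute value'' retraction $X(\ov{F})^{\rm an}\to Y(\ov{F})$ with compact torus fibers and a cancellation argument, but you would have to say so explicitly.)
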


\begin{proof} To see that \eqref{proper1} and \eqref{proper1prime} are equivalent, first note that we have a commutative diagram \bne{Xfdiagram} & \xym{ X(F) \ar[d]_{X(f)} \ar[r] & X(\ov{F}) \ar[d]^{X(\ov{f})} \\ X(F') \ar[r] & X(\ov{F}') } \ene where the horizontal arrows are coarse moduli spaces, hence proper (Proposition~\ref{prop:coarsemodulispace}).  The equivalence then follows from the valuative criterion for properness for maps of such stacks because it is enough to check the valuative criterion in the case where the residue field $K$ of the DVR is algebraically closed, in which case the coarse moduli space maps are ``bijective" on $K$ points (Theorem~\ref{thm:cms}).  Conditions \eqref{proper2} and \eqref{proper2prime} are equivalent by the same reasoning, because in that case the horizontal arrows in the diagram \eqref{Xfdiagram} (with ``$X$" replaced by ``$Y$") are even homeomorphisms (Proposition~\ref{prop:coarsemodulispace}) so the maps $Y(f)$ and $Y(\ov{f})$ are the same at the level of topological spaces.  Condition \eqref{proper1prime} implies, by general GAGA results, that the corresponding map of analytic topological spaces $X^{\rm an}(\ov{F}) \to X^{\rm an}(\ov{F}')$ is proper and this in turn implies \eqref{proper2} because we have a commutative diagram of topological spaces \bne{propdia2} & \xym{ Y(\ov{F}) \ar[r] \ar[d]_{Y(\ov{f})} & X^{\rm an}(\ov{F}) \ar[d]^{X^{\rm an}(\ov{f})} \\ Y(\ov{F}') \ar[r] & X^{\rm an}(\ov{F}') } \ene where the horizontal arrows are closed embeddings.  The equivalence of \eqref{proper1prime} and \eqref{proper3} is a standard fact from the classical theory of toric varieties \cite[\S2.4]{F}.  To see that \eqref{proper2} implies \eqref{proper3} we use the last part of Proposition~\ref{prop:support} to see that if \eqref{proper3} fails, then there is a commutative diagram of topological spaces $$ \xym{ \RR_{>0} \ar[r] \ar[d] & Y(\ov{F}) \ar[d]^{Y(\ov{f})} \\ \RR_{\geq 0} \ar@{.>}[ru] \ar[r] & Y(\ov{F}') } $$ for which there is no lift as indicated---this contradicts \eqref{proper2}.   \end{proof}

\subsection{Products} \label{section:products}  

Let $F = (N,F,\{ F_\sigma \})$, $F' = (N',F',\{ F'_\tau \})$ be KM fans.  We define the \emph{product} KM fan $F \times F'$ as follows:  The ``lattice" of $F \times F'$ is $N \times N'$, the cones of $F \times F'$ are subsets of $(N \times N')_{\RR} = N_{\RR} \times N'_{\RR}$ of the form $\sigma \times \sigma'$, where $\sigma \in F$, $\sigma' \in F'$, and the lattice datum for such a cone is $F_\sigma \times F_{\sigma'}'$.

\begin{prop} \label{prop:products}  The product KM fan $F \times F'$ defined above is the product of $F$ and $F'$ in the category of KM fans.  Furthermore, this product commutes with realization so that $\AA(F \times F') = \AA(F) \times \AA(F')$. \end{prop}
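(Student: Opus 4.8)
The plan is to handle the categorical statement first and then reduce the statement about realizations to the affine case, where it becomes a matter of the formal properties of $\AA$, $\GG$, $\EE$ and of quotient stacks. First, the categorical product: one checks routinely that $F \times F'$ really is a KM fan --- the faces of a product cone $\sigma \times \sigma'$ are exactly the cones $\tau \times \tau'$ with $\tau \leq \sigma$, $\tau' \leq \sigma'$ (a submonoid $G \leq \sigma \times \sigma'$ is closed under summands, so $G = \tau \times \tau'$ with $\tau = \{ x : (x,0) \in G \}$, $\tau' = \{ y : (0,y) \in G \}$), and $(\sigma_1 \times \sigma_1') \cap (\sigma_2 \times \sigma_2') = (\sigma_1 \cap \sigma_2) \times (\sigma_1' \cap \sigma_2')$, whence the fan axioms and the compatibility condition for the lattice data $F_\sigma \times F_{\sigma'}'$ all follow from those for $F$ and $F'$. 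The projections to $N$ and $N'$ are maps of KM fans, and given maps $g : G \to F$, $g' : G \to F'$ out of a KM fan $G = (M,G,\{ G_\rho \})$, the homomorphism $(g,g') : M \to N \times N'$ is again a map of KM fans: for $\rho \in G$, pick $\sigma \supseteq g_\RR(\rho)$ and $\sigma' \supseteq g'_\RR(\rho)$ with $g(G_\rho) \subseteq F_\sigma$, $g'(G_\rho) \subseteq F_{\sigma'}'$; then $(g,g')_\RR(\rho) \subseteq \sigma \times \sigma'$ and $(g,g')(G_\rho) \subseteq F_\sigma \times F_{\sigma'}'$. Uniqueness is clear, so $F \times F'$ is the product.

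For the claim about realizations, I would reduce to the affine case. Fix cones $\sigma \in F$, $\sigma' \in F'$ with liftings $L \subseteq N$ of $F_\sigma$ and $L' \subseteq N'$ of $F_{\sigma'}'$; then $L \times L'$ is a lifting of $F_\sigma \times F_{\sigma'}'$. Dualizing gives $(L \times L')^\lor = L^\lor \times (L')^\lor$ and $(\sigma \times \sigma')^\lor = \sigma^\lor \times (\sigma')^\lor$, hence $S_{\sigma \times \sigma'}(L \times L') = S_\sigma(L) \oplus S_{\sigma'}(L')$ (with componentwise operations), while $\EE\big((N \times N')/(L \times L')\big) = \EE(N/L) \oplus \EE(N'/L')$ by additivity of $\EE$; moreover the action of the latter group on the former monoid defined in Definition~\ref{defn:AAsigma} is the ``product'' of the two individual actions, since the map $(L \times L')^\lor \to \EE\big((N\times N')/(L\times L')\big)$ is the product of $L^\lor \to \EE(N/L)$ and $(L')^\lor \to \EE(N'/L')$. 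Now $P \oplus Q$ is the categorical product of $P$, $Q$ in $\Mon^{\rm op}$ (and likewise in $\Ab^{\rm op}$), and $\AA$ and $\GG$ preserve finite inverse limits, so $\AA(S_{\sigma\times\sigma'}(L\times L')) = \AA(S_\sigma(L)) \times \AA(S_{\sigma'}(L'))$ equivariantly for $\GG(\EE(N/L)) \times \GG(\EE(N'/L'))$. Passing to quotients via the standard identity $[(X \times Y)/(G \times H)] = [X/G] \times [Y/H]$ (a $(G \times H)$-torsor over a base is canonically a fibre product of a $G$-torsor and an $H$-torsor, under which equivariant maps to $X \times Y$ correspond to pairs of equivariant maps to $X$ and to $Y$), we obtain $\AA(\sigma \times \sigma', F \times F') = \AA(\sigma, F) \times \AA(\sigma', F')$, naturally in $(\sigma,\sigma')$ --- in particular compatibly with the open embeddings attached to face inclusions $\tau \times \tau' \leq \sigma \times \sigma'$.

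To globalize I would glue. The cones of $F \times F'$ are the products $\sigma \times \sigma'$ ordered by the product order, so the maximal cones of $F \times F'$ are exactly the products of maximal cones, and $\AA(F) \times \AA(F')$ is covered by the open substacks $\AA(\sigma,F) \times \AA(\sigma',F') = \AA(\sigma \times \sigma', F \times F')$ with $\sigma$, $\sigma'$ maximal (a product of open embeddings is an open embedding, being a composite of base changes of open embeddings). Combining $(\sigma_1 \times \sigma_1') \cap (\sigma_2 \times \sigma_2') = (\sigma_1 \cap \sigma_2) \times (\sigma_1' \cap \sigma_2')$ with the intersection formula \eqref{intersectionformula} applied in $\AA(F)$ and $\AA(F')$, one finds that the pairwise intersections of these open substacks inside $\AA(F) \times \AA(F')$ are exactly the $\AA\big((\sigma_1 \times \sigma_1') \cap (\sigma_2 \times \sigma_2'),\, F \times F'\big)$; so the gluing datum presenting $\AA(F) \times \AA(F')$ agrees with the one defining $\AA(F \times F')$ as the $2$-direct limit of the $\AA(\sigma \times \sigma', F \times F')$, yielding a canonical isomorphism $\AA(F \times F') = \AA(F) \times \AA(F')$.

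I expect the main obstacle to be purely organizational: making precise in the last step that the two descriptions of $\AA(F) \times \AA(F')$ (once as the $2$-direct limit of the product-cone charts, once as a direct limit of products of charts) literally agree as gluings, including that the transition $2$-isomorphisms match. The essential mathematical content --- the identification $S_{\sigma \times \sigma'}(L \times L') = S_\sigma(L) \oplus S_{\sigma'}(L')$, the preservation of finite inverse limits by $\AA$ and $\GG$, and the quotient-stack identity --- is entirely routine.
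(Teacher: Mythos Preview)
Your argument is correct and is precisely the routine verification the paper has in mind: the paper's own ``proof'' is literally the single word \emph{Exercise}. You have carried out that exercise in the natural way---checking the categorical product directly, reducing the realization statement to the affine charts via compatible liftings $L \times L'$, using preservation of finite products by $\AA$ and $\GG$ together with the identity $[(X\times Y)/(G\times H)] \cong [X/G]\times[Y/H]$, and then gluing---so there is nothing to add.
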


\begin{proof}  Exercise. \end{proof}

\begin{rem} As in the usual theory of toric varieties, one does not expect the category of KM fans to have a good theory of more general finite inverse limits commuting with realization.  This is because, for example, the pushout of a diagram of toric monoids will not generally be toric (nor even integral).  If one wishes to have a satisfactory theory of finite inverse limits, then one should work in the category of stacks over fans in the CZE topology, just as one should work in the category of abstract fans if one wants a satisfactory inverse limit theory for classical toric varieties.  \end{rem}

\begin{example} \label{example:atoroidalsplitting} Any KM fan $F=(N,F,\{ F_\sigma \})$ can be written as a product $F = G \times (B,0,0)$, where $G$ is atoroidal (Definition~\ref{defn:nondegenerate}), $B$ is a lattice, and $(B,0,0)$ is the ``zero fan" associated to $B$ as in Example~\ref{example:grouptofan}.  To see this, let $A$ be the \emph{saturated} subgroup of $N$ generated by all the $F_\sigma$.  Then the image of \eqref{nondegeneratemap} is a finite index subgroup of $A$, so $G :=(A,``F", \{ F_\sigma \})$ is an atoroidal KM fan.  Since $A$ is saturated in $N$, $N/A =: B$ is a lattice and we can choose a splitting $N = A \oplus B$---this yields the desired splitting of $F$.  \end{example}

\subsection{Isotropy groups} \label{section:isotropygroups}  When studying an algebraic stack $X$, one of the most basic issues is to determine the isotropy groups of geometric points of $X$.  We shall now do this when $X=X(F)$ is the algebraic realization of a KM fan $F$.  Among other things, this will provide a stepping stone to the representability results established in \S\ref{section:representability}.

\begin{lem} \label{lem:isotropygroups}  Let $\ov{k}$ be an algebraically closed field of characteristic zero.  There exists an isomorphism of groups $\ov{k}^*_{\rm tor} \cong \QQ / \ZZ$.  For any finite abelian group $A$, there are natural isomorphisms $$ \Hom(\EE(A),\ov{k}^*) =  A = \EE(\EE(A)).$$  (The left isomorphism depends on the choice of an isomorphism $\ov{k}^*_{\rm tor} \cong \QQ / \ZZ$, but is natural in $A$.)  Here $\EE(A) := \Ext^1(A,\ZZ)$, as usual.  In other words, the group $\GG(\EE(A))(\ov{k})$ of $\ov{k}$ points of the finite $\ov{k}$ group scheme $\GG(\EE(A)) = \Spec \ov{k}[\EE(A)]$ is canonically identified with the group $A$. \end{lem}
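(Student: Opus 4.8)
The plan is to reduce everything to standard facts about $\Ext^1$ of finite abelian groups, the structure of $\ov{k}^*$, and Cartier/Pontryagin-type duality for finite diagonalizable group schemes. First I would handle the claim $\ov{k}^*_{\rm tor} \cong \QQ/\ZZ$: since $\ov{k}$ is algebraically closed of characteristic zero, for each $n \geq 1$ the polynomial $x^n - 1$ has exactly $n$ roots in $\ov{k}$, so the $n$-torsion subgroup $\mu_n(\ov{k})$ is cyclic of order $n$; the $\mu_n$ form a directed system under divisibility whose colimit is the torsion subgroup of $\ov{k}^*$, and this colimit is (non-canonically) isomorphic to $\varinjlim \tfrac{1}{n}\ZZ/\ZZ = \QQ/\ZZ$. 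Fixing one such isomorphism $\iota : \ov{k}^*_{\rm tor} \cong \QQ/\ZZ$ is exactly the choice the statement refers to.

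Next I would establish the two natural isomorphisms. For a finite abelian group $A$, applying $\Hom(A,-)$ and $\Ext^1(A,-)$ to the short exact sequence $0 \to \ZZ \to \QQ \to \QQ/\ZZ \to 0$ and using that $\Hom(A,\QQ) = \Ext^1(A,\QQ) = 0$ (because $A$ is finite torsion and $\QQ$ is divisible and torsion-free) yields a natural isomorphism $\Hom(A,\QQ/\ZZ) \cong \Ext^1(A,\ZZ) = \EE(A)$. Composing with the isomorphism $\Hom(A,\ov{k}^*) = \Hom(A,\ov{k}^*_{\rm tor}) \xrightarrow{\sim} \Hom(A,\QQ/\ZZ)$ induced by $\iota$ (here $\Hom(A,\ov{k}^*) = \Hom(A,\ov{k}^*_{\rm tor})$ since $A$ is torsion), and using the standard self-duality $\Hom(\Hom(A,\QQ/\ZZ),\QQ/\ZZ) \cong A$ for finite abelian groups (Pontryagin duality, natural in $A$), I would assemble the chain
$$
\Hom(\EE(A),\ov{k}^*) \;=\; \Hom(\EE(A),\QQ/\ZZ) \;\cong\; \Hom(\Hom(A,\QQ/\ZZ),\QQ/\ZZ) \;\cong\; A.
$$
Iterating the isomorphism $\Hom(A,\QQ/\ZZ)\cong\EE(A)$ once more gives $\EE(\EE(A)) \cong \Hom(\Hom(A,\QQ/\ZZ),\QQ/\ZZ) \cong A$, naturally and with no choices (the dependence on $\iota$ cancels, or rather never enters, in the $\EE(\EE(A))$ identification).

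Finally I would tie this to the scheme-theoretic statement. For a finite abelian group $G$, the $\ov{k}$-points of $\GG(G) = \Spec \ov{k}[G]$ are by definition $\Hom_{\Alg(\ov{k})}(\ov{k}[G],\ov{k}) = \Hom_{\Mon}(G,(\ov{k},\cdot)) = \Hom_{\Ab}(G,\ov{k}^*)$, the last equality because $G$ is a group so every monoid map lands in the units. Taking $G = \EE(A)$ and combining with the above gives $\GG(\EE(A))(\ov{k}) = \Hom(\EE(A),\ov{k}^*) \cong A$ canonically. I do not anticipate a serious obstacle here; the only point requiring a little care is bookkeeping the naturality and keeping track of exactly where the choice of $\iota$ enters (it enters the identification $\Hom(\EE(A),\ov{k}^*) \cong A$ but not $\EE(\EE(A)) \cong A$), which is the one subtlety the statement explicitly flags. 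Everything else is a routine assembly of textbook facts about finite abelian groups and divisible groups.
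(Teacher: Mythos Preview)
Your proposal is correct and follows essentially the same approach as the paper: both use the exact sequence $0 \to \ZZ \to \QQ \to \QQ/\ZZ \to 0$ to identify $\EE(A)$ with $\Hom(A,\QQ/\ZZ)$, then invoke double (Pontryagin) duality. The only minor difference is that the paper proves $\EE(\EE(A)) \cong A$ separately via a free resolution $0 \to F \to G \to A \to 0$ and the double-dual identification $F^{\lor\lor} = F$, whereas you obtain it by iterating the $\Hom(-,\QQ/\ZZ)$ description of $\EE$; both routes are standard and equally short.
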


\begin{proof}  Choose an exact sequence \bne{Ares} & 0 \to F \to G \to A \to 0 \ene with $F$ and $G$ lattices.  Then $\EE(A)$ is (up to unique isomorphism) the cokernel of of the injection $G^\lor \to F^\lor$.  So for the same reason, $\EE(\EE(A))$ is the cokernel of \be (F^{\lor \lor} \to G^{\lor \lor}) & = & (F \to G), \ee which is (up to canonical isomorphism) $A$.  (This is a special case of the evaluation isomorphism between any bounded complex of FGA groups and its double derived dual.)

We have \bne{ONE} \Hom(\EE(A),\ov{k}^*) & = & \Hom(\EE(A),\ov{k}^*_{\rm tor}) \ene since $\EE(A)$ is finite when $A$ is finite.  The hypotheses on $\ov{k}$ ensure that $\ov{k}$ contains an algebraic closure $\ov{\QQ}$ of $\QQ$, so $\ov{k}^*_{\rm tor} \cong \ov{\QQ}_{\rm tor}^*$ (since any root of unity is algebraic over $\QQ$).  If we think of $\ov{\QQ}$ as being contained in $\CC$, then we can define an isomorphism $\QQ / \ZZ \to \ov{\QQ}^*_{\rm tor}$ via the map $q \mapsto \exp( 2 \pi i q )$.  This proves the first statement.  Once we fix an isomorphism $\ov{k}^*_{\rm tor} \cong \QQ / \ZZ$, we have a short exact sequence \be 0 \to \ZZ \to \QQ \to \ov{k}^*_{\rm tor} \to 0. \ee  By applying $\Hom(A, \slot)$ to this exact sequence we obtain a natural isomorphism \bne{ONEA} \Hom(A,\ov{k}^*_{\rm tor}) & = & \EE(A). \ene  By applying $\Hom( \slot , \ov{k}^*_{\rm tor})$ to \eqref{ONEA}, we obtain a natural isomorphism \bne{TWO} \Hom( \Hom(A,\ov{k}^*_{\rm tor}),\ov{k}^*_{\rm tor}) & = & \Hom(\EE(A),\ov{k}^*_{\rm tor}). \ene  Finally, since $A$ is finite, the evaluation map yields a natural isomorphism \bne{THREE} A & = & \Hom( \Hom(A,\ov{k}^*_{\rm tor}), \ov{k}^*_{\rm tor}) . \ene  The natural isomorphism is obtained by combining \eqref{ONE}, \eqref{TWO}, and \eqref{THREE}. \end{proof}

\begin{prop} \label{prop:isotropygroups} Let $\ov{k}$ be an algebraic closure of our fixed characteristic zero base field $k$, equipped with a choice of isomorphism $\ov{k}^*_{\rm tor} \cong \QQ / \ZZ$ as in Lemma~\ref{lem:isotropygroups}.  Let $F$ be a KM fan with algebraic realization $X(F)$ and let $x \in X(F)(\ov{k})$ be a geometric point of $X(F)$.  By Proposition~\ref{prop:stratification}, $x$ lies in the stratum $X(N/F_\sigma,0,0) \subseteq X(F)$ for a unique cone $\sigma \in F$.  The isotropy group of $x$ (the automorphism group of $x$ as an object of the fiber category $X(F)(\ov{k})$) is naturally isomorphic to $(N/F_\sigma)_{\rm tor}$. \end{prop}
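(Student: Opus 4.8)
The plan is to localize the computation to the single stratum of $X(F)$ containing $x$---where the stack structure is completely transparent---and then to translate the resulting finite group scheme into $(N/F_\sigma)_{\rm tor}$ by means of the ``duality'' isomorphism of Lemma~\ref{lem:isotropygroups}.

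First I would invoke Proposition~\ref{prop:stratification}: the point $x$ lies in the locally closed substack $X(N/F_\sigma,0,0) = X(0,\Star(\sigma))$ of $X(F)$. A locally closed immersion of stacks is a monomorphism, hence fully faithful as a morphism of categories fibered in groupoids, so the inclusion induces an isomorphism $\Aut_{X(F)}(x) \cong \Aut_{X(N/F_\sigma,0,0)}(x)$; it therefore suffices to compute automorphism groups in the stratum. By the description given in Proposition~\ref{prop:stratification} (equivalently, by Example~\ref{example:atomicopen} applied with the group $N/F_\sigma$ in place of $N$), this stratum is a gerbe over the torus $\GG((N/F_\sigma)^\lor)$ banded by the finite diagonalizable $k$-group scheme $G := \GG(\EE(N/F_\sigma))$, and $\EE(N/F_\sigma) = \EE((N/F_\sigma)_{\rm tor})$ since $\EE$ annihilates the free part of an FGA group.

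Second, since $G$ is commutative, the automorphism group of any object of a $G$-gerbe is canonically identified with $G$ as a sheaf of groups---no choice of trivialization is involved. Evaluating on $\ov{k}$, and using that over the algebraically closed field $\ov{k}$ of characteristic zero the relevant $G$-torsor is trivial, we obtain a natural identification $\Aut_{X(F)}(x) \cong G(\ov{k}) = \GG(\EE(N/F_\sigma))(\ov{k})$. Now apply Lemma~\ref{lem:isotropygroups} with $A := (N/F_\sigma)_{\rm tor}$, a finite abelian group: with respect to the globally fixed isomorphism $\ov{k}^*_{\rm tor} \cong \QQ/\ZZ$ there is a natural isomorphism $\GG(\EE(A))(\ov{k}) = \Hom(\EE(A),\ov{k}^*) \cong A$. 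Composing the identifications yields the asserted natural isomorphism $\Aut_{X(F)}(x) \cong (N/F_\sigma)_{\rm tor}$.

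I do not expect any single step to be a serious obstacle; the fussiest point is the naturality bookkeeping---verifying that each of the identifications above is canonical, that in the gerbe step one genuinely uses the (canonical) band and not a non-canonical trivialization of the gerbe, and that the only choice entering Lemma~\ref{lem:isotropygroups} is the single, once-and-for-all fixed isomorphism $\ov{k}^*_{\rm tor} \cong \QQ/\ZZ$. As a cross-check---and to make the isomorphism explicit---one can instead argue in a local chart $X(\sigma,F) = [X(S_\sigma(L))/\GG(\EE(N/L))]$ with $L$ a lifting of $F_\sigma$, lift $x$ to a point of the smallest torus orbit $\GG(\sigma^\perp \cap L^\lor)$ of the affine toric variety $X(S_\sigma(L))$, and compute its stabilizer inside $\GG(\EE(N/L))(\ov{k})$; an orbit--stabilizer analysis together with the exact sequence $0 \to N^\lor \to L^\lor \to \EE(N/L) \to \EE(N) \to 0$ and the snake-lemma identification of $\Cok(N^\lor \to F_\sigma^\lor)$ with $\Ker(\EE(N/F_\sigma) \to \EE(N))$ recovers the same group, at the cost of heavier toric combinatorics.
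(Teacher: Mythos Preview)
Your proof is correct and follows essentially the same route as the paper: reduce to the stratum via the locally closed immersion, use the gerbe description from Example~\ref{example:atomicopen} to identify the isotropy with $\GG(\EE(N/F_\sigma))(\ov{k})$, and then apply Lemma~\ref{lem:isotropygroups}. Your write-up is in fact more careful about naturality and the passage from $\EE(N/F_\sigma)$ to $\EE((N/F_\sigma)_{\rm tor})$ than the paper's, and the local-chart cross-check you sketch at the end is a nice addition but not needed.
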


\begin{proof} Since the stratum $X(N/F_\sigma,0,0) \subseteq X(F)$ containing $x$ is locally closed in $X(F)$, the isotropy group of $x$ as an $\ov{k}$-point of $X(F)$ is the same as its isotropy group as a point of $X(N/F_\sigma,0,0)$.  We know from Example~\ref{example:atomicopen} that $X(N/F_\sigma,0,0)$ is a gerbe banded by $\GG(\EE(N/F_\sigma))$ over the torus $\GG((N/F_\sigma)^\lor)$, so the isotropy group of $x$ as an $\ov{k}$-point of $X(N/F_\sigma,0,0)$ is given by $\GG(\EE(N/F_\sigma))(\ov{k})$, which is identified with $N/F_\sigma$ by Lemma~\ref{lem:isotropygroups}. \end{proof}

\subsection{Fundamental group} \label{section:fundamentalgroup}  Throughout this section, we shall work over the base field $k = \CC$.  We shall think of the usual algebraic realization functor \be X : \Fans & \to & \Sch_{\CC} \ee simply as a functor \be X : \Fans & \to & \Top \ee by passing to the underlying analytic topological space.  Correspondingly, we shall view the algebraic realization $X(F)$ of a KM fan $F$ as a stack over the category of topological spaces by ``passing to the analytic topology."  With this understanding, we can speak of the fundamental group $\pi_1(X(F))$ of $X(F)$, which we shall now calculate:

\begin{thm} \label{thm:fundamentalgroup} Let $F=(N,F,\{ F_\sigma \})$ be a KM fan.  The fundamental group of the algebraic realization $X(F)$ is given by \be \pi_1(X(F)) & = & N / \sum_{\sigma \in F} F_\sigma. \ee \end{thm}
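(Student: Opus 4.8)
The plan is to work over $k=\CC$ in the analytic topology, so that $X(F)$ and each open substack $X(\sigma,F)\subseteq X(F)$ becomes a Deligne--Mumford topological stack, and to compute $\pi_1$ by a Seifert--van Kampen argument over the standard affine cover $\{X(\sigma,F)\}$ of $X(F)$, after first settling the case of a single cone. The single-cone case will itself be reduced to the \emph{classical} computation of the fundamental group of an affine toric variety (the classical analogue of the present theorem; see \cite[\S12.1]{CLS}), applied to $X(S_\sigma(L))$, together with an analysis of the quotient by the finite group $\GG(\EE(N/L))$.

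First I would record the behaviour of the ``atomic open'' $X(N,0,0)$. By Example~\ref{example:atomicopen}, after choosing a splitting $N\cong\ov N\oplus N_{\rm tor}$ one has $X(N,0,0)\cong\GG(\ov N^\lor)\times B\GG(\EE(N_{\rm tor}))$; analytically $\GG(\ov N^\lor)^{\rm an}\simeq(\CC^*)^r$ (with $r$ the rank of $N$) while $\GG(\EE(N_{\rm tor}))^{\rm an}$ is the finite group $N_{\rm tor}$ (Lemma~\ref{lem:isotropygroups}), so $\pi_1(X(N,0,0))\cong\ov N\times N_{\rm tor}\cong N$. Moreover the one-parameter subgroup maps $n:\GG_m\to X(N,0,0)$ realize this isomorphism $N\xrightarrow{\ \sim\ }\pi_1(X(N,0,0))$, using the naturality of the gerbe structure in Example~\ref{example:atomicopen} and the identification $H^1_{\rm et}(\GG_m,\GG(\EE(A)))\cong A$ that is implicit in the proofs of Lemma~\ref{lem:isotropygroups} and Proposition~\ref{prop:support}.

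Next I would treat a single cone $\sigma\in F$, proving $\pi_1(X(\sigma,F))=N/F_\sigma$ via the map induced by the open embedding of the atomic open $X(N,0,0)\into X(\sigma,F)$ and the previous step. Choose a lifting $L$ of $F_\sigma$ (Lemma~\ref{lem:liftingsexist}); then $X(\sigma,F)=[X(S_\sigma(L))/\GG(\EE(N/L))]$ (Definition~\ref{defn:AAsigma}), analytically $[W/G]$ with $W:=X(S_\sigma(L))^{\rm an}$ the analytification of the affine toric variety of $\sigma$ for the lattice $L$ and $G:=N/L$ finite (Lemma~\ref{lem:isotropygroups}), and, since $L$ is also a lifting of the zero datum, $X(N,0,0)=[\GG(L^\lor)^{\rm an}/G]\into[W/G]$. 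As $L\cap\Span\sigma=F_\sigma$, the classical result \cite[\S12.1]{CLS} gives $\pi_1(W)=L/F_\sigma$, with the natural surjection $L=\pi_1(\GG(L^\lor)^{\rm an})\twoheadrightarrow L/F_\sigma$. The homotopy exact sequence of the Borel fibration $W\to[W/G]\to BG$ (using $\pi_2(BG)=0$ and $W$ connected) yields $1\to L/F_\sigma\to\pi_1(X(\sigma,F))\to N/L\to1$, and functoriality along $[\GG(L^\lor)^{\rm an}/G]\into[W/G]$ exhibits $N=\pi_1(X(N,0,0))\to\pi_1(X(\sigma,F))$ as a morphism of these extensions of $N/L$ lying over the surjection $L\twoheadrightarrow L/F_\sigma$, whose kernel is $F_\sigma$. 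Hence it factors through the pushout extension $1\to L/F_\sigma\to N/F_\sigma\to N/L\to1$, and the induced morphism of extensions, being the identity on both outer terms, is an isomorphism $N/F_\sigma\xrightarrow{\ \sim\ }\pi_1(X(\sigma,F))$ by the five lemma. (The inclusion $F_\sigma\subseteq\ker\bigl(N\to\pi_1(X(\sigma,F))\bigr)$ can also be seen geometrically: $F_\sigma$ is generated as a group by $P_\sigma=\sigma\cap F_\sigma$, since $\sigma$ is a sharp cone spanning $(F_\sigma)_\RR$, and for $n\in P_\sigma$ the map $n:\GG_m\to X(\sigma,F)$ extends to $\AA^1\to X(\sigma,F)$ by Proposition~\ref{prop:support}, making the loop $n$ null-homotopic.) These identifications are natural in $\sigma$: for $\tau\leq\sigma$ the open embedding $X(\tau,F)\into X(\sigma,F)$ induces the quotient map $N/F_\tau\to N/F_\sigma$ (both compatible with the one-parameter subgroup maps from the common atomic open).

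Finally I would globalize. The open substacks $X(\sigma,F)$ with $\sigma$ a maximal cone of $F$ cover $X(F)$, with $X(\sigma,F)\cap X(\sigma',F)=X(\sigma\cap\sigma',F)$ and likewise for higher intersections, all of these being connected (their coarse spaces are affine toric varieties). The Seifert--van Kampen theorem---valid here since each $X(\sigma,F)$ is a global quotient of a space by a finite group and all the pieces and iterated intersections are connected---then gives $\pi_1(X(F))=\varinjlim_{\sigma\in F}\pi_1(X(\sigma,F))=\varinjlim_{\sigma\in F}N/F_\sigma$, the colimit being over the poset of cones of $F$ with transition maps the quotient maps of the previous step. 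A compatible family of homomorphisms out of this diagram is precisely a homomorphism $N\to \Gamma$ annihilating every $F_\sigma$, i.e.\ annihilating $\sum_{\sigma\in F}F_\sigma$, so the colimit is $N/\sum_{\sigma\in F}F_\sigma$, as claimed. The main obstacle is the single-cone step: it is easy that $\sum_\sigma F_\sigma$ dies in $\pi_1$, but pinning down the kernel exactly forces one through the Borel-fibration exact sequence for the local quotient stacks and a somewhat delicate ``pushout of extensions / five lemma'' bookkeeping to match it with $N/F_\sigma$; one must also take care that Seifert--van Kampen is applied correctly to the Deligne--Mumford stacks involved and that the local identifications really are functorial in the cone.
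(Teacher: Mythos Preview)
Your proof is correct and follows essentially the same route as the paper: construct the map $N\to\pi_1(X(F))$ via one-parameter subgroups, reduce (by van Kampen / Mayer--Vietoris over the cover $\{X(\sigma,F)\}$) to the case of a single maximal cone, and there compare the homotopy exact sequence of the finite quotient $X(S_\sigma(L))\to[X(S_\sigma(L))/G]$ (your Borel fibration is the same sequence) with $0\to L/F_\sigma\to N/F_\sigma\to N/L\to 0$, invoking the classical computation $\pi_1(X(S_\sigma(L)))=L/F_\sigma$ and the Five Lemma. The only cosmetic differences are that the paper phrases the globalization as an induction rather than a single colimit, and does not pause to compute $\pi_1$ of the atomic open separately.
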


\begin{proof} The most important thing is simply to check that there at least \emph{exists a natural map} \bne{naturalfundamentalgroupmap} N / \sum_{\sigma \in F} F_\sigma & \to & \pi_1(X(F)). \ene  Indeed, each element $n \in N$ gives rise, by realizing an obvious map of KM fans $n : \GG_m \to F$ (Example~\ref{example:GmA1}), to a map of stacks $n : \GG(\ZZ)=\CC^* \to X(F)$ and hence to an element $n_*(\zeta) \in \pi_1(X(F))$, where $\zeta \in \pi_1(\CC^*)$ is the standard generator of $\pi_1(\CC^*)=\ZZ$.  The element $n_*(\zeta)$ is trivial whenever $n \in F_\sigma$ for some $\sigma \in F$, for then the map of stacks $n : \GG(\ZZ)=\CC^* \to X(F)$ extends to a map of stacks $\ov{n} : \GG(\NN)=\CC \to X(F)$ (again by realizing an obvious map of KM fans).  This completes the construction of the natural map \eqref{naturalfundamentalgroupmap}.  

To see that \eqref{naturalfundamentalgroupmap} is an isomorphism, we can use the same Mayer-Vietoris argument one would use in the classical case of toric varieties (together with induction on the total number of cones), to reduce to the case where $F$ has a unique maximal cone $\sigma$, in which case we are trying to show that the natural map $N/F_\sigma \to \pi_1(X(F))$ is an isomorphism.  Fix a lifting $L$ of $F_\sigma$.  Let $X(\sigma,L)$ be the affine toric variety corresponding to the cone $\sigma$ in the lattice $L$.  By construction, $X(F) = [X(\sigma,L) / G]$, where $G$ is the group given by the analytic topology on the $\CC$ points of the group scheme $\Spec \CC[\EE(N/L)]$.  But this finite $\CC$ group scheme is necessarily discrete and its group $G$ of $\CC$ points is naturally identified with the finite group $N/L$ via Lemma~\ref{lem:isotropygroups}.  Thus we see that $\pi_0(G)=N/L$ and $\pi_1(G)=0$.  The Serre homotopy fibration sequence for the $G$-bundle $X(F) \to X(\sigma,L)$ gives an exact sequence $$ \pi_1(G) \to \pi_1(X(\sigma,L)) \to \pi_1(X(F)) \to \pi_0(G) \to \pi_0(X(\sigma,L)),$$ which we can write as a short exact sequence \bne{Serreseq} 0 \to \pi_1(X(\sigma,L)) \to \pi_1(X(F)) \to N/L \to 0 \ene by using our calculation of $\pi_0(G)$ and $\pi_1(G)$ and the fact that a toric variety is connected.  Since we \emph{have the natural maps} \eqref{naturalfundamentalgroupmap}, we see that \eqref{Serreseq} receives a map from the obvious short exact sequence $$0 \to L/F_\sigma \to N/F_\sigma \to N/L \to 0.$$  By the ``classical" version of the statement we want to prove (just for the toric variety $X(\sigma,L)$), we know the map $L/F_\sigma \to \pi_1(X(\sigma,L))$ is an isomorphism, so we conclude isomorphy for the desired map by the Five Lemma. \end{proof}

\subsection{Quotient theorem} \label{section:quotients}  The purpose of this section is to characterize the maps of KM fans $F \to F'$ whose realizations are torsors under a group action compatible with the stratification defined in the previous section.

\begin{defn} \label{defn:tame} A map of groups $f : N \to N'$ is called \emph{tame} iff $\Cok f$ is finite and $f_{\rm tor} : N_{\rm tor} \to N'_{\rm tor}$ is injective (equivalently $\Ker f$ is torsion-free).  A map of KM fans $f: (N,F,\{ F_\sigma \}) \to (N',F',\{ F'_\tau \})$ is called \emph{semi-tame} (resp.\ \emph{tame}) iff the following conditions are satisfied: \begin{enumerate} \item \label{tame:equidimensional} $\sigma' := f_{\RR}(\sigma) \in F'$ for every $\sigma \in F$ and $f_{\RR} | \sigma : \sigma \to \sigma'$ is bijective, \item \label{tame:conebijection} the map $\sigma \mapsto \sigma'$ defines a bijection from $F$ to $F'$, and \item \label{tame:reducedfibers} $f|F_\sigma : F_\sigma \to F'_{\sigma'}$ is bijective for every $\sigma \in F$ \end{enumerate} (resp.\ and the map of abelian groups $f : N \to N'$ is tame in the previous sense). \end{defn}

\begin{example} The rigidification map $F \to F^{\rm rig}$ of Example~\ref{example:rigidification} is always semi-tame, but it is tame iff it is an isomorphism (iff $F$ is a lattice KM fan). \end{example}

\begin{example} \label{example:inflationistame} The inflation $F \to F'$ with respect to $N \subseteq N'$ discussed in Example~\ref{example:inflation} is tame.  The contraction and canonical resolution of Examples~\ref{example:contraction} and \ref{example:canonicalresolution} are not generally tame, though they are bijective on the underlying sets of cones. \end{example}

\begin{rem} If $f : N \to N'$ is a tame map of groups and $N'$ is a lattice, then $N$ must also be a lattice. \end{rem}

\begin{rem} A composition of tame maps of groups or fans is tame. \end{rem}

When we consider a tame map of KM fans as in the above definition and we refer to something like the ``cone $\sigma \in F$ corresponding to the cone $\sigma' \in F'$" we are of course referring to the bijection in \eqref{tame:conebijection}.  Eventually, in Theorem~\ref{thm:quotients2}, we will prove a result to the effect that the tame maps of KM fans are precisely those maps of KM fans whose realizations are torsors under a group action ``compatible with the orbit stratifications."  We begin by proving a special case of that result, which is important in its own right:

\begin{thm} \label{thm:quotients}  Let $F = (N,F,\{ F_\sigma \})$, $F' = (N',F',\{ F'_\tau \})$ be KM fans, $f : F \to F'$ a tame map of KM fans. \begin{enumerate} \item \label{quotients1} If $f : N \to N'$ is surjective with kernel $K := \Ker f$, then there is a natural action of $\GG(K^\lor)$ on $\AA(F)$ making $\AA(f) : \AA(F) \to \AA(F')$ a $\GG(K^\lor)$ torsor. \item \label{quotients2} If $f : N \to N'$ is injective, then the action of $\GG(\EE(N'/N))=\GG(\EE(\Cok f))$ on $\AA(F)$ makes $\AA(f)$ a $\GG(\EE(N'/N))$ torsor. \item \label{quotients3} If $N'$ (hence also $N$) is a lattice, then the action of $\GG(N^\lor / (N')^\lor)$ on $\AA(F)$ makes $\AA(f)$ a $\GG(N^\lor / (N')^\lor)$ torsor. \end{enumerate} \end{thm}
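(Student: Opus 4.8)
The plan is to argue locally on the target and then glue, following the template of Lemma~\ref{lem:fundamental} and the construction of $\AA(f)$ in \S\ref{section:KMfanrealization}. Being a torsor under a representable sheaf of groups may be checked $\tau$-locally on the base, and $\AA(F')$ is covered by the open substacks $\AA(\sigma',F')$ as $\sigma'$ runs over the maximal cones of $F'$. Since $f$ is semi-tame, the bijection \eqref{tame:conebijection} identifies, via Remark~\ref{rem:preimages}, the preimage $\AA(f)^{-1}(\AA(\sigma',F'))$ with $\AA(\sigma,F)$, where $\sigma\in F$ is the unique cone with $f_\RR(\sigma)=\sigma'$. So it suffices to treat each restriction $\AA(f):\AA(\sigma,F)\to\AA(\sigma',F')$, to check that the relevant group action on $\AA(\sigma,F)$ is intrinsic enough to be independent of the auxiliary choices — precisely the sort of check made in Lemma~\ref{lem:fundamental} — and to conclude that the local torsor structures patch to a global one.

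For the common local set-up, fix $\sigma$, put $\sigma':=f_\RR(\sigma)$, and choose a lifting $L'\subseteq N'$ of $F'_{\sigma'}$. Tameness makes Lemma~\ref{lem:liftings} available: $f_{\rm tor}$ is injective and $f|F_\sigma:F_\sigma\to F'_{\sigma'}$ is bijective, so $(N/F_\sigma)_{\rm tor}\to(N'/F'_{\sigma'})_{\rm tor}$ is injective (Lemma~\ref{lem:liftings}\eqref{liftings2}), whence $L:=f^{-1}(L')$ is a lifting of $F_\sigma$ and $N/L\to N'/L'$ is injective (Lemma~\ref{lem:liftings}\eqref{liftings3}). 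Furthermore the induced map of \emph{lattices} $f:L\to L'$ satisfies the hypotheses of Lemma~\ref{lem:torsor}: it has finite cokernel, $f_\RR:\sigma\to\sigma'$ is bijective, and $f:\Span\sigma\cap L=F_\sigma\to F'_{\sigma'}=\Span\sigma'\cap L'$ is bijective. Hence $\AA(S_\sigma(L))\to\AA(S_{\sigma'}(L'))$ is a $\GG(\Cok(f^\lor))$-torsor, and by the construction in \S\ref{section:KMfanrealization} (see the square \eqref{localfsquare}) the restriction $\AA(f):\AA(\sigma,F)\to\AA(\sigma',F')$ is the map of quotient stacks $[\AA(S_\sigma(L))/\GG(\EE(N/L))]\to[\AA(S_{\sigma'}(L'))/\GG(\EE(N'/L'))]$ induced by this equivariant map.

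For \eqref{quotients1}, surjectivity of $f$ makes $L=f^{-1}(L')\to L'$ surjective with kernel $K=\Ker f$ (a lattice, by tameness), so $N/L\to N'/L'$ is an isomorphism and, dualizing $0\to K\to L\to L'\to 0$ (with $\EE(K)=0$), $\Cok(f^\lor)=K^\lor$; Lemma~\ref{lem:torsor} moreover makes the local torsor trivial. Since the $\GG(K^\lor)$- and $\GG(\EE(N/L))$-actions on $\AA(S_\sigma(L))$ commute (both arise from homomorphisms out of $S_\sigma(L)^{\rm gp}=L^\lor$), the torsor descends: $\AA(f):\AA(\sigma,F)\to\AA(\sigma',F')$ is a $\GG(K^\lor)$-torsor, the action on $\AA(\sigma,F)$ descending from $S_\sigma(L)\hookrightarrow L^\lor\to K^\lor$. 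Running this description through the independence argument of Lemma~\ref{lem:fundamental} shows the action is independent of $L'$ and compatible with the open embeddings \eqref{openembedding2}, so it glues to a $\GG(K^\lor)$-action on $\AA(F)$ under which $\AA(f)$ is a torsor. For \eqref{quotients2}, injectivity of $f$ makes $L=N\cap L'\hookrightarrow L'$ a finite-index inclusion, so Lemma~\ref{lem:torsor} makes $\AA(S_\sigma(L))\to\AA(S_{\sigma'}(L'))$ a $\GG(\EE(L'/L))$-torsor; one must then reconcile this with the two quotients. Dualizing the commuting diagram built from the inclusions $L\hookrightarrow N$, $L\hookrightarrow L'$, $N\hookrightarrow N'$ (with $L=N\cap L'$), a diagram chase yields the natural lift $L^\lor\to\EE(N'/L)$ of $L^\lor\to\EE(N/L)$, and — collapsing the resulting nested stack quotients by repeated use of Lemma~\ref{lem:quotients}, exactly as in \eqref{lemfundcomputation} — identifies $\AA(f):\AA(\sigma,F)\to\AA(\sigma',F')$ with a $\GG(\EE(N'/N))$-torsor whose structure action on $\AA(\sigma,F)$ is induced by $L^\lor\to\EE(N'/L)$ after passing to the quotient by $\GG(\EE(N/L))$. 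As before this is independent of $L'$, so the actions glue and $\AA(f)$ is a global $\GG(\EE(N'/N))$-torsor.

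For \eqref{quotients3}, I would factor $f=g\circ h$ with $h:N\twoheadrightarrow N'':=f(N)$ and $g:N''\hookrightarrow N'$, endowing $N''$ with the KM fan $F''$ having cones $\{h_\RR(\sigma)\}$ and lattice data $\{h(F_\sigma)\}$; semi-tameness of $f$ forces $h$ and $g$ to be tame. By \eqref{quotients1} and \eqref{quotients2}, $\AA(h)$ is a $\GG(K^\lor)$-torsor ($K=\Ker f$) and $\AA(g)$ is a $\GG(\EE(N'/N''))$-torsor. The inclusions of finite-index sublattices $(N')^\lor\subseteq(N'')^\lor\subseteq N^\lor$ give a short exact sequence $0\to\EE(N'/N'')\to N^\lor/(N')^\lor\to K^\lor\to 0$, hence (Proposition~\ref{prop:tautorsors}) an exact sequence $0\to\GG(K^\lor)\to\GG(N^\lor/(N')^\lor)\to\GG(\EE(N'/N''))\to 0$ of group sheaves. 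As $N,N'$ are lattices, $\GG(N^\lor/(N')^\lor)$ sits inside the torus $\GG(N^\lor)$ acting on $\AA(F)$ (Remark~\ref{rem:torusaction2}); its action restricts to the $\GG(K^\lor)$-action along $\AA(h)$ and projects to the $\GG(\EE(N'/N''))$-action on $\AA(F'')$, so — a composite of torsors fitting into such an exact sequence being a torsor under the total group, by one more use of Lemma~\ref{lem:quotients} — $\AA(f)=\AA(g)\circ\AA(h)$ is a $\GG(N^\lor/(N')^\lor)$-torsor. The step I expect to be the main obstacle is the bookkeeping in \eqref{quotients2}: tracking the finite abelian groups $\EE(N/L)$, $\EE(N'/L')$, $\EE(L'/L)$, $\EE(N'/N)$ and the maps among them, and verifying that the locally defined actions depend only on $f$ so that they patch over $\AA(F')$ — conceptually routine, in the style of Lemma~\ref{lem:fundamental}, but where most of the work lies.
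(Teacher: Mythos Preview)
Your overall strategy and the proofs of \eqref{quotients1} and \eqref{quotients3} match the paper's almost exactly: local reduction via Remark~\ref{rem:preimages}, the choice $L=f^{-1}(L')$ in \eqref{quotients1}, and the factorization through the image $N''$ in \eqref{quotients3} with the exact sequence $0\to\EE(N'/N'')\to N^\lor/(N')^\lor\to K^\lor\to 0$ are all as in the paper.

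The one place you and the paper diverge is \eqref{quotients2}, and this is worth noting because it is exactly the step you flag as the main obstacle. You continue with $L=N\cap L'$ for an arbitrary lifting $L'\subseteq N'$, which forces you to juggle the four finite groups $\EE(N/L)$, $\EE(N'/L')$, $\EE(L'/L)$, $\EE(N'/N)$ and the ``hub'' $\EE(N'/L)$ via two exact sequences. This works, but the paper makes a simpler choice: since $N\hookrightarrow N'$ has finite index and $f$ is tame, \emph{any} lifting $L\subseteq N$ of $F_\sigma$ is automatically also a lifting of $F'_{\sigma'}$ inside $N'$, and then $S_\sigma(L)=S_{\sigma'}(L)$ on the nose. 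With this choice the map \eqref{localtorsor2} is simply
\[
[\,\AA(S_\sigma(L))/\GG(\EE(N/L))\,]\longrightarrow[\,\AA(S_\sigma(L))/\GG(\EE(N'/L))\,],
\]
and a single application of Lemma~\ref{lem:quotients} to $\GG(\slot)$ of $0\to\EE(N'/N)\to\EE(N'/L)\to\EE(N/L)\to 0$ finishes the job. So the bookkeeping you anticipate largely evaporates with the right choice of lifting.
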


\begin{proof} The question is local on $\AA(F')$.  By Remark~\ref{rem:preimages} and the tameness assumption we reduce to proving that for any $\sigma \in F$, if we let $\sigma' := f_{\RR}(\sigma)$ be the corresponding cone of $F'$, then \bne{localtorsor} \AA(\sigma,F) & \to & \AA(\sigma',F') \ene is a torsor under the appropriate group action.  Suppose $L \subseteq N$ (resp.\ $L' \subseteq N'$) is a lifting of the lattice datum $F_\sigma$ (resp.\ $F_{\sigma'}$) with $f(L) \subseteq L'$.  (One can always find such $L$, $L'$ by Lemma~\ref{lem:liftings}.)  As discussed in \S\ref{section:KMfanrealization}, we can compute \eqref{localtorsor} as \bne{localtorsor2} \AA(\sigma,F,L) & \to & \AA(\sigma',F',L'). \ene

Assume the hypothesis in \eqref{quotients1}.  Let $L'$ be a lifting of the lattice datum $F_{\sigma'}$.  Since $f_{\rm tor}$ is injective and $f$ takes $F_{\sigma}$ bijectively onto $F_{\sigma'}$ (since $f$ is tame), $L := f^{-1}(L')$ is a lifting of the lattice datum $F_\sigma$ by Lemma~\ref{lem:liftings}.  Since $f : N \to N'$ is surjective, the definition of $L$ ensures that the indicated arrow in the map of exact sequences \bne{SEStorsormap} & \xym{ 0 \ar[r] & L \ar[r] \ar[d]_{f|L} & N \ar[d]_f \ar[r] & N/L \ar[r] \ar[d]_{\cong} & 0 \\ 0 \ar[r] & L' \ar[r] & N' \ar[r] & N'/L' \ar[r] & 0 } \ene is an isomorphism and that the sequence \bne{latticeSES} & 0 \to K \to L \to L' \to 0 \ene is exact (recall that $K := \Ker(f : N \to N')$).  Dualizing \eqref{SEStorsormap} we obtain a map of exact sequences \bne{SEStorsormap2} & \xym{ 0 \ar[r] & M \ar[r] & L^\lor \ar[r] & \EE(N/L) \ar[r] & \EE(N) \ar[r] & 0 \\ 0 \ar[r] & M' \ar[r] \ar[u]^{f^\lor} & (L')^\lor \ar[r] \ar[u]^{(f|L)^\lor} & \EE(N'/L') \ar[u]^{\cong} \ar[r] & \EE(N') \ar[u] \ar[r] & 0 } \ene with the indicated arrow an isomorphism.  Since \eqref{latticeSES} is a short exact sequence of \emph{lattices}, we have $K^\lor = \Cok( (f|L)^\lor)$.  Since $f$ is tame, we see from Lemma~\ref{lem:torsor} (with $f$ there given by $f|L : L \to L'$ here) that \bne{localtorsor3} \AA(S_\sigma(L)) & \to & \AA(S_{\sigma'}(L')) \ene is a $\GG(K^\lor)$ torsor.  The map \eqref{localtorsor3} is $G := \GG(\EE(N/L)) = \GG(\EE(N'/L'))$ equivariant and the map \eqref{localtorsor2} is, by definition, the induced map on the stack quotients by $G$.  The $G$ action and the $\GG(K^\lor)$ action on $\AA(S_\sigma(L))$ commute because both actions are defined by mapping these group objects to the abelian group object $\GG(L^\lor)$, so by general nonsense, the $\GG(K^\lor)$ action on $\AA(S_\sigma(L))$ descends to a $\GG(K^\lor)$ action on the quotient \be \AA(\sigma,F,L) & = & [ \AA(S_\sigma(L)) / G ] \ee making \eqref{localtorsor2} a $\GG(K^\lor)$ torsor, which, incidentally, fits into a cartesian diagram \bne{cartesiantorsordiagram} & \xym{ \AA(S_\sigma(L)) \ar[r] \ar[d] & [ \AA(S_\sigma(L)) / G ] = \AA(\sigma,F,L) \ar[d] \\ \AA(S_{\sigma'}(L')) \ar[r] & [ \AA(S_{\sigma'}(L')) / G ] = \AA(\sigma',F',L').} \ene

Now assume the hypothesis in \eqref{quotients2}.  Pick a lifting $L \subseteq N$ of the lattice datum $F_\sigma$.  Since $f$ is tame and $N \into N'$ is injective with finite cokernel, $L \subseteq N \subseteq N'$ is also a lifting of $F_{\sigma'}$ and we have $S_\sigma(L) = S_{\sigma'}(L)$.  Applying the Snake Lemma to the map of exact sequences $$ \xym{ 0 \ar[r] & L \ar[r] \ar@{=}[d] & N \ar[r] \ar[d] & N/L \ar[r] \ar[d] & 0 \\ 0 \ar[r] & L \ar[r] & N' \ar[r] & N'/L \ar[r] & 0 } $$ we obtain an exact sequence of finite abelian groups \bne{fagSES} & 0 \to N/L \to N'/L \to N'/N \to 0. \ene Dualizing \eqref{fagSES} we obtain a similar exact sequence \bne{fagSESdual} & 0 \to \EE(N'/N) \to \EE(N'/L) \to \EE(N/L) \to 0 \ene and hence an exact sequence of abelian group objects \bne{goSES} & 0 \to \GG(\EE(N/L)) \to \GG(\EE(N'/L)) \to \GG(\EE(N'/N)) \to 0. \ene  In this case, the map \eqref{localtorsor2} is given by \bne{localtorsor4} [ \AA(S_\sigma(L)) / \GG(\EE(N/L)) ] & \to & [ \AA(S_\sigma(L)) / \GG(\EE(N'/L)) ], \ene which is a $\GG(\EE(N'/N))$ torsor by Lemma~\ref{lem:quotients}. 

We will obtain \eqref{quotients3} by combining \eqref{quotients1} and \eqref{quotients2} in much the same way that we bootstrapped up in the proof of Lemma~\ref{lem:torsor}.  Let $N''$ be the sublattice of $N'$ given by the image of $f : N \to N'$.  Since $f$ is tame, we can take images of cones and lattice data under $f : N \to N''$ to define a KM fan $F''$ in $N''$ so that the maps \bne{Nmaps} & N \to N'' \to N' \ene define tame maps of KM fans \bne{tameKMfanmaps} & F \to F'' \to F'. \ene  By the parts we already proved, we know that $\AA(F) \to \AA(F'')$ is a torsor under $\GG(K^\lor)$, where $K := \Ker f = \Ker (N \to N'')$, and $\AA(F'') \to \AA(F')$ is a torsor under $\GG(\EE(N'/N''))$.  We can conclude that the composition $\AA(F) \to \AA(F')$ is a torsor under $\GG(N^\lor / (N')^\lor)$ by showing there is an exact sequence \bne{desiredSES} & 0 \to \EE(N'/N'') \to N^\lor / (N')^\lor \to K^\lor \to 0. \ene  Since the maps in \eqref{Nmaps} have finite cokernels, dualizing gives inclusions \bne{Minclusions} & (N')^\lor \into (N'')^\lor \into N^\lor  \ene and hence an exact sequence \bne{desiredSES2} & 0 \to (N'')^\lor / (N')^\lor \to N^\lor / (N')^\lor \to N^\lor / (N'')^\lor \to 0. \ene  Since $N''$ is free, dualizing \bne{gfg} & 0 \to K \to N \to N'' \to 0 \ene shows that $K^\lor = N^\lor / (N'')^\lor$ and since $N'$ is free, dualizing \bne{gfg2} & 0 \to N'' \to N' \to N'/N'' \to 0 \ene shows that $\EE(N'/N'') = (N'')^\lor / (N')^\lor$.  In light of these identifications, \eqref{desiredSES2} is the desired exact sequence \eqref{desiredSES}. \end{proof}

\begin{example} \label{example:P22revisited}  The map of KM fans $f : \tilde{F} \to F$ described in Example~\ref{example:P22} is a tame map satisfying the first assumption of Theorem~\ref{thm:quotients}.  The underlying map of groups $f : \ZZ^2 \to \ZZ \oplus \ZZ / 2 \ZZ$ fits into an exact sequence \bne{P22SES} & \xym@C+15pt{ 0 \ar[r] & \ZZ \ar[r]_-{ \bp 2 \\ 2 \ep } & \ZZ^2 \ar[r]^-{f}_-{ \bp 1 & -1 \\ 1 & 0 \ep } & \ZZ \oplus \ZZ / 2 \ZZ \ar[r] & 0 } \ene so that $K := \Ker f = \ZZ$.  Dualizing \eqref{P22SES} yields an exact sequence \bne{P22SESdual} & \xym@C+15pt{ 0 \ar[r] & \ZZ \ar[r]^-{f^\lor}_-{ \bp 1 \\ -1 \ep } & \ZZ^2 \ar[r]_-{ \bp 2 & 2 \ep } & \ZZ \ar[r] & \ZZ / 2 \ZZ \ar[r] & 0 } \ene so that the natural map $M=N^\lor \to K$ in Theorem~\ref{thm:quotients} is given by $(2,2) : \ZZ^2 \to \ZZ$.  The torsorial action of $\GG(K^\lor) = \GG_m$ on $X(\tilde{F}) = \AA^2 \setminus \{ 0 \}$ in Theorem~\ref{thm:quotients} in this case is given by $t \cdot (x,y) = (t^2 x, t^2 y)$.  Since this action makes $\AA(f)$ a $\GG_m$ torsor, we see that $$ X(F) = [ X(\tilde{F}) / \GG_m ] = \PP(2,2) $$ is a weighted projective line and $\AA(f)$ is the ``tautological" $\GG_m$ torsor over it.  \end{example}

Theorem~\ref{thm:quotients}---particularly the proof of \eqref{quotients3}---points us in the direction of the most general statement along these lines.  Suppose $f : F \to F'$ is an arbitrary tame map of KM fans.  We want to show that $\AA(f)$ is a torsor under $\GG(G)$ for \emph{some} group $G=\DD(f)$ depending naturally on $f$ \dots but which one?  A good way to figure this out is to write down all of the properties we expect of $\DD(f)$ and \emph{then} try to find the group satisfying those properties.  We can always factor $f : N \to N'$ as a (tame) surjection $N \to N''$ followed by a (tame) injection $N'' \to N'$, then factor $f : F \to F'$ correspondingly as $F \to F''$ followed by $F'' \to F$, just as in the above proof.  We know from Theorem~\ref{thm:quotients} that $\AA(F) \to \AA(F'')$ is a $\GG((\Ker f)^\lor)$ torsor and $\AA(F'') \to \AA(F)$ is a $\GG(\EE(\Cok f))$ torsor, so, as in the proof of Theorem~\ref{thm:quotients}\eqref{quotients3}, we expect $\DD(f)$ to sit in a natural short exact sequence \bne{Gsequence} & 0 \to \EE(\Cok f) \to \DD(f) \to (\Ker f)^\lor \to 0 \ene like the sequence \eqref{desiredSES} in that proof.  We also want the \emph{special formulas} \bne{specialformulas} \DD(f) & = & (\Ker f)^\lor \quad \quad (f {\rm \; surjective}) \\ \nonumber \DD(f) & = & \EE(\Cok f) \quad \quad (f {\rm \; injective}) \\ \nonumber \DD(f) & = & \Cok (f^\lor) \quad \quad (f {\rm \; a \; map \; of \; lattices}) \ene to hold for special types of tame $f$, and we want the action of $\GG(\DD(f))$ on $\AA(f)$ to be the one described in the above theorem in these special cases.  

Now anyone familiar with the idea of the derived category will quickly figure out how to define $\DD(f)$.  

\begin{defn} \label{defn:DDf} For a map of FGA groups $f : N \to N'$, let $[f] := [N \to N']$ denote the \emph{two-term complex associated to} $f$ with $N$ and $N'$ placed in degrees $0$ and $1$, with $f$ as the coboundary map between them, and with all other cochain groups and coboundary maps zero.  (Up to shifting, this is the mapping cone of $f$.)  Viewing $[f]$ as an object of the derived category $\DAb$ of abelian groups, we can consider the group \be \DD(f) & := & \Hom_{\DAb}([f], \ZZ) \\ & = & \H^0(\RHom([f],\ZZ)). \ee \end{defn}

We will eventually see that this $\DD(f)$ is as desired.  Let us first verify that this $\DD(f)$ sits in an exact sequence \eqref{Gsequence} and that the expected ``formulas" \eqref{specialformulas} hold for the special sorts of $f$ mentioned above.  First of all, we have two short exact sequences of chain complexes \bne{SESf1} & 0 \to N'[-1] \to [f] \to N \to 0 \\ \label{SESf2} & 0 \to \Ker f \to [f] \to (\Cok f)[-1] \to 0, \ene where our shifting convention is the usual one, so that, for a group $A$, the complex $A[-1]$ consists of $A$ in degree $1$ and zero elsewhere.  We can view either of these short exact sequences as a distinguished triangle in $\DAb$, then apply $\RHom( \slot, \ZZ)$ to obtain a new distinguished triangle, then look at the long exact cohomology sequence associated to that distinguished triangle.  The non-zero part of the exact sequence thus obtained from \eqref{SESf1} looks like \bne{LESf1} & 0 \to \H^{-1}(\RHom([f],\ZZ)) \to (N')^\lor \to \\ \nonumber & \to  N^\lor \to \DD(f) \to \EE(N') \to \\ \nonumber & \to \EE(N) \to \H^1( \RHom([f],\ZZ)) \to 0. \ene  (The three lines are the $\H^{-1}$ line, the $\H^0$ line, and the $\H^1$ line.)  From \eqref{LESf1} we see that $\H^{-1}(\RHom([f],\ZZ))=0$ iff $f^\lor : (N')^\lor \to N^\lor$ is injective (iff $\Cok f$ is finite).  We also see that   $\H^1(\RHom([f],\ZZ))=0$ iff $\EE(N') \to \EE(N)$ is surjective (iff $\EE(N'_{\rm tor}) \to \EE(N_{\rm tor})$ is surjective iff $f_{\rm tor} : N_{\rm tor} \to N'_{\rm tor}$ is injective).  We have proved:

\begin{prop} \label{prop:tamemaps} A map of FGA groups $f : N \to N'$ is tame iff the augmentation map \be \DD(f) & \to & \RHom([f],\ZZ) \ee is an isomorphism in $\DAb$ (iff $\H^i( \RHom([f],\ZZ) )=0$ for all $i \neq 0$). \end{prop}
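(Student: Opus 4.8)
The plan is to reduce the statement entirely to the homological bookkeeping already set up in \eqref{SESf1}--\eqref{LESf1}. The parenthetical reformulation --- that the augmentation $\DD(f) = \H^0(\RHom([f],\ZZ)) \to \RHom([f],\ZZ)$ is an isomorphism in $\DAb$ iff $\H^i(\RHom([f],\ZZ)) = 0$ for all $i \ne 0$ --- is just the usual truncation fact that a complex is carried by the natural augmentation to/from its degree-$0$ cohomology exactly when it is quasi-isomorphic to that single group, and I would dispose of it in one line. So everything comes down to deciding when those cohomology groups vanish.

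First I would observe that $\RHom([f],\ZZ)$ is supported in the three degrees $-1,0,1$. Indeed, every FGA group $A$ has a length-one free resolution (a subgroup of a free abelian group is free), so $\RHom(A,\ZZ)$ has cohomology $A^\lor$ in degree $0$ and $\EE(A)$ in degree $1$ and nothing else; plugging this into the long exact cohomology sequence of the distinguished triangle attached to \eqref{SESf1} --- namely the sequence \eqref{LESf1} --- gives $\H^i(\RHom([f],\ZZ)) = 0$ for $i \le -2$ and for $i \ge 2$. Hence the augmentation is an isomorphism precisely when $\H^{-1}(\RHom([f],\ZZ)) = 0 = \H^1(\RHom([f],\ZZ))$, and it is essential to invoke this three-degree support so that vanishing of only the two outer groups is enough.

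It then remains to read off these two conditions, which is exactly what was done in the discussion preceding the statement: from \eqref{LESf1}, $\H^{-1}(\RHom([f],\ZZ)) = 0$ iff $f^\lor \colon (N')^\lor \to N^\lor$ is injective, i.e.\ iff $\Cok f$ is finite (tensor with $\QQ$); and $\H^1(\RHom([f],\ZZ)) = 0$ iff $\EE(N') \to \EE(N)$ is surjective, i.e.\ iff $f_{\rm tor} \colon N_{\rm tor} \to N'_{\rm tor}$ is injective, using $\EE(A) \cong \Hom(A_{\rm tor},\QQ/\ZZ)$ and Pontryagin duality of finite groups. Alternatively, and more transparently, applying $\RHom(\slot,\ZZ)$ to the triangle attached to \eqref{SESf2} identifies $\H^{-1}(\RHom([f],\ZZ)) \cong (\Cok f)^\lor$ and $\H^1(\RHom([f],\ZZ)) \cong \EE(\Ker f)$, and these two groups vanish exactly when $\Cok f$ is finite and $\Ker f$ is torsion-free, respectively. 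Either way, by Definition~\ref{defn:tame} the conjunction of the two vanishing conditions is precisely the assertion that $f$ is tame, which finishes the proof.

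Since all of the real work is already contained in setting up \eqref{SESf1}--\eqref{LESf1}, I do not anticipate any genuine obstacle here; the only care needed is to keep the cohomological degrees straight --- in particular to use that $\RHom([f],\ZZ)$ lives in degrees $-1,0,1$ --- and to match the two cohomological vanishing conditions with the two clauses of Definition~\ref{defn:tame}.
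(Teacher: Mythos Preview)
Your proposal is correct and follows essentially the same approach as the paper: the paper's proof is precisely the discussion preceding the proposition, which reads off from \eqref{LESf1} that $\H^{-1}=0$ iff $f^\lor$ is injective (iff $\Cok f$ is finite) and $\H^1=0$ iff $\EE(N')\to\EE(N)$ is surjective (iff $f_{\rm tor}$ is injective). Your additional remarks---explicitly noting the support in degrees $-1,0,1$ and offering the alternative computation $\H^{-1}\cong(\Cok f)^\lor$, $\H^1\cong\EE(\Ker f)$ via \eqref{SESf2}---are correct elaborations that make the argument slightly more self-contained, but do not change the route.
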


From \eqref{SESf2} we obtain the short exact sequence \eqref{Gsequence} (for tame $f$). Indeed, \eqref{Gsequence} is $\H^0(\RHom( \slot, \ZZ))$ of \eqref{SESf2}, which is exact because the cohomology of $\RHom( \slot, \ZZ)$ vanishes in non-zero degrees when $\slot$ is one of the two-term complexes $[f]$, $\Ker f$, $(\Cok f)[-1]$ associated to one of the tame maps $f$, $(\Ker f) \to 0$, $0 \to (\Cok f)$.  The ``special formulas" \eqref{specialformulas} follow easily from the exact sequences \eqref{LESf1} and \eqref{Gsequence}.

\begin{example} \label{example:inflationrevisited} {\bf (Finite Quotients)}  As mentioned in Example~\ref{example:inflationistame}, the inflation $F \to F'$ with respect to a finite index inclusion $N \subseteq N'$ (cf.\ Example~\ref{example:inflation}) is tame, so by Theorem~\ref{thm:quotients}\eqref{quotients2}, $\AA(F) \to \AA(F')$ is a torsor under $\GG(\EE(N'/N))$.  In other words, the map $\AA(F) \to \AA(F')$ is identified with the natural map $\AA(F) \to [ \AA(F) / \GG(\EE(N'/N)) ].$  (In particular, the differential realization $Y(F) \to Y(F')$ will be an isomorphism of log differentiable spaces because the differential realization of $\GG(\EE(N'/N))$ is the trivial group.)  Let us specialize now to the case where $F$ is a lattice KM fan (i.e.\ $N$ is a lattice).  Then $\GG(\EE(N'/N))$ acts on $\AA(F)$ through the $T = \GG(N^\lor)$ action on $\AA(F)$ (cf.\ Remark~\ref{rem:torusaction2}) and the map of group objects $\GG(\EE(N'/N)) \to \GG(N^\lor)$ (note that this may fail to be injective when $N'$ has torsion) induced by the natural map $N^\lor \to \EE(N'/N)$.  Suppose, on the other hand, that we are given a finite abelian group $A$ and a map of groups $g : N^\lor \to A$ (not necessarily surjective).  This gives us a map of groups $\GG(A) \to T$, hence an action of $\GG(A)$ on $\AA(F)$.  We claim that there is a finite index inclusion $N \subseteq N'$ such that the natural map $\AA(F) \to [ \AA(F) / \GG(A) ]$ is the realization of the inflation of $F$ with respect to $N \subseteq N'$.  (In particular, $[ \AA(F) / \GG(A) ]$ is the realization of a KM stack.)  This is a simple application of the ``$\DD$" construction from Definition~\ref{defn:DDf}---we'll see that we can take $N' := \DD(g)$.  Note that $g$ is tame (Definition~\ref{defn:tame}).  The exact sequence \eqref{SESf1} with the $f$ there equal to our $g$ takes the form \bne{SESg} & 0 \to A[-1] \to [g] \to N^\lor \to 0. \ene  Each of the three terms in the sequence \eqref{SESg} is the two-term complex attached to a tame map of groups, so, much as in the discussion just before this example, the exact sequence obtained from \eqref{SESg} by applying $\RHom( \slot , \ZZ)$ and taking cohomology takes the form \bne{SESg2} & 0 \to N \to N' \to \EE(A) \to 0. \ene  This gives us the finite index inclusion $N \subseteq N'$ with respect to which we will inflate, and we obtain the identification $\EE(N'/N)=\EE(\EE(A))=A$ from Lemma~\ref{lem:isotropygroups}.  \end{example}

Once we have the right $\DD(f)$, the rest is not so hard:

\begin{thm} \label{thm:quotients2} If $f : F \to F'$ is a tame map of KM fans, then any realization $\AA(f)$ of $f$ is a torsor under $\GG(\DD(f))$ (Definition~\ref{defn:DDf}).  ``Conversely," suppose that $f : F \to F'$ is a map of KM fans whose algebraic realization $X(f) : X(F) \to X(F')$ satisfies the following properties: \begin{enumerate} \item \label{P1} $X(f)$ is surjective on geometric points \item \label{P2} $X(f)$ is representable. \item \label{P3} $X(f)$ is equidimensional. \item \label{P4} $X(f)$ has reduced fibers. \item \label{P5} The induced map $X(\ov{f}) : X(\ov{F}) \to X(\ov{F}')$ from the coarse moduli space of $X(f)$ to the coarse moduli space of $X(F')$ induces a bijection between the set of torus orbits in the toric variety $X(\ov{F})$ and the set of torus orbits in the toric variety $X(\ov{F}')$. \end{enumerate}  Then $f$ is tame. \end{thm}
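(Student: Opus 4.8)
\emph{The forward direction (tameness implies torsor).} Given a tame $f : (N,F,\{F_\sigma\}) \to (N',F',\{F'_\tau\})$, the plan is to reduce to Theorem~\ref{thm:quotients}: factor $f : N \to N'$ as the surjection $N \twoheadrightarrow N'' := \Image f$ followed by the finite-index inclusion $N'' \into N'$; both are tame, and since $f$ is tame we may push the cones and lattice data of $F$ forward along $N \to N''$ to obtain a KM fan $F''$, so that $f$ factors as tame maps $F \to F'' \to F'$ exactly as in the proof of Theorem~\ref{thm:quotients}\eqref{quotients3}. By Theorem~\ref{thm:quotients}\eqref{quotients1}, $\AA(F) \to \AA(F'')$ is a $\GG((\Ker f)^\lor)$-torsor, and by Theorem~\ref{thm:quotients}\eqref{quotients2}, $\AA(F'') \to \AA(F')$ is a $\GG(\EE(N'/N'')) = \GG(\EE(\Cok f))$-torsor. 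Applying $\GG(\slot)$ to the short exact sequence \eqref{Gsequence}, which for tame $f$ reads $0 \to \EE(\Cok f) \to \DD(f) \to (\Ker f)^\lor \to 0$, gives (Proposition~\ref{prop:tautorsors}) a short exact sequence $0 \to \GG((\Ker f)^\lor) \to \GG(\DD(f)) \to \GG(\EE(\Cok f)) \to 0$ of group objects. All of the group actions in sight are restrictions of the action of an ambient torus $\GG(L^\lor)$ (for liftings $L$) on the local pieces $\AA(S_\sigma(L))$, so they commute and are mutually compatible; hence the two torsor structures assemble and $\AA(f) : \AA(F) \to \AA(F')$ is a torsor under $\GG(\DD(f))$. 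This is the same bootstrapping as in Theorem~\ref{thm:quotients}\eqref{quotients3}, with \eqref{desiredSES} replaced by \eqref{Gsequence}.

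\emph{The converse: semi-tameness.} Suppose now that $X(f)$ satisfies \eqref{P1}--\eqref{P5}. First, \eqref{P1} forces $\Cok f$ to be finite: the image of $X(f)$ is contained in the closure of $X(f)(X(N,0,0))$, and $X(f)$ carries the atomic open $X(N,0,0)$ into $X(N',0,0)$ via the realization of the map of zero fans, so this image has dimension $\operatorname{rk}(\Image f)$ inside $X(N',0,0)$; surjectivity of $X(f)$ then forces $\operatorname{rk}(\Image f) = \operatorname{rk} N'$. With $\Cok f$ finite in hand, Proposition~\ref{prop:equidimensional} applies: from \eqref{P3} we get $\sigma' := f_\RR(\sigma) \in F'$ for every $\sigma \in F$ (the first half of \eqref{tame:equidimensional}), and then from \eqref{P4} we get that $f|F_\sigma : F_\sigma \to F'_{\sigma'}$ is surjective for every $\sigma$. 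Condition \eqref{tame:conebijection} comes from \eqref{P5}: torus orbits of $X(\ov F)$ (resp.\ $X(\ov F')$) correspond to cones of $F$ (resp.\ $F'$), and by the functoriality of the orbit stratification (Remark~\ref{rem:stratification2}), together with the fact that $\sigma' = f_\RR(\sigma)$ is itself a cone of $F'$, the orbit map induced by $X(\ov f)$ is, under these identifications, precisely $\sigma \mapsto \sigma'$; so \eqref{P5} says exactly that $\sigma \mapsto \sigma'$ is a bijection $F \to F'$.

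\emph{The converse: bijectivity on lattice data, and tameness.} To upgrade surjectivity of $f|F_\sigma$ to bijectivity (condition \eqref{tame:reducedfibers}), I would use equidimensionality over the ``deep'' strata. Fix $\sigma \in F$ and a geometric point $y$ of $X(F')$ in the stratum $X(N'/F'_{\sigma'},0,0)$. By Remark~\ref{rem:stratification2} and the bijectivity of $\sigma \mapsto \sigma'$ just established, the only stratum of $X(F)$ mapped by $X(f)$ into the $\sigma'$-stratum is the $\sigma$-stratum, so $X(f)^{-1}(y)$ lies entirely in $X(N/F_\sigma,0,0)$ and coincides, set-theoretically, with the fiber over $y$ of the induced map of gerbes-over-tori $X(N/F_\sigma,0,0) \to X(N'/F'_{\sigma'},0,0)$, which has dimension $\operatorname{rk}\Ker(\ov f_\sigma)$ for $\ov f_\sigma : N/F_\sigma \to N'/F'_{\sigma'}$ the map induced by $f$. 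The same argument over the atomic open shows the generic fiber of $X(f)$ has dimension $\operatorname{rk}\Ker f$, so \eqref{P3} gives $\operatorname{rk}\Ker(\ov f_\sigma) = \operatorname{rk}\Ker f$. Taking alternating ranks in the snake-lemma exact sequence attached to the map of short exact sequences $0 \to F_\sigma \to N \to N/F_\sigma \to 0$ and $0 \to F'_{\sigma'} \to N' \to N'/F'_{\sigma'} \to 0$ (vertical arrows $f|F_\sigma$, $f$, $\ov f_\sigma$), and using that $\Cok f$ and $\Cok \ov f_\sigma$ are finite and $\Cok(f|F_\sigma) = 0$, one finds $\operatorname{rk}\Ker(f|F_\sigma) = 0$; since $F_\sigma$ is a lattice this means $f|F_\sigma$ is injective, hence bijective. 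Tensoring $f|F_\sigma : F_\sigma \to F'_{\sigma'}$ with $\RR$ identifies $\Span\sigma$ with $\Span\sigma'$, so $f_\RR|\sigma : \sigma \to \sigma'$ is injective, and it is surjective since $f_\RR(\sigma) = \sigma'$; this completes \eqref{tame:equidimensional}, so $f$ is semi-tame. Finally $f : N \to N'$ is tame as a map of groups: $\Cok f$ is finite by the first step, and $f_{\rm tor}$ is injective because \eqref{P2} makes $X(f)$ representable, hence injective on isotropy groups at every geometric point; at a point of the atomic open the isotropy group is $N_{\rm tor} = (N/F_0)_{\rm tor}$, its image lies in the atomic open of $X(F')$ with isotropy $N'_{\rm tor}$, and by Proposition~\ref{prop:isotropygroups} the induced map is $f_{\rm tor}$. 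Thus $f$ is tame.

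\emph{Main obstacle.} The delicate step is the third paragraph: extracting \emph{bijectivity} of $f|F_\sigma$ --- the reduced-fibers criterion of Proposition~\ref{prop:equidimensional} gives only surjectivity --- out of the equidimensionality hypothesis. This rests on correctly identifying $X(f)^{-1}(y)$ for $y$ in a non-open stratum with a fiber of a map of gerbes over tori, which in turn needs the precise functoriality of the orbit stratification and condition \eqref{tame:conebijection} already in hand, followed by a careful rank count; it is here that the hypotheses \eqref{P3} and \eqref{P5} interact most delicately. The forward direction, by contrast, is essentially a repackaging of Theorem~\ref{thm:quotients} once the complex $[f]$ and the sequence \eqref{Gsequence} are available.
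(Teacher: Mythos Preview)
Your converse argument is correct and is a genuine alternative to the paper's. The paper first shows $f_\RR|\sigma:\sigma\to\sigma'$ is bijective (via a ``standard lemma'' on cones: if the surjection $\sigma\to\sigma'$ failed to be injective, some proper face $\tau<\sigma$ would have $\tau'=\sigma'$, violating \eqref{P5}), and only then deduces bijectivity of $f|F_\sigma$ from the rank equality $\dim\sigma=\dim\sigma'$. You reverse the order: you extract $\operatorname{rk}\Ker(f|F_\sigma)=0$ from equidimensionality of fibers over the deep strata and a snake-lemma rank count, and then get $f_\RR|\sigma$ bijective by tensoring with $\RR$. Your route is more elaborate but perfectly valid; the paper's is quicker because the cone lemma it invokes is elementary.

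Your forward direction, however, has a real gap. You assert that the bootstrapping of Theorem~\ref{thm:quotients}\eqref{quotients3} goes through verbatim with \eqref{desiredSES} replaced by \eqref{Gsequence}, but that bootstrapping silently uses that $N'$ (hence $N$) is a lattice: in that case $\GG(N^\lor/(N')^\lor)$ is a subgroup of the torus $\GG(N^\lor)$, which acts globally on $\AA(F)$, so the torsorial group visibly acts. In the general setting of Theorem~\ref{thm:quotients2} there is no global torus action on $\AA(F)$ when $N$ has torsion, and you have not explained how $\GG(\DD(f))$ acts on $\AA(F)$. Knowing only that $\GG((\Ker f)^\lor)$ acts on $\AA(F)$ and $\GG(\EE(\Cok f))$ acts on $\AA(F'')$, together with the exact sequence relating the three groups, is not enough: the $\GG(\EE(\Cok f))$-action on $\AA(F'')$ does not canonically lift to $\AA(F)$, and there is no natural map between $L^\lor$ and $\DD(f)$ in either direction, so your appeal to ``restrictions of an ambient torus $\GG(L^\lor)$'' does not produce the required action.

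The paper handles exactly this point by working cone-by-cone and introducing an auxiliary group $G':=\DD(f')$, where $f':L\to N'$ is the composite $L\hookrightarrow N\xrightarrow{f} N'$. There \emph{is} a natural map $L^\lor\to G'$ (from the analogue of \eqref{LESf1} for $f'$), so $\GG(G')$ acts on $\AA(S_\sigma(L))$ through the torus. The two short exact sequences $0\to\DD(f)\to G'\to\EE(N/L)\to 0$ and $0\to\EE(N'/L')\to G'\to\Cok(f_L^\lor)\to 0$ then let one apply Lemma~\ref{lem:quotients} twice to identify $[\AA(S_\sigma(L))/\GG(G')]$ simultaneously with $\AA(\sigma',F')$ and with the $\GG(\DD(f))$-quotient of $\AA(\sigma,F)$. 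This local construction is the missing ingredient in your argument.
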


\begin{proof} Set $G := \DD(f)$.  For the first statement, suppose $f$ is tame and let us show that $\AA(f)$ is such a torsor.  As in the proof of Theorem~\ref{thm:quotients} we can reduce to the case where $F$ contains a single maximal cone $\sigma$ and hence $F'$ contains a single maximal cone $\sigma'$ taken bijectively onto $\sigma$ by $f_{\RR} : N_{\RR} \to N'_{\RR}$.  Let $L' \subseteq N'$ be a lifting of $F_{\sigma'}'$ and let $L \subseteq N$ be a lifting of $F_\sigma$ taken into $L'$ by $f : N \to N'$ (call this map $f_L : L \to L'$), as in Lemma~\ref{lem:liftings}.  (That lemma asserts that we can take $L = f^{-1}(L')$, but we shall not need to make that particular choice of $L$, as we did in the proof of Theorem~\ref{thm:quotients}.)  According to the construction of the realization $\AA(f)$ (\S\ref{section:KMfanrealization}), the map $\AA(f) : \AA(F) \to \AA(F')$ is given by the natural map \bne{GGtorsor} \AA(F) = [ \AA(S_\sigma(L)) / \GG(\EE(N/L)) ] & \to & [ \AA(S_{\sigma'}(L')) / \GG(\EE(N'/L')) ] = \AA(F'). \ene  We want to show that this is a $\GG(G)$ torsor. 

First notice that the map $\AA(S_\sigma(L)) \to \AA(S_{\sigma'}(L'))$ is the realization of an evident tame map of KM fans (in fact, classical fans) whose underlying map of ``lattices" is the map of lattices $f_L : L \to L'$, so we know from Theorem~\ref{thm:quotients}\eqref{quotients3} that the natural action of $\GG(\Cok(f_L^\lor))$ on $\AA(S_\sigma(L))$ makes this map a $\GG(\Cok(f_L^\lor))$ torsor---that is, we have \bne{Hformula} [ \AA(S_\sigma(L)) / \GG(\Cok(f_L^\lor)) ] & = & \AA(S_{\sigma'}(L')). \ene

Let us assume for a moment that we have constructed a group $G'$, a group homomorphism $L^\lor \to G'$ and two exact sequences \bne{HSES1} & 0 \to G \to G' \to \EE(N/L) \to 0 \\ \label{HSES2} & 0 \to \EE(N'/L') \to G' \to \Cok(f_L^\lor) \to 0. \ene  (We'll also assume that the composition of $L^\lor \to G'$ and the map $G' \to \Cok(f_L^\lor)$ in \eqref{HSES2} is just the usual obvious projection $L^\lor \to \Cok(f_L^\lor)$.)  The action of $\GG(L^\lor)$ on $\AA(S_\sigma(L))$ and the map $L^\lor \to G'$ yield an action of $\GG(H)$ on $\AA(S_\sigma(L))$.  By applying Lemma~\ref{lem:quotients} with $X = \AA(S_\sigma(L))$ and with the exact sequence of groups given by $\GG( \slot )$ of \eqref{HSES1}, we obtain an action of $\GG(G)$ on $\AA(F)$ making the natural map $\AA(F) \to [\AA(S_\sigma(L)) / \GG(G') ]$ a $\GG(G)$ torsor.  On the other hand, by applying that lemma to the same $X$ but with the exact sequence of groups given instead by $\GG( \slot )$ of \eqref{HSES2}, we see that \be [\AA(S_\sigma(L)) / \GG(G') ] & = & [ [ \AA(S_\sigma(L)) /  \GG( \Cok(f_L^\lor) ) ] / \GG(\EE(N'/L')) ] \\ & = & [ \AA(S_{\sigma'})(L') / \GG(\EE(N'/L')) ] \\ & = & \AA(F'), \ee using \eqref{Hformula} for the second equality.  We thus conclude that the natural map \eqref{GGtorsor} is a $\GG(G)$ torsor.

For the proof of the first statement it remains only to construct $G'$, the map $L^\lor \to G'$, and the sequences \eqref{HSES1} and \eqref{HSES2}.  To do this, notice that we have a tame map of groups $f' : L \to N'$, which may be thought of either as the composition of the tame maps $L \into N$ and $f: N \to N'$ or as the composition of the tame maps $f_L : L \to L'$ and $L' \into N'$.  We set $G'  :=  \DD(f')$.  The map $L^\lor \to G'$ is defined to be the natural map appearing in the long exact sequence analogous to \eqref{LESf1}, but for the tame map $f'$, rather than the one for $f$ shown in \eqref{LESf1}.  The exact sequence \eqref{HSES1} is obtained from the evident short exact sequence of chain complexes \bne{HSES3} & 0 \to [f'] \to [f] \to N/L \to 0 \ene by the usual rigmarole (view \eqref{HSES3} as a distinguished triangle in $\DAb$, then apply $\RHom( \slot , \ZZ)$ and take cohomology).  The exact sequence \eqref{HSES2} is obtained from the evident short exact sequence of chain complexes \bne{HSES4} & 0 \to [f_L] \to [f] \to N'/L'[-1] \to 0 \ene in the same fashion.  (It is easy to see that the composition $L^\lor \to \Cok(f_L^\lor)$ described parenthetically above is just the obvious projection.)  The proof of the first statement is complete. 

For the second part, consider a map of KM fans $f : F \to F'$ satisfying the list of properties.  We need to show that $f$ is tame.  First of all, $f : N \to N'$ must certainly have finite cokernel, otherwise \eqref{P1} would not hold.  (One can pass to the underlying coarse fans here, so this is easy.)  Pick a geometric point $x \in X(F)(\ov{k})$ lying in the ``atomic open" subset $X(0) \subseteq X(F)$ (Example~\ref{example:atomicopen}).  By Proposition~\ref{prop:isotropygroups}, the map $f_{\rm tor} : N_{\rm tor} \to N'_{\rm tor}$ can be identified with the map of isotropy groups \be f_* : \Aut_{X(F)(\ov{k})}(x) & \to & \Aut_{X(F')(\ov{k})}(f(x)), \ee which must be injective by the assumption \eqref{P2}.  We have shown that the map of groups $f : N \to N'$ is tame.  By Proposition~\ref{prop:equidimensional}, the properties \eqref{P3} and \eqref{P4} of $f$ ensure that for each cone $\sigma \in F$, $\sigma' := f_{\RR}(\sigma) \in F$ and $f|F_\sigma : F_{\sigma} \to F_{\sigma'}$ is surjective.  Looking at Definition~\ref{defn:tame}, it remains to see that each $f_{\RR} : \sigma \to \sigma'$ is bijective and that each map $f|F_\sigma : F_{\sigma} \to F'_{\sigma'}$ is actually bijective.  If the surjection $f_{\RR} : \sigma \to \sigma'$ were not bijective, then a standard lemma (exercise!) says that there would be some proper face $\tau < \sigma$ such that $\tau' = f_{\RR}(\tau)$ contains an interior point of $\sigma'$, which would imply, by \eqref{P3}, that $\tau' = \sigma'$, in violation of \eqref{P5}.  (Under assumption \eqref{P3}, the map described in \eqref{P5} is just the map of finite sets $F \to F'$ given by $\sigma \mapsto \sigma'$.)  Similarly, if the surjective map of lattices $f|F_\sigma : F_{\sigma} \to F'_{\sigma'}$ were not bijective, then we'd have \be \dim \sigma = \dim F_\sigma > \dim F'_{\sigma'} = \dim \sigma' \ee contradicting the bijectivity of $f_{\RR} : \sigma \to \sigma'$ that we just established. \end{proof}

\subsection{Toric DM stacks} \label{section:toricstacks}  In this section we will prove that the algebraic realization functor $X$ is fully faithful on \emph{lattice} KM fans and we will characterize its essential image.  We remind the reader that, in defining and working with the algebraic realization, we \emph{work over a fixed base field} $k$ \emph{of characteristic zero}.

\begin{defn} \label{defn:toricstack} (cf.\ \cite{GS2}) A \emph{toric DM stack} is a separated, normal Deligne-Mumford stack (of finite type over $k$) equipped with an action of a torus $T$ having an open, dense orbit isomorphic to $T$.  Toric DM stacks form a category where morphisms are required to be torus equivariant for some map of tori.  \end{defn}

\begin{rem} \label{rem:morphismsoftoricstacks}  Since DM stacks form a $2$-category, the ``category" of toric DM stacks is \emph{a priori} a $2$-category (with all $2$-morphisms invertible), but actually one can show that a $1$-morphism of toric DM stacks does not have any non-trivial automorphisms in the $2$-category of stacks (cf.\ \cite[Lemma~4.2.3]{AV}---the assumption there that $F$ be representable is not needed and indeed, is not used in the proof there). \end{rem}

Proposition~\ref{prop:realization} ensures that the algebraic realization $X(F)$ of a lattice KM fan $F$ is a toric DM stack and the realization of a morphism of lattice KM fans is a morphism of toric DM stacks.  Thus $X$ defines a functor from lattice KM fans to toric DM stacks---we will eventually show (Theorem~\ref{thm:main}) that this is an equivalence of categories.

\begin{rem} \label{rem:toricstacks}  In \cite{GS2}, the authors consider a more general theory of \emph{toric stacks} $\mathcal{X}$ where our ``separated and DM" assumptions are replaced by: \begin{enumerate} \item $\mathcal{X}$ is an algebraic stack of finite presentation over $k$ \item $\mathcal{X}$ has affine diagonal \item Geometric points of $\mathcal{X}$ have linearly reductive stabilizers. \end{enumerate} Every toric DM stack is a ``toric stack" in their sense because every DM stack has finite (hence affine) diagonal and every geometric point of a DM stack has finite (hence linearly reductive, because we work over a field of characteristic zero) stabilizers. \end{rem}

The main step in the proof of the ``classical" theorem that every toric variety is the algebraic realization of a classical fan (we shall use this ``classical" result in establishing our analogous ``stacky" results) is Sumihiro's Theorem, which asserts that every toric variety has an open cover by torus invariant \emph{affine} toric varieties.  The analogous result for toric DM stacks, below, is essentially due to A.~Geraschenko and M.~Satriano:

\begin{lem}[Sumihiro's Theorem] \label{lem:Sumihiro} Let $\mathcal{X}$ be a toric DM stack with torus $T$.  Every geometric point $x \in \mathcal{X}$ is contained in a $T$-invariant open substack $\mathcal{Y} \subseteq \mathcal{X}$ of the form $\mathcal{Y} = [ W / S]$, where $W$ is an affine toric variety with torus $T'$, $S$ is a finite subgroup scheme of $T'$, and there is an exact sequence of group schemes \bne{toriexactsequence} & 0 \to S \to T' \to T \to 0 \ene so that the $T$-action on $\mathcal{Y}$ inherited from the $T$-action on $\mathcal{X}$ coincides with the $T$-action on $\mathcal{Y}$ coming from \eqref{toriexactsequence} and the quotient description of $Y$ (cf.\ Lemma~\ref{lem:quotients}). \end{lem}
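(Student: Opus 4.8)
\emph{Strategy.} The plan is to combine the classical Sumihiro theorem (applied to the coarse moduli space) with the local structure theory for toric stacks of \cite{GS2}, and then to reconcile two a priori different torus actions. First one reduces, by passing to a suitable $T$-invariant open substack, to the case in which the coarse moduli space is an affine toric variety; in that situation the local description of \cite{GS2} already presents the stack as a quotient of exactly the required shape, and the only remaining work is bookkeeping with the $T$-action.

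Form the coarse moduli space $\pi : \mathcal{X} \to \ov{\mathcal{X}}$ (Theorem~\ref{thm:cms}). By the universal property of $\pi$, the $T$-action on $\mathcal{X}$ descends to a $T$-action on $\ov{\mathcal{X}}$ making $\pi$ equivariant. Since the dense orbit $\mathcal{O} \subseteq \mathcal{X}$ is an honest scheme (isomorphic to $T$, hence its own coarse space), compatibility of coarse spaces with open immersions (Theorem~\ref{thm:cms}(5)) shows that $\pi$ restricts to an isomorphism of $\mathcal{O}$ onto a $T$-invariant, dense open subspace $\ov{\mathcal{O}} \cong T$ of $\ov{\mathcal{X}}$; thus $\ov{\mathcal{X}}$ is a normal, separated algebraic space with a torus action having a dense orbit isomorphic to $T$. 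Sumihiro's theorem, in the form valid for normal algebraic spaces with torus action, then produces a $T$-invariant affine open $U \ni \pi(x)$; being affine it is a scheme, and containing $\ov{\mathcal{O}}$ densely it is a normal affine toric variety with torus $T$. Set $\mathcal{Y} := \pi^{-1}(U)$: this is a $T$-invariant open substack of $\mathcal{X}$ containing $x$, and it is a toric DM stack whose coarse moduli space is the affine toric variety $U$.

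Now $\mathcal{Y}$ is a toric DM stack with affine toric coarse space $U$ and torus $T$, which is precisely the situation analyzed in \cite{GS2}: such a stack is canonically of the form $[W/S]$ with $W$ a normal affine toric variety, $S$ a finite diagonalizable subgroup scheme of the torus $T'$ of $W$, and with $T' / S = T$. (In outline, one recovers $W$ from $\mathcal{Y}$ by a Cox-type construction: the relative spectrum over $\mathcal{Y}$ of $\bigoplus_E \mathcal{O}_{\mathcal{Y}}(E)$, the sum running over the effective $\ZZ$-combinations $E$ of the $T$-invariant prime divisors $D_1,\dots,D_r$ of $\mathcal{Y}$, is a torsor over $\mathcal{Y}$ under $\GG$ of a divisor class group; that group is finite precisely because $\mathcal{Y}$ is Deligne--Mumford, and $W$ is affine because $U$ is; the exact sequence $0 \to S \to T' \to T \to 0$ emerges by restricting the torsor to dense orbits, where $S = \GG(\slot)$ acts freely since $\mathcal{Y}$ has trivial isotropy along its dense orbit.) The construction is functorial, so the $T$-action it carries along on $\mathcal{Y}$ is the given one.

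It remains to check that the $T$-action on $[W/S]$ produced by Lemma~\ref{lem:quotients} from the sequence $0 \to S \to T' \to T \to 0$ coincides with the $T$-action inherited from $\mathcal{X}$. Both actions restrict to left translation of $T$ on the dense open orbit $T \subseteq \mathcal{Y}$, and two morphisms $T \times \mathcal{Y} \to \mathcal{Y}$ that agree on the dense open $T \times \mathcal{O}$ must agree, since $\mathcal{Y}$ is separated and $T \times \mathcal{O}$ is dense in $T \times \mathcal{Y}$. The main obstacle is the middle step: everything there is the substantive input taken from \cite{GS2} (their ``local description'' of toric stacks, which is correct even though their subsequent passage from the local to the global picture is not), while the role of the reduction and of the final paragraph is merely to set it up and to carry out the equivariance bookkeeping. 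A secondary point deserving care in the reduction is the algebraic-space subtlety: one should invoke Sumihiro's theorem for normal algebraic spaces rather than presuppose that $\ov{\mathcal{X}}$ is a scheme.
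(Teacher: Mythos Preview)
Your route differs from the paper's in a way that introduces a logical difficulty. The paper does \emph{not} pass through the coarse moduli space first; instead it applies \cite[Theorem~4.5]{GS2} directly to the quotient stack $[\mathcal{X}/T]$, obtaining an open embedding $[Y/H] \hookrightarrow [\mathcal{X}/T]$ with $Y$ an affine toric variety and $H$ its torus. Pulling this back along the $T$-torsor $\mathcal{X} \to [\mathcal{X}/T]$ produces a $T$-invariant open $\mathcal{Y} \subseteq \mathcal{X}$ together with an affine toric variety $Z$ (with torus $T \times H$) and an identification $\mathcal{Y} = [Z/H]$. Because $\mathcal{Y}$ is DM, the $H$-stabilizer of the distinguished point of $Z$ is finite; the paper translates this into a cone condition, builds from it a lattice KM fan $F$ with $X(F)=\mathcal{Y}$, and then reads off the presentation $\mathcal{Y}=[W/S]$ (with $S$ finite and $0 \to S \to T' \to T \to 0$ exact) from the very definition of the realization $X(F)$.

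Your argument instead first passes to the coarse space $\ov{\mathcal{X}}$ and applies Sumihiro there. The problem is that $\ov{\mathcal{X}}$ is a priori only an algebraic space; the fact that it is a toric \emph{variety} is exactly what the paper deduces as Corollary~\ref{cor:Sumihiro} \emph{from} the present lemma. You acknowledge this and appeal to a ``Sumihiro theorem for normal algebraic spaces with torus action,'' but you give no reference, and this is not a classical statement---establishing it independently is essentially as hard as (and close in content to) what is being proved. Your second step also does not match the paper's use of \cite{GS2}: the paper invokes their local result on $[\mathcal{X}/T]$, not a global structure theorem for toric DM stacks with affine coarse space, and the Cox-type sketch you offer in its place (with the class group ``finite precisely because $\mathcal{Y}$ is Deligne--Mumford'') is not carried through and is not obviously correct as stated. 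In short, the reduction step is close to circular, and the structure theorem you need to replace it remains a sketch.
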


\begin{proof} By \cite[Theorem~4.5]{GS2} (and Remark~\ref{rem:toricstacks}), applied to the stack $[\mathcal{X}/T]$, we can find an affine toric variety $Y$ with torus $H$ and an open embedding $[Y/H] \into [\mathcal{X}/T]$ taking the image of the distinguished point $e \in Y$ in $[Y/H]$ to the image of $x$ in $[\mathcal{X}/T]$.  Define $\mathcal{Y}$ and $Z$ by the cartesian diagram \bne{Sumihirodiagram} & \xym{ Z \ar[r] \ar[d] & \mathcal{Y} \ar[r] \ar[d] & \mathcal{X} \ar[d] \\ Y \ar[r] & [Y/H] \ar[r] & [\mathcal{X}/T].} \ene Clearly $\mathcal{Y}$ is a $T$-invariant open substack of $\mathcal{X}$ containing $x$, so it remains only to show that $\mathcal{Y}$ is of the desired form.

Since $Z \to Y$ is a base change of the $T$-torsor $\mathcal{X} \to [\mathcal{X}/T]$ it is also a $T$-torsor.  Furthermore, $Y$ is an affine toric variety, so $Z$ is also an affine toric variety, with torus $T \times H$.  The $H$ action on $Z \to Y$ makes $Z \to Y$ an $H$ torsor because the left square in \eqref{Sumihirodiagram} is cartesian and $Y \to [Y/H]$ is an $H$ torsor, hence we have $\mathcal{Y} = [ Z / H ]$.  We aren't done yet though, because $H$ may not be finite, so we must continue to refine our description of $\mathcal{Y}$.  Let $N$ (resp.\ $A$) be the cocharacter lattice of $T$ (resp.\ $H$), so that $N \oplus A$ is the cocharacter lattice of $T \times H$ and $Z$ corresponds (``classically") to a (sharp, rational) cone $\sigma$ in $N_{\RR} \oplus A_{\RR}$.  Let $z$ be the distinguished point of the affine toric variety $Z$.  Since $\mathcal{Y} = [ Z / H ]$ is an open substack of the DM stack $\mathcal{X}$, the stabilizer of $z$ in $H$ must be finite.  By a standard exercise from the classical theory of toric varieties, this finiteness is equivalent to \bne{Sumiconebij} \Span \tau \cap ( \{ 0 \} \times A_{\RR}) & = & \{ 0 \}. \ene  Let $p : N \oplus A \to A$ be the projection.  Then \eqref{Sumiconebij} implies that $p_{\RR}$ takes $\sigma$ bijectively onto its image $\tau := p_{\RR}(\sigma) \subseteq N_{\RR}$ and $p$ takes $B := (N \oplus A) \cap \Span \sigma$ bijectively onto its image $$F_{\tau} := p(B) \subseteq N_{\tau} = N \cap \Span \tau.$$  If $n \in N_{\tau}$, then, by definition of $\tau$, we see that $(n,s) \in \Span \tau$ for some $s \in A_{\RR}$.  In fact, since $n$ is integral and our cones are rational, we can arrange that $s$ can be taken in $A_{\QQ}$ and then, clearing denominators, we see that some positive integer multiple of $n$ is in $F_{\tau}$.  This proves that the inclusion $F_{\tau} \subseteq N_{\tau}$ has finite index, so by letting $F_{\rho} := F_{\tau} \cap \Span \rho$ be the lattice datum for each face $\rho \leq \tau$, we obtain a lattice KM fan $F=(N, [ \tau] , \{ F_\rho \})$.  Furthermore, the map $p$ defines a map of KM fans from the classical fan $(N \oplus A, [ \sigma ])$ (whose realization is $Z$) to $F$ whose realization $X(p) : Z \to X(F)$ is an $H$-torsor (for the obvious action of $H$ on $Z$) by construction of $F$ and the criterion of Theorem~\ref{thm:quotients}\eqref{quotients1}.  

Therefore $X(F) = [ Z / H ] = \mathcal{Y}$, so $\mathcal{Y}$ will have the desired form by construction of $X(F)$.  Indeed, if we pick a lifting $L \subseteq N$ of $F_{\tau}$, then, by construction, $X(F) = [ W / S]$ where $W$ is the affine toric variety corresponding to the cone $\tau \subseteq N_{\RR} = L_{\RR}$ with respect to the lattice $L$, the torus $T'$ of $W$ is $\GG(L^\lor)$ and \eqref{toriexactsequence} is $\GG( \slot )$ of \bne{toriexactsequence2} & 0 \to N^\lor \to L^\lor \to \EE(N/L) \to 0.\ene \end{proof}

\begin{cor} \label{cor:Sumihiro}  Let $\mathcal{X}$ be a toric DM stack with torus $T$ and coarse moduli space $\pi : \mathcal{X} \to \ov{\mathcal{X}}$ (cf.\ \S\ref{section:coarsemodulispace}).  Then: \begin{enumerate} \item \label{Sumi1} $\ov{\mathcal{X}}$ is a toric variety with torus $T$ and $\pi$ is $T$-equivariant. \item \label{Sumi2} For each $T$ invariant affine open subset $Y \subseteq \ov{\mathcal{X}}$, the preimage $\mathcal{Y} := \pi^{-1}(Y)$ of $Y$ in $\mathcal{X}$ is a $T$ invariant open substack of $\mathcal{X}$ which can be expressed ($T$ equivariantly) as a quotient of an affine toric variety by a finite subgroup of its torus. \end{enumerate} \end{cor}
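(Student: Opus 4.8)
\emph{Proof plan.} Everything should follow from Lemma~\ref{lem:Sumihiro}, the Keel--Mori Theorem (Theorem~\ref{thm:cms}), and two elementary facts about classical toric varieties recalled in \S\ref{section:conesandfans}: that the $T$-invariant affine opens of a toric variety $X(\Delta)$ are exactly the $U_\tau$ ($\tau \in \Delta$), and that $U_\tau$ is the smallest $T$-invariant open containing the closed orbit $O_\tau$ (a subfan containing $\tau$ contains all faces of $\tau$).

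For \eqref{Sumi1} I would first descend the $T$-action along $\pi$. Since $T$ is a scheme, the projections $T\times\ov{\mathcal X}\to\ov{\mathcal X}$ and $T\times T\times\ov{\mathcal X}\to\ov{\mathcal X}$ are flat, so by Theorem~\ref{thm:cms} the base changes $T\times\mathcal X\to T\times\ov{\mathcal X}$ and $T\times T\times\mathcal X\to T\times T\times\ov{\mathcal X}$ of $\pi$ are again coarse moduli space maps; the universal property of $\pi$ then lets the action $T\times\mathcal X\to\mathcal X$, composed with $\pi$, factor uniquely through a morphism $T\times\ov{\mathcal X}\to\ov{\mathcal X}$, and the associativity and unit axioms descend by uniqueness of such factorizations, so $\ov{\mathcal X}$ carries a $T$-action making $\pi$ equivariant. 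Next I would cover $\mathcal X$ by the $T$-invariant open substacks $\mathcal Y_i=[W_i/S_i]$ produced by Lemma~\ref{lem:Sumihiro}. Writing $W_i=X(P_i)$ and letting $a_i$ be the character classifying the $S_i$-action, Remark~\ref{rem:algebraicCMS} identifies the coarse moduli space of $\mathcal Y_i$ with $W_i/S_i=X(Q_i)$, where $Q_i=\{p\in P_i:a_i(p)=0\}$; since $Q_i$ is the intersection of the saturated monoid $P_i$ with the subgroup $\ker a_i\subseteq P_i^{\gp}$, it is itself saturated, so $X(Q_i)$ is an affine toric variety with torus $T$ (using the exact sequence $0\to S_i\to T_i\to T\to 0$). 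Because open immersions are flat, $\pi$ restricts on $\mathcal Y_i$ to its coarse moduli space map, so $\pi(\mathcal Y_i)=X(Q_i)$, and these affine toric varieties form an open cover of $\ov{\mathcal X}$, glued $T$-equivariantly. Hence $\ov{\mathcal X}$ is a scheme, it is separated (Theorem~\ref{thm:cms}) and of finite type, and the dense open orbit $T\subseteq\mathcal X$ of Definition~\ref{defn:toricstack} — which has trivial stabilizers, hence is its own coarse space — maps under $\pi$ isomorphically and $T$-equivariantly onto a dense open orbit of $\ov{\mathcal X}$ with its translation action. So $\ov{\mathcal X}$ is a toric variety with torus $T$ and $\pi$ is $T$-equivariant.

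For \eqref{Sumi2}, given a $T$-invariant affine open $Y\subseteq\ov{\mathcal X}$, set $\mathcal Y:=\pi^{-1}(Y)$, which is a $T$-invariant open substack of $\mathcal X$ since $\pi$ is equivariant. By part \eqref{Sumi1} and the classical fact quoted above, $Y=U_\tau$ for a cone $\tau$ in the fan of $\ov{\mathcal X}$, with unique closed orbit $O_\tau$; I would pick a geometric point $x$ of $\mathcal Y$ with $\pi(x)\in O_\tau$. Lemma~\ref{lem:Sumihiro} applied at $x$ yields a $T$-invariant open $\mathcal Y'\ni x$ with $\mathcal Y'=[W/S]$ of the stated shape, and exactly as in \eqref{Sumi1} one has $\mathcal Y'=\pi^{-1}(Y')$ for the $T$-invariant affine open $Y':=\pi(\mathcal Y')=W/S$ of $\ov{\mathcal X}$ (using that $\pi$ is a homeomorphism on underlying spaces). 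Since $O_\tau$ meets $Y'$ and $Y'$ is $T$-invariant, $O_\tau\subseteq Y'$; as $U_\tau$ is the smallest $T$-invariant open containing $O_\tau$, we get $Y=U_\tau\subseteq Y'$ and therefore $\mathcal Y=\pi^{-1}(Y)\subseteq\pi^{-1}(Y')=[W/S]$. Finally I would let $W''\subseteq W$ be the preimage of $\mathcal Y$ under the $S$-torsor $W\to[W/S]$: it is $T'$-invariant (the $T'$-action on $W$ covers the $T=T'/S$-action on $[W/S]$, which preserves $\mathcal Y$) and affine, being the preimage of the affine open $Y$ under the finite morphism $W\to W/S$ (Setup~\ref{setup:finitequotient}). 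Hence $W''$ is an affine toric variety with torus $T'$, $S\subseteq T'$ is finite, and $\mathcal Y=[W''/S]$ with the $T$-action coming from \eqref{toriexactsequence} — the required description.

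The one step that takes genuine care is the inclusion $\mathcal Y\subseteq[W/S]$: it rests on knowing that every $T$-invariant affine open of a toric variety is some $U_\tau$ and that, by choosing the base point of Lemma~\ref{lem:Sumihiro} over the closed orbit $O_\tau$, the single Sumihiro chart $[W/S]$ can be forced to contain all of $\pi^{-1}(U_\tau)$. Everything else is the universal property of coarse moduli spaces together with routine descent of the torus action.
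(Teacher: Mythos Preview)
Your argument is correct and follows essentially the same route as the paper's proof. Both part \eqref{Sumi1} arguments are the same: descend the $T$-action via the universal property/flat base change of the coarse moduli space, then use the Sumihiro charts $[W_i/S_i]$ from Lemma~\ref{lem:Sumihiro} to see that $\ov{\mathcal X}$ is covered by the affine toric varieties $W_i/S_i$ and contains $T$ as a dense orbit.

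For part \eqref{Sumi2} there is a mild stylistic difference worth noting. The paper reuses the fixed finite Sumihiro cover from \eqref{Sumi1} and argues combinatorially: the fan of $\ov{\mathcal X}$ consists of faces of the cones $\sigma_i$ for $W_i$, so any $T$-invariant affine open $Y=U_\tau$ already lies inside some $W_i/S_i$; since $[W_i/S_i]$ is the realization of a KM fan with the same cones (only the lattice data differ), the preimage $\pi^{-1}(U_\tau)$ is the open substack corresponding to the face $\tau$, i.e.\ a quotient of an affine toric variety by $S_i$. You instead re-apply Lemma~\ref{lem:Sumihiro} at a point lying over the closed orbit $O_\tau$ and use the ``smallest $T$-invariant open containing $O_\tau$'' characterization of $U_\tau$ to force $Y\subseteq Y'$, then restrict the torsor $W\to[W/S]$ to obtain $W''$. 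The endpoint is identical --- your $W''$ is exactly the affine toric variety for the face $\tau$ in the lattice $L$ --- and the extra observation that $W''$ is affine (as the preimage of affine $Y$ under the finite map $W\to W/S$) and hence a single $U_\rho\subseteq W$ is a clean way to finish without invoking the KM-fan description explicitly. Either route is fine; yours is slightly more self-contained, the paper's slightly more in keeping with the KM-fan language developed earlier.
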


\begin{proof}  First of all, $T$ acts on $\ov{\mathcal{X}}$ making $\pi$ equivariant simply because the formation of coarse moduli spaces is functorial and commutes with products.  By definition, $\mathcal{X}$ has an open, dense $T$ orbit (call it $U$) $T$-equivariantly isomorphic to $T$.  Since $\pi$ is a homeomorphism (cf.\ Theorem~\ref{thm:cms}) and is $T$-equivariant, $\pi(U)$ is an open, dense $T$-invariant subspace of $\ov{\mathcal{X}}$ with preimage $U$ in $\mathcal{X}$.  As a special case of the fact that ``formation of coarse moduli spaces commutes with flat base change" (cf.\ Theorem~\ref{thm:cms}), this means that $\pi(U) = \ov{U}$ must be the coarse moduli space of $U$, but $\ov{U}=U$ since $U \cong T$ is already an algebraic space (even a scheme).  This proves that the $T$ action on $\ov{\mathcal{X}}$ has an open, dense orbit isomorphic to $T$.  By definition, $\mathcal{X}$ is separated, hence $\ov{\mathcal{X}}$ is also separated by general properties of the coarse moduli space map (Theorem~\ref{thm:cms}).  This reduces us to proving \eqref{Sumi1} locally on $\mathcal{X}$ (equivalently, locally on $\ov{\mathcal{X}}$).

Since $\mathcal{X}$ is of finite type, we can cover $\mathcal{X}$ by finitely many $T$ invariant open substacks of the form $\mathcal{Y} = [ W / S ]$, as in Lemma~\ref{lem:Sumihiro}.  Each such $\mathcal{Y}$ is the algebraic realization of a KM fan, so (cf.\ \S\ref{section:coarsemodulispace}) the coarse moduli space $\ov{\mathcal{Y}}$ of such a $\mathcal{Y}$ is the usual ``scheme theoretic" quotient $W/S$ and, furthermore, this $W/S$ is an affine toric variety.  This proves \eqref{Sumi1}.  More precisely, let (following the notation of Lemma~\ref{lem:Sumihiro} and its proof) $L$ (resp.\ $N$) be the cocharacter lattice of the torus $T'$ for $W$ (resp.\ $T$), and let $\sigma$ be the cone in $L_{\RR}$ corresponding to $W$.  As in Lemma~\ref{lem:Sumihiro}, the exact sequence \eqref{toriexactsequence} correponds to a finite index inclusion of lattices $L \into N$ (coming from the dual of \eqref{toriexactsequence2}) and $W/S$ is nothing but the affine toric variety (with torus $T$) corresponding to the cone $\sigma$, viewed as a cone in $N_{\RR}=L_{\RR}$ with respect to the lattice $N$.  The map $[W/S] = \mathcal{Y} \to \ov{\mathcal{Y}} = W/S$ has the property in \eqref{Sumi2} because the cones for the (classical) fan whose realization is $W/S$ (i.e.\ the faces of $\sigma$) are the same as the cones for the KM fan whose realization is $[W/S]$ (only the lattice data differ).  The fan for $\ov{\mathcal{X}}$ has as cones the faces of the various $\sigma$'s as we run over any set of $\mathcal{Y}$'s as above covering $\mathcal{X}$ (this is because the corresponding coarse moduli spaces $\ov{\mathcal{Y}}$ cover $\ov{\mathcal{X}}$).  Therefore \eqref{Sumi2} holds for $\mathcal{X} \to \ov{\mathcal{X}}$ because it holds for each $\mathcal{Y} \to \ov{\mathcal{Y}}$.  \end{proof}

\begin{const} \label{const:main} Let $\mathcal{X}$ be a toric DM stack (Definition~\ref{defn:toricstack}) with torus $T$.  We construct a lattice KM fan $F=F( \mathcal{X} )$ from $\mathcal{X}$ as follows:  Let $N$ be the cocharacter lattice of $T$.  By Corollary~\ref{cor:Sumihiro}, the coarse moduli space $\ov{\mathcal{X}}$ of $\mathcal{X}$ is a toric variety with torus $T$ (this is not \emph{a priori} obvious---at least to us).  By the ``classical" theory of fans and toric varieties, $X$ is the realization of a unique classical fan $\ov{F}$ with lattice $N$.  We will define $F$ by ``lifting" $\ov{F}$ to a KM fan.  We must define lattice data $F_\sigma \subseteq N_{\sigma}$ for the cones $\sigma \in \ov{F}$, or, equivalently, submonoids $P_{\sigma} \subseteq N \cap \sigma$, satisfying certain properties (cf.\ Definition~\ref{defn:KMfan2}).  We define $P_{\sigma}$ to be the submonoid of $N \cap \sigma$ consisting of those $n \in N \cap \sigma$ for which the corresponding cocharacter $n : \GG_m \to T$ extends (necessarily uniquely, up to unique isomorphism) to a map of stacks $\tilde{n} : \AA^1 \to \mathcal{X}$.  \end{const}

Some remarks on Construction~\ref{const:main} are in order.  First:  For any $n \in N \cap \sigma$, the map $n : \GG_m \to T$ does extend to a map $\tilde{n} : \AA^1 \to \ov{\mathcal{X}}$ since $n$ is in the support of $\ov{F}$, but in general this lift won't factor through the coarse moduli space map.  Second: It is not obvious from the construction that the $P_{\sigma}$ satisfy the conditions of Definition~\ref{defn:KMfan2}.  It is possible (though somewhat difficult) to verify this directly (much as in the classical case).  For example, the fact that some positive integer multiple of any $n \in N \cap \sigma$ will be in $P_{\sigma}$ (which ensures that $F_\sigma := P_\sigma^{\rm gp}$ is finite index in $N_{\sigma}$)  follows from the valuative criterion for properness of the coarse moduli space map.  In any case, we shall establish the required conditions during the coarse of proving Theorem~\ref{thm:main} below.

\begin{thm} \label{thm:main} The algebraic realization functor $X$, viewed as a functor from the category of lattice KM fans to the category of toric DM stacks (Definition~\ref{defn:toricstack}), is an equivalence of categories.  Any toric DM stack $\mathcal{X}$ is the algebraic realization of the lattice KM fan $F=F(\mathcal{X})$ constructed in Construction~\ref{const:main}. \end{thm}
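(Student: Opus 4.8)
The plan is to prove the two asserted facts together: that $X$ is fully faithful on lattice KM fans, and that it is essentially surjective onto toric DM stacks via $F \mapsto F(\mathcal{X})$. The strategy mirrors the classical proof that fans and toric varieties are equivalent, with the "stacky" input being the Sumihiro-type Lemma~\ref{lem:Sumihiro} and its Corollary~\ref{cor:Sumihiro}, which reduce everything to local quotient descriptions that we already understand.

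\textbf{Essential surjectivity.} Let $\mathcal{X}$ be a toric DM stack with torus $T$, cocharacter lattice $N$, coarse moduli space $\pi : \mathcal{X} \to \ov{\mathcal{X}}$, and let $F = F(\mathcal{X})$, $\ov{F}$ be the classical fan of $\ov{\mathcal{X}}$ as in Construction~\ref{const:main}. First I would verify that $F$ really is a lattice KM fan, i.e.\ that the monoids $P_\sigma \subseteq N \cap \sigma$ satisfy Definition~\ref{defn:KMfan2}. By Corollary~\ref{cor:Sumihiro}, $\mathcal{X}$ is covered by $T$-invariant open substacks $\mathcal{Y} = [W/S]$ each of which is the realization of a lattice KM fan with a \emph{single} maximal cone $\tau$ and some lattice datum $F_\tau$; concretely $\mathcal{Y} = X(\tau, L)$ for a lifting $L$ of $F_\tau$. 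For such a one-cone KM fan, Proposition~\ref{prop:support} (applied over the algebraically closed field $\ov{k}$, using the equivalence of \eqref{support1}, \eqref{support4}, \eqref{support5}) identifies $P_\tau$ with the set of $n \in N \cap \tau$ whose cocharacter extends to $\AA^1 \to \mathcal{Y}$; since $\mathcal{Y}$ is open in $\mathcal{X}$ and $\AA^1$ is connected with generic point mapping into the torus, extending to $\mathcal{X}$ is the same as extending to $\mathcal{Y}$ for $n$ in the relevant cone. Hence the $P_\sigma$ of Construction~\ref{const:main} agree cone-by-cone with the lattice data of these local quotient presentations, so they satisfy the required compatibility and saturatedness, and $F(\mathcal{X})$ is a well-defined lattice KM fan whose coarse fan is $\ov{F}$. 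Moreover each $\mathcal{Y}$ is then canonically $X(\tau_{\mathcal{Y}})$ for the corresponding cone of $F$, and these open substacks glue — $T$-equivariantly, matching the intersections via \eqref{intersectionformula} — to give a $T$-equivariant isomorphism $\mathcal{X} \cong X(F)$, which is the second assertion of the theorem.

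\textbf{Full faithfulness.} Given lattice KM fans $F = (N,F,\{F_\sigma\})$, $F' = (N',F',\{F'_\tau\})$, I claim $X$ induces a bijection between KM fan maps $f : F \to F'$ and torus-equivariant maps $g : X(F) \to X(F')$ (torus-equivariant for some map of tori; note that by Remark~\ref{rem:morphismsoftoricstacks} there is no $2$-categorical subtlety). A morphism $g$ of toric DM stacks restricts to a map of the dense open torus orbits $T \to T'$, which by rigidity of tori is a homomorphism of tori and hence comes from a unique homomorphism of cocharacter lattices $f : N \to N'$; conversely $X(f)$ realizes $f$ and recovers $g$ on the torus, so $f \mapsto X(f)$ is injective and the only issue is that an arbitrary torus-equivariant $g$ need not \emph{a priori} come from a map of KM fans. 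To see it does, one checks that the underlying $f : N \to N'$ is automatically a morphism of KM fans: for $\sigma \in F$ and $n \in P_\sigma$, the cocharacter $n$ extends to $\AA^1 \to X(F)$, so $f(n) = g \circ n$ extends to $\AA^1 \to X(F')$, whence $f(n) \in \Supp F' = \bigcup P'_{\sigma'}$; since $f_\RR(\sigma)$ is a cone spanned by such images and $F'$ is a fan, $f_\RR(\sigma)$ lies in some $\sigma' \in F'$ with $f(F_\sigma) = f(P_\sigma^{\gp}) \subseteq (P'_{\sigma'})^{\gp} = F'_{\sigma'}$, which is exactly Definition~\ref{defn:KMfanmorphism}. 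It then remains to check that $X(f) = g$, not merely that they agree on the torus; this follows because both are $T$-equivariant maps out of $X(F)$ with the torus dense, and a torus-equivariant map out of a normal toric variety (or DM stack) is determined by its restriction to the torus — one reduces to the affine local pieces $X(\sigma) = [X(S_\sigma(L))/\GG(\EE(N/L))]$, where a $\GG(L^\lor)$-equivariant map of affine toric varieties is determined by the induced map of character lattices, and descent along the $\GG(\EE(N/L))$-torsor propagates this to the quotient stacks.

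\textbf{Main obstacle.} The delicate point is the compatibility of the gluing in essential surjectivity: one must check that the open substacks $X(\tau_{\mathcal{Y}})$ produced by Corollary~\ref{cor:Sumihiro} for different $\mathcal{Y}$ overlap on exactly the open substacks predicted by the intersection formula \eqref{intersectionformula} for $X(F)$, and that the identifications are $T$-equivariant. This is where one genuinely uses that the cones of $F$ are the faces of the various maximal cones occurring in the local presentations (the last paragraph of the proof of Corollary~\ref{cor:Sumihiro}) together with the fact that, on a $T$-invariant affine open, a $T$-invariant open substack corresponds to a face of the cone — i.e.\ the stacky analogue of the classical statement that $T$-invariant opens of an affine toric variety correspond to faces. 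Once this local-to-global bookkeeping is in place, the rest is a routine transcription of the classical argument, using that $X$ preserves open embeddings \eqref{openembedding2}, commutes with the relevant gluings by Definition~\ref{defn:realization}, and — for the equivariance — the naturality of the torus action recorded in Remark~\ref{rem:torusaction2}.
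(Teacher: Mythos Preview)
Your proposal is correct and follows essentially the same route as the paper: faithfulness from the map of tori, fullness via the support characterization of Proposition~\ref{prop:support}, and essential surjectivity by applying Corollary~\ref{cor:Sumihiro} to reduce to the one-cone case and then gluing. Two minor points where the paper is slightly cleaner: for fullness, the paper obtains the cone containment $f_{\RR}(\sigma) \subseteq \tau$ by passing to coarse moduli spaces and invoking the classical result, whereas your jump from ``$f(n) \in \Supp F'$ for all $n \in P_\sigma$'' to ``$f_{\RR}(\sigma)$ lies in a single cone of $F'$'' needs an extra sentence of justification; and for $X(f)=g$, the paper simply uses density of the torus together with separatedness of $X(F')$, which is shorter than your local reduction.
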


\begin{proof} First we check that the functor $X$ is faithful.  Suppose $f : F \to F'$ is the map of lattice KM fans determined by a map of lattices $f : N \to N'$.  Then we can recover $f : N \to N'$ from $X(f)$ as the map on cocharacter groups induced by the map of tori $T \to T'$ which is part of the algebraic realization $X(f)$.  

Next we check that $X$ is full.  Suppose $g : X(F) \to X(F')$ is a map between algebraic realizations equivariant with respect to a map $g_T : T \to T'$ of the corresponding tori.  Let $f : N \to N'$ be the map of cocharacter lattices induced by $g_T$.  We claim that $f$ determines a map of KM fans $f : F \to F'$.  Assuming this claim, we must have $X(f)=g$ since $X(f)$ and $g$ agree on the tori, these tori are dense, and $X(F)$ and $X(F')$ are separated (Proposition~\ref{prop:realization}).  Thus the claim establishes the fullness of our functor.  To prove the claim, we first need to check that $f_{\RR} : N_{\RR} \to N'_{\RR}$ takes each cone $\sigma \in F$ into some cone $\tau \in F'$.  We could check this in essentially the same way one does it for classical fans, or just reduce to the classical case by noting that $g$ induces an equivariant map $\ov{g}$ between coarse moduli spaces, which, by Proposition~\ref{prop:coarsemodulispace}, is a map between realizations of the coarse fans (Example~\ref{example:coarsefan}) underlying $F$ and $F'$.  The map $\ov{g}$ must agree with the map $f : \ov{F} \to \ov{F}'$ determined by $f : N \to N'$ (in particular the former map must be well-defined) because the proposition is known for classical fans, hence we obtain the desired $\tau$ because $F$ and $\ov{F}$ (resp.\ $F'$ and $\ov{F}'$) have the same cones.  We also need to check that $f$ takes $F_\sigma$ into $F'_{\tau}$.  Since $F_\sigma$ (resp.\ $F'_{\tau}$) is the groupification of $P_\sigma := F_\sigma \cap \sigma$ (resp.\ $P_\tau := F'_{\tau} \cap \tau$) and $f$ is a map of groups and we already know that $f$ takes cones into cones, we reduce to showing that $f : N \to N'$ takes $\Supp F$ into $\Supp F'$.  This follows from the interpretation of the fine support in Proposition~\ref{prop:support} since, if $n : \GG_m \to T$ completes to $\tilde{n} : \AA^1 \to X(F)$, then $f(n) : \GG_m \to T'$ completes to $g \tilde{n} : \AA^1 \to X(F')$. 

In remains to prove the final assertion of the theorem.  We first prove this in the special case where $\mathcal{X} = [W /S]$, with $W$ and \bne{tseq} & 0 \to S \to T' \to T \to 0 \ene as in Lemma~\ref{lem:Sumihiro}.  Since $S$ is finite, the cocharacter lattice $L$ of $T'$ has finite index in the cocharacter lattice $N$ of $T$ and \eqref{tseq} is $\GG( \slot )$ of \bne{otherseq} & 0 \to N^\lor \to L^\lor \to \EE(N/L) \to 0 \ene (cf.\ the proof of Lemma~\ref{lem:Sumihiro}, where the same notation is used).  Here the coarse moduli space $\ov{\mathcal{X}} = W/S$ is an affine toric variety with torus $T$, so $\ov{\mathcal{X}} = X(\ov{F})$ where $\ov{F} = (N,[ \sigma], \{ N \cap \Span \tau \})$ for some cone $\sigma \subseteq N_{\RR}$.  Then by construction of the realization, $\mathcal{X} = [W/S]$ is the realization of the KM fan $F=(N, [ \sigma ], \{ F_{\tau} \})$, where the lattice datum $F_{\sigma}$ is defined by $F_{\sigma} = L \cap \sigma$ (so that $L \subseteq N$ is a lift of $F_\sigma$) and the lattice data for faces $\tau \leq \sigma$ are defined by $$ F_\tau := F_{\sigma} \cap \Span \tau = L \cap \Span \tau,$$ as must be the case to satisfy the compatibility condition.  Proposition~\ref{prop:support} ensures that for any face $\tau \leq \sigma$ any $n \in N \cap \tau$, the corresponding map $\GG_m \to \mathcal{X} = X(F)$ extends to a map of stacks $\AA^1 \to \mathcal{X} = X(F)$ iff $n \in F_{\tau}$, so the monoid $P_{\tau}$ defined in Construction~\ref{const:main} is nothing but $F_{\tau} \cap \tau$ and therefore the KM fan $F(\mathcal{X})$ defined in Construction~\ref{const:main} is indeed the KM fan $F$ whose realization $X(F)$ is $\mathcal{X}$.

Now for the general case, let $\ov{F}$ be the unique classical fan in $N$ with $X(\ov{F}) = \ov{\mathcal{X}}$, as in Construction~\ref{const:main}.  Fix some cone $\sigma \in \ov{F}$, so that $Y_{\sigma} := X(\sigma,\ov{F})$ is a $T$ invariant affine open subspace of $X(\ov{F}) = \ov{\mathcal{X}}$.  By Corollary~\ref{cor:Sumihiro}, the preimage $\mathcal{Y}_\sigma$ of $Y_\sigma$ in $\mathcal{X}$ is of the form discussed in the special case above.  For any $n \in N \cap \sigma$, the corresponding cocharacter $n : \GG_m \to T$ completes to a map $\tilde{n} : \AA^1 \to Y_\sigma \subseteq \ov{\mathcal{X}}$, so if that cocharacter $n$ completes to a map $\AA^1 \to \mathcal{X}$, then it must complete to a map $\AA^1 \to \mathcal{Y}_\sigma$.  Therefore the monoid $P_{\sigma}$ defined in Construction~\ref{const:main} for $\mathcal{X}$ is the same as the corresponding monoid constructed for $\mathcal{Y}_\sigma$.  Since this is true for every such $\sigma$, the special case treated above ensures that the $P_{\sigma}$ do indeed define a KM fan $F$ lifting $\ov{F}$.  Furthermore, the special case treated above also ensures that there is an isomorphism of toric DM stacks $X(\sigma,F) \to \mathcal{Y}_\sigma$ (over $X(\ov{F}) = \ov{\mathcal{X}}$).  Since these maps are ``compatible with passing to faces," the definition of $X(F)$ as the colimit of the $X(\sigma,F)$ furnishes a map of toric stacks $X(F) \to \mathcal{X}$ over $X(\ov{F}) = \ov{\mathcal{X}}$, which must be an isomorphism because it is an isomorphism over each $Y_{\sigma}$ and the $Y_{\sigma}$ form an open cover of $X(\ov{F}) = \ov{\mathcal{X}}$. \end{proof}

\subsection{Representability} \label{section:representability}  In this section our goal will be to ``combinatorially" characterize the KM fans (or, more generally, the maps of KM fans) with representable realization.  Of course this will depend on which realization we have in mind.  For example, for the differential realization, \emph{everything} is representable:

\begin{prop} \label{prop:Yrealizationrepresentable} The differential (resp.\ log differential) realization $Y(F)$ of any KM fan $F$ is (representable by) a differentiable (resp.\ log differential) space, hence the differential (resp.\ log differential) realization $Y(f)$ of any morphism of KM fans $f : F \to F'$ ``is" a morphism of differentiable (resp.\ log differential) spaces. \end{prop}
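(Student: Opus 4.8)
The plan is to unwind the definition of the differential realization and to observe that, because finite groups realize \emph{trivially} in $\DS$, every quotient stack that appears in the gluing construction of \S\ref{section:KMfanrealization} is in fact an honest (log) differentiable space; then $Y(F)$ is obtained simply by gluing such spaces along open subspaces.

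First I would fix a KM fan $F = (N,F,\{ F_\sigma \})$ and, for each cone $\sigma \in F$, a lifting $L$ of its lattice datum. By Definition~\ref{defn:AAsigma} we have $Y(\sigma) = [\, Y(S_\sigma(L)) / \GG(\EE(N/L)) \,]$, where in the differential setting $\GG(\slot) = Y(\slot)$. Since $N/L$ is finite, so is $\EE(N/L) = \Ext^1(N/L,\ZZ)$, and Example~\ref{example:YG} tells us that $Y(\EE(N/L))$ is the terminal object of $\LogDS$ (resp.\ of $\DS$ after forgetting log structures), i.e.\ the trivial group object. The next, elementary, step is the observation that a stack quotient $[X/G]$ by the trivial group object $G$ is canonically isomorphic to $X$ (a $G$-torsor over $U$ is just $U$, a $G$-equivariant map to $X$ is just a map to $X$, and there are no nontrivial $2$-morphisms). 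Hence $Y(\sigma) \cong Y(S_\sigma(L))$, which is representable by a fine log differentiable space by Example~\ref{example:YP} (resp.\ a differentiable space). I would also record that these isomorphisms identify the open embeddings $Y(\tau) \into Y(\sigma)$ of \eqref{openembedding2} with the open embeddings $Y(S_\tau(L)) \into Y(S_\sigma(L))$ induced by the localizations $S_\sigma(L) \into S_\tau(L)$, and that they are compatible with composition over the poset of cones of $F$.

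Then I would globalize. By Definition~\ref{defn:realization}, $Y(F)$ is the $2$-colimit over the poset of cones of $F$ of the diagram $\sigma \mapsto Y(\sigma)$ with the open embeddings above as transition maps, the pieces overlapping along $Y(\sigma \cap \tau)$ by \eqref{intersectionformula}. Under the identifications of the previous paragraph this is exactly a gluing datum of (log) differentiable spaces along open subspaces, and such gluings exist in $\LogDS$ (resp.\ $\DS$): this is part of the structure of $\LogDS$ as a category of spaces, and concretely it follows from the fact that $\DS$ is a full subcategory of $\LRS/\RR$ for which ``being in $\DS$ is a local property'' (property~\eqref{local} of \S\ref{section:logdifferentiablespaces}), together with the fact that log structures are defined Zariski-locally. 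Since the topology on $\LogDS$ is admissible, hence subcanonical, the Yoneda embedding $\LogDS \into \St(\LogDS)$ carries this gluing of spaces to the corresponding $2$-colimit of stacks, so $Y(F)$ is representable. Given a morphism of KM fans $f : F \to F'$, its realization $Y(f)$ is then a $1$-morphism between two representable stacks, hence ``is'' a morphism of (log) differentiable spaces because $\LogDS$ (resp.\ $\DS$) is a full subcategory of $\St(\LogDS)$ (resp.\ $\St(\DS)$).

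I expect the main obstacle to be precisely this last globalization step: making rigorous that the $2$-colimit in stacks of the (now representable) local pieces agrees with the gluing computed inside $\LogDS$ (resp.\ $\DS$) itself. This is the ``gluing axiom'' built into the notion of a category of spaces in \cite[\S4]{GM1}, so in the end it reduces to a citation; everything else is either a direct appeal to Examples~\ref{example:YG} and~\ref{example:YP} or the triviality of a quotient by the trivial group object.
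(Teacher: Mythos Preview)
Your proposal is correct and follows essentially the same approach as the paper: the key observation in both is that $\EE(N/L)$ is finite, so its differential realization $Y(\EE(N/L))$ is the trivial group object (the paper gives the same one-line reason as Example~\ref{example:YG}), whence each local quotient $[Y(S_\sigma(L))/\GG(\EE(N/L))]$ is just $Y(S_\sigma(L))$. The paper's proof is terser than yours---it simply says the construction ``only involves quotients by actions of such groups'' and leaves the gluing implicit---so your careful treatment of the globalization step is, if anything, more complete than what the paper records.
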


\begin{proof}  It is a basic fact about differential spaces that the differential realization $Y(G)$ of $\GG(G)$, $G$ a finite group, is the trivial group.  (This is because, for a differentiable space $Z$, the positive function $1 \in \O_Z^{>0}(Z)$ has no non-trivial positive roots.)  As for the log realization of any group, it also has trivial log structure.  Since the construction of the realization of $F$ only involves quotients by actions of such groups (\S\ref{section:KMfanrealization}), the result follows. \end{proof}

We next address the question of representability for the algebraic realization \emph{without log structures}.  Here we have a perfect combinatorial characterization of representable morphisms:

\begin{thm} \label{thm:representability}  For a map of KM fans $f : F \to F'$, the following are equivalent: \begin{enumerate} \item \label{representable1} The algebraic realization $X(f)$ of $f$ is representable (by algebraic spaces) as a map of schemes, forgetting log structures. \item \label{representable2} For any $\sigma \in F$, if we let $\tau \in F'$ be the smallest cone of $F'$ containing $f_{\RR}(\sigma)$, then the map of abelian groups $N/F_\sigma \to N'/F'_\tau$ is injective on torsion subgroups.  \item \label{representable3} For any $\sigma \in F$ and \emph{any} cone $\rho \in F'$ with $f_{\RR}(\sigma) \subseteq \rho$, the map of abelian groups $N/F_\sigma \to N'/F'_\rho$ is injective on torsion subgroups. \end{enumerate} \end{thm}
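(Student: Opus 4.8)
The plan is to reduce the representability of $X(f)$ to a statement about isotropy maps, exploiting the stratification of Proposition~\ref{prop:stratification} and the identification of isotropy groups in Proposition~\ref{prop:isotropygroups}. Recall that a map of Deligne--Mumford stacks is representable (by algebraic spaces) if and only if it is injective on automorphism groups of every geometric point. Since $X(F) = \coprod_{\sigma \in F} X(N/F_\sigma,0,0)$ is a stratification into locally closed substacks, and a geometric point $x$ in the stratum corresponding to $\sigma$ has isotropy group $(N/F_\sigma)_{\rm tor}$ (Proposition~\ref{prop:isotropygroups}), the condition ``$X(f)$ is representable'' becomes: for every $\sigma \in F$ and every geometric point $x$ in the stratum of $\sigma$, the induced map on isotropy groups is injective. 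By the functoriality of the stratification (Remark~\ref{rem:stratification2}), $X(f)$ sends the stratum of $\sigma$ into the stratum of the \emph{smallest} cone $\tau \in F'$ containing $f_{\RR}(\sigma)$, and the map of isotropy groups is precisely the map $(N/F_\sigma)_{\rm tor} \to (N'/F'_\tau)_{\rm tor}$ induced by $f$ (this identification must be checked, but it is immediate from the naturality built into the construction of the atomic opens in Example~\ref{example:atomicopen} and the compatibility of $f$ with the gerbe structure maps). This gives the equivalence \eqref{representable1} $\Leftrightarrow$ \eqref{representable2}.

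For \eqref{representable2} $\Rightarrow$ \eqref{representable3}: given $\sigma \in F$ and an arbitrary $\rho \in F'$ with $f_{\RR}(\sigma) \subseteq \rho$, let $\tau$ be the smallest such cone, so $\tau \leq \rho$ and hence $F'_\tau \subseteq F'_\rho$ with $F'_\rho / F'_\tau$ torsion-free (this follows from the compatibility condition $F'_\tau = F'_\rho \cap \Span \tau$ together with the fact that $F'_\tau$ is saturated in $F'_\rho$, as noted in the star fan discussion in \S\ref{section:starfansandstratification}). The map $N/F_\sigma \to N'/F'_\rho$ factors through $N/F_\sigma \to N'/F'_\tau$, and by Lemma~\ref{lem:representability} (applied with $A = F'_\rho \supseteq F'_\tau$ and $A/F'_\tau$ torsion-free) the surjection $N'/F'_\tau \to N'/F'_\rho$ is injective on torsion. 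A composition of maps injective on torsion subgroups is injective on torsion subgroups (the torsion subgroup of $N/F_\sigma$ maps into the torsion subgroup of $N'/F'_\tau$, which maps injectively into $N'/F'_\rho$), giving \eqref{representable3}. The implication \eqref{representable3} $\Rightarrow$ \eqref{representable2} is trivial, since $\tau$ is one of the allowed cones $\rho$.

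The main obstacle I expect is the precise identification, in the proof of \eqref{representable1} $\Leftrightarrow$ \eqref{representable2}, of the induced map on isotropy groups with the algebraic map $(N/F_\sigma)_{\rm tor} \to (N'/F'_\tau)_{\rm tor}$. One must confirm that when $X(f)$ restricts to a map of strata $X(N/F_\sigma,0,0) \to X(N'/F'_\tau,0,0)$, the map on banding groups $\GG(\EE(N/F_\sigma)) \to \GG(\EE(N'/F'_\tau))$ is $\GG$ of the map $\EE(N'/F'_\tau) \to \EE(N/F_\sigma)$ dual to $N/F_\sigma \to N'/F'_\tau$, and then that passing to $\ov{k}$-points and applying Lemma~\ref{lem:isotropygroups} recovers the map $(N/F_\sigma)_{\rm tor} \to (N'/F'_\tau)_{\rm tor}$ itself (with a possible subtlety about whether Lemma~\ref{lem:isotropygroups}'s natural isomorphism $\GG(\EE(A))(\ov{k}) = A$ is compatible with the maps induced by $A \to A'$ — it is, by the naturality asserted there, but this should be spelled out). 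A cleaner route that sidesteps some of this bookkeeping: argue directly that $X(f)$ is \'etale-locally on $X(F')$ of the form $[\AA(S_\sigma(L))/\GG(\EE(N/L))] \to [\AA(S_\tau(L'))/\GG(\EE(N'/L'))]$ via the squares \eqref{localfsquare}, so representability is equivalent to the map $\GG(\EE(N/L)) \to \GG(\EE(N'/L'))$ being a monomorphism on the relevant stabilizers, and then use the snake-lemma diagrams of Lemma~\ref{lem:liftings} to translate this into the injectivity-on-torsion condition \eqref{representable2}. Once the isotropy identification is in hand, the rest is the short diagram chase in the preceding paragraph, which is genuinely routine given Lemma~\ref{lem:representability}.
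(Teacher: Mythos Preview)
Your argument is correct and matches the paper's approach closely. The paper runs the implications cyclically as \eqref{representable1}$\Rightarrow$\eqref{representable2}$\Rightarrow$\eqref{representable3}$\Rightarrow$\eqref{representable1}, whereas you argue \eqref{representable1}$\Leftrightarrow$\eqref{representable2} directly via the criterion ``representable iff faithful on isotropy of geometric points'' and then \eqref{representable2}$\Leftrightarrow$\eqref{representable3} via Lemma~\ref{lem:representability}; the content is the same. The one substantive difference is in the direction \eqref{representable3}$\Rightarrow$\eqref{representable1}: the paper explicitly notes that your isotropy-criterion argument works, but prefers the direct local computation (your ``cleaner route'') using Lemma~\ref{lem:liftings}\eqref{liftings3} to choose $L=f^{-1}(L')$ so that $N/L\hookrightarrow N'/L'$, whence $\GG(\EE(N/L))\to\GG(\EE(N'/L'))$ is a closed embedding and the quotient map \eqref{repmap2} is visibly representable. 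The advantage of this route is that it is self-contained (no black-box DM-stack criterion) and, as the paper remarks afterward, works verbatim for any realization functor $\AA$, not just the algebraic one. Your identification of the isotropy map with $(N/F_\sigma)_{\rm tor}\to(N'/F'_\tau)_{\rm tor}$ is exactly what the paper asserts via Proposition~\ref{prop:isotropygroups} and Remark~\ref{rem:stratification2}, so no gap there.
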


\begin{proof} Suppose \eqref{representable1} holds.  Pick an algebraic closure $\ov{k}$ of our base field $k$.  Given $\sigma \in F$, pick a $\ov{k}$ point $x \in X(N/F_\sigma,0,0)(\ov{k})$ lying in the stratum $X(N/F_\sigma,0,0)$ of $X(F)$ corresponding to $\sigma$ in Proposition~\ref{prop:stratification}.  As discussed in Remark~\ref{rem:stratification2}, the realization $X(f)$ takes this stratum of $X(F)$ into the stratum $X(N'/F'_\tau,0,0)$ of $X(F')$ with $\tau$ as in \eqref{representable2} via the realization of the map of zero fans associated to the map of groups $N/F_\sigma \to N'/F'_\tau$.  Since $X(f)$ is representable, the map induced by $X(f)$ from the isotropy group of $x$ in $X(F)$ to the isotropy group of $X(f)(x)$ in $X(F')$ must be injective.  Since the aforementioned ``strata" are locally closed, this map of isotropy groups is the same as the one induced by $X(N/F_\sigma,0,0) \to X(N/F_\sigma,0,0)$.  From Proposition~\ref{prop:isotropygroups}, we see that our map of isotropy groups \emph{is} the map on torsion subgroups induced by $N/F_\sigma \to N'/F'_\tau$.  This proves that \eqref{representable2} holds. 

Suppose \eqref{representable2} holds.  Let $\sigma$, $\tau$, $\rho$ be as in \eqref{representable2} and \eqref{representable3}.  Then $N/F_\sigma \to N'/F'_\rho$ factors as the composition $$ N/F_\sigma \to N'/F'_\tau \to N'/F'_\rho . $$  The first map is injective on torsion subgroups by \eqref{representable2} and the second map is injective on torsion subgroups by Lemma~\ref{lem:representability}, hence \eqref{representable3} holds. 

Suppose \eqref{representable3} holds and let us establish \eqref{representable1}.  One way to do this would be to argue as in the first paragraph of this proof that \eqref{representable3} ensures that $X(f)$ induces an injective map on isotropy groups for any geometric point of $X(F)$, then we quote some general result to the effect that any map between DM stacks over a field of characteristic zero with this property is representable.  Although one can make this argument work, it has the disadvantage of using many features specific to the \emph{algebraic} realization.  A better argument goes as follows:

Representability of $X(f)$ can be checked Zariski locally on $X(f)$, so it is enough to prove that for $\sigma$ and $\rho$ as in \eqref{representable3}, the map of stacks \bne{repmap} X(\sigma,F) & \to & X(\rho,F') \ene obtained by restricting $X(f)$ to the open substack $X(\sigma,F)$ is representable.  By Lemma~\ref{lem:liftingsexist} we can choose a lifting $L' \subseteq N'$ of $F'_\rho$.  By Lemma~\ref{lem:liftings}, the injectivity assumption \eqref{representable3} ensures that $L := f^{-1}(L') \subseteq N$ is a lifting of $F_\sigma$.  The map $N/L \to N'/L'$ is clearly injective, hence $\EE(N'/L') \to \EE(N/L)$ is surjective, hence $\GG(\EE(N/L)) \to \GG(\EE(N'/L'))$ is a closed embedding (in particular, a monomorphism).  By construction of the realization of $f$ (\S\ref{section:KMfanrealization}), the map \eqref{repmap} is computed as the map on global quotients \bne{repmap2} [ X(S_\sigma(L)) / \GG(\EE(N/L)) ] & \to & [ X(S_\rho(L')) / \GG(\EE(N/L)) ] \ene induced by the map of affine toric varieties $X(S_\sigma(L)) \to X(S_\rho(L'))$ which is equivariant with respect to our \emph{injective} map of group objects.  It is a straightforward exercise to see that this injectivity ensures that \eqref{repmap2} is representable.  \end{proof}

\begin{cor} The algebraic realization of a KM fan $F$ (disregarding log structures) is representable by a sheaf iff $F$ is a classical fan, in which case its algebraic realization is a toric variety. \end{cor}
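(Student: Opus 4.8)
The plan is to deduce this from Theorem~\ref{thm:representability} applied to the structure morphism of $X(F)$. First I would note that the zero fan $(0,0,0)$ attached to the trivial group (Example~\ref{example:grouptofan}) is the terminal object of $\KMFans$: a morphism $F \to (0,0,0)$ is just the unique homomorphism $N \to 0$, and conditions (i),(ii) in the definition of a KM fan morphism are automatic. Its algebraic realization is $X(0,0,0) = \GG(0^\lor) = \Spec k$ (Example~\ref{example:atomicopen} with $N=0$), and the realization of the unique map $u \colon F \to (0,0,0)$ is the structure morphism $X(F) \to \Spec k$. Since $X(F)$ is a Deligne--Mumford stack over $\LogSch$ (Proposition~\ref{prop:realization}), it is representable by a sheaf if and only if it is an algebraic space, if and only if its structure morphism to the terminal object is representable (standard stack theory). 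Hence $X(F)$ is representable by a sheaf iff $X(u)$ is representable, and Theorem~\ref{thm:representability} converts the latter into a combinatorial statement about $u$.

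Next I would unwind condition~\eqref{representable2} of Theorem~\ref{thm:representability} for $u \colon F \to (0,0,0)$. The target fan has a single cone $\{0\}$ with lattice datum $0$, so for every $\sigma \in F$ the group $N'/F'_\tau$ appearing in \eqref{representable2} is trivial, and ``$N/F_\sigma \to N'/F'_\tau$ is injective on torsion subgroups'' says exactly that $N/F_\sigma$ is torsion-free. Applying this to the zero cone $\sigma = 0$ (which lies in every fan, with lattice datum $F_0 = 0$) forces $N = N/F_0$ to be torsion-free, so $N$ is a lattice. Granting that $N$ is a lattice, for a general cone $\sigma$ the finiteness of $N_\sigma/F_\sigma$ (Definition~\ref{defn:latticedatum}) means every element of $N_\sigma = \Span\sigma \cap N$ has torsion image in $N/F_\sigma$, so $N/F_\sigma$ torsion-free forces $F_\sigma = N_\sigma$; conversely, if $F_\sigma = N_\sigma$ for all $\sigma$ then each $N/F_\sigma = N/N_\sigma$ is torsion-free by Remark~\ref{rem:Nsigma}. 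Therefore $X(F)$ is representable by a sheaf iff $N$ is a lattice and $F_\sigma = N_\sigma$ for every $\sigma \in F$, which by Example~\ref{example:classicalfan} is precisely the condition that $F$ lie in the essential image of the fully faithful functor from classical fans to KM fans --- i.e.\ that $F$ ``be a classical fan.''

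Finally, for the ``in which case'' clause (and the easy implication of the equivalence), I would invoke Example~\ref{example:classicalrealization}: when $F$ is a classical fan, $X(F)$ is the usual toric variety attached to $F$ in classical toric geometry, which in particular is a scheme, hence trivially representable by a sheaf. I do not expect a genuine obstacle here: essentially all the content sits in Theorem~\ref{thm:representability} together with the basic dictionary of Examples~\ref{example:classicalfan}, \ref{example:grouptofan}, \ref{example:atomicopen}, and \ref{example:classicalrealization}. The only points needing a little care are the reduction of ``$X(F)$ is a sheaf'' to representability of its structure morphism, and the elementary observation that a finite-index subgroup $F_\sigma \subseteq N_\sigma$ with torsion-free quotient $N/F_\sigma$ must equal $N_\sigma$.
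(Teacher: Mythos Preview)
Your proof is correct and follows essentially the same approach as the paper's: apply Theorem~\ref{thm:representability} to the map $F \to (0,0,0)$ onto the terminal KM fan, reduce condition~\eqref{representable2} to ``$N/F_\sigma$ is torsion-free for all $\sigma$,'' and translate this into $F$ being classical via Example~\ref{example:classicalfan}. The paper compresses this into two lines, but your more explicit treatment of why ``$N/F_\sigma$ torsion-free for all $\sigma$'' forces both $N$ to be a lattice (via $\sigma=0$) and $F_\sigma=N_\sigma$ (via the finite-index inclusion $F_\sigma\subseteq N_\sigma$) is exactly the content the paper is leaving implicit.
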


\begin{proof} By applying the theorem with $F'$ the zero fan, we see that $X(F)$ is representable by a sheaf iff each $N/F_\sigma$ is torsion free, which is equivalent to $F_\sigma = N_\sigma$, which is equivalent to $F$ being classical (cf.\ Example~\ref{example:classicalfan}). \end{proof}

Now we turn to the case of the \emph{log} algebraic and fan realizations.  In the interest of brevity, we will only establish some partial results in this direction, roughly to the effect that these realizations are ``often" representable \emph{by sheaves} (Corollary~\ref{cor:representablebyasheaf}).

\begin{lem} \label{lem:effectiveaction} Let $P$ be an arbitrary monoid, $G$ an arbitrary abelian group, $a : P^{\rm gp} \to G$ a surjective group homomorphism.  For any integral monoid $M$,\footnote{One only needs $M$ to be \emph{quasi-integral} in the sense that $M^*$ acts effectively on $M$.} the action of $G(M) := \Hom_{\Mon}(G,M) = \Hom_{\Ab}(G,M^*)$ on $P(M) := \Hom_{\Mon}(P,M)$ induced by $a$ is effective. \end{lem}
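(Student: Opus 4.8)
The plan is to compute the stabilizer of an arbitrary point of $P(M)$ under the $G(M)$-action directly and show it is trivial; the whole thing then reduces to the elementary fact that a group homomorphism out of $G$ which kills a generating set of $G$ must vanish.

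First I would unwind the action. Since $G$ is a group, any monoid homomorphism $G \to M$ has image contained in the units $M^*$, so $G(M) = \Hom_{\Ab}(G,M^*)$ is an abelian group under pointwise addition. Writing $a(p) \in G$ for the image of $p \in P$ under the canonical composition $P \to P^{\rm gp} \to G$, the action takes $(\chi,\phi) \in G(M) \times P(M)$ to the map $\chi\cdot\phi : P \to M$ given by $(\chi\cdot\phi)(p) := \chi(a(p)) + \phi(p)$. A one-line check using that $a$ and $\chi$ are homomorphisms shows $\chi\cdot\phi$ is again a monoid homomorphism and that $(\chi_1+\chi_2)\cdot\phi = \chi_1\cdot(\chi_2\cdot\phi)$ and $0\cdot\phi = \phi$; this is the only routine calculation involved and I would not belabor it.

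Next I would show every stabilizer is trivial. Suppose $\chi \in G(M)$ fixes some $\phi \in P(M)$, i.e.\ $\chi(a(p)) + \phi(p) = \phi(p)$ in $M$ for all $p \in P$. The key point is that $\chi(a(p))$ is a \emph{unit} of $M$, so the only property of $M$ actually used is that units of $M$ act freely on $M$ by translation --- which is exactly the ``quasi-integrality'' of the footnote and holds here because $M$ is integral, hence cancellative. This forces $\chi(a(p)) = 0$ for all $p \in P$, so $\chi$ vanishes on the subset $\{\, a(p) : p \in P \,\}$ of $G$. Since the image of $P$ generates $P^{\rm gp}$ as a group and $a : P^{\rm gp} \to G$ is surjective, $\{\, a(p) : p \in P \,\}$ generates $G$; as $\ker\chi$ is a subgroup of $G$ containing a generating set, $\ker\chi = G$, i.e.\ $\chi = 0$. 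This is precisely the effectivity (freeness) of the action.

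I do not anticipate any real obstacle: the proof has essentially no content beyond the observation that $\chi(a(p))$ is invertible --- so that only quasi-integrality, rather than cancellativity on arbitrary elements, is needed --- together with the surjectivity of $a$, which is what makes $\{\, a(p) \,\}$ generate $G$ and is plainly indispensable to the statement.
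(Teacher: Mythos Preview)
Your proof is correct and follows essentially the same approach as the paper: write out the action explicitly, use (quasi-)integrality of $M$ to cancel and conclude $\chi(a(p))=0$ for all $p\in P$, then observe that $\{a(p):p\in P\}$ generates $G$ since $a$ is surjective and the image of $P$ generates $P^{\rm gp}$. The only differences are notational (you use additive notation, the paper multiplicative) and that you spell out a bit more detail on why the action is well-defined.
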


\begin{proof}  Writing everything multiplicatively, the action in question is defined by $(u \cdot f)(p) := u(a(p))f(p)$, where $f \in P(M)$, $u \in G(M)$, $p \in P$, $u \cdot f \in P(M)$.  Suppose $u$ fixes $f$.  Then we have $u(a(p))f(p) = f(p)$ for all $p \in P$, hence $u(a(p)) = 1 \in M^*$ by (quasi-)integrality of $M$ for all $p \in P$.  Since the image of $P \to P^{\rm gp}$ generates $P^{\rm gp}$ and $a$ is surjective, we conclude that $u =1$ as desired. \end{proof}

\begin{prop} \label{prop:sheafquotient} Let $P$ be a fine monoid, $a : P^{\rm gp} \to G$ a map of abelian groups.  Then the following quotient ``stacks" are naturally isomorphic to the corresponding quotient \emph{sheaves}: \begin{enumerate} \item the stack $[ \Spec P / \GG(G) ]$ over the category $\Fans$ of fine fans \item the stack $[ \Spec (P \to k[P]) / \Spec k[G]]$ over the category of fine log schemes $\LogSch$ \item the stack $[ \RR_{\geq 0}(P) / \RR_{>0}(G) ]$ over the category of fine log differentiable spaces. \end{enumerate}  In particular, each such quotient stack is representable by a sheaf. \end{prop}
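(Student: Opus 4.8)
The plan is to deduce all three statements from one general principle together with Lemma~\ref{lem:effectiveaction}. For a sheaf of groups $H$ acting on a sheaf $X$ over a site with a subcanonical topology (Proposition~\ref{prop:tautopology}), the quotient stack $[X/H]$ is fibered in setoids --- hence equivalent to the sheafification of $U \mapsto X(U)/H(U)$, i.e.\ to the quotient sheaf $X/H$ --- as soon as the action is \emph{free}, meaning that $H(U)$ acts freely on the set $X(U)$ for every test object $U$. The reason is routine: an object of $[X/H]$ over $U$ is an $H$-torsor $P \to U$ equipped with an $H$-equivariant map $\phi : P \to X$; over a cover that trivializes $P$, any automorphism of this object becomes right multiplication by some section of $H$, and compatibility with $\phi$ forces that section to stabilize a point of $X$, so it is the identity by freeness; thus every object has trivial automorphisms, which is exactly the condition for the stack to be (equivalent to) a sheaf. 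So it suffices, in each of the three cases, to read off $X$, $H$ and the action from the relevant modular interpretations and then to check freeness.

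First I would note that all three cases have precisely the shape of Lemma~\ref{lem:effectiveaction}. In case (1), $\Spec P$ represents $U \mapsto \Hom_{\Mon}(P,\M_U(U))$ and $\GG(G) = \Spec G$ represents $U \mapsto \Hom_{\Ab}(G,\M_U^*(U)) = \Hom_{\Mon}(G,\M_U(U))$ on $\Fans$ (\S\ref{section:fans}); in case (2) the analogous facts over $\LogSch$ are Examples~\ref{example:XP} and \ref{example:XG} (with $\Spec k[G] = X(G)$ carrying the trivial log structure); in case (3) they are Examples~\ref{example:YP} and \ref{example:YG} over $\LogDS$. In every instance the given action is the action of $\Hom_{\Mon}(G,M)$ on $\Hom_{\Mon}(P,M)$ induced by $a$, with $M := \M_U(U)$. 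Before invoking Lemma~\ref{lem:effectiveaction} one must observe that this test monoid $M$ is integral --- in fact it is enough that it be quasi-integral, as the footnote to that lemma records --- and this holds because a fine log structure (Definition~\ref{defn:finelogstructure}), resp.\ the structure sheaf of a fine fan, is locally of the form $P'^a$ for $P'$ a fine monoid, resp.\ locally a localization of a fine monoid, hence is a sheaf of integral monoids, and global sections of a sheaf of integral monoids form an integral monoid (by cancellation stalk-by-stalk). Lemma~\ref{lem:effectiveaction} then gives that $H(U)$ acts freely on $X(U)$ for every $U$, so by the general principle $[X/H]$ is canonically isomorphic to the quotient sheaf $X/H$ --- naturally in $P$, $G$, $a$ --- and in particular is representable by a sheaf.

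The substantive work is already done in Lemma~\ref{lem:effectiveaction}; what remains is bookkeeping with modular interpretations together with the standard passage from a free action to a setoid quotient, so I do not expect a serious obstacle here. If I had to single out the delicate point, it is the integrality check --- confirming that the monoids $\M_U(U)$ arising as ``test monoids'' are (quasi-)integral, which is exactly where fineness of the log structures and of the fans enters --- and, relatedly, keeping straight that the group sheaf is being evaluated on $\M_U^*(U)$ while the space is evaluated on $\M_U(U)$. One should also bear in mind that the statement is to be read with $a$ surjective, as in the hypothesis of Lemma~\ref{lem:effectiveaction}: without surjectivity the action factors through $\GG$ of the image of $a$ and picks up the nontrivial stabilizer $\GG(G/\Image a)$, so $[X/\GG(G)]$ is then a trivial $\GG(G/\Image a)$-gerbe over a sheaf rather than a sheaf itself (for instance $[\Spec \NN / \GG_m] = \Spec \NN \times B\GG_m$ when $a=0$).
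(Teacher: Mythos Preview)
Your proposal is correct and follows essentially the same route as the paper: both arguments reduce to the general principle that a free action of a sheaf of groups on a sheaf yields a quotient stack equivalent to the quotient sheaf, and both invoke Lemma~\ref{lem:effectiveaction} (which, despite its name, actually proves freeness, not merely effectiveness) after identifying the test monoid as $M = \M_U(U)$ and noting its integrality in the fine setting. The paper phrases the general principle via the prestack description---the fiber categories $[X(U)/G(U)]$ are setoids when the action is free, so stackification reduces to sheafification---while you phrase it via the torsor description of objects of $[X/H]$; these are two equivalent packagings of the same fact.

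Your closing remark about surjectivity is well taken and in fact sharper than the paper: the proposition as stated does not assume $a$ surjective, but Lemma~\ref{lem:effectiveaction} does, and without it the conclusion genuinely fails (one picks up a $\GG(G/\operatorname{Im} a)$-gerbe). The paper's only application (Corollary~\ref{cor:representablebyasheaf}) is to the surjective case, so no harm is done, but you are right to flag this as an implicit hypothesis.
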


\begin{rem} It is important in the above proposition that we are considering the various realization functors as functors to \emph{fine} fans, etc. \end{rem}

\begin{proof} In any site $\C$, when a sheaf of groups $G$ acts on a sheaf $X$, the quotient stack $[X/G]$ is defined to be the stackification of the evident groupoid fibration $[X/G]^{\rm pre} \to \C$ whose fiber category over $U \in \C$ is the quotient category $[ X(U) / G(U) ]$.  (When a group $G$ acts on a set $X$, the quotient category $[X/G]$ has $X$ as its set of objects; a morphism from $x$ to $y$ in $[X/G]$ is a $g \in G$ satisfying $g \cdot x = y$.)  If the action of $G(U)$ on $X(U)$ is effective, then the quotient category $[ X(U) / G(U) ]$ is equivalent to the quotient set $X(U) / G(U)$ (thought of as a category with only identity morphisms).  If this is true for all $U \in \C$, then the groupoid fibration $[X/G]^{\rm pre}$ is equivalent to the one associated to the presheaf $U \mapsto X(U)/G(U)$, hence the stackification of the former is equivalent to the sheafification of the latter.

This general discussion applies to any of the settings above, using Lemma~\ref{lem:effectiveaction} to establish the necessary effectivity, to yield the desired result.  The point is that the action of $\GG(G)(U)$ on $\AA(P)(U)$ is identified with the action discussed in that lemma when we take $M := \M_U(U)$, which is integral because we work with fine fans, etc. \end{proof}

\begin{cor} \label{cor:representablebyasheaf}  Suppose $F = (N,F,\{ F_\sigma \})$ is a lattice KM fan.  Then the fan and log algebraic realizations of $F$ are all representable by sheaves (respectively, in the CZE topology and in the strict \'etale topology). \end{cor}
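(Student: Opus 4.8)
The plan is to deduce the statement from the explicit quotient description of the realization in Proposition~\ref{prop:realization}, combined with Proposition~\ref{prop:sheafquotient}, and then a formal gluing argument. First I would fix a lifting of each lattice datum of $F$ and let $\AA$ denote either the fan realization or the log algebraic realization $X$. By Proposition~\ref{prop:realization} the stack $\AA(F)$ is covered by the open substacks $\AA(\sigma)=[\AA(Q_\sigma)/\GG(E_\sigma)]$, $\sigma\in F$, with $Q_\sigma$ a toric (hence fine) monoid, $E_\sigma$ a finite abelian group, and $\GG(E_\sigma)$ acting through the realization of a homomorphism $a_\sigma:Q_\sigma^{\rm gp}\to E_\sigma$. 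The only place the hypothesis that $F$ be a \emph{lattice} KM fan enters is the ``furthermore'' part of Proposition~\ref{prop:realization}: since $N$ is a lattice, $a_\sigma$ is the cokernel map in the exact sequence \eqref{realizationSES}, so in particular $a_\sigma$ is \emph{surjective}. (This is essential: for a general KM fan the pieces $\AA(\sigma)$ are typically non-trivial gerbes, cf.\ Example~\ref{example:atomicopen}, and the conclusion fails.)

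Next I would apply Proposition~\ref{prop:sheafquotient} with $P:=Q_\sigma$, $G:=E_\sigma$, $a:=a_\sigma$; the surjectivity of $a_\sigma$ is exactly what makes the relevant action effective in the sense of Lemma~\ref{lem:effectiveaction}. This shows that each open substack $\AA(\sigma)=[\AA(Q_\sigma)/\GG(E_\sigma)]$ is representable by a sheaf --- a sheaf on $\Fans$ in the CZE topology when $\AA$ is the fan realization, and a sheaf on $\LogSch$ in the strict \'etale topology when $\AA=X$ is the log algebraic realization.

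Finally I would argue that a stack admitting an open cover by open substacks each of which is representable by a sheaf is itself representable by a sheaf. Concretely, a stack $\mathcal Z$ over a site is equivalent to a sheaf iff the automorphism sheaf $\u{\Aut}(x)$ is the trivial sheaf of groups for every object $T$ and every $x\in\mathcal Z(T)$, and this triviality can be checked on a cover since a locally trivial sheaf of groups is trivial. Given $x\in\AA(F)(T)$, the preimages $x^{-1}(\AA(\sigma))$ form a Zariski open cover of $T$, hence --- by admissibility, Definition~\ref{defn:admissible} --- a cover in the ambient topology, and over each such piece $x$ factors through the fully faithful open embedding $\AA(\sigma)\into\AA(F)$; thus the restriction of $\u{\Aut}(x)$ there is an automorphism sheaf computed in the \emph{sheaf} $\AA(\sigma)$, hence trivial. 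Therefore $\u{\Aut}(x)$ is trivial and $\AA(F)$ is representable by a sheaf.

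I do not anticipate a serious obstacle: the substance is entirely in Propositions~\ref{prop:realization} and \ref{prop:sheafquotient}, and the gluing step is soft. The two points requiring a little care are (i) being sure one genuinely has surjectivity of $a_\sigma$ --- this really does fail outside the lattice case, which is why the hypothesis is there --- and (ii) formulating the local-to-global principle for ``being a sheaf'' cleanly, for which the automorphism-sheaf criterion seems the most economical (avoiding any explicit manipulation of $2$-colimits of stacks).
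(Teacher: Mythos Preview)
Your proposal is correct and follows essentially the same approach as the paper: reduce to the open pieces $\AA(\sigma)$, use the lattice hypothesis to see that the relevant map $a_\sigma$ (equivalently $L^\lor\to\EE(N/L)$) is surjective because its cokernel is $\EE(N)=0$, and then invoke Proposition~\ref{prop:sheafquotient}. The paper dispatches the gluing step with the single phrase ``the question is local,'' whereas you spell it out via the automorphism-sheaf criterion; your explicit emphasis on where surjectivity is needed for Lemma~\ref{lem:effectiveaction} is a useful clarification, since the statement of Proposition~\ref{prop:sheafquotient} as written does not list that hypothesis even though its proof requires it.
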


\begin{proof} The question is local, so it suffices to prove that $\AA(\sigma)$ (Definition~\ref{defn:AAsigma}) is represented by a sheaf for each $\sigma \in F$.  If we choose a lift $L$ of $F_\sigma$, then, by construction of $\AA(\sigma)$ (Definition~\ref{defn:AAsigma}) we have $\AA(\sigma) = [ \AA(S_\sigma(L)) / \GG(\EE(N/L)) ]$ where the action in question is induced by the natural group homomorphism $S_\sigma(L)^{\rm gp} = L^\lor \to \EE(N/L)$ appearing in \eqref{fundamentalSES}; the cokernel of this map is $\EE(N) = 0$ since $N$ is a lattice, so we conclude by Proposition~\ref{prop:sheafquotient}. \end{proof}

\begin{rem}  The proof that \eqref{representable3} implies \eqref{representable1} above makes sense for any realization functor $\AA$ (not just the algebraic realization $X$) and shows that \eqref{representable3} implies representability of $\AA(f)$ by $\tau$ sheaves (which are of course ``algebraic spaces" in an appropriate sense).  Of course, one does not have the converse for a general realization functor. \end{rem}

\section{Folding and Unfolding} \label{section:foldingandunfolding}  The goal of this section is to explain the relationship between our theory of KM fans and the ``stacky fans" introduced by Geraschenko and Satriano in \cite{GS1} (which we shall call \emph{GS fans} to avoid confusion), generalizing earlier constructions of Lafforgue, Borisov, Chen, Smith, and many others (see the references in \cite{GS1}).

\subsection{GS fans} \label{section:GSfans}  First we need to recall the basic notions from the Geraschenko-Satriano theory.

\begin{defn} \label{defn:GSfan} (cf.\ \cite[Definition~2.4]{GS1}) A \emph{GS fan} is a pair $(F,\beta)$ where $F$ is a fan in a lattice $L$ and $\beta : L \to N$ is a map of lattices with finite cokernel.  A \emph{morphism} $f=(f_L,f_N)$ from a GS fan $(F,\beta)$ to a GS fan $(F',\beta')$ is a map of lattices $f_L : L \to L'$ defining a map of fans $f_L : F \to F'$, together with a map of lattices $f_N : N \to N'$ making the obvious square commute ($f_N \beta = \beta' f_L$).  GS fans form a category, denoted $\GSFans$. \end{defn}

As in \cite[Definition~2.5]{GS1} we define the \emph{realization} of a GS fan $(F,\beta)$ to be the stack-theoretic quotient \be \AA(F,\beta) & := & [ \AA(F) / \GG(\Cok (\beta^\lor)) ]. \ee  Here $\AA(F)$ is the usual realization of the fan $F$ in the lattice $L$, and the group object $\GG(\Cok (\beta^\lor))$ acts on $\AA(F)$ through the map of group objects $\GG(\Cok (\beta^\lor)) \to \GG(L^\lor)$ appearing in the exact sequence of group objects \bne{GSSES1} & \xym{  0 \ar[r] & \GG(\Cok (\beta^\lor)) \ar[r] & \GG(L^\lor) \ar[r] & \GG(N^\lor) \ar[r] & 0 } \ene obtained by realizing the exact sequence of abelian groups \bne{GSSES} & \xym{  0 \ar[r] & N^\lor \ar[r]^-{\beta^\lor} & L^\lor \ar[r] & \Cok (\beta^\lor) \ar[r] & 0 } \ene and the usual action of the torus $\GG(L^\lor)$ on $\AA(F)$.  From the aforementioned action and the exact sequence \eqref{GSSES1}, we obtain an action of $\GG(N^\lor)$ on $\AA(F,\beta)$ as in Lemma~\ref{lem:quotients}.  Since $\AA(F)$ has an open dense $\GG(L^\lor)$ orbit isomorphic to $\GG(L^\lor)$, we see from exactness of \eqref{GSSES1} that $\AA(F,\beta)$ has an open dense $\GG(N^\lor)$ orbit isomorphic to \be \GG(N^\lor) & = & \GG(L^\lor) / \GG(\Cok (\beta^\lor)).\ee  The realization of a morphism of GS fans $(f_L,f_N) : (F,\beta) \to (F',\beta')$ is defined to be the map on quotients $\AA(F,\beta) \to \AA(F',\beta')$ induced by the $\GG( \Cok(\beta^\lor)) \to \GG( \Cok( (\beta')^\lor))$ equivariant map $\AA(f_L) : \AA(F) \to \AA(F')$.

In analogy with Definition~\ref{defn:KMstack}, we make the following

\begin{defn} \label{defn:GSstack} A \emph{GS stack} is a stack isomorphic to the realization of a GS fan. \end{defn}

\begin{example} \label{example:GSfans} Let $F=(N,F,\{ F_\sigma \})$ be a lattice KM-fan.  Fix a cone $\sigma \in F$ and a lifting $L \subseteq N$ of $F_\sigma$.  Then the stack \be \AA(\sigma) = [ \AA(S_\sigma(L)) / \GG(\EE(N/L)) ] \ee defined in \S\ref{section:KMfanrealization} is a GS stack.  The choice of $L$ gives us a choice of GS fan with realization $\AA(\sigma)$.  Indeed, the assumption that $N$ is a lattice ensures that $\EE(N/L)$ is the cokernel of the dual of the inclusion $L \into N$, so $\AA(\sigma)$ will be the realization of the GS fan $([\sigma],\beta)$, where $[\sigma]$ is the fan consisting of all faces of the cone $\sigma$ in $L_{\RR}$ ($=N_{\RR}$), and $\beta$ is the inclusion $L \into N$. \end{example}

\subsection{Folding} \label{section:folding}  In this section we will describe a recipe (``folding") for constructing a KM fan $\FF(F,\beta)$ from a GS fan $(F,\beta)$ satisfying certain properties.  The GS fans which can be ``folded" in this manner admit a simple combinatorial description which can also be interpreted naturally in terms of the algebraic realization.

\begin{thm} \label{thm:folding} The algebraic realization $X(F,\beta)$ of a GS fan $(F,\beta : L \to N)$ is a separated Deligne-Mumford (DM) stack iff the following two conditions hold: \begin{enumerate} \item \label{DM} $\beta_{\RR} : \sigma \to \beta_{\RR}(\sigma)$ is bijective for every cone $\sigma \in F$. \item \label{separated} The interiors of the cones $\beta_{\RR}(\sigma)$ and $\beta_{\RR}(\tau)$ are disjoint for any two distinct cones $\sigma, \tau \in F$. \end{enumerate}  When these two conditions hold, the set of cones $\FF(F,\beta) := \{ \beta_{\RR}(\sigma) : \sigma \in F \}$ is a fan in $N$, and the groups $\beta(L_\sigma)$ define lattice data for these cones, making $\FF(F,\beta) := (N, \FF(F,\beta), \{ \beta(L_\sigma) \})$ a lattice KM fan, called the \emph{folding} of $(F,\beta)$.  The map $\beta : L \to N$ defines a tame map of KM fans $\beta : F \to \FF(F,\beta)$.  The realization (in any category of spaces) of the KM fan $\FF(F,\beta)$ coincides with the realization of the GS fan $(F,\beta)$. \end{thm}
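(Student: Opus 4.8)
The statement has several parts: (a) $X(F,\beta)$ is a separated DM stack iff conditions \eqref{DM} and \eqref{separated} hold; (b) under those conditions, $\FF(F,\beta)$ is a fan in $N$ with the $\beta(L_\sigma)$ as lattice data; (c) $\beta$ is a tame map of KM fans; (d) the realization of the KM fan $\FF(F,\beta)$ agrees with the realization of the GS fan $(F,\beta)$. I will take the parts more or less in this order, but the logical heart is (d), which for the final statement is what we are asked to prove, so let me concentrate there (taking (a)--(c) as established, as the theorem allows).

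\textbf{Reduction to the affine (single-cone) case.} Both realizations are built by gluing along open embeddings indexed by cones. On the KM side, $\AA(\FF(F,\beta))$ is glued from the $\AA(\beta_\RR(\sigma),N,\beta(L_\sigma))$ over $\sigma\in F$, with the open embeddings \eqref{openembedding2} associated to $\tau\le\sigma$. On the GS side, $\AA(F,\beta)=[\AA(F)/\GG(\Cok\beta^\lor)]$, and since $\GG(\Cok\beta^\lor)$ acts through the torus $\GG(L^\lor)$ preserving the torus-invariant affine opens $\AA(S_\sigma(L))\subseteq\AA(F)$, we get $\AA(F,\beta)$ glued from the $[\AA(S_\sigma(L))/\GG(\Cok\beta^\lor)]$ along the same poset of cones (using condition \eqref{DM}, which identifies the cone posets of $F$ and $\FF(F,\beta)$ via $\beta_\RR$, and \eqref{separated}, which guarantees the cones $\beta_\RR(\sigma)$ genuinely form a fan). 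So it suffices to produce, for each $\sigma\in F$, a natural isomorphism
\[
[\AA(S_\sigma(L))/\GG(\Cok\beta^\lor)]\;\cong\;\AA(\beta_\RR(\sigma),N,\beta(L_\sigma))
\]
compatible with the face-restriction open embeddings, and then invoke the universal property of the $2$-colimit. The key point: $L$ is a lifting of the lattice datum $\beta(L_\sigma)\subseteq N$ for the cone $\beta_\RR(\sigma)$—this is exactly what condition \eqref{DM} plus the construction $\FF(F,\beta)$ buys us, since $\Span\beta_\RR(\sigma)\cap L = L_\sigma$ maps isomorphically to $\beta(L_\sigma)$. Here I am implicitly using that $\beta|_{L_\sigma}:L_\sigma\to\beta(L_\sigma)$ is injective (true because $L_\sigma\subseteq\Span\sigma$ and $\beta_\RR|_\sigma$ is bijective, so $\beta_\RR$ is injective on $\Span\sigma$).

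\textbf{Matching the two affine models.} By Definition~\ref{defn:AAsigma}, with $L$ as the chosen lifting of $\beta(L_\sigma)$,
\[
\AA(\beta_\RR(\sigma),N,\beta(L_\sigma),L)=[\AA(S_{\beta_\RR(\sigma)}(L))/\GG(\EE(N/L))].
\]
Now $\beta:L\into N$ is a finite-index inclusion, so $S_{\beta_\RR(\sigma)}(L)=S_\sigma(L)$ as submonoids of $L^\lor$ (the dual cone computation only depends on the cone and the lattice $L$, and $\beta_\RR$ carries $\sigma$ bijectively to $\beta_\RR(\sigma)$ in the vector space $L_\RR=N_\RR$). Dualizing $0\to L\to N\to N/L\to 0$ gives, since $L,N$ are lattices, the exact sequence $0\to N^\lor\to L^\lor\to\EE(N/L)\to 0$, which identifies $\EE(N/L)=\Cok(\beta^\lor)$. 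Under this identification the two group actions on $\AA(S_\sigma(L))$—the GS action through $\GG(\Cok\beta^\lor)\to\GG(L^\lor)$, and the KM action through the composite $S_\sigma(L)\into L^\lor\to\EE(N/L)$—are literally the same, because both are induced by the map $L^\lor\to\EE(N/L)=\Cok\beta^\lor$ restricted to $S_\sigma(L)$ (and the torus action on $\AA(S_\sigma(L))$ is just the realization of $S_\sigma(L)\into L^\lor$). Hence the two quotient stacks are canonically isomorphic. Compatibility with face restrictions is immediate: for $\tau\le\sigma$ the open embedding $S_\sigma(L)\into S_\tau(L)$ (localization at the face $S_\sigma(L)\cap\tau^\perp$) is the same on both sides, and it is equivariant for the common group action, so it descends to the same open embedding of quotients \eqref{openembedding2}.

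\textbf{Where the work really is.} Parts (a)--(c)—the separated-DM criterion, checking $\FF(F,\beta)$ really is a KM fan (the compatibility condition $F_\tau=F_\sigma\cap\Span\tau$ for the lattice data $\beta(L_\sigma)$, which follows from $L_\tau=L_\sigma\cap\Span\tau$ since $\beta_\RR$ is injective on each $\Span\sigma$), and tameness of $\beta:F\to\FF(F,\beta)$ (conditions \eqref{tame:equidimensional}--\eqref{tame:reducedfibers} of Definition~\ref{defn:tame} are exactly \eqref{DM}, \eqref{separated}, and the isomorphism $\beta|_{L_\sigma}:L_\sigma\to\beta(L_\sigma)$, plus $\beta:L\to N$ tame in the map-of-groups sense since $L$ is a lattice)—are the substantive inputs, but they are either stated as part of the theorem or close to the surface. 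For the final statement specifically, the main obstacle is mostly bookkeeping: being careful that the gluing data on the two sides (the poset of cones, the open embeddings, the group actions) are identified \emph{coherently}, so that the $2$-colimits agree and not merely each affine piece. One also wants to remark that once the isomorphism is established for \emph{one} realization functor $\AA$ it holds for all, since everything—the affine models $\AA(S_\sigma(L))$, the group objects $\GG(-)$, the quotients, the open embeddings, the colimit—is produced functorially from $\Mon^{\op}$ and $\Ab^{\op}$ by the machinery of \S\ref{section:realizationofmonoids}; indeed the argument above never used anything special about $\Esp$ beyond its being a category of spaces with an admissible topology.
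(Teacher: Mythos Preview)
Your argument for the final assertion has a genuine gap: you assert that ``$\beta:L\into N$ is a finite-index inclusion,'' but the definition of a GS fan (Definition~\ref{defn:GSfan}) only requires $\beta$ to have finite cokernel, not to be injective. Condition~\eqref{DM} gives injectivity of $\beta_\RR$ on each $\Span\sigma$, hence of $\beta$ on each $L_\sigma$, but not on all of $L$ (e.g.\ take $L=\ZZ^2$, $N=\ZZ$, $\beta$ the first projection, and $F$ the fan whose unique maximal cone is $\RR_{\ge0}\times\{0\}$; then \eqref{DM} and \eqref{separated} hold yet $\Ker\beta\neq 0$). Consequently your key identifications $S_{\beta_\RR(\sigma)}(L)=S_\sigma(L)$ and $\EE(N/L)=\Cok(\beta^\lor)$ both collapse: there is no short exact sequence $0\to L\to N\to N/L\to 0$ to dualize, and $L$ cannot serve as a lifting of the lattice datum $\beta(L_\sigma)$ inside $N$ because it is not a sublattice of $N$ at all.

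The paper's route avoids this by first verifying (c)---that $\beta:F\to\FF(F,\beta)$ is a tame map of lattice KM fans---and then simply invoking Theorem~\ref{thm:quotients}\eqref{quotients3}, whose proof already handles an arbitrary tame map of lattices by factoring it as a surjection followed by an injection. That theorem yields $\AA(\beta):\AA(F)\to\AA(\FF(F,\beta))$ as a $\GG(\Cok\beta^\lor)$-torsor globally, whence $\AA(\FF(F,\beta))=[\AA(F)/\GG(\Cok\beta^\lor)]=\AA(F,\beta)$ in one line, for any realization. Your affine-by-affine matching can be repaired along these same lines (factor $\beta$ through its image, choose a lifting of $\beta(L_\sigma)$ inside $N$, and compare via Lemma~\ref{lem:torsor}), but once repaired it essentially reproduces the content of Theorem~\ref{thm:quotients}\eqref{quotients3} rather than citing it.

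Separately, you dismiss part (a) as ``close to the surface,'' but the \emph{only if} direction---deducing \eqref{DM} from DM-ness and \eqref{separated} from separatedness---is not formal: the paper extracts \eqref{DM} by computing the isotropy of distinguished points and extracts \eqref{separated} by exhibiting, when it fails, two inequivalent extensions of a single $\GG_m\to X(F,\beta)$ to $\AA^1$. These arguments are short but substantive and should not be skipped.
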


\begin{proof}  Note that $\beta_{\RR} : L_{\RR} \to N_{\RR}$ is surjective because $\beta$ is assumed to have finite cokernel, so $\beta_{\RR} : \sigma \to \beta_{\RR}(\sigma)$ is bijective iff it is injective.  Set $L_\sigma := L \cap \Span \sigma$, as usual.  Then $\beta_{\RR} : \sigma \to \beta_{\RR}(\sigma)$ is bijective iff $\Ker( \beta | L_\sigma : L_\sigma \to N) =0$ iff the rank of $\Cok( (\beta | L_\sigma)^\lor )$ is zero (i.e.\ that group is finite).  We now have to translate this algebraic condition into a condition about stabilizers.  Let $x_\sigma \in X(F)(k)$ be the distinguished point of $X(\sigma) \subseteq X(F)$ as in \cite[\S2.1]{F}.  Then by a simple exercise in the classical theory of toric varieties, the stabilizer subgroup of $x_\sigma$ for the action of the torus $T = \GG(L^\lor)$ on the toric variety $X(F)$ is the sub-torus $\GG(L_\sigma^\lor)$.  The isotropy group of the image of $x_\sigma$ in $X(F,\beta) = [X(F) / \GG(\Cok(\beta^\lor)) ]$ is hence the intersection of the subgroups $\GG(\Cok (\beta^\lor))$ and $\GG(L_\sigma^\lor)$ of $T$, which is $\GG(A)$, where $A$ is the quotient of $L^\lor$ by the subgroup generated by $N^\lor$ and the kernel $L(\sigma)^\lor$ of $L^\lor \to L_\sigma^\lor$.  Equivalently, thinking of $\GG(A)$ as a subgroup of $\GG(L_\sigma^\lor)$, we see that $A$ can be described as the cokernel of $(\beta | L_\sigma)^\lor$.  Putting this all together, we see that $\beta_{\RR} : \sigma \to \beta_{\RR}(\sigma)$ is bijective iff the isotropy of the image of $x_\sigma$ in $X(F,\beta)$ is finite.  If $X(F,\beta)$ is DM, then the isotropy groups of all its points must be finite, so each $\beta_{\RR} : \sigma \to \beta_{\RR}(\sigma)$ must be bijective.  

Denote the interior of a cone $\sigma$ by $\sigma^\circ$.  Suppose $\beta_{\RR}(\sigma)^\circ \cap \beta_{\RR}(\sigma)^\circ \neq \emptyset$ for distinct cones $\sigma, \tau \in F$.  Then we can find $l_\sigma \in L \cap \sigma^\circ$ and $l_\tau \in L \cap \tau^\circ$ with $\beta(l_\sigma) = \beta( l_\tau)$.  This corresponds to two maps of tori $l_\sigma, l_\tau : \GG_m \rightrightarrows \GG(L^\lor)$ which become equal when composed with $\GG(\beta^\lor) : \GG(L^\lor) \to \GG(N^\lor)$, hence these two maps differ by the action of $\GG(\Cok(\beta^\lor)) = \Ker( \GG(\beta^\lor) )$, hence the two compositions $$ l_\sigma, l_\tau : \GG_m \rightrightarrows \GG(L^\lor) \subseteq X(F) \to X(F,\beta) = [X(F) / \GG(\Cok(\beta^\lor)) ] $$ are ``equal" (more precisely: isomorphic)---call this common map $l : \GG_m \to X(F,\beta)$.  By the classical theory of toric varieties, the maps $l_\sigma, l_\tau : \GG_m \rightrightarrows \GG(L^\lor) \subseteq X(F)$ extend uniquely to $\ov{l}_\sigma, \ov{l}_\tau : \AA^1 \to X(F)$ taking $0 = \AA^1 \setminus \GG_m$ to $x_\sigma, x_\tau$, respectively.  Composing with $X(F) \to X(F,\beta)$ we get two extensions of $l$ which cannot be ``equal" because the images of $x_\sigma$ and $x_\tau$ in $X(F,\beta)$ cannot be ``equal" since $x_\sigma$ and $x_\tau$ are not in the same $T$-orbit, let alone the same orbit under $\GG(\Cok(\beta^\lor)) \subseteq T$.  This shows that $X(F,\beta)$ cannot be separated.

We have proved that if $X(F,\beta)$ is separated DM, then \eqref{DM} and \eqref{separated} hold.  The converse will follow from the final statements of the theorem in light of Proposition~\ref{prop:realization}, so it remains only to prove those final statements.  Let $(F,\beta)$ be a GS fan satisfying \eqref{DM} and \eqref{separated}.  

We first prove that $\FF(F,\beta)$ is a fan.  Each $\beta_{\RR}(\sigma)$ is a sharp cone and $\FF(F,\beta)$ is closed under passing to faces because of \eqref{DM} and the fact that $F$ is a fan.  To see that any two cones of $\FF(F,\beta)$ intersect in a face of each, it suffices to prove that \be \beta_{\RR}(\sigma \cap \tau) & = & \beta_{\RR}(\sigma) \cap \beta_{\RR}(\tau) \ee for any $\sigma, \tau \in F$.  The containment $\subseteq$ is clear.  For the opposite containment, suppose $x \in \beta_{\RR}(\sigma) \cap \beta_{\RR}(\tau)$.  Then there exist $v \in \sigma,w \in \tau$ with $\beta_{\RR}(v)=\beta_{\RR}(w)=x$.  We claim that $v=w$, which will prove that $x \in \beta_{\RR}(\sigma \cap \tau)$.  To prove this claim, let $\sigma'$ (resp.\ $\tau'$) denote the faces of $\sigma$ (resp.\ $\tau$) containing $v$ (resp.\ $w$) in its interior.  We must have $\sigma' = \tau'$, otherwise $x$ will be in the intersection of $\beta_{\RR}(\sigma')^\circ = \beta_{\RR}((\sigma')^\circ)$ and $\beta_{\RR}(\tau')^\circ = \beta_{\RR}((\tau')^\circ)$, in violation of \eqref{separated}.  But now $v,w$ are in $\sigma' = \tau'$, which is mapped bijectively onto its image via $\beta_{\RR}$ (by \eqref{DM}), so we must have $v=w$.

The fact that $\beta(L_\sigma)$ has finite index in $N_{\beta_{\RR}(\sigma)}$ follows from the fact that $\Cok \beta$ is finite.  The compatibility condition \be \beta(L_\tau) & = & \beta(L_\sigma) \cap \Span \beta_{\RR}(\tau) \ee for $\tau \leq \sigma \in F$ is immediate from the definitions.

This proves that $\FF(F,\beta)$ is a KM fan.  It is clear from the definition of $\FF(F,\beta)$ that $\beta : L \to N$ defines a tame map of KM fans $\beta : F \to \FF(F,\beta)$, so by Theorem~\ref{thm:quotients}\eqref{quotients3}, the realization $\AA(\beta)$ of $\beta$ makes $\AA(F)$ a $\GG(\Cok( \beta^\lor))$ torsor over $\FF(F,\beta)$, hence $$ \AA(F,\beta) = [ \AA(F) / \GG(\Cok( \beta^\lor)) ] = \AA(\FF(F,\beta)) $$ as desired. \end{proof}

\subsection{Unfolding} \label{section:unfolding}  In this section, we will describe a construction, called \emph{unfolding}, which will be useful in the study of tame maps of KM fans (Definition~\ref{defn:tame}).  This will ultimately allow us to describe the relationship between GS fans and KM fans.

For a KM fan $F=(N,F,\{ F_\sigma \})$ (Definition~\ref{defn:KMfan}) we set \be \latticedatalimit(F) & := & \dirlim \{ F_\sigma : \sigma \in F \} . \ee  The colimit defining $\latticedatalimit(F)$ is the colimit in the category of abelian groups, taken over the cones $\sigma \in F$ with maps given by the sublattice inclusions $F_\tau \into F_\sigma$ when $\tau \leq \sigma \in F$.  If there is no chance of confusion we write $\latticedatalimit$ instead of $\latticedatalimit(F)$.  The lattice $\ov{\latticedatalimit} = \latticedatalimit / \latticedatalimit_{\rm tor}$ may be viewed as the colimit of the $F_\sigma$ \emph{taken in the category of lattices}.  We write $i_\sigma : F_{\sigma} \to \latticedatalimit$ for the structure map to the direct limit and $\ov{i}_\sigma : F_\sigma \to \ov{\latticedatalimit}$ for its composition with the projection $\latticedatalimit \to \ov{\latticedatalimit}$.  Let $\beta : \latticedatalimit \to N$ be the map induced by the inclusions $F_\sigma \into N$ using the universal property of the direct limit, so that $\beta i_\sigma$ is the inclusion $F_\sigma \into N$ for each $\sigma \in F$.  Since each such inclusion is injective, so are the maps $i_\sigma$.  Since each $F_\sigma$ is also torsion-free, the maps $\ov{i}_\sigma : F_\sigma \to \ov{\latticedatalimit}$ are also injective.

When $N$ is a lattice, $\beta$ factors through the quotient map $\latticedatalimit \to \ov{\latticedatalimit}$ via a map $\ov{\beta} : \ov{\latticedatalimit} \to N = \ov{N}$ such that $\ov{\beta} \ov{i}_\sigma$ is the inclusion $F_\sigma \into N$.

Let us calculate $\latticedatalimit(F)$ in some simple examples.

\begin{example} If $F$ has a unique maximal cone $\sigma$, then $\latticedatalimit(F) = \ov{\latticedatalimit}(F) = F_\sigma$ because $F_\sigma$ is terminal in the direct limit system defining $\latticedatalimit(F)$. \end{example}

\begin{example} \label{example:tildeLnotalattice} In this example, we will see that $\latticedatalimit(F)$ may fail to be a lattice, even when $F$ is a complete classical fan in $N=\ZZ^2$.  Indeed, consider the complete classical fan $F$ in $\RR^2 = (\ZZ^2)_\RR$ with rays $\rho_1 = \RR_{\geq 0}(1, 2)$, $\rho_2 = \RR_{\geq 0}(1 , -2)$, and $\rho_3 = \RR_{\geq 0}(-1,0)$.  The fan $F$ has three two-dimensional cones $\sigma_1 = \RR_{\geq 0} \langle \rho_1,\rho_2 \rangle$,  $\sigma_2 = \RR_{\geq 0} \langle \rho_2,\rho_3 \rangle$, and $\sigma_3 = \RR_{\geq 0} \langle \rho_1, \rho_3 \rangle$.  The group $\latticedatalimit(F)$ is the quotient of $$F_{\sigma_1} \oplus F_{\sigma_2} \oplus F_{\sigma_3} = N_{\sigma_1} \oplus N_{\sigma_2} \oplus N_{\sigma_3} = \ZZ^2 \oplus \ZZ^2 \oplus \ZZ^2$$ be the subgroup $K \subseteq (\ZZ^2)^3$ consisting of elements of the form $$( d \rho_1 + e \rho_2 , - e \rho_2 + f \rho_3 , -d \rho_1 - f \rho_3 ) \quad \quad (d,e,f \in \ZZ).$$  Taking $d=e=f=1$, we see that $$2 ( (1 , 0) , (-1 , 1) , (0 , -1) ) = ( ( 2 , 0) , ( -2 , 2) , (0 , -2) )$$ is in $K$, even though $( (1 , 0) , (-1 , 1) , (0 , -1) )$ is not in $K$ (because its first entry $(1 , 0)$ is not a $\ZZ$-linear combination of $\rho_1$ and $\rho_2$), hence $\latticedatalimit(F) = (\ZZ^2)^3 / K$ has non-trivial $2$-torsion. \end{example}

\begin{example} \label{example:isigmanotsaturated} In this variant of the previous example, we will describe a complete KM fan $F$ in $N = \ZZ^2$ for which $\latticedatalimit = \ov{\latticedatalimit} \cong \ZZ^3$ with a cone $\sigma \in F$ such that the structure map $i_\sigma = \ov{i}_\sigma : F_\sigma \to \latticedatalimit$ is not saturated.  The fan underlying our KM fan $F$ will be the same as the fan in the previous example.  We take the lattice data $F_{\sigma_2}$ and $F_{\sigma_3}$ to be the classical data $F_{\sigma_2} = F_{\sigma_3} = N = \ZZ^2$ as in the previous example, but this time we take $F_{\sigma_1}$ to be the lattice (freely) generated by $\rho_1=(1,2)$ and $\rho_2=(1,-2)$.  The rays of our KM fan $F$ and their lattice data are as in the previous example.  To construct $\latticedatalimit(F)$, we start with the free abelian group $F_{\sigma_1} \oplus F_{\sigma_2} \oplus F_{\sigma_3}$ freely generated by $$i_{\sigma_1}(\rho_1), i_{\sigma_1}(\rho_2), i_{\sigma_2}(e_1), i_{\sigma_2}(e_2), i_{\sigma_3}(e_1), i_{\sigma_3}(e_2)$$ and we impose the three relations \bne{isigmanotsaturatedrelations} i_{\sigma_1}(\rho_1) & = & i_{\sigma_3}(e_1)+2 i_{\sigma_3}(e_2) \\ \nonumber i_{\sigma_1}(\rho_2) & = & i_{\sigma_2}(e_1)-2 i_{\sigma_2}(e_2) \\ \nonumber i_{\sigma_2}(e_1) & = & i_{\sigma_3}(e_1), \ene one for each of the rays $\rho_1$, $\rho_2$, $\rho_3$.  Clearly the resulting group $\latticedatalimit$ is freely generated by $i_{\sigma_2}(e_1)$, $i_{\sigma_2}(e_2)$, and $i_{\sigma_3}(e_2)$.  Using that ordered basis for $\latticedatalimit \cong \ZZ^3$ and the ordered basis $\rho_1$, $\rho_2$ for $F_{\sigma_1} \cong \ZZ^2$, the structure map $i_{\sigma_1} : F_{\sigma_1} \to \latticedatalimit$ is given by the matrix $$ \bp 1 & 1 \\ 0 & -2 \\ 2 & 0 \ep. $$  This map is not saturated because $(0,1,1)$ (transpose) is not in its image, even though $2(0,1,1)$ \emph{is} in its image. \end{example}

\begin{rem} \label{rem:functorialityoflatticedatalimit}  The group $\latticedatalimit(F)$ is covariantly functorial in $F$.  If $U$ and $V$ are sub-KM-fans of $F$ with union $F$, then the pushout diagram of KM fans on the left below is taken by the functor $\latticedatalimit$ to a pushout diagram of groups as on the right below.  \bne{Lpushout} \xym{ U \cap V \ar[r] \ar[d] & U \ar[d] \\ V \ar[r] & F } & \quad \quad & \xym{ \latticedatalimit(U \cap V) \ar[r] \ar[d] & \latticedatalimit(U) \ar[d] \\ \latticedatalimit(V) \ar[r] & \latticedatalimit(F) } \ene  One can use these pushout diagrams to compute $\latticedatalimit(F)$ ``inductively" by taking a maximal cone $\sigma \in F$, then taking $U := F \setminus \{ \sigma \}$ and $V := [ \sigma ]$ (the sub-KM-fan of $F$ whose cones are all the subcones of $\sigma$).  \end{rem}

\begin{rem} \label{rem:latticedatalimitsemitame} If $f : F \to F'$ is a semi-tame (Definition~\ref{defn:tame}) map of KM fans, then the induced map of abelian groups $\latticedatalimit(F) \to \latticedatalimit(F')$ is an isomorphism because $f$ even yields an isomorphism from the limit system defining $\latticedatalimit(F)$ to the one defining $\latticedatalimit(F')$. \end{rem}

Now we return to our construction.  For a cone $\sigma \in F$, we let $i(\sigma) \subseteq \latticedatalimit_{\RR}$ be the image of the cone $\sigma \subseteq F_\sigma \otimes \RR = N_\sigma \otimes \RR$ under the inclusion of $\RR$ vector spaces $i_\sigma \otimes \RR : F_\sigma \otimes \RR \into \latticedatalimit_{\RR}$ (which we will often abusively denote $i$).  Of course $\latticedatalimit_{\RR} = \ov{\latticedatalimit}_{\RR}$, so we can also view $i(\sigma)$ as a cone in $\ov{\latticedatalimit}_{\RR}$.

Our main result about unfolding is that the unfolding of a KM fan is ``the universal torsor" over it (compare Cox's construction \cite{Cox}):

\begin{thm} \label{thm:unfolding}  Let $F=(N,F,\{ F_\sigma \})$ be a KM fan, $\latticedatalimit := \latticedatalimit(F)$.  Then the triple \be \UU(F) & := & ( \latticedatalimit, \{ i(\sigma) : \sigma \in F \}, \{ i_{\sigma}(F_\sigma) : \sigma \in F \} ) \ee is a KM fan, called the \emph{unfolding} of $F$, and the map of groups $\beta : \latticedatalimit \to N$ defines a semi-tame (Definition~\ref{defn:tame}) map of KM fans $\beta : \UU(F) \to F$ which is initial among all semi-tame maps of KM fans with codomain $F$.  That is, any diagram of KM fans $$ \xym{ \UU(F) \ar@{.>}[rr] \ar[rd]_-\beta & & F' \ar[ld]^-{f} \\ & F } $$ with $f$ semi-tame can be uniquely completed as indicated.  If we assume that $F$ is atoroidal (Definition~\ref{defn:nondegenerate}) and that the kernel of $\beta : \latticedatalimit \to N$ is torsion-free, then $\beta : \UU(F) \to F$ is tame.  The construction $F \mapsto \UU(F)$ is functorial in $F$ (as is the map $\beta : \UU(F) \to F$) and takes semi-tame maps to isomorphisms.  The rigidification $\UU^{\rm rig}(F)$ of $\UU(F)$ in the sense of Example~\ref{example:rigidification} is given by the triple \be \UU^{\rm rig}(F) & = & ( \ov{\latticedatalimit}, \{ i(\sigma) : \sigma \in F \}, \{ \ov{i}_{\sigma}(F_\sigma) : \sigma \in F \} ) \ee and called the \emph{rigidified unfolding} of $F$.  Assume now that $N$ is a lattice (i.e.\ $F$ is a lattice KM fan).  Then the map of groups $\ov{\beta} : \ov{\latticedatalimit} \to N$ defines a semi-tame map of KM fans $\ov{\beta} : \UU^{\rm rig}(F) \to F$ (via the universal property of the rigidification) which is initial in the category of semi-tame maps of lattice KM fans $f : F' \to F$.  This map of KM fans is tame whenever $F$ is atoroidal ($\Ker \ov{\beta}$ is always torsion-free since $\ov{\latticedatalimit}$ is a lattice).  If $F$ is a classical fan, then $\UU^{\rm rig}(F)$ is also a classical fan. \end{thm}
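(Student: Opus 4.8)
Proof proposal for the statement ``if $F$ is a classical fan, then $\UU^{\rm rig}(F)$ is also a classical fan.''

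The plan is to reduce the assertion to the single combinatorial fact that, when $F$ is classical, every structure map $i_\sigma : F_\sigma \to \latticedatalimit(F)$ is a \emph{split} injection. Granting this, the lattice datum $\ov{i}_\sigma(F_\sigma)$ of the lattice KM fan $\UU^{\rm rig}(F)$ (which we already know from the theorem is a lattice KM fan) will be a direct summand of $\ov{\latticedatalimit}$, hence saturated in $\ov{\latticedatalimit}$. Since by Definition~\ref{defn:latticedatum} it has finite index in $\ov{\latticedatalimit}_{i(\sigma)} := \Span i(\sigma) \cap \ov{\latticedatalimit}$, and a saturated subgroup equals its own saturation while every finite-index overgroup of it lies inside that saturation, we get $\ov{i}_\sigma(F_\sigma) = \ov{\latticedatalimit}_{i(\sigma)}$ for every $\sigma \in F$. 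By Example~\ref{example:classicalfan} this is precisely the condition for the lattice KM fan $\UU^{\rm rig}(F)$ to be classical.

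To produce the splitting, fix a cone $\sigma \in F$. Since $F$ is classical, $F_\sigma = N_\sigma$, and $N_\sigma$ is saturated in $N$, so $N/N_\sigma$ is free (Remark~\ref{rem:Nsigma}) and the inclusion $N_\sigma \into N$ admits a retraction $r : N \to N_\sigma$. I would then observe that the restrictions $r|_{N_\tau} : N_\tau \to N_\sigma$, over all $\tau \in F$, form a cocone on the direct system $\{ F_\tau = N_\tau \}$ defining $\latticedatalimit(F)$: for $\tau_1 \leq \tau_2$ the transition map is the inclusion $N_{\tau_1} \into N_{\tau_2}$ inside $N$, so restricting $r$ to $N_{\tau_2}$ and then to $N_{\tau_1}$ is the same as restricting $r$ to $N_{\tau_1}$ directly. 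The universal property of the colimit then yields $\rho : \latticedatalimit(F) \to N_\sigma$ with $\rho \circ i_\tau = r|_{N_\tau}$ for all $\tau$; taking $\tau = \sigma$ gives $\rho \circ i_\sigma = r|_{N_\sigma} = \Id_{N_\sigma}$. Because $N_\sigma$ is torsion-free, $\rho$ kills $\latticedatalimit(F)_{\rm tor}$ and descends to $\ov{\rho} : \ov{\latticedatalimit} \to N_\sigma$ with $\ov{\rho} \circ \ov{i}_\sigma = \Id$, so $\ov{i}_\sigma(F_\sigma)$ is a direct summand of $\ov{\latticedatalimit}$, as claimed.

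I do not expect a serious obstacle here; the one point that needs care is the cocone check, and that is exactly where classicality of $F$ enters: a \emph{single} retraction $r$ of $N_\sigma \into N$ restricts compatibly to all the $N_\tau$ only because these are genuine subgroups of the common ambient group $N$, which uses $F_\sigma = N_\sigma$. For a general KM fan the maps $i_\sigma$ need not be split (indeed need not even have saturated image), as Example~\ref{example:isigmanotsaturated} shows, so the argument genuinely requires the hypothesis that $F$ is classical.
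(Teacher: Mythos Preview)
Your argument is correct, but it takes a genuinely different route from the paper's proof. Both proofs aim to show that $\ov{i}_\sigma(F_\sigma)$ is saturated in $\ov{\latticedatalimit}$ for each $\sigma$, which by Example~\ref{example:classicalfan} is the condition for $\UU^{\rm rig}(F)$ to be classical. The paper argues directly by element-chasing: given $n\lambda = \ov{i}_\sigma(x)$ in $\ov{\latticedatalimit}$, it applies the already-available map $\ov{\beta} : \ov{\latticedatalimit} \to N$ to obtain $n\ov{\beta}(\lambda) = x$ in $N$; since $F_\sigma = N_\sigma$ is saturated in $N$, this forces $\ov{\beta}(\lambda) \in F_\sigma$, and then torsion-freeness of $\ov{\latticedatalimit}$ gives $\lambda \in \ov{i}_\sigma(F_\sigma)$. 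You instead construct, for each $\sigma$, a \emph{new} map $\rho : \latticedatalimit \to N_\sigma$ by feeding the restrictions of a chosen retraction $r : N \to N_\sigma$ into the universal property of the colimit, thereby exhibiting $\ov{i}_\sigma(F_\sigma)$ as a direct summand of $\ov{\latticedatalimit}$. Your approach proves the slightly stronger statement that $\ov{i}_\sigma$ is split (not merely that its image is saturated), and it makes transparent exactly where classicality enters: the single ambient retraction $r$ restricts compatibly to all the $N_\tau$ only because the $F_\tau = N_\tau$ are honest subgroups of one common $N$. The paper's approach has the virtue of economy, using only the map $\ov{\beta}$ that was already set up, with no auxiliary construction and no choice of retraction.
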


\begin{rem} The fan $F$ of Example~\ref{example:tildeLnotalattice} is complete, hence atoroidal, but for that $F$ the kernel of $\beta : \latticedatalimit \to N$ has torsion, since $N = \ZZ^2$ is torsion-free and $\latticedatalimit$ has torsion.  That $F$ is classical, but $\UU(F)$ is not classical. \end{rem}

\begin{proof}  To see that $\UU(F)$ is a KM fan, the main issue is to show that the set of cones $\{ i(\sigma) \}$ is a fan---the other details will be left to the reader.  Since the maps $i_\sigma : F_{\sigma} \to \latticedatalimit$ are injective, so are the maps $i_\sigma \otimes \RR$.  Since the set of cones $F$ is closed under passage to faces, it follows that $\{ i(\sigma) \}$ is closed under passage to faces.  Now suppose $\sigma_1, \sigma_2 \in F$.  We want to show that $i(\sigma_1) \cap i(\sigma_2)$ is a face of, say, $i(\sigma_1)$.  Since $F$ is a fan, we know $\tau := \sigma_1 \cap \sigma_2$ is a face of $\sigma_1$.  We first claim that \be i(\sigma_1) \cap i(\sigma_2) = i(\tau). \ee  The containment $\supseteq$ is clear.  On the other hand, if $x = i(z_1)=i(z_2)$ in the colimit $\latticedatalimit$, with $z_1 \in \sigma_1$, $z_2 \in \sigma_2$, then by definition of $\latticedatalimit$, there must be a cone $\sigma \in F$ containing $\sigma_1,\sigma_2$ as faces, such that $z_1 = z_2$ in $\sigma$. But $z=z_1 = z_2 \in \sigma$ means $z$ belongs to the face $\tau = \sigma_1 \cap \sigma_2 < \sigma$ and $x = i(z)$.  To see that $i(\tau)$ is a face of $i(\sigma_1)$, suppose $i(z) = i(u)+i(v)$ for $z \in \tau$, $u,v \in \sigma_1$. Since $i = i_\sigma \otimes \RR$ is injective, we have $z=u+v$, hence $u,v \in \tau$ since $\tau < \sigma_1$.

Since $i_\sigma \beta$ is the inclusion $F_\sigma \into N$, it is clear that $\beta_{\RR}$ takes $i(\sigma)$ bijectively onto $\sigma$ and $\beta$ takes the lattice datum $i_{\sigma}(F_\sigma)$ for the cone $i(\sigma)$ of $\UU(F)$ bijectively onto the lattice datum $F_\sigma$ for the cone $\sigma$ in the fan $F$.  This proves that $\beta : \latticedatalimit \to N$ defines a semi-tame map of KM fans $\beta : \UU(F) \to F$.

The second assertion amounts to showing that the cokernel of $\beta : \latticedatalimit \to N$ is finite whenever $F$ is atoroidal.  To see this, note that, by construction of $\latticedatalimit$, we have a commutative square of FGA groups as below. $$ \xym{ \bigoplus_{\sigma \in F} F_\sigma \ar[d]_{\sum_{\sigma \in F} i_\sigma} \ar[r] & \bigoplus_{\sigma \in F} N_{\sigma} \ar[d] \\ \latticedatalimit \ar[r]^-{\beta} & N. } $$  The right vertical arrow has finite cokernel since $F$ is atoroidal.   The top horizontal arrow has finite cokernel by definition of a KM fan, so $\beta$ has finite cokernel. 

The functoriality of $F \mapsto \UU(F)$ is clear, and the fact that $\UU$ takes semi-tame maps to isomorphisms is an elementary elaboration on Remark~\ref{rem:latticedatalimitsemitame}.

To see that $\UU^{\rm rig}(F)$ can be described as in the theorem, we need only note that $\latticedatalimit \to \ov{\latticedatalimit}$ takes $i_\sigma(F_\sigma) \subseteq \latticedatalimit$ bijectively onto $\ov{i}_\sigma(F_\sigma) \subseteq \ov{\latticedatalimit}$ for each $\sigma \in F$.

The ``universal property" of $\UU(F)$ is straightforward to establish; the analogous universal property of $\UU^{\rm rig}(F)$ is obtained by combining this universal property with the universal property of rigidification (Example~\ref{example:rigidification}).

To see that $\UU^{\rm rig}(F)$ is classical when $F$ is classical, we need to show that $\ov{i}_\sigma(F_\sigma)$ is saturated in $\ov{\latticedatalimit}$ for each cone $\sigma \in F$ (cf.\ Example~\ref{example:classicalfan}).  Suppose $n \lambda = \ov{i}_\sigma(x)$ in $\ov{\latticedatalimit}$ for some $\lambda \in \ov{\latticedatalimit}$, $x \in F_\sigma$, $n \in \{ 1,2,\dots \}$.  Applying $\ov{\beta} : \ov{\latticedatalimit} \to N$ to this equality and recalling that $\ov{\beta} \ov{i}_\sigma$ is the inclusion $F_\sigma \subseteq N$, we find that $n \beta(\lambda) = x$ in $N$.  Since $F$ is classical, $F_\sigma$ is saturated in $N$, so $\beta(\lambda)=y$ for a unique $y \in F_\sigma \subseteq N$.  This implies that $\ov{i}_\sigma(y) - \lambda \in \ov{\latticedatalimit}$ is killed by multiplication by $n$ (and is in the kernel of $\ov{\beta}$), hence $\ov{i}_\sigma(y) = \lambda$ since $\ov{\latticedatalimit}$ is torsion-free.  \end{proof}

\begin{cor} \label{cor:unfolding} Let $F=(N,F,\{ F_\sigma \})$ be an atoroidal lattice KM fan.  Then the following are equivalent: \begin{enumerate} \item \label{rigidifiedunfoldingsaturated} The rigidified unfolding $\UU^{\rm rig}(F)$ is a classical fan. \item \label{saturatedincs} For every cone $\sigma \in F$, the inclusion $\ov{i}_{\sigma} : F_\sigma \to \ov{\latticedatalimit}(F)$ is saturated.  \item \label{classicalcover} There is a classical fan $F'$ and a tame map of KM fans $f : F' \to F$. \item \label{isaGSstack} The algebraic realization $X(F)$ of $F$ (with its torus action) is a GS stack (Definition~\ref{defn:GSstack}).  \end{enumerate}  If these equivalent conditions hold, then $\ov{\beta} : \ov{\latticedatalimit}(F) \to N$ defines a tame map of KM fans $\UU^{\rm rig}(F) \to F$, and the pair $(\UU^{\rm rig}(F), \ov{\beta} : \ov{\latticedatalimit}(F) \to N)$ is a GS fan whose realization (in any category of spaces) coincides with the realization of $F$. \end{cor}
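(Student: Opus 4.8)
The plan is to establish the four equivalences by a cycle of implications, then deduce the ``GS fan'' statement essentially for free from Theorem~\ref{thm:folding}. First I would prove \eqref{rigidifiedunfoldingsaturated} $\Leftrightarrow$ \eqref{saturatedincs}: by Example~\ref{example:classicalfan}, $\UU^{\rm rig}(F)$ is classical iff each of its lattice data $\ov{i}_\sigma(F_\sigma)$ is saturated in $\ov{\latticedatalimit}(F)$, which is exactly \eqref{saturatedincs}; this is immediate from the description of $\UU^{\rm rig}(F)$ in Theorem~\ref{thm:unfolding}. Next I would show \eqref{rigidifiedunfoldingsaturated} $\Rightarrow$ \eqref{classicalcover}: since $F$ is atoroidal, the last part of Theorem~\ref{thm:unfolding} gives that $\ov{\beta} : \ov{\latticedatalimit}(F) \to N$ defines a \emph{tame} map of KM fans $\UU^{\rm rig}(F) \to F$, and under \eqref{rigidifiedunfoldingsaturated} the source is classical, so we may take $F' := \UU^{\rm rig}(F)$. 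The implication \eqref{classicalcover} $\Rightarrow$ \eqref{isaGSstack} is also quick: given a tame map $f : F' \to F$ with $F'$ classical, Theorem~\ref{thm:quotients2} says $X(f)$ is a torsor under $\GG(\DD(f))$; since $F'$ is classical (hence $X(F')$ is a toric variety, Example~\ref{example:classicalrealization}) and a tame map between lattice KM fans is in particular a map of lattices $\beta : L \to N$ with finite cokernel, one checks that $(F', \beta)$ is a GS fan (Definition~\ref{defn:GSfan}) whose realization $[X(F')/\GG(\Cok \beta^\lor)]$ agrees with $X(F)$ — here one uses the ``lattice'' special formula $\DD(f) = \Cok(f^\lor)$ from \eqref{specialformulas} to identify the torsorial group with $\GG(\Cok \beta^\lor)$, and the $T$-equivariance is part of Theorem~\ref{thm:quotients}\eqref{quotients3}.

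It then remains to close the cycle with \eqref{isaGSstack} $\Rightarrow$ \eqref{rigidifiedunfoldingsaturated} (or, alternatively, \eqref{isaGSstack} $\Rightarrow$ \eqref{classicalcover}). This is the step I expect to be the main obstacle. The point is: if $X(F)$ is the realization of \emph{some} GS fan $(G,\gamma : L \to N)$ — $T$-equivariantly — then since $X(F)$ is a separated DM stack, Theorem~\ref{thm:folding} tells us $(G,\gamma)$ satisfies conditions \eqref{DM} and \eqref{separated} there, its folding $\FF(G,\gamma)$ is a lattice KM fan with $X(\FF(G,\gamma)) = X(F)$, and $\gamma : G \to \FF(G,\gamma)$ is a tame map of KM fans with $G$ classical. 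By the equivalence part of Theorem~\ref{thm:main} (the algebraic realization is an equivalence of categories on lattice KM fans, hence in particular faithful — and one must check the $T$-equivariant isomorphism $X(\FF(G,\gamma)) \cong X(F)$ is realized by an isomorphism of KM fans), we get $\FF(G,\gamma) \cong F$, so $\gamma$ exhibits a classical fan $G$ with a tame map $G \to F$, giving \eqref{classicalcover}. The delicate part here is making sure the GS-stack isomorphism in \eqref{isaGSstack} is compatible with the torus actions in the way Theorem~\ref{thm:folding} and Theorem~\ref{thm:main} require; this is where I would be most careful, invoking Remark~\ref{rem:morphismsoftoricstacks} to rule out ambiguity in the $1$-morphism.

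Finally, for the concluding ``GS fan'' assertion: assuming the equivalent conditions hold, Theorem~\ref{thm:unfolding} already gives that $\ov{\beta} : \ov{\latticedatalimit}(F) \to N$ is a tame map of KM fans $\UU^{\rm rig}(F) \to F$, and under \eqref{rigidifiedunfoldingsaturated} the source $\UU^{\rm rig}(F)$ is a classical fan, so $(\UU^{\rm rig}(F), \ov{\beta})$ is literally a GS fan in the sense of Definition~\ref{defn:GSfan}. That its realization coincides with the realization of $F$ is then exactly the last sentence of Theorem~\ref{thm:folding} applied to this GS fan — whose folding is $F$ itself, since folding a classical fan along a tame map just reads off the cones $\ov{\beta}_\RR(i(\sigma)) = \sigma$ and lattice data $\ov{\beta}(\ov{i}_\sigma(F_\sigma)) = F_\sigma$, recovering $F$. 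Alternatively, and more directly, apply Theorem~\ref{thm:quotients2} (or Theorem~\ref{thm:quotients}\eqref{quotients3}) to the tame map $\ov{\beta}$ to see $\AA(\UU^{\rm rig}(F)) \to \AA(F)$ is a $\GG(\Cok \ov{\beta}^\lor)$-torsor, whence $\AA(F) = [\AA(\UU^{\rm rig}(F))/\GG(\Cok \ov{\beta}^\lor)] = \AA(\UU^{\rm rig}(F), \ov{\beta})$ in any category of spaces, as claimed.
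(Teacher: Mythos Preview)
Your argument has a genuine gap: the cycle is not closed. You establish \eqref{rigidifiedunfoldingsaturated} $\Leftrightarrow$ \eqref{saturatedincs}, then \eqref{rigidifiedunfoldingsaturated} $\Rightarrow$ \eqref{classicalcover}, then \eqref{classicalcover} $\Rightarrow$ \eqref{isaGSstack}, and finally your ``closing'' step actually proves \eqref{isaGSstack} $\Rightarrow$ \eqref{classicalcover} (you conclude ``$\gamma$ exhibits a classical fan $G$ with a tame map $G \to F$, giving \eqref{classicalcover}''). So what you have is \eqref{rigidifiedunfoldingsaturated} $\Leftrightarrow$ \eqref{saturatedincs}, \eqref{rigidifiedunfoldingsaturated} $\Rightarrow$ \eqref{classicalcover} $\Leftrightarrow$ \eqref{isaGSstack}, but never \eqref{classicalcover} $\Rightarrow$ \eqref{rigidifiedunfoldingsaturated}. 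Your parenthetical ``(or, alternatively, \eqref{isaGSstack} $\Rightarrow$ \eqref{classicalcover})'' is not an alternative that closes anything unless \eqref{classicalcover} $\Rightarrow$ \eqref{rigidifiedunfoldingsaturated} is already in hand.

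The paper supplies exactly this missing implication, and the argument is short but uses a feature of Theorem~\ref{thm:unfolding} you did not invoke: the functor $\UU$ (hence $\UU^{\rm rig}$) takes semi-tame maps to \emph{isomorphisms}, and $\UU^{\rm rig}$ of a classical fan is again classical. Given a tame (hence semi-tame) $f : F' \to F$ with $F'$ classical, $\UU^{\rm rig}(f) : \UU^{\rm rig}(F') \to \UU^{\rm rig}(F)$ is an isomorphism with classical source, so $\UU^{\rm rig}(F)$ is classical. Once you insert this, the rest of your proposal is fine. As a minor remark, for \eqref{isaGSstack} $\Rightarrow$ \eqref{classicalcover} the paper takes a slightly more direct route than yours (it applies full faithfulness from Theorem~\ref{thm:main} directly to the torsor map $X(F') \to X(F)$ and then invokes the converse in Theorem~\ref{thm:quotients2} to get tameness), avoiding the detour through folding; but your approach via Theorem~\ref{thm:folding} plus Theorem~\ref{thm:main} is also valid.
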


\begin{proof}  The equivalence of \eqref{rigidifiedunfoldingsaturated} and \eqref{saturatedincs} is just the characterization of classical fans among KM fans discussed in Example~\ref{example:classicalfan}.  If this common condition holds, then \eqref{classicalcover} holds because we can take $f : F' \to F$ to be $\ov{\beta} : \UU^{\rm rig}(F) \to F$ by the theorem.  Conversely, if we have $f:F' \to F$ as in \eqref{classicalcover}, then, by the theorem, $\UU^{\rm rig}(f) : \UU^{\rm rig}(F') \to \UU^{\rm rig}(F)$ is an isomorphism and $\UU^{\rm rig}(F')$ is classical, so $\UU^{\rm rig}(F)$ is classical.

Suppose the first three equivalent conditions hold.  Since $\UU^{\rm rig}(F)$ is a classical fan and $\ov{\latticedatalimit}(F) \to N$ is tame by the theorem, the indicated pair is certainly a GS fan.  By definition, the realization of this GS fan is the stack-theoretic quotient of $\AA( \UU^{\rm rig}(F) )$ by the action of $\GG( \Cok (\ov{\beta}^\lor) )$.  But the theorem says that $\UU^{\rm rig}(F) \to F$ is tame, so Theorem~\ref{thm:quotients}\eqref{quotients3} says that the action of $\GG( \Cok (\ov{\beta}^\lor) )$ on $\AA( \UU^{\rm rig}(F) )$ makes $\AA( \UU^{\rm rig}(F) ) \to \AA(F)$ a $\GG( \Cok (\ov{\beta}^\lor) )$-torsor, so the stack-theoretic quotient in question is just $\AA(F)$, as desired.  In particular, if we take $\AA=X$ to be the algebraic realization, we see that \eqref{isaGSstack} holds.

Now suppose \eqref{isaGSstack} holds.  Then there is a classical fan $F'$ such that the toric variety $X(F')$ admits a map (of algebraic stacks) $g : X(F') \to X(F)$ which is a torsor under the action of some subtorus $S$ of the torus $T'$ for $X(F')$ and such that the torus $T$ for $X(F)$ is the quotient $T'/S$ (and its action on $X(F)$ is the obvious one).  Since the algebraic realization of lattice KM fans is fully faithful (Theorem~\ref{thm:main}), $g=X(f)$ is the algebraic realization of some map of KM fans $f : F \to F'$.  Since $X(f)$ is an $S$-torsor, the ``converse" part of Theorem~\ref{thm:quotients2} implies that $f : F' \to F$ is tame, hence \eqref{classicalcover} holds. \end{proof}

\begin{cor} \label{cor:noteveryKMfanisGS} There is a complete KM fan $F=(N,F,\{ F_\sigma \})$ with $N = \ZZ^2$ whose algebraic realization $X(F)$ is not a GS stack.  This realization $X(F)$ \emph{is}, however, a normal, complete (proper over $\Spec$ of the base field $k$) DM stack with a torus action with a dense orbit. \end{cor}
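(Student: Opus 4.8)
The plan is to exhibit the KM fan constructed in Example~\ref{example:isigmanotsaturated} as the required $F$ and then read off every asserted property from results already established.

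First I would record that this $F$ has $N = \ZZ^2$, hence is a \emph{lattice} KM fan, and that its underlying fan is the complete fan of Example~\ref{example:tildeLnotalattice}, so $F$ is complete and in particular atoroidal (Definition~\ref{defn:nondegenerate}: the maximal cones of a complete fan already cover $N_{\RR}$). Thus the hypotheses of Corollary~\ref{cor:unfolding} hold for $F$. Example~\ref{example:isigmanotsaturated} exhibits a cone $\sigma \in F$ for which the structure map $\ov i_\sigma : F_\sigma \to \ov{\latticedatalimit}(F)$ is \emph{not} saturated, so condition~\eqref{saturatedincs} of Corollary~\ref{cor:unfolding} fails for $F$. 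By the equivalence of the four conditions in that corollary, condition~\eqref{isaGSstack} fails as well; that is, the algebraic realization $X(F)$, together with its torus action, is not a GS stack. This single deduction is the whole of the first assertion.

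For the remaining claims I would simply assemble standard facts about realizations of lattice KM fans. By Proposition~\ref{prop:realization}, $X(F)$ is a Deligne--Mumford stack with an \'etale cover by a finite disjoint union of affine toric varieties, hence normal (normality being \'etale local and holding for toric varieties). The torus $T = \GG(N^\lor)$ acts on $X(F)$ by Remark~\ref{rem:torusaction2}, and the atomic open $\AA(0) = T$ is a dense open substack isomorphic to $T$ (the density statement of Proposition~\ref{prop:realization}), which gives the dense torus orbit. Finally, $X(F)$ is proper over $\Spec k$: the coarse fan $\ov F$ is the complete fan of Example~\ref{example:tildeLnotalattice}, so its realization $X(\ov F)$ is a complete toric variety, hence proper over $k$; the coarse moduli space map $X(F) \to X(\ov F)$ is proper by Proposition~\ref{prop:coarsemodulispace}, so composing gives properness of $X(F)$ over $k$. (One can instead invoke Proposition~\ref{prop:properness} for the unique map from $F$ to the terminal KM fan $(0,0,0)$, using that the preimage of the coarse support of a point is all of $\ov N$, which equals the coarse support of the complete fan $F$.)

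Since the argument is entirely a matter of applying earlier results to one explicit example, I expect no serious obstacle. The only points needing care are checking that the data in Example~\ref{example:isigmanotsaturated} really do constitute an atoroidal lattice KM fan --- i.e.\ that the compatibility conditions linking the lattice data $F_{\sigma_1}, F_{\sigma_2}, F_{\sigma_3}$ to the ray data hold --- and that the non-saturation computed there is exactly the failure of condition~\eqref{saturatedincs} of Corollary~\ref{cor:unfolding}; both are routine bookkeeping essentially already carried out in the cited example.
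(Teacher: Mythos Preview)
Your proposal is correct and follows the paper's approach exactly: the paper's proof is a single sentence citing Example~\ref{example:isigmanotsaturated} and Corollary~\ref{cor:unfolding}, and you have spelled out precisely why that example does the job. Your additional justification of the second sentence (normality, properness, dense torus orbit) is not given explicitly in the paper's proof but is correct and appeals to the right earlier results.
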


\begin{proof} The KM fan $F$ described in Example~\ref{example:isigmanotsaturated} will do, in light of the previous corollary. \end{proof}

\begin{thm} \label{thm:KMandGSfans}  Let $\C$ be the full subcategory of the category of GS fans consisting of those GS fans $(F,\beta)$ satisfying \eqref{DM} and \eqref{separated} in Theorem~\ref{thm:folding}.  The folding construction $(F,\beta) \mapsto \FF(F,\beta)$ of Theorem~\ref{thm:folding} defines a functor $\FF$ from $\C$ to the category of KM fans.  The functor $\FF$ is not faithful, but any parallel $\C$-morphisms with the same image under $\FF$ induce the same map on any realizations.  For any atoroidal lattice KM fan $F$ satisfying the conditions of Corollary~\ref{cor:unfolding}, we have \bne{FFequality} \FF( \UU^{\rm rig}(F), \ov{\beta}) & = & F. \ene  The essential image of $\FF$ consists of those lattice KM fans satisfying condition \eqref{classicalcover} in Corollary~\ref{cor:unfolding}. \end{thm}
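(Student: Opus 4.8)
Looking at Theorem~\ref{thm:KMandGSfans}, I need to prove several assertions about the folding functor $\FF$: it is a well-defined functor, not faithful but "essentially faithful" on realizations, satisfies the round-trip identity \eqref{FFequality}, and has essential image precisely the lattice KM fans satisfying \eqref{classicalcover} of Corollary~\ref{cor:unfolding}.

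\textbf{Plan of proof.} I would organize the argument in four steps matching the four claims. \emph{Step 1 (functoriality).} Given a $\C$-morphism $(f_L,f_N) : (F,\beta) \to (F',\beta')$, I need to check that $f_N : N \to N'$ defines a map of KM fans $\FF(F,\beta) \to \FF(F',\beta')$. By Theorem~\ref{thm:folding}, the cones of $\FF(F,\beta)$ are the $\beta_\RR(\sigma)$ with lattice data $\beta(L_\sigma)$, and similarly for the primed data; the relation $f_N\beta = \beta' f_L$ together with the fact that $f_L$ carries each $\sigma$ into some $\sigma'$ (with $f_L(L_\sigma) \subseteq L'_{\sigma'}$, by the coherence remark in Definition~\ref{defn:KMfanmorphism}) immediately gives $f_N(\beta_\RR(\sigma)) = \beta'_\RR(f_{L,\RR}(\sigma)) \subseteq \beta'_\RR(\sigma')$ and $f_N(\beta(L_\sigma)) = \beta'(f_L(L_\sigma)) \subseteq \beta'(L'_{\sigma'})$. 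Composition and identities are preserved since they are on the level of the underlying lattice maps $f_N$. This step is essentially bookkeeping.

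\emph{Step 2 (non-faithfulness but agreement on realizations).} For non-faithfulness, I would produce or cite a concrete example: take $F = \AA^1$ (as a GS fan with $\beta$ an isomorphism) and a GS fan with a larger lattice $N$ together with two different choices of $f_N$ differing by the (finite) group $\Cok$; since $\FF$ forgets the ambient lattice $L$ in favor of $N$ and the relevant extra data collapses, the two morphisms have the same image. The real content is the second half: if $(f_L,f_N)$ and $(g_L,g_N)$ are parallel $\C$-morphisms with $\FF(f_L,f_N) = \FF(g_L,g_N)$, then in particular $f_N = g_N$ as maps of KM fans, hence $f_N = g_N$ as maps of lattices. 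The realization $\AA(f_L,f_N)$ is by definition the map on quotients $[\AA(F)/\GG(\Cok\beta^\lor)] \to [\AA(F')/\GG(\Cok(\beta')^\lor)]$ induced by $\AA(f_L)$. Now $\AA(f_L)$ and $\AA(g_L)$ are both $T'$-equivariant maps of toric varieties $X(F) \to X(F')$ inducing the \emph{same} map $\AA(\FF(f_L,f_N)) : \AA(F) = \AA(\FF(F,\beta)) \to \AA(\FF(F',\beta')) = \AA(F')$ after quotienting (using the identification of realizations from Theorem~\ref{thm:folding}). Since the torsor projections $\AA(F) \to \AA(\FF(F,\beta))$ are faithfully flat (indeed $\tau$-covers), a map out of $\AA(\FF(F,\beta))$ is determined by, and here even recoverable from, the induced map on the torsor total spaces — but actually I want the reverse: both $\AA(f_L,f_N)$ and $\AA(g_L,g_N)$ are the unique descent of the \emph{same} map on quotients, so they coincide. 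More carefully: $\AA(f_L)$, $\AA(g_L)$ are two lifts of the same map $\AA(\FF)$ along the torsor $\AA(F) \to \AA(\FF(F,\beta))$ pulled back over $\AA(F')$; they differ by the torsorial group $\GG(\Cok(\beta')^\lor)$, and after passing to the quotient they become literally equal. That is the desired conclusion.

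\emph{Step 3 (the identity \eqref{FFequality}).} Let $F$ be an atoroidal lattice KM fan satisfying the conditions of Corollary~\ref{cor:unfolding}, so $\UU^{\rm rig}(F)$ is a classical fan and $\ov\beta : \ov\latticedatalimit(F) \to N$ is tame, making $(\UU^{\rm rig}(F),\ov\beta)$ a GS fan in $\C$ (the conditions \eqref{DM}, \eqref{separated} hold because $\ov\beta$ is semi-tame, hence carries each cone $i(\sigma)$ bijectively onto $\sigma$, and the cones $\sigma \in F$ have disjoint interiors since $F$ is a fan). By Theorem~\ref{thm:folding}, $\FF(\UU^{\rm rig}(F),\ov\beta)$ has lattice $N$, cones $\{\ov\beta_\RR(i(\sigma)) : \sigma\in F\} = \{\sigma : \sigma\in F\} = F$, and lattice data $\ov\beta(\ov i_\sigma(F_\sigma)) = F_\sigma$ — using $\ov\beta\,\ov i_\sigma = (F_\sigma \into N)$ from \S\ref{section:unfolding}. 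So the triple is exactly $F$ on the nose; this is a direct unwinding of definitions.

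\emph{Step 4 (essential image).} One inclusion: any $\FF(F,\beta)$ with $(F,\beta)\in\C$ is a lattice KM fan, and the tame map $\beta : F \to \FF(F,\beta)$ from a classical fan $F$ exhibits condition \eqref{classicalcover} of Corollary~\ref{cor:unfolding}. (I should note $\FF(F,\beta)$ is automatically atoroidal when $F$ is — or restrict the claim appropriately; I expect the intended statement is within atoroidal lattice KM fans, matching Corollary~\ref{cor:unfolding}, and I would state it that way, handling the general case by the product decomposition of Example~\ref{example:atoroidalsplitting} if needed.) Conversely, if a lattice KM fan $F$ satisfies \eqref{classicalcover}, then by Corollary~\ref{cor:unfolding} it satisfies \eqref{rigidifiedunfoldingsaturated}, so $\UU^{\rm rig}(F)$ is classical and $(\UU^{\rm rig}(F),\ov\beta)\in\C$, whence \eqref{FFequality} from Step 3 shows $F = \FF(\UU^{\rm rig}(F),\ov\beta)$ lies in the essential image.

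\textbf{Expected main obstacle.} The routine steps are 1, 3, and the easy inclusion in 4. The subtle point is the precise formulation and proof in Step 2 that parallel $\C$-morphisms with equal $\FF$-image induce equal maps on realizations: I must carefully track the descent/quotient picture — namely that $\AA(f_L,f_N)$ is genuinely \emph{determined} by the map it induces on the quotient stack, which it is, since by construction it \emph{is} that induced map. The potential trap is conflating "$\AA(f_L)$ determines $\AA(f_L,f_N)$" (true, trivially) with "$\AA(f_L,f_N)$ determines $\AA(f_L)$" (false in general), so the write-up must be phrased to use only the correct direction; I also need the identification $\AA(\FF(F,\beta)) = \AA(F,\beta)$ from Theorem~\ref{thm:folding} to even make sense of "the same map on realizations." A secondary subtlety is whether to restrict Step 4's "essential image" claim to atoroidal lattice KM fans; I would align the statement with Corollary~\ref{cor:unfolding} and remark on the general case only briefly.
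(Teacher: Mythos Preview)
Your overall structure matches the paper's proof closely: the same four steps, and Steps~1 and~3 are handled exactly as in the paper (Step~3 in particular is just unwinding definitions, as you say). Two points deserve correction or sharpening.

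\textbf{Step 2.} Your non-faithfulness example is garbled: you speak of ``two different choices of $f_N$,'' but $\FF$ is given on morphisms by $(f_L,f_N) \mapsto f_N$, so to exhibit non-faithfulness you need \emph{the same} $f_N$ with \emph{different} $f_L$. The paper's example is the cleanest possible: take $N=N'=0$ and $F,F'$ zero fans (say in $L=L'=\ZZ$); then $f_N$ is forced to be zero while $f_L$ is arbitrary. For the ``agreement on realizations'' part, your torsor/descent discussion is more labored than needed and flirts with the wrong direction of the implication. The paper's argument is a one-liner: realize the commutative square of KM fans
\[
\xymatrix{ F \ar[r]^-{f_L} \ar[d]_\beta & F' \ar[d]^{\beta'} \\ \FF(F,\beta) \ar[r]^-{f_N} & \FF(F',\beta') }
\]
to obtain a diagram in which the identifications $\AA(F,\beta) \cong \AA(\FF(F,\beta))$ from Theorem~\ref{thm:folding} are the bottom vertical isomorphisms; then $\AA(f_L,f_N)$ is literally identified with $\AA(f_N)$, which manifestly depends only on $f_N$. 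No descent subtlety is needed.

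\textbf{Step 4.} You correctly flag the atoroidal issue but do not resolve it, and your parenthetical ``$\FF(F,\beta)$ is automatically atoroidal when $F$ is'' is false (take $F$ the zero fan in $L=\ZZ$ with $\beta=\Id$). The theorem's essential-image claim is for \emph{all} lattice KM fans satisfying \eqref{classicalcover}, not just atoroidal ones, and your converse argument via \eqref{FFequality} only applies to atoroidal $F$. The paper completes this exactly as you anticipate: split $F = G \times (B,0,0)$ via Example~\ref{example:atoroidalsplitting}, note that $G$ inherits \eqref{classicalcover} (since the projection $F \to G$ is tame), and then exhibit $F$ as $\FF(\UU^{\rm rig}(G) \times (B,0,0),\ \ov\beta \times \Id_B)$. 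You should carry this out rather than defer it.
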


\begin{proof}  Suppose $f=(f_L,f_N) : (F,\beta) \to (F',\beta')$ is a morphism (Definition~\ref{defn:GSfan}) between GS fans satisfying \eqref{DM} and \eqref{separated} in Theorem~\ref{thm:folding}.  Using the commutativity condition $f_N \beta = \beta f_L$ and the fact that $f_L : L \to L'$ defines a map of fans $f_L : F \to F'$, we see immediately from the definition of folding that $f_N : N \to N'$ defines a map of (lattice) KM fans $\FF(f) : \FF(F,\beta) \to \FF(F',\beta')$, so our functor $\FF$ is given on $\C$-morphisms $(f_L,f_N)$ simply by forgetting $f_L$.  It is easy to make examples of parallel $\C$-morphisms with different $f_L$ (take $N=N'=0$ and $F$, $F'$ the ``zero" fans, for example), so $\FF$ is clearly not faithful.  To see that ``the" (rather: ``any fixed") realization $\AA(f_L,f_N)$ of $(f_L,f_N)$ depends only on $f_N$, first note that we have a commutative diagram of KM fans $$ \xym{ F \ar[r]^-{f_L} \ar[d]_\beta & F' \ar[d]^{\beta'} \\ \FF(F,\beta) \ar[r]^-{f_N} & \FF(F',\beta') } $$ whose realization gives rise to a commutative diagram of stacks $$ \xym{ \AA(F) \ar[r]^-{\AA(f_L)} \ar[d] & \AA(F') \ar[d] \\ \AA(F,\beta') \ar[r]^-{\AA(f_L,f_N)} \ar[d] &  \AA(F',\beta') \ar[d] \\ \AA(\FF(F,\beta)) \ar[r]^-{\AA(f_N)} & \AA(\FF(F',\beta')). } $$  The bottom vertical arrows have nothing to do with $f_L$ (or $f_N$) and are isomorphisms by Theorem~\ref{thm:folding}, so $\AA(f_L,f_N)$ is independent of $f_L$.  (We are merely explaining that the coincidence of realizations in Theorem~\ref{thm:folding} is functorial.)

Given an atoroidal lattice KM fan $F$ satisfying the conditions of Corollary~\ref{cor:unfolding}, we know from that corollary that $\ov{\beta} : \ov{\latticedatalimit}(F) \to N$ defines a tame map of KM fans $\ov{\beta} : \UU^{\rm rig}(F) \to F$, so \eqref{FFequality} is clear from the construction of $\FF$ in Theorem~\ref{thm:folding}.

For every $(F,\beta) \in \C$, we have a tame map of lattice KM fans $\beta : F \to \FF(F,\beta)$ (Theorem~\ref{thm:folding}), so clearly any KM fan in the essential image of $\FF$ is a lattice KM fan satisfying \eqref{classicalcover} in Corollary~\ref{cor:unfolding}.  Conversely, suppose $F=(N,F,\{ F_\sigma \})$ is a lattice KM fan satisfying \eqref{classicalcover} in Corollary~\ref{cor:unfolding}.  Choose a splitting $F=G \times (B,0,0)$ with $G$ atoroidal as in Example~\ref{example:atoroidalsplitting}, with $N = A \oplus B$ the corresponding splitting of lattices.  Note that $G$ also satisfies \eqref{classicalcover} in Corollary~\ref{cor:unfolding} (for example because the projection $N \to A$ defines a tame map of KM fans $F \to G$).  Using the result from the previous paragraph we see easily that \be \FF( \UU^{\rm rig}(G) \times (B,0,0) , \ov{\beta} \times \Id_B ) & = & F, \ee so $F$ is in the essential image of $\FF$.   \end{proof}

\begin{rem} \label{rem:KMandGSfans}  The isomorphism \eqref{FFequality} is functorial in $F$, so for atoroidal lattice KM fans $F$, $F'$ satisfying the conditions of Corollary~\ref{cor:unfolding}, folding and rigidified unfolding yield inverse bijections \be \Hom_{\GSFans}(( \UU^{\rm rig}(F), \ov{\beta}),( \UU^{\rm rig}(F'), \ov{\beta}')) & = & \Hom_{\KMFans}(F,F'). \ee  We suspect that, in general, the folding functor $\FF$ of Theorem~\ref{thm:KMandGSfans} is not full, but we have not sought an example of this behaviour. \end{rem}

\section{Technical Appendix} \label{section:appendix}

The following result is probably obvious but we have provided a detailed proof just to remove any doubt about its validity in the generality in which we use it.

\begin{lem} \label{lem:quotients}  Let $\C$ be a site, $X$ a stack over $\C$ equipped with an action of a sheaf of (not necessarily abelian) groups $B$, \bne{SESsheaves} & 1 \to A \to B \to C \to 1 \ene a short exact sequence of sheaves of groups on $\C$.  There is an action of $C$ on the stack-theoretic quotient $[X/A]$ (natural in both $X$ and the sequence \eqref{SESsheaves}) making the natural map $[X/A] \to [X/B]$ a $C$ torsor.  In particular, $[[X/A]/C] = [X/B]$.  Furthermore, when $X=B$ with the $B$ action given by left multiplication, the $C$ action on $[B/A]=C$ is also given by left multiplication. \end{lem}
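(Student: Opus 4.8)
The plan is to work with the standard concrete models for quotient stacks and the relevant descent. First I would recall that for a stack $X$ over a site $\C$ with an action of a sheaf of groups $G$, the quotient stack $[X/G]$ is the stackification of the prestack $[X/G]^{\rm pre}$ whose fiber over $U \in \C$ is the quotient groupoid $[X(U)/G(U)]$, and that there is a canonical map $q_G : X \to [X/G]$ which is a $G$-torsor (locally trivial in the topology of $\C$). The content of the lemma has two halves: (i) constructing a $C$-action on $[X/A]$ compatible with the quotient maps, and (ii) showing $[X/A] \to [X/B]$ is a $C$-torsor. For the final ``furthermore" statement I then just need to trace through these constructions in the special case $X = B$ with the left-translation action.

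For the construction of the $C$-action, I would proceed as follows. The group $B$ acts on $[X/A]$ by functoriality of $[\ \cdot\ /A]$ in its (left) argument: $b \in B$ gives an isomorphism $X \to X$ intertwining the $A$-action with itself via conjugation $a \mapsto bab^{-1}$, and since conjugation by $b$ preserves $A$ (as $A$ is normal), this descends to an automorphism of $[X/A]$. I should check this is a genuine action of $B$ on $[X/A]$ — here ``action" of a sheaf of groups on a stack involves coherence $2$-isomorphisms, but in our situation these can be taken to be identities because everything is induced from strict actions on $X$; I would remark that this is where a small amount of bookkeeping is needed but no real obstacle. Next I need $A \subseteq B$ to act trivially on $[X/A]$: for $a \in A$, the induced automorphism of $[X/A]$ is canonically $2$-isomorphic to the identity, via the natural transformation coming from the $1$-morphism $a : x \to a\cdot x$ in the groupoid $[X(U)/A(U)]$. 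Therefore the $B$-action factors through $C = B/A$ (this factorization again requires checking compatibility of the relevant $2$-isomorphisms, but it is formal). This defines the $C$-action on $[X/A]$, and it is clearly natural in $X$ and in the sequence \eqref{SESsheaves}.

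For the torsor statement, the cleanest route is descent: the map $[X/A] \to [X/B]$ becomes, after pulling back along the $B$-torsor $q_B : X \to [X/B]$, the map $X \times_{[X/B]} [X/A] \to X$. I would identify $X \times_{[X/B]} [X/A]$ with $[X \times_X^{\text{tw}} B / A] \cong [B/A] = C$ — more precisely, pulling back $q_A : X \to [X/A]$ along itself gives $X \times A$, and one computes $X \times_{[X/B]}[X/A]$ by the usual groupoid-fiber-product recipe, obtaining (after the $A$ in the base has been accounted for) a copy of $B/A = C$ with its left $C$-action. Thus over the cover $X \to [X/B]$, the map $[X/A] \to [X/B]$ is $C \times X \to X$, the trivial $C$-torsor, with the $C$-action matching the one constructed above; hence $[X/A] \to [X/B]$ is a $C$-torsor, and consequently $[[X/A]/C] = [X/B]$. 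I expect the main obstacle here to be purely notational: keeping the various $2$-isomorphisms straight when identifying the relevant $2$-fiber products of quotient prestacks before stackifying. A convenient simplification is that all the actions and maps in sight are induced from strict (set-level) group actions, so one may compute everything at the level of prestacks, where $[X(U)/A(U)] \to [X(U)/B(U)]$ literally has ``fiber" the transitive $B(U)$-set $B(U)/A(U)$, and only stackify at the end; sheafification/stackification is exact enough to preserve the torsor property.

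Finally, for the ``furthermore": take $X = B$ with $B$ acting by left multiplication. Then $[B/A]$, with $A$ acting on $B$ by left multiplication, is canonically identified with $C$ as a sheaf — this is exactly the statement that $q_A : B \to [B/A]$ is the quotient sheaf $B \to B/A = C$ (the quotient groupoid $[B(U)/A(U)]$ for the left $A$-action on $B$ is equivalent to the set $B(U)/A(U)$ since the action is free). Under this identification, the $C$-action on $[B/A]$ constructed above is the residual action of $B$ on $B/A$ induced by left multiplication of $B$ on itself — which is precisely left multiplication of $C = B/A$ on itself. So I would simply observe that all the general constructions, specialized to $X = B$, recover the elementary statement about $C$ acting on itself by translation, and that the torsor $[B/A] \to [B/B] = \mathrm{pt}$ is the trivial $C$-torsor $C \to \mathrm{pt}$ with this action, consistent with the general assertion.
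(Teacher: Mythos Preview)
Your approach is correct and takes a more direct route than the paper. The paper first reduces to the case $X$ terminal via the $2$-cartesian square $[X/A] \simeq [X/B] \times_{\BB B} \BB A$, then to $\C = \Sets$ by working with prestacks before stackifying (your closing remark about computing at the prestack level is exactly this move). At that point, rather than arguing abstractly that the $B$-action factors through $C$, the paper builds an explicit groupoid $\BB' A$ equivalent to $\BB A$ --- the action groupoid for the \emph{right} $B$-action on $C$ --- on which $C$ acts \emph{strictly} by left translation, and then verifies by hand that the strictly $C$-invariant functor $\BB' A \to \BB B$ is $2$-initial among weakly $C$-invariant functors (this is the paper's meaning of ``$C$-torsor''). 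Your argument trades this explicit strict model for the general principle ``a strict $B$-action together with a coherent trivialization of the restricted $A$-action descends to a weak $C$-action''; this is true, and your acknowledgement that it ``requires checking compatibility of the relevant $2$-isomorphisms'' points to precisely the bookkeeping the paper's device of $\BB' A$ is designed to bypass. Your verification of torsoriality by base change along the cover $X \to [X/B]$ (rather than by the categorical-quotient universal property) is clean; the computation you want is $X \times_{[X/B]} X \cong X \times B$, then quotient by $A$ acting by left translation on the $B$-factor to get $X \times C$, and finally check that this copy of $C$ carries the action you constructed --- your intermediate notation ``$X \times_X^{\rm tw} B$'' is obscure, but the conclusion is right.
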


\begin{rem} \label{rem:quotientslemma} In the text, this lemma is used only in the following situation: $\C = \Esp$ is a category of spaces with an admissible topology $\tau$ as in \S\ref{section:realizationofmonoids}.  The exact sequence \eqref{SESsheaves} will be the exact sequence \eqref{SequenceC} in Proposition~\ref{prop:tautorsors} obtained by applying the functor $\GG$ of \S\ref{section:realizationofmonoids} to an exact sequence of FGA groups \eqref{SequenceA} as in Proposition~\ref{prop:tautorsors}.  The stack $X$ will be the realization $\AA(F)$ of some KM fan $F$, and the action of $\GG(B)$ on $X$ will always be the realization of an action of the (abstract) group fan $\Spec B$ on $F$. \end{rem}

The rest of this appendix is devoted to explaining and proving this lemma.  Not surprisingly, most of the ``proof" amounts to carefully defining the approproate notions.  However, there will be one or two genuinely nice ideas / constructions.  We will try not to get too bogged down in details, but, on the other hand, you really have to get your hands dirty a little bit to give anything resembling a proof of something like this.

Roughly speaking, the proof proceeds by reducing to the case where $X$ is the terminal object, then reducing to the case where $\C$ is the category of sets, in which case the lemma becomes:

\begin{lem} \label{lem:quotients2} For any exact sequence of groups \bne{SESgroups} & 1 \to A \to B \to C \to 1, \ene there is a natural action of $C$ on the classifying groupoid $\BB A$ (the category with one object $\bullet_A$ with $\Hom( \bullet_A, \bullet_A) = \Aut( \bullet_A, \bullet_A) =A$) making the functor $\BB A \to \BB B$ a $C$ torsor (meaning this functor is ``the" categorical quotient of $\BB A$ by the $C$ action). \end{lem}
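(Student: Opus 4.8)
The plan is to make sense of the statement at the level of groupoids and then simply write down the $C$-action and the torsor structure by hand. First I would spell out what a ``$C$-action on $\BB A$'' means and what ``$\BB A \to \BB B$ is a $C$-torsor'' means in the bare category-theoretic setting. Here $\BB A$ and $\BB B$ are one-object groupoids, the functor $\BB A \to \BB B$ is induced by the inclusion $A \into B$, and a $C$-torsor over $\BB B$ should be a $C$-equivariant functor $P \to \BB B$ (with $C$ acting trivially on $\BB B$, or, more precisely, factoring through $\BB C \to \ast$ in the appropriate sense) together with the requirement that the induced map $P \times C \to P \times_{\BB B} P$ is an equivalence, and that $[P/C] \simeq \BB B$. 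Since everything here is small and explicit, verifying these conditions reduces to unwinding the definitions.

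The key construction is the $C$-action on $\BB A$. I would define it as follows: an element $c \in C$ acts on $\BB A$ as the (identity-on-objects) endofunctor which is \emph{conjugation by a lift of $c$}. Concretely, pick (set-theoretically) a lift $\tilde c \in B$ of $c$; then $c$ acts on $\Hom(\bullet_A,\bullet_A) = A$ by $a \mapsto \tilde c\, a\, \tilde c^{-1}$, which lies in $A$ since $A \trianglelefteq B$. Different choices of lift $\tilde c$ differ by an element of $A$, so they give naturally isomorphic functors (the natural isomorphism being conjugation by that element of $A$, realized as a morphism $\bullet_A \to \bullet_A$); this is precisely why we get an action on the \emph{groupoid} $\BB A$ (a pseudo-action / action up to coherent isomorphism) rather than a strict group action on a set. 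The coherence data ($\varphi_{c,c'}$ comparing the composite of the actions of $c$ and $c'$ with the action of $cc'$) comes from the failure of a section $C \to B$ to be a homomorphism, i.e.\ from a $2$-cocycle valued in $A$; I would write this down and check the cocycle/coherence identities, which is routine. Naturality in the exact sequence \eqref{SESgroups} is then automatic from the construction.

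Given this action, the torsor claim is essentially a restatement of the fact that $B$ surjects onto $C$ with kernel $A$. The functor $\BB A \to \BB B$ is visibly $C$-equivariant for the conjugation action on the source and the trivial action on the target (conjugation becomes inner, hence isomorphic to the identity, in $\BB B$). To see it is a $C$-torsor I would exhibit the equivalence $[\BB A / C] \simeq \BB B$: the quotient groupoid $[\BB A/C]$ has one object with automorphism group fitting into $1 \to A \to \Aut \to C \to 1$ reconstructed from the cocycle, and this extension is (equivalent to) the original extension \eqref{SESgroups}, so $\Aut \cong B$ and $[\BB A/C] \simeq \BB B$. Equivalently, I would check directly that $\BB A \times C \to \BB A \times_{\BB B} \BB A$ is an equivalence of groupoids by a morphism count, using exactness of \eqref{SESgroups}. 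The ``furthermore'' clause is a direct computation: when $X = B$ with left-multiplication $B$-action, $[B/A] = A\backslash B$, the functor to $[B/B] = \ast$ realizes $A\backslash B$ as a $C$-torsor, and unwinding shows $A\backslash B \simeq C$ with the residual action being left multiplication, since left and ``conjugation'' translates of $B$ by elements lifting $c$ agree up to the left $A$-action already quotiented out.

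The main obstacle I anticipate is purely bookkeeping: pinning down the right $2$-categorical notion of ``group action on a groupoid'' and ``torsor'' so that the statement is both true and exactly what is needed in Lemma~\ref{lem:quotients} (and ultimately in the fibered/sheafified setting via Remark~\ref{rem:quotientslemma}), and then checking the coherence pentagon for the action and the equivariance hexagon for the functor. None of this is deep, but it is the part where ``you really have to get your hands dirty,'' exactly as the text warns. Once the definitions are fixed, every verification is a short diagram chase or element count in the groups $A$, $B$, $C$.
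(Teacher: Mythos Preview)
Your proposal is correct and arrives at the same action and the same conclusion, but it takes a genuinely different route from the paper. The paper does not work directly on $\BB A$ with a weak (conjugation-by-a-lift) action. Instead it introduces an auxiliary ``fat'' model $\BB' A$ whose objects are elements of $C$ and whose morphisms $b : c \to c'$ are elements $b \in B$ with $c = c'\,\ov{b}$; on this model $C$ acts \emph{strictly} by left translation on objects, and the functor $Q : \BB' A \to \BB B$ (send every object to $\bullet_B$, send $b : c \to c'$ to $b$) is strictly $C$-invariant. The paper then verifies the categorical-quotient universal property for $Q$ by an explicit element chase, and only at the end transports everything along an equivalence $\BB' A \simeq \BB A$ (built from a set-theoretic section $s : C \to B$). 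What this buys is exactly the avoidance of the coherence bookkeeping you flag as the main obstacle: since the weak action on $\BB A$ arises by transferring a strong action along an equivalence, the pentagon and hexagon identities hold automatically and never need to be checked directly. Your direct approach is conceptually cleaner and makes the link with the $2$-cocycle of the extension explicit, but it costs precisely those coherence verifications; note also that the paper's statement defines ``$C$-torsor'' to mean ``categorical quotient,'' so your alternative verification via the shearing map $\BB A \times C \to \BB A \times_{\BB B} \BB A$ would, strictly speaking, still need to be reconciled with that universal property.
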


The first reduction is easy:  Suppose the result is known when $X$ is the terminal object.  Then we have a $C$ action on the classifying stack $\BB A$ making the map of classifying stacks $\BB A \to \BB B$ a $C$ torsor.  Now, in the general setup of the lemma, the $A$ action on $X$ is induced from the $B$ action, thus one sees readily that the square \bne{2cart} & \xym{ [X/A] \ar[r] \ar[d] & \BB A \ar[d] \\ [X/B] \ar[r] & \BB B } \ene is $2$-cartesian (the horizontal arrows are the $X$-bundles corresponding to the actions), so the general result follows by ``pulling back" the special case where $X$ is the terminal object.

For the ``furthermore," suppose $X=B$ (with $B$ acting on $X$ by left multiplication).  Then $[X/B]=[B/B]$ is the terminal object $\bullet$, so the ``furthermore" is equivalent to showing that there is a $2$-cartesian diagram of stacks \bne{2cartfurthermore} & \xym{ C \ar[r] \ar[d] & \BB A \ar[d] \\ \bullet \ar[r] & \BB B } \ene where $C \to \BB A$ is $C$ equivariant, with $C$ acting on itself by left multiplication and on $\BB A$ in as in the special case where $X$ is the terminal object.

The second reduction is just general nonsense:  Suppose we know Lemma~\ref{lem:quotients2}.  Let $C^{\rm pre}$ be the quotient of $A \to B$, calculated in presheaves on $\C$.  Similarly, let $\BB^{\rm pre}A$ and $\BB^{\rm pre}B$ be the classifying groupoid fibrations, calculated in the $2$-category of groupoid fibrations (``prestacks") over $\C$.  Then $C$ (resp.\ $\BB A$, $\BB B$) is obtained from $C^{\rm pre}$ (resp.\ $\BB^{\rm pre}A$, $\BB^{\rm pre}B$) by sheafification (resp.\ stackification).  For any $Y \in \C$, the sequence of groups \bne{SESsheavesonY} & 1 \to A(Y) \to B(Y) \to C^{\rm pre}(Y) \to 1 \ene is exact (and natural in $Y$) and the fiber categories $(\BB^{\rm pre}A)(Y)$ and $(\BB^{\rm pre}B)(Y)$ are naturally equivalent to the classifying groupoids $\BB( A(Y) )$ and $\BB( B(Y) )$ of the groups $A(Y)$ and $B(Y)$ (respectively), so Lemma~\ref{lem:quotients2} yields an action of $C^{\rm pre}(Y)$ on $\BB( A(Y) )$ making $\BB( A(Y) ) \to \BB( B(Y) )$ a $C^{\rm pre}(Y)$ torsor.  Since this is all natural in $Y$, we conclude that $C^{\rm pre}$ acts on $\BB^{\rm pre} A$ making $\BB^{\rm pre}(A) \to \BB^{\rm pre}(B)$ a $C^{\rm pre}$ torsor.  Since sheafification and stackification ``preserve inverse limits," they take group actions to group actions and torsors to torsors, thus $\BB A \to \BB B$ is a $C$-torsor.

The same general nonsense reduces the ``furthermore" to the case where $\C=\Sets$.

It remains to prove Lemma~\ref{lem:quotients2}.  This is perhaps the only thing that isn't a matter of general nonsense.  We now really need to address an issue which we've been unjustly avoiding thusfar:  We have to explain what it means for a group $G$ to \emph{act} on a category $\C$.  (This also yields a notion of what it means for a sheaf of groups to act on a stack---by passing to fiber categories---and this is the notion of ``action" we have in mind in Lemmas~\ref{lem:quotients} and \ref{lem:quotients2}.)  The ``right" answer is well-known---let's recall it for the reader's convenience.

The naive notion is that of a \emph{strong action} of $G$ on $\C$, which is simply a homomorphism $g \mapsto \phi_g$ from $G$ to the \emph{automorphism} group of $\C$.  This means that the functors $\phi_g : \C \to \C$ are required to satisfy an actual \emph{equality} of functors $\phi_g \phi_h = \phi_{gh}$ for $g,h \in G$, as well as an actual \emph{equality} $\phi_1 = \Id$.  The notion of ``strong action" is generally unsatisfactory because such an action can't be ``transfered" along an \emph{equivalences} of categories:  If we had an equivalence $F : \C \to \D$ with ``inverse" $H : \D \to \C$, then we'd like to be able to ``transfer" a ``$G$ action" $g \mapsto \phi_g$ on $\C$ to a $G$ action $g \mapsto \psi_g$ on $\D$ by setting $\psi_g := F \phi_g H$.  But we can't transfer a strong action $g \mapsto \phi_g$ to a \emph{strong} action in this manner because $\psi_1 = F \phi_1 H = FH$ is no longer the identity and, for $g,h \in G$, the two functors $\psi_{gh} = F \phi_{gh} H = F \phi_g \phi_h H$ and $\psi_g \psi_h = F \phi_g H F \phi_g H$ are not (generally) \emph{equal}.  However, we do have an evident choice of \emph{isomorphisms} (of functors) $\alpha : \psi_1 \to \Id$ and $\alpha(g,h) : \psi_{gh} \to \psi_g \psi_h$ coming from the isomphisms of functors $HF \to \Id$ and $FH \to \Id$ which were part of our equivalence.  The isomorphisms $\alpha$ and $\alpha(g,h)$ satisfy many obvious ``compatibilities" which the interested reader can enumerate.  (For example, for any $g,h,k \in G$, the two evident ways of using the $\alpha( \slot, \slot)$'s to make an isomorphism of functors $\psi_{ghk} \to \psi_g \psi_h \psi_k$ coincide.)  The ``correct" notion of a $G$ action on $\C$ (which \emph{can} be transfered along an equivalence) is that of a \emph{weak action}, which consists of the assignment $g \mapsto \psi_g$ and isomorphisms $\alpha$, $\alpha(g,h)$ as before, satisfying the ``obvious compatibilities."  Luckily, we won't need to worry too much about exactly what the ``obvious compatibilities" are because the weak actions we consider will always arise by transfering a strong action along an equivalence in the manner discussed above.  (It is in general not so clear that one can always ``realize homotopy group actions" in this way.  This is a line of inquiry in topology.)  

\begin{example} \label{example:quotients1} Note that a weak $G$ action on $\C$ induces an action of $G$ on the ``set" of isomorphism classes in $\C$.  If a set $X$ is regarded as a category with only the identity maps as morphisms, then a weak action of $G$ on $X$ is the same as a strong action, which is just the usual notion of $G$ acting on $X$. \end{example}

We also need to know what it means for a functor $Q : \C \to \D$ to be ``the" \emph{(categorical) quotient} of a (weak) $G$-action $g \mapsto \psi_g$ on $\C$.  First of all, it means that $Q$ should be \emph{weakly} $G$ \emph{invariant} in the sense that there are isomorphisms of functors $\epsilon_g : Q \to Q \psi_g$ enjoying ``some obvious compatibilities" with the $\alpha$ and $\alpha(g,h)$ isomorphisms which are part of the weak $G$ action structure.  (These $\epsilon$'s are considered part of the structure of the weakly $G$ invariant functor, so that $Q$ should really refer to the pair $Q=(Q,\epsilon)$.)  We shall only need to consider these ``obvious compatibilities" in the case of a \emph{strong} action, in which case they become:  \bne{1compatibility} \epsilon_1 & = & \Id \\ \label{ghcompatibility} \epsilon_{gh} & = & ( \epsilon_g * \phi_h) \epsilon_h \ene for all $g,h \in G$.  Finally, $Q$ should be $2$-\emph{initial} among such weakly $G$ invariant functors in the sense that for any other weakly $G$ invariant functor $Q'=(Q' : \C \to \D',\epsilon')$, there is a functor $K : \D \to \D'$ and an isomorphism of functors $\zeta : KQ \to Q'$ compatible with $\epsilon$ and $\epsilon'$ in the sense that the diagram \bne{epsiloncompatibility} & \xym{ KQ \ar[r]^-{\zeta} \ar[d]_{K * \epsilon_g} & Q'  \ar[d]^{\epsilon'_g}  \\ KQ \psi_g \ar[r]_-{\zeta * \psi_g} & Q' \psi_g } \ene of isomorphisms (of functors $\C \to \D'$) commutes.  The pair $(K,\zeta)$ is also required to be ``unique" in the sense that if $(K',\zeta')$ is another such pair, then there is a unique isomorphism $\theta : K \to K'$ such that \bne{zetacompatibility} \zeta & = & \zeta'( \theta * Q). \ene

\begin{rem} \label{rem:Gequivariant} If $G$ acts (weakly) on $\C$, $H$ acts (weakly) on $\D$, and we have a group homomorphism $f : G \to H$, then a \emph{(weakly)} $f$ \emph{equivariant} functor $Q : \C \to \D$ is defined in a manner very similar to the way we defined ``weakly $G$ invariant" above (the special case $H=1$).  Again, we don't really need to worry about the exact details, since the only weakly equivariant diagram we consider will be equivalent to an explicit strongly equivariant diagram of strong actions. \end{rem}

Like the notion of a (weak) group action on a category, the notion of a categorical quotient is made to be ``invariant under equivalence" in various senses.  We will make implicit use of the following simple properties of this definition, which are readily verified: \begin{enumerate} \item If $Q=(Q: \C \to \D, \epsilon)$ is ``the" categorical quotient of a weak $G$ action on $\C$ and $Q' : \C \to \D$ is a functor for which there is an isomorphism of functors $\eta : Q \to Q'$, then $Q'$ (with $\epsilon_g' := (\eta * \psi_g)\epsilon_g \eta$) is also ``the" categorical quotient of $\C$ by $G$.  \item Suppose $\C'$ is a category equipped with a (weak) $G$ action and $F : \C' \to \C$ is an equivalence of categories.  Regard $\C$ as a category with (weak) $G$ action by transfering the $G$ action on $\C'$ along $F$, as discussed above.  Then a pair $Q=(Q,\epsilon)$ is ``the" categorical quotient of $\C$ iff $(QF, \epsilon * F)$ is ``the" categorical quotient of $\C'$. \end{enumerate}

\begin{example} \label{example:quotients2}  Suppose a group $G$ acts on a set $X$ and we regard this as a (strong) action of $G$ on the category associated to $X$ as in Example~\ref{example:quotients1}, so that $\phi_g(x)=gx$.  Consider the category $[X/G]$ whose objects are elements of $X$ and where a morphism $g : x \to x'$ is an element $g \in G$ such that $gx=x'$.  Composition is multiplication in $G$: \be (g' : x' \to x'') \circ (g : x \to x') & := & (g'g : x \to x''). \ee  Let $F : X \to [X/G]$ be the unique functor which is the identity on objects.  Define an isomorphism of functors $\epsilon_g : F \to F\phi_g$ by $\epsilon_g(x) := (g : Fx=x \to F\phi_g x=gx)$.  One can check that $(F,\epsilon)$ is ``the" categorical quotient of $X$ by $G$ in the above sense.  In particular, the classifying groupoid $\BB G$ is the categorical quotient of the punctual category by the trivial $G$ action.  This illustrates the philosophy that set-theoretic quotients are obtained by identifying elements while category-theoretic quotients are obtained by adding more isomorphisms---which has the effect of identifying isomorphism classes of objects. \end{example}

We now prove Lemma~\ref{lem:quotients2}.  The key construction is the category (groupoid, in fact) $\BB' A$ which we shall define (naturally) from the exact sequence \eqref{SESgroups} as follows:  Denote the quotient map $B \to C$ by $b \mapsto \ov{b}$.  This quotient map and multiplication in $C$ define an action of $B$ on $C$ and we basically define $\BB' A$ to be the category $[C/B]$ as in Example~\ref{example:quotients2}, except we shall use the \emph{right} action of $B$ on $C$ because we'll also want to introduce a \emph{left} action later.  Explicitly, an object of $\BB' A$ is an element of $C$.  A morphism $b : c \to c'$ in $\BB' A$ is an element $b \in B$ such that $c =c' \ov{b}$ in $C$.  As in Example~\ref{example:quotients2}, composition is multiplication in $B$: \be (b' : c' \to c'') \circ (b : c \to c') & := & (b' b : c \to c''). \ee  

We have a \emph{strong} action $g \mapsto \phi_g$ of $C$ on the category $\BB' A$ (also natural in \eqref{SESgroups}) defined as follows:  On objects $\phi_g$ is given by $\phi_g(c) := gc$ and on morphisms $\phi_g$ is given by $\phi_g(b : c \to c') := (b : gc \to gc')$.  (There are unfortunately two morphisms called ``$b$" here, but they have potentially different domains and codomains.)

Notice that $\BB' A$ is equivalent to the usual classifying groupoid $\BB A$ because any two objects of $\BB' A$ are isomorphic (since $B \to C$ is surjective) and the automorphism group of $1 \in \BB' A$ is clearly $A = \{ b \in B : \ov{b}=1 \}$.  Let us explicitly write down such an equivalence.  We have a functor $H : \BB A \to \BB' A$ given by $\bullet_A \mapsto 1$ on objects and by $a \mapsto (a : 1 \to 1)$ on maps.  Pick a set-theoretic section $s : C \to B$ of the surjection $B \to C$.  Keep in mind that $s$ will not generally be a group homomorphism!  Then we can define a functor $F : \BB' A \to \BB A$ by mapping every object to $\bullet_A$ (as we must) and by taking a morphism $b : c \to c'$ in $\BB' A$ to the morphism $s(c') b s(c)^{-1} \in A$.  The composition $FH : \BB A \to \BB A$ necessarily takes $\bullet_A$ to itself and is given by $a \mapsto s(1) a s(1)^{-1}$ on maps.  We have an isomorphism of functors $\eta : \Id \to FH$ defined by $\eta(\bullet_A) := s(1) \in A$ and an isomorphism of functors $\theta : \Id \to HF$ defined by $\theta(c) := (s(c) : c \to 1)$.  We can transfer the strong action of $C$ on $\BB' A$ along this equivalence in the manner discussed above to get a (weak) $C$ action on $\BB A$.  The reader may wish to explicitly write down the $\alpha$ maps for this weak action, though we shall not need to do so.  We claim this is the desired torsorial action of Lemma~\ref{lem:quotients2}.

We have a functor $Q : \BB' A \to \BB B$ given by mapping all objects to $\bullet_B$ and on morphisms by $Q(b : c \to c') := b$.  This functor is \emph{strongly} $C$ invariant in the sense that we have an actual equality of functors $Q = Q \phi_g$ for each $g \in C$---in particular it can and will be viewed as a weakly $C$ invariant functor with $\epsilon_c = \Id$ for all $c \in C$.  Denote the composition of $F : \BB' A \to \BB A$ and the natural functor $\BB A \to \BB B$ by $\ov{F} : \BB' A \to \BB B$.  Then one checks easily that setting $\eta(c) := s(c)$ defines an isomorphism of functors $\eta : Q \to \ov{F}$.

Since the notion of categorical quotient is ``invariant under equivalences" in the senses made explicit above, we can check that $\BB A \to \BB B$ is ``the" categorical quotient of our $C$ action by instead checking that $Q : \BB'A \to \BB B$ is ``the" categorical quotient of our strong $C$ action.  (We could also equivalently check that $\ov{F} : \BB' A \to \BB B$ is ``the" quotient by $C$.  Just as one can explicitly write down the $\alpha$ maps for the weak $C$ action on $\BB A$, one could also explicitly write down the $\epsilon$ maps for all of our quotient functors.  For example, the $\epsilon$ maps for $\ov{F} : \BB' A \to \BB B$ will be given by $\epsilon_g(c) = s(gc)s(c)^{-1}$.)

It remains only to show that $Q=(Q,\epsilon=\Id)$ is $2$-initial among such weakly $C$-invariant maps, so suppose $Q'=(Q' : \BB' A \to \D, \epsilon')$ is another one.  We define a functor $K : \BB B \to \D$ as follows:  On objects, we take $\bullet_B$ to $Q'(1) \in \D$.  On morphisms, we define $K$ by \bne{Kdefn} K(b) & := & Q'(b : \ov{b} \to 1) \epsilon'_{\ov{b}}(1) \ene for $b \in B = \Hom_{\BB B}(\bullet_B,\bullet_B)$.  To see that $K$ is actually a functor, we first note that $K(1) = \Id$ because $\epsilon'_1 = \Id$ (by \eqref{1compatibility} for the weakly $C$ invariant functor $(F',\epsilon')$) and $F'$ preserves identity maps.  To see that $K$ respects composition we compute: \be K(bb') & = & Q'(bb' : \ov{bb'} \to 1) \epsilon'_{\ov{bb'}}(1) \\ & = & Q'( b : \ov{b} \to 1) Q'( b' : \ov{bb'} \to \ov{b} ) \epsilon'_{\ov{bb'}}(1) \\ & = & Q'( b : \ov{b} \to 1) Q'( b' : \ov{bb'} \to \ov{b} ) \epsilon'_{\ov{b}}(\ov{b}') \epsilon'_{\ov{b'}}(1) \\ & = & Q'(b : \ov{b} \to 1) \epsilon'_{\ov{b}}(1) Q'(b' : \ov{b'} \to 1) \epsilon'_{\ov{b'}}(1) \\ & = & K(b)K(b'). \ee  The first equality is the definition \eqref{Kdefn}.  For the second equality we use the fact that \be (bb' : \ov{bb'} \to 1) & = & ( b : \ov{b} \to 1)( b' : \ov{bb'} \to \ov{b}) \ee in $\BB' A$ and the fact that $Q'$ respects composition.  The third equality uses the formula \be \epsilon'_{\ov{b}\ov{b'}} & = & ( \epsilon'_{\ov{b}} * \phi_{\ov{b'}}) \epsilon'_{\ov{b'}} \ee (an instance of \eqref{ghcompatibility}) evaluated at the object $1 \in \BB' A$.  The forth equality uses the naturality of $\epsilon'_{\ov{b}} : Q' \to Q' \phi_{\ov{b}}$ under the $\BB' A$ morphism $b' : \ov{b'} \to 1$.  

We next define an isomorphism $\zeta : KQ \to Q'$ by setting \bne{zetadefinition} \zeta(c) & := & \epsilon'_c(1). \ene  (Note that both sides are $\D$-morphisms from $Q'(1)$ to $Q'(c)$.)  To check the naturality of $\zeta$ on a $\BB' A$ morphism $b : c \to c'$ we need to check that \bne{TS1} Q'(b : c \to c') \epsilon'_c(1) & = & \epsilon'_{c'}(1)Q'(b : \ov{b} \to 1) \epsilon'_{\ov{b}}(1). \ene  To see this, we first use the naturality of $\epsilon'_{c'}$ with respect to the map $b : \ov{b} \to 1$ (whose image under $\phi_{c'}$ is $b : c \to c'$ because $c=c' \ov{b}$) to find that \bne{TS2} \epsilon'_{c'}(1)Q'(b : \ov{b} \to 1) & = & Q'( b : c \to c') \epsilon'_{c'}(\ov{b}), \ene then we evaluate the formula \eqref{ghcompatibility} (for $\epsilon'$, with $g=c'$, $h=\ov{b}$) at $1$ to find \bne{TS3} \epsilon'_{c'}(1) & = & \epsilon'_{c'}(\ov{b}) \epsilon'_{\ov{b}}(1). \ene  Evidently \eqref{TS1} follows from \eqref{TS2} and \eqref{TS3}.

Next we need to check that our $\zeta$ makes \eqref{epsiloncompatibility} commute.  Since our map $Q$ is strongly $C$ invariant, our $\epsilon_g$ is the identity and the commutativity of \eqref{epsiloncompatibility} on an object $c$ of $\BB' A$ is equivalent to the formula \be \epsilon'_g(c) \zeta(c) & = & \zeta(gc). \ee  Looking at the formula \eqref{zetadefinition}, we see that this is equivalent to the formula \be \epsilon'_g(c) \epsilon'_c(1) & = & \epsilon'_{gc}(1), \ee which is an instance of \eqref{ghcompatibility} for $\epsilon'$ (evaluated at $1$).

Finally, suppose $(K' : \BB B \to \D, \zeta' : K'Q \to Q')$ is another pair satisfying the properties we checked above for our pair $(K,\zeta)$.  We want to show that there is a unique isomorphism of functors $\theta : K \to K'$ satisfying \eqref{zetacompatibility}.  Since $\bullet_B$ is the only object of $\BB B$, $\theta$ is uniquely determined by $\theta( \bullet_B )$ and if we want \eqref{zetacompatibility} to hold even on the object $1$ of $\BB' A$, then we have no choice but to define $\theta(\bullet_B)$ by \bne{thetaformula} \theta(\bullet_B)  & := & \zeta'(1)^{-1} \zeta(1)  \\ \nonumber & = & \zeta'(1)^{-1} \epsilon'_1(1) \\ \nonumber & = & \zeta'(1)^{-1} . \ene It remains only to show that \eqref{thetaformula} is actually natural under $\BB B$ morphisms and that \eqref{zetacompatibility} holds.  To check that $\theta$ is natural under a $\BB B$ morphism $b : \bullet_B \to \bullet_B$ we need to show that \be \theta( \bullet_B) K(b)  & = & K'(b) \theta(\bullet_B). \ee  Using the formulas \eqref{thetaformula} and \eqref{Kdefn}, we see that the issue is to show \bne{SHOW1} Q'(b : \ov{b} \to 1) \epsilon'_{\ov{b}}(1) \zeta'(1) & = & \zeta'(1) K'(b). \ene  Commutativity of \eqref{epsiloncompatibility} for $\zeta'$ with $g=\ov{b}$ gives us \bne{SHOW2} \epsilon'_{\ov{b}}(1) \zeta'(1) & = & \zeta'(\ov{b}). \ene  Naturality of $\zeta'$ on $b : \ov{b} \to 1$ gives \bne{SHOW3} \zeta'(1)K'(b) & = & Q'(b : \ov{b} \to 1) \zeta'( \ov{b} ). \ene  Evidently \eqref{SHOW1} follows from \eqref{SHOW2} and \eqref{SHOW3}.  

This completes the proof of Lemma~\ref{lem:quotients2}.

For the ``furthermore" we still need to construct a $2$-cartesian diagram of categories \eqref{2cartfurthermore} with $C \to \BB A$ weakly $C$ equivariant.  Again, since being $2$-cartesian or weakly $C$ equivariant is appropriately invariant under equivalences, we just need to show that the ``obvious" diagram \bne{2cartfurthermoresub} &  \xym{ C \ar[r] \ar[d] & \BB' A \ar[d]^Q \\ \bullet \ar[r] & \BB B } \ene is $2$-cartesian with $C \to \BB' A$ weakly $C$ equivariant.  Here $C$ is the discrete category with set of objects $C$, acted on \emph{strongly} by $C$ via $g \cdot c = gc$.  The functor $C \to \BB'A$ is the unique functor which is the identity on objects.  This functor is clearly \emph{strongly} $C$ equivariant.  The functor $\bullet \to \BB B$ is the only possible one (since $\BB B$ has only one object).  The diagram \eqref{2cartfurthermoresub} commutes ``on the nose" because both compositions $C \to \BB B$ must take any object $c$ of $C$ to the unique object $\bullet_B$ of $\BB B$, and there are no non-identity morphisms in the discrete category $C$.  We leave it to the reader to check that \eqref{2cartfurthermoresub} is $2$-cartesian. 

It might be worth saying a few words about the (weak) action of $C$ on $X$ in Lemma~\ref{lem:quotients} that we get by following through all these isomorphisms.  Let's stick to the case where $X$ is a sheaf, for simplicity.  Use the notation $U(V) := \Hom_{\C}(V,U)$.  The stack-theoretic quotient $[X/A]$ can be described as follows:  An object of the category $[X/A]$ is an equivalence class $[Y,\{ Y_i \to Y \}, a,f]$ of triples $(Y,\{ Y_i \to Y \}, a,f)$ where: \begin{enumerate} \item $Y$ is an object of $\C$ and $\{ Y_i \to Y \}$ is a cover of $Y$.  Set $Y_{ij} := Y_i \times_Y Y_j$, $Y_{ijk} := Y_i \times_Y Y_j \times_Y Y_k$.  \item \label{transitionfunctions} $a = (a_{ij}) \in \prod_{i,j} A(Y_{ij})$ is a set of \emph{transition functions} satisfying the \emph{cocycle condition} $a_{ij}a_{jk} = a_{ik}$ in $A(Y_{ijk})$ for any triple of indices $i,j,k$ (dropping notation for restriction along the projections from $Y_{ijk}$ to $Y_{ij}$, $Y_{jk}$, $Y_{ik}$). \item \label{localmaps} $f = (f_i) \in \prod_i X(Y_i)$ are \emph{local maps} satisfying the \emph{gluing condition}:  $a_{ij} \cdot f_j  = f_i$ in $X(Y_{ij})$ for any pair of indices $i,j$.  (Again, notation for various restriction maps is dropped.  The $\cdot$ is the action of $A(Y_{ij})$ on $X(Y_{ij})$ induced by the action of $B$ on $X$ and the map $A \to B$.) \end{enumerate}  The equivalence relation on these triples is the smallest one such that: \begin{enumerate}  \item Any triple $(Y,\{ Y_i \to Y \}, a,f)$ is equivalent to the triple obtained from it by passing to a refinement of the cover $\{ Y_i \to Y \}$ and restricting the other data to the refinement. \item  Any triple $(Y,\{ Y_i \to Y \}, a,f)$ is equivalent to the triple obtained from it by changing $a$ and $f$ by a coboundary---i.e.\ replacing $a=(a_{ij})$ with $(a_i a_{ij} a_j^{-1})$ and $(f_i)$ with $(a_i \cdot f_i)$ for some $(a_i) \in \prod_i A(Y_i)$. \end{enumerate}  Morphisms in $[X/A]$ are defined in an evident manner that we leave to the reader to write out in detail.  The structure functor from $[X/A]$ to $\C$ is given by $[Y,\{ Y_i \to Y \}, a,f] \mapsto Y$, so the object $[Y,\{ Y_i \to Y \}, a,f]$ of $[X/A]$ lies in the fiber category $[X/A](Y)$.

Let us explain the meaning of this formal mess to the geometrically-minded reader.  The $a_{ij}$ are supposed to be the transition functions for an $A$-torsor $P \to Y$ equipped with a trivialization $\phi_i : P \times_Y Y_i \cong Y_i \times A$ over each $Y_i$.  (But note that, in our arbitrary category $\C$, there might not actually \emph{be} such a torsor, so we have to think in this more abstract manner!)  The $f_i$ are supposed to be the maps $f_i : X(Y_i)$ corresponding to the $A$-equivariant maps $f \times_Y Y_i : P \times_Y Y_i \to X$ under the trivialization $\phi_i$, where $f : P \to X$ is an $A$-equivariant map.  The equivalence relation on triples is there to remove the artificial choice of a local trivialization of $P \to Y$.

To define the $C$ action on $[X/A]$ suppose we have $[Y,\{ Y_i \to Y \}, a,f] \in [X/A](Y)$ and $c \in C(Y)$.  We define a new object $$c \cdot [Y,\{ Y_i \to Y \}, a,f] = [Y, \{ Y_i \to Y \}, c \cdot a, c \cdot f]  \in [X/A](Y)$$ as follows:  First of all, after possibly passing to a different representative of the equivalence class $[Y,\{ Y_i \to Y \}, a,f]$, we can assume the cover $ \{ Y_i \to Y \}$ is fine enough that there are lifts $b_i \in B(Y_i)$ of $c|Y_i \in C(Y_i)$ (because $B \to C$ is a surjection of sheaves).  We then define $c \cdot f$ by replacing $f_i$ with $b_i \cdot f_i$ and $c \cdot a$ by replacing $a_{ij}$ with $b_i a_{ij} b_j^{-1}$.  Note that $c \cdot a$ and $c \cdot f$ satisfy the cocycle and gluing conditions.  Also note that the equivalence class $[Y, \{ Y_i \to Y \}, c \cdot a, c \cdot f]$ does not actually depend on the \emph{choice} of liftings $b_i$ because if $(b_i) \in \prod_i B(Y_i)$ is another choice of liftings, then $(Y, \{ Y_i \to Y \}, (b_ia_{ij}b_j^{-1}))$ is obtained by changing $(Y, \{ Y_i \to Y \}, (b_i'a_{ij}(b_j')^{-1}))$ by the coboundary $(b_i(b_i')^{-1}) \in \prod_i A(Y_i)$.  

This completes the description of the $C$ action on the level of objects, but to really prove anything about it, one must also explain how it behaves on maps, and then one must explain the $\alpha$ structure maps for this weak action, etc.  Hopefully this explains why our proof goes the way it goes.

\end{document}